\newtheorem{theorem}{Theorem}[section]
\newtheorem{corollary}[theorem]{Corollary}
\newtheorem{lemma}[theorem]{Lemma}
\newtheorem{proposition}[theorem]{Proposition}
\newtheorem*{claim*}{Claim}
\theoremstyle{remark}
\newtheorem*{remark*}{Remark}
\theoremstyle{definition}
\newtheorem*{definition}{Definition}
\newcommand\R{\mathbb{R}}
\newcommand\C{\mathbb{C}}
\newcommand\Z{\mathbb{Z}}
\newcommand\N{\mathbb{N}}
\newcommand\D{\mathbb{D}}
\newcommand\h{\mathbb{H}}
\newcommand\F{\mathbb{F}}
\newcommand\ip[1]{\left \langle #1 \right \rangle}
\DeclareMathOperator{\id}{id}
\DeclareMathOperator{\im}{im}
\DeclareMathOperator{\Scaf}{Scaf}
\DeclareMathOperator{\Mid}{Mid}
\DeclareMathOperator{\Beg}{Beg}
\DeclareMathOperator{\End}{End}
\DeclareMathOperator{\Hom}{Hom}
\DeclareMathOperator{\rank}{rank}
\DeclareMathOperator{\Perm}{Perm}
	\newcommand\rgvertex[2]{ %
		\fill[red] (#1,{#2+0.15}) arc (90:270:0.15) -- (#1,{#2+0.15});
		\fill[green!50!black!50] (#1,{#2+0.15}) arc (90:-90:0.15) -- (#1,{#2+0.15});
	}
	\newcommand\rvertex[2]{ \fill[red] (#1,#2) circle (0.15); }
	\newcommand\gvertex[2]{ \fill[green!50!black!50] (#1,#2) circle (0.15); }
	\newcommand\ivertex[2]{ \path[draw = black, fill = white] (#1,#2) circle (0.15); }
\tikzset{
    set arrow inside/.code={\pgfqkeys{/tikz/arrow inside}{#1}},
    set arrow inside={end/.initial=>, opt/.initial=},
    /pgf/decoration/Mark/.style={
        mark/.expanded=at position #1 with
        {
            \noexpand\arrow[\pgfkeysvalueof{/tikz/arrow inside/opt}]{\pgfkeysvalueof{/tikz/arrow inside/end}}
        }
    },
    arrow inside/.style 2 args={
        set arrow inside={#1},
        postaction={
            decorate,decoration={
                markings,Mark/.list={#2}
            }
        }
    },
}
\begin{document}

\author{David Jekel}
\title{Layering $\partial$-Graphs and Networks: $\partial$-Graph Transformations, Harmonic Continuation, and the Electrical Inverse Problem}

\begin{center}
{\LARGE Layering $\partial$-Graphs and Networks:}

\Large $\partial$-Graph Transformations, Harmonic Continuation, and the Electrical Inverse Problem

~

David Jekel\footnote{Ph. D. Student at University of California Los Angeles.  B.S. in Math from University of Washington, 2015.}

~

{January 2, 2016, revised May 17, 2016.}
\end{center}

\begin{abstract}
We consider the inverse problem for countable, locally finite electrical networks with edge weights in an arbitrary field.  The electrical inverse problem seeks to determine the weights of the edges knowing only the potential and current data of harmonic functions on a set of boundary nodes.  Motivated by the results of Curtis-Ingerman-Morrow and de-Verdiere-Gitler-Vertigan and others, we formalize the idea of using layer-stripping and harmonic continuation to solve the inverse problem.  Our strategy is to iteratively recover ``vulnerable'' edges near the boundary, then remove them by deletion or contraction.  To recover the vulnerable edge, we set up a clever boundary value problem and solve it using discrete harmonic continuation.

We define ``scaffolds,'' a set of oriented edges that models the flow of information in harmonic continuation.  We formulate a sufficient geometric condition (``recoverability by scaffolds'') for the inverse problem to be solvable using the layer-stripping strategy.  Recoverability by scaffolds is preserved under box products, harmonic subgraphs, covering graphs, and more generally under preimages by unramified harmonic morphisms.  For critical circular planar graphs, we prove recoverability by scaffolds using the medial graph.

We also connect the harmonic continuation process to Baez-Fong's compositional framework for networks and Lam-Pylyavskyy's electrical linear group.  We use this to generalize results of Curtis-Ingerman-Morrow and de-Verdiere-Gitler-Vertigan relating the size of connections through the graph and the rank of submatrices of the response matrix.  We give a symplectic characterization of the boundary behavior for networks and the electrical linear group, valid for fields other than $\mathbb{F}_2$.  Many of our results also generalize to the nonlinear networks such as those of Johnson.
\end{abstract}

\subsection*{Approach and Prerequisites:}

The main thrust of this paper is concrete and geometric--it is about cutting networks apart and gluing them together, stripping away a network layer by layer, and propagating potential and current information step-by-step through a network.  The results are elementary and self-contained enough to be accessible to advanced undergraduates familiar with linear algebra, set theory, basic graph theory, and basic category theory.

Network theory is a multi-faceted subject reaching out to graph theory, physics, probability, algebraic topology, and symplectic Lie theory.  I therefore make passing references to many branches of mathematics, yet none of the other results are essential to understand the main proofs here.

The first major section of the paper is devoted to explaining the main ideas without going into the technical details.  It is meant to serve as a summary to those who do not have time to read the whole thing and as preparation for those who do.  Most of the insights are simple and it is only a matter of choosing the correct definitions to make the proofs work in the best generality.

Familiarity with the results of Curtis-Ingerman-Morrow \cite{CIM} or de Verdiere-Gitler-Vertigan \cite{dVGV} on electrical networks is very helpful in understanding the motivation, though some of the important ideas will be explained here anyway.

\subsection*{Acknowledgements}

James Morrow organized the 2013 math REU at the University of Washington which first got me interested in electrical networks.  Both he and Ian Zemke spent a lot of time listening to and critiquing the ideas which would eventually evolve into this paper.  Jim Morrow and I continued to meet over the next two years and had many useful conversations.

The ideas and examples of the UW REU catalyzed my creative processes.  This paper continues the work of Curtis-Ingerman-Morrow, Will Johnson, Konrad Schr\o der, Ian Zemke, and others.  The REU student papers are sometimes flawed, but I list them to give credit to the originators of the ideas I used.

I thank Pavlo Pylyavskyy for inviting me to present these ideas at the University of Minnesota combinatorics seminar.  The presentation and conversations helped me improve the exposition.

My collaborator Avi Levy from the UW pointed out many useful ideas and references.  Our joint paper ``Torsion of the Graph Laplacian'' \cite{torsion} cross-fertilized this paper in several places.  I also thank the REU students Will Dana, Austin Stromme, and Collin Litterell, with whom we collaborated in the earlier stages of that paper.

Several times, I had the start of an idea that had already been invented by others, and their perspective and terminology ultimately became an integral part of this paper.  If I have copied anyone else's results, it is unintentional, and I will insert proper citations when I become aware.

The papers Curtis-Ingerman-Morrow \cite{CIM} and de-Verdiere-Gitler-Vertigan \cite{dVGV} contain similar results.  However, specific citations will be given from \cite{CIM} since I am more familiar with that paper.

The pictures were produced using Till Tantau's package Tikz.  I also used code posted on Stack Exchange by ``Qrrbrbirlbels'' and ``Henri Menke''
\newline (http://tex.stackexchange.com/questions/163689/add-arrows-to-a-smooth-tikz-function).

I worked on this paper while at the University of Washington and at UCLA, and was in part supported by the NSF grant DMS-1460937t for the REU and James Morrow's RTG.

Soli Deo Gloria.

\newpage

\tableofcontents

\newpage

\section{Background and Results}

The electrical inverse problem seeks to probe the interior of an electrical network from boundary measurements.  We have an electrical network with resistors of unknown properties and we want to figure out what they are from the testing potential and net current at boundary nodes.  The graph-based inverse problem we shall study is a discrete analogue of a continuous problem in PDE, and was motivated by electrical engineering and electrical impedance tomography.

The seminal papers of Curtis-Ingerman-Morrow \cite{CIM} and de Verdiere-Gitler-Vertigan \cite{dVGV} solved the inverse problem for networks embedded in the disk, and proved many related results concerning spanning-tree-determinant formulas, connections through the graph, and medial graphs.

We take as our starting point their idea of using \emph{layer-stripping} and \emph{harmonic continuation} to solve the inverse problem.  Our strategy is to iteratively recover ``vulnerable'' edges near the boundary, then remove them by \emph{deletion} or \emph{contraction} (as in \cite{CIM}, \S 11).  To recover the vulnerable edge, we set up a clever \emph{boundary value problem} and solve it using \emph{discrete harmonic continuation} (as in \cite{CMresistor}, \S 3).

This is perhaps the simplest possible approach to the inverse problem, but it is quite powerful, especially when done systematically.  Will Johnson used this strategy to solve the inverse problem for nonlinear networks in the disk in the arXiv paper \cite{WJ}, and his techniques were adapted to infinite networks in the half-plane in the undergraduate thesis of Ian Zemke \cite{IZ}.  In the spirit of Johnson and Zemke's work, we will formalize the layer-stripping approach and describe sufficient geometric conditions to make each step work.  We will define a class of graphs \emph{recoverable by scaffolds} for which the layer-stripping approach is guaranteed to solve the inverse problem (Theorem \ref{thm:solvablerecoverable}, \S \ref{subsec:solvable}).

Our approach is more general in that we do not assume the graph is embedded in any surface.  However, if we are given an embedded graph with a medial graph, we can still use this to prove recoverability by scaffolds (\S \ref{sec:surfaces}).  We will show that the \emph{critical circular planar graphs} studied by \cite{CIM} and \cite{dVGV} are recoverable by scaffolds (Theorem \ref{thm:CCPTL}, \S \ref{subsec:circularscaf}).  Moreover, the harmonic continuation process does not rely on any special properties of the edge weights, and thus works for networks over arbitrary fields and certain types of nonlinear networks (\S \ref{subsec:nonlinear}).  It adapts to infinite networks and applies to ``supercritical'' networks in the half-plane (\S \ref{subsec:halfplanar}).  We thus reprove the results of \cite{CIM}, \cite{dVGV}, \cite{WJ}, and \cite{IZ} about the inverse problem.

One of the main advantages of our framework is that recoverability by scaffolds can be ``pulled back'' using an adaptation of Urakawa's \emph{harmonic morphisms} \cite{urakawa}:  If $f: G \to G'$ is an \emph{unramified harmonic morphism} of graphs with boundary (see \S \ref{subsec:harmonicmorphisms}), and $G'$ is recoverable by scaffolds, then $G$ is also recoverable by scaffolds (Theorem \ref{thm:solvablepullback}, \S \ref{subsec:solvable}).  In particular, this shows that \emph{covering graphs}, \emph{subgraphs}, and \emph{box products} of graphs recoverable by scaffolds are also recoverable by scaffolds, which greatly expands the list of known recoverable networks.

The key geometric construction in our sufficient condition is called a \emph{scaffold} (\S \ref{sec:scaffolds}).  It is a set of oriented edges of a scaffold, roughly speaking, show the direction of harmonic continuation.  It turns out that scaffolds and layer-stripping itself are related and describe the same fundamental structure (\S \ref{subsec:scaflayerability}).  Yet a third perspective is furnished by Baez-Fong's compositional framework \cite{BF}, in which a graph is viewed as a morphism from a set of input vertices to a set of output vertices, and composition of morphisms glues the outputs of the first graph to the inputs of the second graph (see \S \ref{sec:IO}).  Scaffolds are equivalent to certain \emph{elementary factorizations} in this category, which express a morphism as a concatenation of very simple networks corresponding to individual steps in the harmonic continuation process (\S \ref{subsec:elementary}).

We refer to these related ideas collectively as ``layering theory.''  We will apply layering theory not only to solve the inverse problem, but also to generalize the \emph{rank-connection principle} observed by \cite{CIM} and \cite{dVGV} relating the ranks of submatrices of the response matrix and the size of connections through the graph (see \cite{CIM} Theorem 4.2).  We formulate a version of the rank-connection principle that makes sense for arbitrary fields, even when the response matrix is not defined, and holds for generic edge weights.  Using layering theory, we describe necessary and sufficient conditions on the graph for the rank-connection principle to hold for all edge weights (Theorem \ref{thm:RC3}, \S \ref{subsec:semielementary}).

We give a description of Lam-Pylyavksyy \cite{LP}'s electrical linear group in terms of layering theory (\S \ref{subsec:IOlayerstripping}).  Motivated by results of Baez-Fong and Lam-Pylyavksyy, we use layering theory to prove a symplectic characterization of the possible boundary behaviors of electrical networks over any field (Theorem \ref{thm:symplectic}, \S \ref{subsec:symplecticchar}).  We also characterize the electrical linear group for fields other than $\F_2$ (Theorem \ref{thm:ELsymplectic}, \S \ref{subsec:ELchar}).

\section{Overview of Main Ideas}

This section motivates and describes the main constructions of this paper, leaving out some of the technicalities and applications.  This overview is meant to summarize the main ideas for those who do not have time to read the whole paper, and to make the later technical developments more digestible for those who will keep reading.

After giving the main definitions, we explain 1) the layer-stripping strategy for the inverse problem, 2) recovering boundary spikes and boundary edges using harmonic continuation, 3) using scaffolds as a geometric model for the harmonic continuation process, 4) application to mixed-data boundary value problems, 5) another model for harmonic continuation using a category where the morphisms are graphs and composition glues them together.

\subsection{Definitions}

In this paper, a {\bf graph} $G$ is a countable, locally finite, undirected 
multi-graph with self-loops allowed. We write $V$ for the vertex set and $E$ 
for the set of {\bf oriented} edges.  If $e$ is an oriented edge, $e_-$ and 
$e_+$ refer to its starting and ending vertices, and $\overline{e}$ refers to 
its reverse orientation.  The {\bf degree} of a vertex $p$ is the number of oriented edges with $e_+ = p$.

A {\bf graph with boundary} (abbreviated to {\bf 
$\partial$-graph}) is a graph with a specified partition of $V$ into two sets 
$V^\circ$ and $\partial V$, called the {\bf interior} and {\bf boundary vertices} 
respectively.  

For a field $\mathbb{F}$, we define an {\bf $\mathbb{F}$-network} $\Gamma = (G,w)$ as a $\partial$-graph $G$ together with a weight function $w: E \to \mathbb{F} \setminus 0$ with $w(e) = w(\overline{e})$.  Traditionally, the weights are in $\R_+$, but most of our results hold for general fields.

A {\bf potential} is a function $u: V \to \mathbb{F}$.  For a potential $u$, we define $du(e) = u(e_+) - u(e_-)$.  The {\bf current} on an oriented edge $e$ induced by the potential $u$ is
\[
w(e) du(\overline{e}) = -w(e) du(e) = w(e) (u(e_-) - u(e_+)).
\]
The reason for the negative sign is that ``current flow goes in the opposite direction of the gradient.''  The {\bf net current} at a vertex $p$ is given by the {\bf weighted Laplacian}
\[
\Delta u(p) = \sum_{e: e_+ = p} w(e)du(e).
\]
We say $u$ is {\bf harmonic} if $\Delta u(p) = 0$ for all \emph{interior} vertices $p$.  We denote the {\bf vector space of harmonic functions} by $\mathcal{U}(\Gamma)$.  Physically, harmonic functions represent valid electrical potentials that satisfy Ohm's law that the current on an edge is $w(e) du(e)$ and Kirchhoff's law that the net current at an interior vertex is zero.

The {\bf boundary data} of a harmonic function $u$ is the pair $(u|_{\partial V}, \Delta u|_{\partial V})$.  The {\bf boundary behavior}
\[
\Lambda(\Gamma) = \{(u|_{\partial V}, \Delta u|_{\partial V}): u \in \mathcal{U}(\Gamma)\}
\]
is the set of all pairs $(\phi,\psi)$ which are the boundary data of harmonic functions.  It is a linear subspace of $\mathbb{F}^{\partial V} \times \mathbb{F}^{\partial V}$.

We consider the following version of the {\bf inverse problem}: For a fixed graph $G$ and field $\mathbb{F}$, are the edge weights uniquely determined by the boundary behavior?  That is, is $w \mapsto \Lambda(G,w)$ injective?  If the answer is yes, then we say $G$ is {\bf recoverable} (over $\mathbb{F}$).

For positive real edge-weights and finite networks, there is a unique harmonic function with any prescribed potentials on $\partial V$ (that is, the \emph{Dirichlet problem} has a unique solution) (see \cite{CIM}).  Thus, the role of $\Lambda(\Gamma)$ is traditionally played by the {\bf Dirichlet-to-Neumann map} or {\bf response matrix}, a linear transformation $\mathbb{F}^{\partial V} \to \mathbb{F}^{\partial V}$ that sends $\phi \in \F^{\partial V}$ to the net current vector of the harmonic function with $u|_{\partial V} = \phi$.  In this situation, the boundary behavior is the \emph{graph} of the Dirichlet-to-Neumann map.  In general, there might not be a Dirichlet-to-Neumann map, so we must use the boundary behavior instead.

\subsection{Layer-Stripping} \label{subsec:strategy}

We want to recover our network through the following iterative procedure (the {\bf layer-stripping strategy}):  As long as there are edges left in the network
\begin{enumerate}
	\item Locate some edge $e$ which is ``near the boundary,'' and find $w(e)$ from $\Lambda(\Gamma)$.
	\item Delete or contract $e$ to obtain a smaller network $\Gamma'$.
	\item Use $w(e)$ and $\Lambda(\Gamma)$ to compute the boundary behavior $\Lambda(\Gamma')$.
	\item Repeat with $\Gamma'$ instead of $\Gamma$.
\end{enumerate}

There are two types of ``near-boundary'' edges:  A {\bf boundary spike} is an edge with one interior endpoint and one boundary endpoint of degree $1$.  A {\bf boundary edge} is an edge where both endpoints are boundary vertices.  A boundary spike and a boundary edge are pictured below; the boundary vertices are black and the interior vertices are white: 

\begin{center}
\begin{tikzpicture}[scale=0.7]
	\node[circle,fill] (1) at (-2,4) {};
	\node[circle,fill] (2) at (2,4) {};
	\node[circle,fill] (3) at (-3,0) {};
	\node[circle,draw] (4) at (-2,2) {};
	\node[circle,draw] (5) at (0,1) {};
	\node[circle,draw] (6) at (2,2) {};
	\node[circle,fill] (7) at (3,0) {};
	\node[circle,fill] (8) at (-2,-2) {};
	\node[circle,draw] (9) at (0,-1) {};
	\node[circle,fill] (10) at (2,-2) {};
	
	\draw (1) to (4) to (3) to (8) to (9) to (10) to (7) to (6) to (2);
	\draw (4) to (5) to (6);
	\draw (5) to (9);
	
	\draw[->,thick,blue] (-3.5,3) -- (-2.5,3);
	\node[blue,align=center] at (-4.5,3) {boundary \\ spike};
	
	\draw[->,thick,blue] (-4.2,-1) -- (-3.2,-1);
	\node[blue,align=center] at (-5.2,-1) {boundary \\ edge};
	
\end{tikzpicture}
\end{center}

Boundary spikes are removed by {\bf contraction} and boundary edges are removed by {\bf deletion}.  When a boundary spike is contracted, the two endpoints are identified.  The new vertex occupies the position of the interior endpoint, but becomes a boundary vertex: 

\begin{center}
\begin{tikzpicture}[scale=0.7]
	\node[circle,fill,black!30] (1) at (-2,4) {};
	\node[circle,fill] (2) at (2,4) {};
	\node[circle,fill] (3) at (-3,0) {};
	\node[circle,fill] (4) at (-2,2) {};
	\node[circle,draw] (5) at (0,1) {};
	\node[circle,draw] (6) at (2,2) {};
	\node[circle,fill] (7) at (3,0) {};
	\node[circle,fill] (8) at (-2,-2) {};
	\node[circle,draw] (9) at (0,-1) {};
	\node[circle,fill] (10) at (2,-2) {};
	
	\draw[black!30] (1) to (4);
	\draw (4) to (3);
	\draw[black!30] (3) to (8);
	\draw (8) to (9) to (10) to (7) to (6) to (2);
	\draw (4) to (5) to (6);
	\draw (5) to (9);
	
	\draw[->,thick,blue] (-3.3,3) -- (-2.5,3);
	\node[blue,align=center] at (-4.5,3) {contraction};
	
	\draw[->,thick,blue] (-4.2,-1) -- (-3.2,-1);
	\node[blue,align=center] at (-5.2,-1) {deletion};
	
\end{tikzpicture}
\end{center}

If $\Gamma'$ is obtained from $\Gamma$ by contracting a boundary spike or deleting a boundary edge, then computing $\Lambda(\Gamma')$ from $\Lambda(\Gamma)$ and vice versa (step 3) is straightforward.  First suppose $\Gamma'$ is obtained by contracting a boundary spike $e$ with boundary endpoint $p$ and interior endpoint $q$.  Note that any harmonic function $u'$ on $\Gamma'$ extends uniquely to a harmonic function $u$ on $\Gamma$.  We simply choose the potential at $p$ to make the net current at $q$ be zero, that is, set $u(p) = u'(q) + w(e)^{-1} \Delta u'(q)$, where the Laplacian is computed in $\Gamma'$.

To find the boundary data of $u$ from that of $u'$, replace $q$ with $p$ in the list of boundary vertices, replace $u'(q)$ with $u(p)$, and leave everything else the same.  Note that all the current at $q$ from the edges in $\Gamma'$ must flow from $q$ to $p$ in $\Gamma$, and thus, $\Delta u'(q)$ in $\Gamma'$ is the same as $\Delta u(p)$ in $\Gamma$.  The boundary data of $u$ is found by a single row operation from the boundary data of $u'$, which adjusts the potential at $p$ or $q$ based on the net current there.  If
\[
\Xi: \mathbb{F}^{\partial V'} \times \mathbb{F}^{\partial V'} \to \mathbb{F}^{\partial V} \times \mathbb{F}^{\partial V}
\]
is the corresponding linear transformation, then $\Lambda(\Gamma) = \Xi \Lambda(\Gamma')$ and $\Lambda(\Gamma') = \Xi^{-1} \Lambda(\Gamma)$.

For a boundary edge, there is a similar transformation | simply change the net current on the two endpoints by $\pm w(e)(u(e_+) - u(e_-))$.

As we shall see in \S \ref{sec:symplectic}, for finite graphs, $\Lambda(\Gamma)$ is a Lagrangian subspace of $\mathbb{F}^{\partial V} \times \mathbb{F}^{\partial V}$ with respect to the standard symplectic form, and the linear transformations for adding boundary spikes and boundary edges are symplectic matrices.  In fact, for any field with more than two elements, these special matrices generate the group of symplectic matrices $\Xi$ which map $(1,\dots,1,0,\dots,0)$ to itself.  This provides another perspective on the \emph{electrical linear group} of \cite{LP}.

\subsection{Recovery of Boundary Spikes and Boundary Edges} \label{subsec:recoveryexample}

The hardest step of the layer-stripping strategy is the recovery of boundary spikes and boundary edges.  Let us first handle the case of a boundary spike $e$ with boundary endpoint $p$ and interior endpoint $q$.  Our goal will be to find $P, Q \subset \partial V$ such that:
\begin{itemize}
	\item \emph{Existence:} For any possible choice of edge weights, there exists a harmonic function $u$ with $u|_P = 0$, $\Delta u|_Q = 0$, and $u(p) = 1$.
	\item \emph{Uniqueness:} For any possible choice edge weights, a harmonic function $u$ with $u|_P = 0$ and $\Delta u|_Q = 0$ is forced to have $u(q) = 0$.
\end{itemize}
Note that if $u$ is any such harmonic function, then the net current on $p$ is
\[
\Delta u(p) = w(e)(u(p) - u(q)) = w(e)(1 - 0) = w(e).
\]
If we establish our two claims, that will show that $w(e)$ is uniquely determined by $\Lambda(\Gamma)$.  Indeed, by solving some linear equations, we can find a pair $(\phi,\psi) \in \Lambda(\Gamma)$ such that $\phi|_P = 0$, $\psi|_Q = 0$, and $\phi(e_+) = 1$.  If we pick any such pair, $\psi(p)$ is guaranteed to be $\Delta u(p) = w(e)$.

For a boundary edge $e$ with endpoints $p$ and $q$, the strategy is the same, except that this time we force $u$ to be zero on $p$ and all its neighbors other than $q$, and we force $u(q) = 1$.  This guarantees that $\Delta u(p) = -w(e)$.

We demonstrate the two claims about $P$ and $Q$ using \emph{discrete harmonic continuation}, which is best explained by example.  We will recover the boundary spike in the earlier example by imposing the boundary conditions pictured below.  The conditions in parentheses denote the net current and the ones not in parentheses denote the potential.

\begin{center}
\begin{tikzpicture}[scale=0.7]
	\node[circle,fill] (1) at (-2,4) {};
	\node[circle,fill] (2) at (2,4) {};
	\node[circle,fill] (3) at (-3,0) {};
	\node[circle,draw] (4) at (-2,2) {};
	\node[circle,draw] (5) at (0,1) {};
	\node[circle,draw] (6) at (2,2) {};
	\node[circle,fill] (7) at (3,0) {};
	\node[circle,fill] (8) at (-2,-2) {};
	\node[circle,draw] (9) at (0,-1) {};
	\node[circle,fill] (10) at (2,-2) {};
	
	\begin{scope}[text = red]
		\node at (-2.5,4) {$1$};
		\node at (-4, 0) {$0$ ($0$)};
		\node at (-2.5, -2) {$0$};
	\end{scope}
	
	\draw (1) to (4);
	\draw (4) to (3);
	\draw (3) to (8);
	\draw (8) to (9);
	\draw (9) to (10);
	\draw (10) to (7);
	\draw (7) to (6);
	\draw (6) to (2);
	\draw (4) to (5);
	\draw (5) to (6);
	\draw (5) to (9);
	
\end{tikzpicture}
\end{center}

In this example, $P$ comprises the two lower left boundary vertices, and $Q$ is the single vertex where the net current is zero.  At this point, we see an edge where $u = 0$ on both endpoints, and deduce that the current on the edge is zero and color the edge blue (below, left).  Next, the vertex where the net current is declared to be zero has only two edges incident to it and one of them has zero current already.  This implies the current is zero on the other edge, and hence we conclude that the potential is zero on the interior vertex of the spike (below, right).

\begin{center}
\begin{tikzpicture}[scale=0.7]
	\node[circle,fill] (1) at (-2,4) {};
	\node[circle,fill] (2) at (2,4) {};
	\node[circle,fill] (3) at (-3,0) {};
	\node[circle,draw] (4) at (-2,2) {};
	\node[circle,draw] (5) at (0,1) {};
	\node[circle,draw] (6) at (2,2) {};
	\node[circle,fill] (7) at (3,0) {};
	\node[circle,fill] (8) at (-2,-2) {};
	\node[circle,draw] (9) at (0,-1) {};
	\node[circle,fill] (10) at (2,-2) {};
	
	\begin{scope}[text = red]
		\node at (-2.5,4) {$1$};
		\node at (-4, 0) {$0$ ($0$)};
		\node at (-2.5, -2) {$0$};
	\end{scope}
	
	\draw (1) to (4);
	\draw (4) to (3);
	\draw[blue] (3) to node[auto,black] {$0$} (8);
	\draw (8) to (9);
	\draw (9) to (10);
	\draw (10) to (7);
	\draw (7) to (6);
	\draw (6) to (2);
	\draw (4) to (5);
	\draw (5) to (6);
	\draw (5) to (9);
	
	\begin{scope}[shift = {(9,0)}]
	
	\node[circle,fill] (1) at (-2,4) {};
	\node[circle,fill] (2) at (2,4) {};
	\node[circle,fill] (3) at (-3,0) {};
	\node[circle,draw] (4) at (-2,2) {};
	\node[circle,draw] (5) at (0,1) {};
	\node[circle,draw] (6) at (2,2) {};
	\node[circle,fill] (7) at (3,0) {};
	\node[circle,fill] (8) at (-2,-2) {};
	\node[circle,draw] (9) at (0,-1) {};
	\node[circle,fill] (10) at (2,-2) {};
	
	\begin{scope}[text = red]
		\node at (-2.5,4) {$1$};
		\node at (-4, 0) {$0$ ($0$)};
		\node at (-2.5, -2) {$0$};
	\end{scope}
	
	\node at (-2.5,2) {$0$};
	
	\draw (1) to (4);
	\draw[->,orange] (3) to node[auto,swap,black] {$0$} (4);
	\draw[blue] (3) to node[auto,black] {$0$} (8);
	\draw (8) to (9);
	\draw (9) to (10);
	\draw (10) to (7);
	\draw (7) to (6);
	\draw (6) to (2);
	\draw (4) to (5);
	\draw (5) to (6);
	\draw (5) to (9);
	\end{scope}
	
\end{tikzpicture}
\end{center}

Next, the current on the boundary spike is determined since the potentials on the endpoints are determined (below, left).  As remarked above, the current on the spike is $w(e)$ from the boundary vertex to the interior vertex, and thus the net current on the boundary vertex is $w(e)$.

\begin{center}
\begin{tikzpicture}[scale=0.7]
	\node[circle,fill] (1) at (-2,4) {};
	\node[circle,fill] (2) at (2,4) {};
	\node[circle,fill] (3) at (-3,0) {};
	\node[circle,draw] (4) at (-2,2) {};
	\node[circle,draw] (5) at (0,1) {};
	\node[circle,draw] (6) at (2,2) {};
	\node[circle,fill] (7) at (3,0) {};
	\node[circle,fill] (8) at (-2,-2) {};
	\node[circle,draw] (9) at (0,-1) {};
	\node[circle,fill] (10) at (2,-2) {};
	
	\begin{scope}[text = red]
		\node at (-2.5,4) {$1$};
		\node at (-4, 0) {$0$ ($0$)};
		\node at (-2.5, -2) {$0$};
	\end{scope}

	\node at (-2.5,2) {$0$};

	\draw[blue] (1) to node[auto,black] {$w(e)$} (4);
	\draw[->,orange] (3) to node[auto,swap,black] {$0$} (4);
	\draw[blue] (3) to node[auto,black] {$0$} (8);
	\draw (8) to (9);
	\draw (9) to (10);
	\draw (10) to (7);
	\draw (7) to (6);
	\draw (6) to (2);
	\draw (4) to (5);
	\draw (5) to (6);
	\draw (5) to (9);
\end{tikzpicture}
\end{center}

We have now completed the ``uniqueness'' step, showing that our boundary conditions force $u$ to be zero on the interior vertex of the spike.  It remains to show that our partially defined function extends to \emph{some} harmonic function on the whole network.  Note that we do not care about uniqueness any more since the behavior of $u$ near the spike is under control.

We start at the interior vertex $q$ of the spike.  There is only one edge at $q$ where the current is not yet determined, but we can choose the current on this edge to make the net current at $q$ zero (below, left).  At this point, the data we have on the network does not determine any more values of $u$.  To continue with our harmonic extension, we assign a potential $*$ arbitrarily at the vertex indicated in gray in the picture (below, right).

\begin{center}
\begin{tikzpicture}[scale = 0.7]

	\node[circle,fill] (1) at (-2,4) {};
	\node[circle,fill] (2) at (2,4) {};
	\node[circle,fill] (3) at (-3,0) {};
	\node[circle,draw] (4) at (-2,2) {};
	\node[circle,draw] (5) at (0,1) {};
	\node[circle,draw] (6) at (2,2) {};
	\node[circle,fill] (7) at (3,0) {};
	\node[circle,fill] (8) at (-2,-2) {};
	\node[circle,draw] (9) at (0,-1) {};
	\node[circle,fill] (10) at (2,-2) {};
	
	\begin{scope}[text = red]
		\node at (-2.5,4) {$1$};
		\node at (-4, 0) {$0$ ($0$)};
		\node at (-2.5, -2) {$0$};
	\end{scope}

	\node at (-2.5,2) {$0$};
	\node at (0,1.5) {$*$};

	\draw[blue] (1) to node[auto,black] {$w(e)$} (4);
	\draw[->,orange] (3) to node[auto,swap,black] {$0$} (4);
	\draw[blue] (3) to node[auto,black] {$0$} (8);
	\draw (8) to (9);
	\draw (9) to (10);
	\draw (10) to (7);
	\draw (7) to (6);
	\draw (6) to (2);
	\draw[->,orange] (4) to (5);
	\draw (5) to (6);
	\draw (5) to (9);
	
	\begin{scope}[shift = {(9,0)}]

	\node[circle,fill] (1) at (-2,4) {};
	\node[circle,fill] (2) at (2,4) {};
	\node[circle,fill] (3) at (-3,0) {};
	\node[circle,draw] (4) at (-2,2) {};
	\node[circle,draw] (5) at (0,1) {};
	\node[circle,draw] (6) at (2,2) {};
	\node[circle,fill] (7) at (3,0) {};
	\node[circle,fill] (8) at (-2,-2) {};
	\node[circle,draw] (9) at (0,-1) {};
	\node[circle,fill] (10) at (2,-2) {};
	
	\begin{scope}[text = red]
		\node at (-2.5,4) {$1$};
		\node at (-4, 0) {$0$ ($0$)};
		\node at (-2.5, -2) {$0$};
	\end{scope}

	\node at (-2.5,2) {$0$};
	\node at (0,1.5) {$*$};

	\begin{scope}[text=gray]
		\node at (0,-1.5) {$*$};
	\end{scope}

	\draw[blue] (1) to node[auto,black] {$w(e)$} (4);
	\draw[->,orange] (3) to node[auto,swap,black] {$0$} (4);
	\draw[blue] (3) to node[auto,black] {$0$} (8);
	\draw (8) to (9);
	\draw (9) to (10);
	\draw (10) to (7);
	\draw (7) to (6);
	\draw (6) to (2);
	\draw[->,orange] (4) to (5);
	\draw (5) to (6);
	\draw (5) to (9);
	
	\end{scope}
\end{tikzpicture}
\end{center}

Once again, we see two edges where the potential on the endpoints is known.  We color them blue to indicate that the current on these edges is known (below, left).  Then we see interior vertices with only one underdetermined edge each, and we must choose the current on these edges to make the net current at the interior vertices zero; they are indicated in orange (below, right).

\begin{center}
\begin{tikzpicture}[scale=0.7]
	\node[circle,fill] (1) at (-2,4) {};
	\node[circle,fill] (2) at (2,4) {};
	\node[circle,fill] (3) at (-3,0) {};
	\node[circle,draw] (4) at (-2,2) {};
	\node[circle,draw] (5) at (0,1) {};
	\node[circle,draw] (6) at (2,2) {};
	\node[circle,fill] (7) at (3,0) {};
	\node[circle,fill] (8) at (-2,-2) {};
	\node[circle,draw] (9) at (0,-1) {};
	\node[circle,fill] (10) at (2,-2) {};
	
	\begin{scope}[text = red]
		\node at (-2.5,4) {$1$};
		\node at (-4, 0) {$0$ ($0$)};
		\node at (-2.5, -2) {$0$};
	\end{scope}

	\node at (-2.5,2) {$0$};
	\node at (0,1.5) {$*$};

	\begin{scope}[text=gray]
		\node at (0,-1.5) {$*$};
	\end{scope}

	\draw[blue] (1) to node[auto,black] {$w(e)$} (4);
	\draw[->,orange] (3) to node[auto,swap,black] {$0$} (4);
	\draw[blue] (3) to node[auto,black] {$0$} (8);
	\draw[blue] (8) to (9);
	\draw (9) to (10);
	\draw (10) to (7);
	\draw (7) to (6);
	\draw (6) to (2);
	\draw[->,orange] (4) to (5);
	\draw (5) to (6);
	\draw[blue] (5) to (9);
	
\begin{scope}[shift = {(9,0)}]

	\node[circle,fill] (1) at (-2,4) {};
	\node[circle,fill] (2) at (2,4) {};
	\node[circle,fill] (3) at (-3,0) {};
	\node[circle,draw] (4) at (-2,2) {};
	\node[circle,draw] (5) at (0,1) {};
	\node[circle,draw] (6) at (2,2) {};
	\node[circle,fill] (7) at (3,0) {};
	\node[circle,fill] (8) at (-2,-2) {};
	\node[circle,draw] (9) at (0,-1) {};
	\node[circle,fill] (10) at (2,-2) {};
	
	\begin{scope}[text = red]
		\node at (-2.5,4) {$1$};
		\node at (-4, 0) {$0$ ($0$)};
		\node at (-2.5, -2) {$0$};
	\end{scope}

	\node at (-2.5,2) {$0$};
	\node at (0,1.5) {$*$};

	\begin{scope}[text=gray]
		\node at (0,-1.5) {$*$};
		\node at (2.5,2) {$*$};
		\node at (2.5,-2) {$*$};
	\end{scope}

	\draw[blue] (1) to node[auto,black] {$w(e)$} (4);
	\draw[->,orange] (3) to node[auto,swap,black] {$0$} (4);
	\draw[blue] (3) to node[auto,black] {$0$} (8);
	\draw[blue] (8) to (9);
	\draw[->,orange] (9) to (10);
	\draw (10) to (7);
	\draw (7) to (6);
	\draw (6) to (2);
	\draw[->,orange] (4) to (5);
	\draw[->,orange] (5) to (6);
	\draw[blue] (5) to (9);
\end{scope}
	
\end{tikzpicture}
\end{center}

Finally, we declare an arbitrary parameter at the right middle boundary vertex, color two edges blue, and then one edge orange:

\begin{center}
\begin{tikzpicture}[scale=0.7]
	\node[circle,fill] (1) at (-2,4) {};
	\node[circle,fill] (2) at (2,4) {};
	\node[circle,fill] (3) at (-3,0) {};
	\node[circle,draw] (4) at (-2,2) {};
	\node[circle,draw] (5) at (0,1) {};
	\node[circle,draw] (6) at (2,2) {};
	\node[circle,fill] (7) at (3,0) {};
	\node[circle,fill] (8) at (-2,-2) {};
	\node[circle,draw] (9) at (0,-1) {};
	\node[circle,fill] (10) at (2,-2) {};
	
	\begin{scope}[text = red]
		\node at (-2.5,4) {$1$};
		\node at (-4, 0) {$0$ ($0$)};
		\node at (-2.5, -2) {$0$};
	\end{scope}

	\node at (-2.5,2) {$0$};
	\node at (0,1.5) {$*$};

	\begin{scope}[text=gray]
		\node at (0,-1.5) {$*$};
		\node at (2.5,2) {$*$};
		\node at (2.5,-2) {$*$};
		\node at (3.5,0) {$*$};
	\end{scope}

	\draw[blue] (1) to node[auto,black] {$w(e)$} (4);
	\draw[->,orange] (3) to node[auto,swap,black] {$0$} (4);
	\draw[blue] (3) to node[auto,black] {$0$} (8);
	\draw[blue] (8) to (9);
	\draw[->,orange] (9) to (10);
	\draw[blue] (10) to (7);
	\draw[blue] (7) to (6);
	\draw (6) to (2);
	\draw[->,orange] (4) to (5);
	\draw[->,orange] (5) to (6);
	\draw[blue] (5) to (9);
	
\begin{scope}[shift = {(9,0)}]
	\node[circle,fill] (1) at (-2,4) {};
	\node[circle,fill] (2) at (2,4) {};
	\node[circle,fill] (3) at (-3,0) {};
	\node[circle,draw] (4) at (-2,2) {};
	\node[circle,draw] (5) at (0,1) {};
	\node[circle,draw] (6) at (2,2) {};
	\node[circle,fill] (7) at (3,0) {};
	\node[circle,fill] (8) at (-2,-2) {};
	\node[circle,draw] (9) at (0,-1) {};
	\node[circle,fill] (10) at (2,-2) {};
	
	\begin{scope}[text = red]
		\node at (-2.5,4) {$1$};
		\node at (-4, 0) {$0$ ($0$)};
		\node at (-2.5, -2) {$0$};
	\end{scope}

	\node at (-2.5,2) {$0$};
	\node at (0,1.5) {$*$};

	\begin{scope}[text=gray]
		\node at (0,-1.5) {$*$};
		\node at (2.5,2) {$*$};
		\node at (2.5,-2) {$*$};
		\node at (3.5,0) {$*$};
		\node at (2.5,4) {$*$};
	\end{scope}

	\draw[blue] (1) to node[auto,black] {$w(e)$} (4);
	\draw[->,orange] (3) to node[auto,swap,black] {$0$} (4);
	\draw[blue] (3) to node[auto,black] {$0$} (8);
	\draw[blue] (8) to (9);
	\draw[->,orange] (9) to (10);
	\draw[blue] (10) to (7);
	\draw[blue] (7) to (6);
	\draw[->,orange] (6) to (2);
	\draw[->,orange] (4) to (5);
	\draw[->,orange] (5) to (6);
	\draw[blue] (5) to (9);
\end{scope}

\end{tikzpicture}
\end{center}

We have now shown that our partially defined harmonic function extends to the whole network.  By construction, the net current at each interior vertex is zero.  Indeed, each interior vertex has an orange edge exiting it, and the current on the orange edge was chosen to make the net current at the starting point zero.  Therefore, we have proved that the weight of the boundary spike is uniquely determined by $\Lambda(\Gamma)$.

\subsection{Formalizing Harmonic Continuation with Scaffolds} \label{subsec:formalizing}

Although the process of defining a harmonic function in the last example was ostensibly algebraic, it can be represented purely geometrically by the orange edges, blue edges, and relationship between them.  The set of orange edges is an example of a \emph{scaffold}, a set $S$ of oriented edges satisfying certain conditions, designed as an auxiliary framework to build a harmonic function, or as a geometric model of the flow of information.  To motivate the definition of scaffolds, let us try to formalize the process in the last example.

Discrete harmonic continuation has two types of moves, represented by the blue and orange edges.  For each blue edge, the values of $u$ on the endpoints are determined first, and that defines the current on the edge.  For each orange edge, we know $u$ at one endpoint and the current on the edge, and use that to find the potential at the other endpoint.  The orange edges are \emph{parallel} to the flow of information, but the blue edges are \emph{transverse} to it.

The harmonic continuation process is broken into two stages:  In the first stage, we are concerned about \emph{uniqueness}.  We do not have to worry about consistency since we just want $u$ to be zero everywhere.  In the second stage, we are concerned about \emph{existence}.  We do not care what the values of $u$ are away from the spike $e$ so long as there is some consistent extension.  The boundary spike comes in the \emph{middle} between the two stages.  The value on the interior endpoint had to be uniquely determined, but we also needed to have $u = 1$ at the boundary endpoint, and needed to have a consistent harmonic extension even after we put nonzero data on the network.

In the second stage, there can be some obstacles to uniqueness of extensions.  In the example, these obstacles were represented by the interior vertices where we assigned an arbitrary parameter rather than deducing the potential from previous information.  These were precisely the interior vertices with no orange edge \emph{entering} them.  When there is an orange edge entering a vertex $p$, we can use the values of $u$ already defined to determine the current on the orange edge and hence $u(p)$.  But if there is no orange edge entering $p$, we have one parameter of freedom in choosing $u(p)$.

In the first stage, obstacles to existence are permissible; since we want $u$ to be zero in this region, there is no problem achieving harmonicity.  The obstacles to existence are interior vertices with no orange edge \emph{exiting} them.  (These did not occur in the example, and if they did we would have barely noticed them, since we were focused on forcing $u$ to be zero.)  When there is an exiting orange edge, then the current on the edge is chosen so as to make the net current at the starting vertex $0$.  But if there is no exiting orange edge, there is nowhere for the current to escape to.

We are going to use the set of oriented orange edges as our model for the flow of information.  What requirements did the orange edges have to satisfy?  First, each interior vertex can have at most one orange edge entering it and at most one orange edge exiting it.  This implies that the oriented edges form disjoint paths.

The choice of orange edges must also be consistent with the \emph{order} of harmonic continuation.  Note that each orange edge was used \emph{after} the other edges incident to its starting point and \emph{before} the edges incident to its ending point.  To capture this idea of order directly from the properties of the orange edges, we define an \emph{increasing path} to be a path that uses only oriented orange edges and blue edges, with no two blue edges in a row.  These are the paths that are forced to be increasing with respect to the order of harmonic continuation.

In order for such increasing paths to reflect an underlying order of the edges, we need to require that \emph{no increasing path forms a cycle}.  Another requirement is that any obstacle to existence must come \emph{after} any obstacle to uniqueness.  If $S$ is the set of orange edges, then the obstacles to existence are interior vertices not in $S_-$ and the obstacles to uniqueness are interior vertices not in $S_+$.  Thus, we make the requirement, that \emph{there is no increasing path from a vertex in $V^\circ \setminus S_+$ to a vertex in $V^\circ \setminus S_-$}.  Finally, for our harmonic continuation process to work for infinite graphs, we anticipate some use of Zorn's lemma, and we require that \emph{there is no infinite decreasing path}.

If a set $S$ of oriented edges with $S \cap \overline{S} = \varnothing$ satisfies these conditions, we will call it a {\bf scaffold}.  For a scaffold $S$, we can partition the vertices and edges into three sets, corresponding to the beginning, middle, and end of the harmonic continuation process:
\begin{itemize}
	\item The {\bf Beginning} consists of anything that can be reached by a decreasing path from an interior vertex not in $S_-$.
	\item The {\bf End} consists of anything that can be reached by an increasing path from an interior vertex not in $S_+$.
	\item The {\bf Middle} consists of everything else.
\end{itemize}
The ``first stage'' takes place in the Beginning and part of the Middle, and the ``second stage'' takes place in part of the Middle and the End.

Using essentially the same argument as in the example, we will show that we can recover a boundary spike $e$ if there is a scaffold $S$, where $e \not \in S \cup \overline{S}$ and $e$ is in the Middle.  We can recover a boundary edge $e$ if there is a scaffold $S$, where $e \in S \cup \overline{S}$ and $e$ is in the Middle.

We say a $\partial$-graph is {\bf recoverable by scaffolds} if there is a sequence of boundary spike contractions and boundary edge deletions that exhausts the edges in the graph, and at each step, the edges removed can be recovered using a scaffold.  We will show that the inverse problem can always be solved for such $\partial$-graphs.

Recoverability by scaffolds has the virtue of being a purely geometric condition | we no longer have to pretend to do algebra while performing harmonic continuation.  However, recoverability by scaffolds is hard to check because it is inductive; it requires choosing the sequence of layer-stripping operations and constructing a scaffold at each stage.  But we will establish some easier-to-check sufficient conditions.

The main advantage of defining recoverability by scaffolds is that we can use a scaffolds on one graph to produce scaffolds on other graphs.  We will define an \emph{unramified harmonic morphism} of $\partial$-graphs later, and show that if $f: G \to H$ is a UHM and $S$ is a scaffold on $H$, then $f^{-1}(S)$ is a scaffold on $G$.  The reason for this is basically that increasing paths push forward to increasing paths.

Moreover, layer-stripping operations also produce layer-stripping operations by taking preimages.  This will enable us to show that if $f: G \to H$ is a UHM and $H$ is recoverable by scaffolds, then so is $G$.  Thus, when one checks that $H$ is recoverable by scaffolds, that automatically shows that a host of other graphs are also recoverable by scaffolds.

\subsection{Application to Mixed-Data Boundary Value Problems} \label{subsec:mixeddata}

Our recovery strategy used harmonic continuation to solve certain mixed-data boundary value problems.  Motivated by results of \cite{CIM}, we apply harmonic continuation to understand existence and uniqueness question for mixed-data boundary value problems in general.  Partition $\partial V$ into two sets $P$ and $Q$, and consider the following questions:
\begin{itemize}
	\item For which $(\phi,\psi) \in \mathbb{F}^P \times \mathbb{F}^P$ does there exist a harmonic function with $u|_P = \phi$ and $\Delta u|_P = \psi$?
	\item If there is such a harmonic function, how uniquely do the values on $P$ determine the values on $Q$?
	\item The same questions with $P$ and $Q$ reversed.
\end{itemize}
If existence and uniqueness occur for the first question, then we have a well-defined map $\mathbb{F}^P \times \mathbb{F}^P \to \mathbb{F}^Q \times \mathbb{F}^Q$.  But in general, we only have a linear relation
\[
X: \mathbb{F}^P \times \mathbb{F}^P \rightsquigarrow \mathbb{F}^Q \times \mathbb{F}^Q,
\]
that is, a linear subspace
\[
X \subset (\mathbb{F}^P \times \mathbb{F}^P) \times (\mathbb{F}^Q \times \mathbb{F}^Q)
\]
describing what boundary data on $P$ is compatible with what boundary data on $Q$.  To avoid clumsy notation, we write $x$ instead of $(\phi,\psi)$ for an element of $\F^P \times \F^P$.  Then we define $X$ by saying that $(x, y) \in X$ if and only if there is a harmonic function $u$ with boundary data $x$ on $P$ and $y$ on $Q$.

If $\pi_P$ and $\pi_Q$ are the projections of $(\mathbb{F}^P \times \mathbb{F}^P) \times (\mathbb{F}^Q \times \mathbb{F}^Q)$ onto the first and second factors, then the space of valid boundary data on $P$ for which the mixed-data problem has a solution is given by $V_P = \pi_P(X)$, and likewise for $Q$ it is $V_Q = \pi_Q(X)$.  The failure of uniqueness for our problem is described by $Z_Q$, the subspace of $V_Q$ consisting of all $y$ which are compatible with $0 \in \mathbb{F}^P \times \mathbb{F}^P$.  We can similarly define $Z_P = \pi_P(\pi_Q^{-1}(0))$.

There is a linear bijection $V_P / Z_P \to V_Q / Z_Q$ and hence $\dim V_P - \dim Z_P = \dim V_Q - \dim Z_Q$.  We will call this number $\rank X$ and think of it as the ``amount of algebraic connection'' between data on $P$ and data on $Q$.

Suppose that there is a scaffold where the vertices of $P$ are the ``inputs'' and the vertices of $Q$ are the ``outputs'' as in Figure \ref{fig:scaffactorization}.  Then we can use harmonic continuation to find the dimensions of the fundamental subspaces $V_P$, $Z_P$, $V_Q$, and $Z_Q$.  Here is an intuitive description of the process (which on the surface is rather different than the formal proof we will give later).

We start with the boundary data on $P$ and harmonically continue, using the edges in the scaffold in order.  As before, there are two stages.  In the first stage, there are some obstacles to existence of harmonic extensions when vertices in $P \cup V^\circ$ are not the input of some orange edge.  Say there are $k$ of them.  When we are forced to determine the net current on such vertices, we may reach an inconsistency, which forces us to throw out some choices of initial data on $P$.  Equivalently, each obstacle imposes a linear relationship that $x \in V_P$ must satisfy.

For the other choices of initial data that survive the first stage, we keep going.  In the second stage, there may be some obstacles to uniqueness when vertices in $V^\circ \cup Q$ are not the output of some orange edge, at which we must assign an arbitrary parameter.  Say there are $\ell$ of them.

Once we have gone through all the edges, we have eliminated all the invalid data on $P$ and parametrized the data on $Q$ that is compatible with each element of $V_P$.  We see that $\dim V_P = 2|P| - k$ since one dimension was eliminated by each obstacle.  Moreover, for each element of $V_P$, the compatible elements of $V_Q$ form an affine subspace of dimension $\ell$, since that is how many arbitrary parameters we used.  Thus, $\dim Z_Q = \ell$.

If we reverse the edges in the scaffold, we can say the same thing switching $P$ and $Q$ and switching $k$ and $\ell$.  In particular,
\[
\rank X = (2|P| - k) - k = (2|Q| - \ell) - \ell = 2(|P| - k) = 2(|Q| - \ell).
\]

This number has a simple geometric meaning as well.  There are $|P|$ paths of orange edges starting at $P$, and $k$ of them end at a vertex in $V^\circ \cup P$, so $|P| - k$ of them make it to $Q$.  Symmetrically, there are $|Q|$ paths going backwards from $Q$ and $|Q| - \ell$ of them reach $P$.  Thus, $|P| - k = |Q| - \ell$ is the size of the connection through the graph from $P$ to $Q$.  Thus, we have

~

{\bf Rank-connection principle:} \emph{The amount of algebraic connection between $V_P$ and $V_Q$ (that is, $\rank X$) is equal to twice the size of connection through the graph between $P$ and $Q$.}

~

The rank-connection principle (or an equivalent formulation using the response matrix) was observed in \cite{CIM} for circular planar networks as a consequence of the determinant-connection formula for determinants of submatrices of the Laplacian.  In a similar spirit, one can deduce from the grove-determinant formula (\cite{RF}) that the rank-connection principle holds for any $\partial$-graphs for \emph{generic} edge weights.  However, provided we can find a scaffold and do harmonic continuation, the rank-connection principle holds for \emph{all} edge-weights.

In particular, if there is a scaffold where all the orange paths connect $P$ and $Q$ (hence a full-size connection between $P$ and $Q$), then the relation $X$ actually defines a bijective function $\mathbb{F}^P \times \mathbb{F}^P \to \mathbb{F}^Q \times \mathbb{F}^Q$ for all edge weights.  Amazingly, the converse is also true:  If $X$ is a bijective function for all nonzero edge weights in $\R$, then such a scaffold exists (Theorem \ref{thm:uniquefull}).  Scaffolds thus provide a geometric characterization of situations when existence and uniqueness occur for all edge weights.

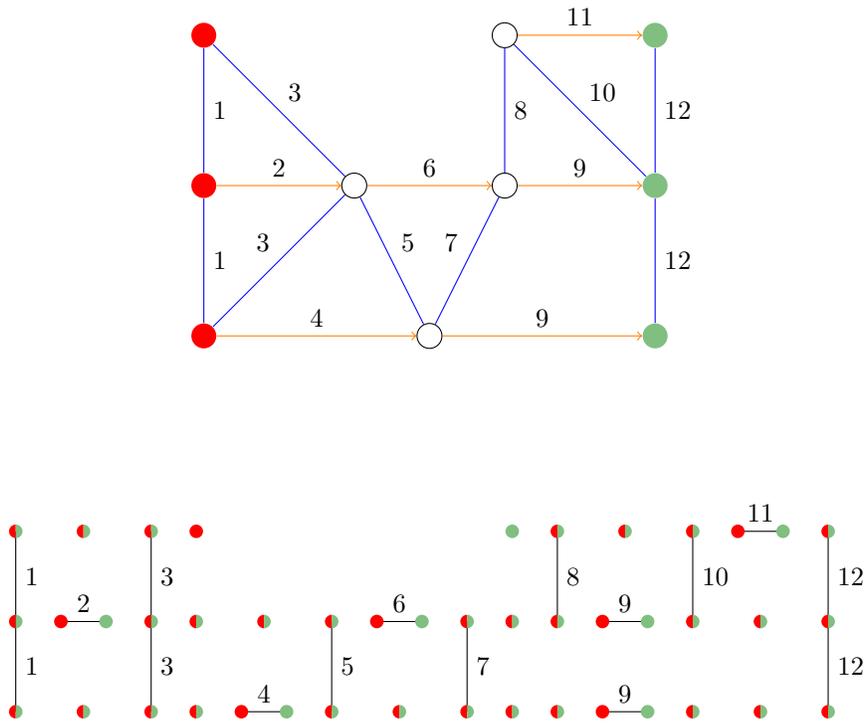
\begin{figure}

\caption{A scaffold and a corresponding elementary factorization.  The numbers on the edges show one possible partial order compatible with the scaffold.  In each IO-network the inputs are red and the outputs are green.  The green vertices of each IO-network are joined to the red vertices of the next one.}  \label{fig:scaffactorization}

\begin{center}
\begin{tikzpicture}
	\node[circle,fill,red] (1) at (0,4) {};
	\node[circle,fill,red] (2) at (0,2) {};
	\node[circle,fill,red] (3) at (0,0) {};
	\node[circle,draw] (4) at (2,2) {};
	\node[circle,draw] (5) at (3,0) {};
	\node[circle,draw] (6) at (4,4) {};
	\node[circle,draw] (7) at (4,2) {};
	\node[circle,fill,green!50!black!50] (8) at (6,4) {};
	\node[circle,fill,green!50!black!50] (9) at (6,2) {};
	\node[circle,fill,green!50!black!50] (10) at (6,0) {};
	
	\draw[blue] (1) to node[auto,black] {$1$} (2);
	\draw[blue] (2) to node[auto,black] {$1$} (3);
	\draw[->,orange] (2) to node[auto,black] {$2$} (4);
	\draw[blue] (1) to node[auto,black] {$3$} (4);
	\draw[blue] (3) to node[auto,black] {$3$} (4);
	\draw[->,orange] (3) to node[auto,black] {$4$} (5);
	\draw[blue] (4) to node[auto,black] {$5$} (5);
	\draw[->,orange] (4) to node[auto,black] {$6$} (7);
	\draw[blue] (5) to node[auto,black] {$7$} (7);
	\draw[blue] (6) to node[auto,black] {$8$} (7);
	\draw[->,orange] (7) to node[auto,black] {$9$} (9);
	\draw[->,orange] (5) to node[auto,black] {$9$} (10);
	\draw[blue] (6) to node[auto,black] {$10$} (9);
	\draw[->,orange] (6) to node[auto,black] {$11$} (8);
	\draw[blue] (8) to node[auto,black] {$12$} (9);
	\draw[blue] (9) to node[auto,black] {$12$} (10);
	
	\begin{scope}[shift = {(-2.5,-5)}, scale = 0.6]
		\draw (0,4) to node[auto,black] {$1$} (0,2) to node[auto,black] {$1$} (0,0);
		\rgvertex{0}{4}
		\rgvertex{0}{2}
		\rgvertex{0}{0}
		
		\draw (1,2) to node[auto] {$2$} (2,2);
		\rgvertex{1.5}{4}
		\rvertex{1}{2}
		\gvertex{2}{2}
		\rgvertex{1.5}{0}
		
		\draw (3,4) to node[auto] {$3$} (3,2) to node[auto] {$3$} (3,0);
		\rgvertex{3}{4}
		\rgvertex{3}{2}
		\rgvertex{3}{0}
		
		\rvertex{4}{4}
		\rgvertex{4}{2}
		\rgvertex{4}{0}
		
		\draw (5,0) to node[auto] {$4$} (6,0);
		\rgvertex{5.5}{2}
		\rvertex{5}{0}
		\gvertex{6}{0}
	
		\draw (7,2) to node[auto,black] {$5$} (7,0);
		\rgvertex{7}{2}
		\rgvertex{7}{0}
	
		\draw (8,2) to node[auto] {$6$} (9,2);
		\rvertex{8}{2}
		\gvertex{9}{2}
		\rgvertex{8.5}{0}
	
		\draw (10,2) to node[auto] {$7$} (10,0);
		\rgvertex{10}{2}
		\rgvertex{10}{0}
		
		\gvertex{11}{4}
		\rgvertex{11}{2}
		\rgvertex{11}{0}
		
		\draw (12,4) to node[auto] {$8$} (12,2);
		\rgvertex{12}{4}
		\rgvertex{12}{2}
		\rgvertex{12}{0}
		
		\draw (13,2) to node[auto] {$9$} (14,2);
		\draw (13,0) to node[auto] {$9$} (14,0);
		\rgvertex{13.5}{4}
		\rvertex{13}{2}
		\gvertex{14}{2}
		\rvertex{13}{0}
		\gvertex{14}{0}
	
		\draw (15,4) to node[auto] {$10$} (15,2);
		\rgvertex{15}{4}
		\rgvertex{15}{2}
		\rgvertex{15}{0}
	
		\draw (16,4) to node[auto] {$11$} (17,4);
		\rvertex{16}{4}
		\gvertex{17}{4}
		\rgvertex{16.5}{2}
		\rgvertex{16.5}{0}
	
		\draw (18,4) to node[auto] {$12$} (18,2) to node[auto] {$12$} (18,0);
		\rgvertex{18}{4}
		\rgvertex{18}{2}
		\rgvertex{18}{0}
	
	\end{scope}
\end{tikzpicture}
\end{center}

\end{figure}

\subsection{Another Viewpoint: Gluing Networks as Composition} \label{subsec:gluing}

Scaffolds are flexible, functorial, and adaptable to the infinite situation, but these virtues arose from \emph{discarding certain information} about the harmonic continuation process:  The scaffold does not specify what boundary vertices were the overall ``inputs'' or ``outputs.''  Nor does it specify in what exact order the edges were used during harmonic continuation; rather, it gives us a \emph{partial} order expressing many different options for ordering the edges.  Thus, in order to establish the rank-connection principle, it will be convenient to have another geometric object which describes the details of the harmonic continuation process more explicitly.

We can view the linear relation $X$ described in the last section as a transformation or morphism from $(\F^P)^2$ to $(\F^Q)^2$ in the category of linear relations.  This is the category where the objects are vector spaces and a morphism $T: V \rightsquigarrow W$ is a subspace $T \subset V \times W$.  The composition of $S: U \rightsquigarrow V$ and $T: V \rightsquigarrow W$ is defined by
\[
(x,z) \in T \circ S \Leftrightarrow \exists y \in V \text{ such that } (x,y) \in S \text{ and } (y,z) \in T.
\]
In the case where the relations are bona fide functions, this reduces to composition of functions.

We will factorize our relation $X: (\F^P)^2 \to (\F^Q)^2$ as a composition of very simple relations corresponding to the individual steps in the harmonic continuation process.  But this algebraic factorization will be modeled by a factorization in a more geometric category.

Using the language of Baez-Fong \cite{BF}, we define a category where a $\partial$-graph with $\partial V = P \cup Q$ is viewed as a morphism from $P$ to $Q$.  As will become clear later, it is useful to allow $P$ and $Q$ to overlap, and to view $P$ and $Q$ as \emph{labels} on the vertices rather than the vertices themselves.  We thus define the category of input-output graphs (or IO-graphs) as follows: The objects are finite sets.  A morphism $\mathcal{G}: P \to Q$ is a graph $G$ together with labelling functions $i: P \to V$ and $j: Q \to V$, which we assume to be injective.\footnote{Technically, a morphism is an equivalence class of graphs where two graphs are considered the same if they are isomorphic by a graph isomorphism that commutes with the labelling functions.}    It is unnecessary to specify $\partial V$ for $G$ since we can simply call it $i(P) \cup j(Q)$.

Two morphisms $\mathcal{G}: P \to Q$ and $\mathcal{G}': Q \to R$ are composed by ``gluing the graphs together along $Q$'':\ taking the disjoint union of $G$ and $G'$ and then identifying the vertices labelled by $Q$ in $G$ with the vertices labelled by $Q$ in $G'$.  In other words, we glue the outputs of the first morphism to the inputs of the second morphism.  The category of IO-networks is defined the same way, but with the extra information of edge weights.

The relation $X$ (properly defined) is a functor from the category of IO-networks to the category of linear relations (called the ``black box functor'' in \cite{BF}).  Suppose $\Gamma: P \to Q$ and $\Gamma': Q \to R$ are IO-network morphisms where the sets of vertices labelled by $P$, $Q$, and $R$ are disjoint.  Suppose $(x,y) \in X(\Gamma)$ and $(y,z) \in X(\Gamma')$ represent the boundary data of harmonic functions $u$ and $u'$.  In $\Gamma' \circ \Gamma$, the vertices labelled by $Q$ are interior.  Then $u$ and $u'$ will glue together to a harmonic function on $\Gamma' \circ \Gamma$ if and only if their potentials agree on these vertices and their net currents cancel.

Thus, to make composition in the category of relations work, we change our sign convention.  We say $(x,y) \in X(\Gamma)$ if $x = (u|_P, -\Delta u|_P)$ and $y = (u|_Q, \Delta u|_Q)$.  In other words, $x$ tells us the current flowing \emph{out of} the network at $P$ and $y$ tells us the current flowing \emph{into} the network at $Q$.\footnote{We put the minus sign on $P$ rather than $Q$ to make the formulas in \S \ref{subsec:elementarymorphisms} and \S \ref{subsec:IOlayerstripping} neater.} In the new convention, if $(x,y) \in X(\Gamma)$ and $(y,z) \in X(\Gamma')$, then the harmonic functions $u$ and $u'$ will glue together to a harmonic function on $\Gamma \circ \Gamma'$, and conversely, any $(x,z) \in X(\Gamma' \circ \Gamma)$ arises this way.  We thus have $X(\Gamma' \circ \Gamma) = X(\Gamma') \circ X(\Gamma)$.

The only question is how to define $X(\Gamma)$ when the vertices $i(P)$ and $j(Q)$ overlap.  If $p \in i(P) \cap j(Q)$, then we can think of current is flowing into $p$ on the $P$ side and out of $p$ on the $Q$ side.  Thus for $((x_1,x_2), (y_1,y_2))$ to be in $X(\Gamma)$ we want the associated harmonic function $u$ to satisfy $x_1(p) = y_1(p) = u(p)$ and $y_2(p) - x_2(p) = \Delta u(p)$.  This convention makes $X$ a functor in the general case.

If we have a scaffold on $\Gamma$ modeling harmonic continuation from $P$ to $Q$, then we obtain an {\bf elementary factorization} of $\Gamma: P \to Q$ as $\Gamma_n \circ \dots \circ \Gamma_1$ as shown in Figure \ref{fig:scaffactorization}.  Here each IO-graph is pictured with the input vertices red and the output vertices green.  The elementary IO-networks come in four types: Type 1 corresponds to an orange edge, type 2 corresponds to a blue edge, type 3 corresponds to an obstacle to existence, and type 4 corresponds to an obstacle to uniqueness.

Given an elementary factorization, it is easy to prove the rank-connection principle:  All we have to do is compute $X$ on the elementary IO-networks $\Gamma_j$ and figure out what happens when we compose them.  The details are carried out in \S \ref{sec:IO}.  We will also describe how to obtain elementary factorizations from scaffolds and vice versa (\S \ref{subsec:IOscaffolds}).

\subsection{Organization}

We have now given a rough description of the ideas that form the backbone of the paper.  The rest of the paper works out the technicalities of these constructions, relates them to other things, and applies them to prove old and new results.

\S \ref{sec:operations} lays out the definitions and basic properties of various $\partial$-graph constructions.  We describe harmonic morphisms, and in particular, the unramified harmonic morphisms (UHMs) which we will use to pull back scaffolds.  Next, we define various operations with harmonic subgraphs play a central (though sometimes hidden) role in the paper and in network theory in general.  We also define layer-stripping operations and show that they pull back under UHMs.

\S \ref{sec:scaffolds} defines \emph{scaffolds} and \emph{recoverability by scaffolds}.  We show that recoverability by scaffolds is a sufficient condition for solving the inverse problem, and that it pulls back under UHMs.  We also define a stronger but more symmetrical condition called \emph{total layerability}.

\S \ref{sec:IO} describes the category of IO-networks (as in \cite{BF}) and elementary factorizations in this category.  We show that the rank-connection principle holds for any network with such an elementary factorization (\S \ref{subsec:elementary}).  We relate elementary factorizations, layer-stripping, and scaffolds.  The IO-graph category provides a more concrete motivation and definition for (a variant of) the electrical linear group from \cite{LP}.

\S \ref{sec:surfaces} describes how to construct scaffolds and elementary factorizations for $\partial$-graphs embedded on surfaces.   We give a strategy that is potentially applicable to general surfaces and execute it for the critical circular planar $\partial$-graphs studied in \cite{CIM}, \cite{dVGV}, \cite{WJ}.  We show critical circular planar $\partial$-graphs are totally layerable and that elementary factorizations exist for any circular pair.  We also prove recoverability of the supercritical half-planar graphs of \cite{IZ}.

\S \ref{sec:characterization} gives a geometric characterization of the situations when the rank-connection principle holds universally for all edge weights, using a generalization of elementary factorizations.  Along the way, we prove a max-flow min-cut principle for connections.

\S \ref{sec:symplectic} characterizes the possible boundary behaviors of electrical networks using symplectic vector spaces, drawing on \cite{BF} and \cite{LP}.  We show that the boundary behavior of a network is a Lagrangian subspace of $\F^{\partial V} \times \F^{\partial V}$ containing $(1,\dots,1,0,\dots,0)$ and conversely, any such subspace can be realized as the boundary behavior of a network (which we explicitly construct).  Similarly, we show that the electrical linear group is the group of symplectic matrices that preserve $(1,\dots,1,0,\dots,0)$ and explicitly construct networks for each matrix.  Together with local network equivalences, this can be used to show that any network is equivalent to a circular planar network (if we don't require the edge weights to be positive).  This result holds for any field other than $\F_2$.

\S \ref{sec:concluding} generalizes much of the theory to nonlinear networks, and suggests further generalizations and open problems.

\S \ref{sec:operations} through \S \ref{sec:IO} build on each other and are prerequisites for the later sections.  However, \S \ref{sec:surfaces} through \S \ref{sec:symplectic} are independent of each other and can be read in any order.  \S \ref{sec:concluding} comments on the results of all the preceding sections.

\section{Operations with $\partial$-Graphs and Networks} \label{sec:operations}

\subsection{Harmonic Morphisms} \label{subsec:harmonicmorphisms}

The correct notion of graph morphism for our theory is neither a continuous map of graphs viewed as topological spaces, nor a graph homomorphism.  Since we are interested in harmonic functions, we need a type of graph morphism that preserves the Laplacian.  Adapting the construction of Urakawa \cite{urakawa}, we will define harmonic morphisms of $\partial$-graphs and networks.  Harmonic morphisms include many standard classes of maps between graphs, and they are loosely analogous to analytic functions between Riemann surfaces in the sense that if $f: \Gamma_1 \to \Gamma_2$ is a harmonic morphism and $u$ is harmonic on $\Gamma_2$, then $u \circ f$ is harmonic on $\Gamma_1$.

A {\bf harmonic morphism of $\partial$-graphs} $f: G_1 \to G_2$ is a map ${f: V_1 \sqcup E_1 \to V_2 \sqcup E_2}$ such that
\begin{enumerate}
	\item $f$ maps vertices to vertices.
	\item If $f(e)$ is an oriented edge, then $f(e_{\pm}) = f(e)_{\pm}$ and $f(\overline{e}) = \overline{f(e)}$.
	\item If $f(e)$ is a vertex, then $f(\overline{e}) = f(e)$ and $f(e_{\pm})= f(e)$.
	\item $f$ maps interior vertices to interior vertices.
	\item For any $p \in V_1^\circ$, the restricted map
	\[
	\{e\in E_1\colon e_+=p\text{ and $f(e)$ is an edge}\} \to \{e \in E_2: e_+ = f(p)\}
	\]
	has constant fiber size.  In other words, it is $n$-to-$1$ for some $n \geq 0$ (which may depend on $p$).
\end{enumerate}
A {\bf harmonic morphism of $\mathbb{F}$-networks} is given by a harmonic morphism of the underlying $\partial$-graphs which preserves the edge weights in the sense that $w(f(e)) = w(e)$ whenever $f(e)$ is an edge.

The reader may verify that using harmonic morphisms, $\partial$-graphs and $\mathbb{F}$-networks form categories.

In brief, (1), (2), and (3) state that $f$ preserves reverse orientations and endpoints of edges.  Unlike a graph homomorphism (see \cite{GodsilRoyle}), $f$ is allowed to ``collapse'' an edge into a vertex, and in that case the endpoints of the edge must map to the same vertex.  (1), (2), (3) imply that in the language of topology, $f$ is a continuous cellular map.

Condition (5) says that if we ignore collapsed edges, then $f$ maps the ``star'' $\{e \in E_1: e_+ = p\}$ in an $n$-to-$1$ way onto the ``star'' $\{e \in E_2: e_+ = f(p)\}$ whenever $p$ is an interior vertex.  This implies that if $u: V_2 \to \mathbb{F}$, then
\[
\Delta(u \circ f)(p) = \sum_{e: e_+ = p} w(e) d(u \circ f)(e) = n \sum_{e: e_+ = f(p)} w(e) du(e) = n \Delta u(p),
\]
where for the middle equality we use the fact that $d(f \circ u) = 0$ on collapsed edges, and $f$ preserves the edge weights.  Together with (4), this implies
\begin{lemma}
If $f: \Gamma_1 \to \Gamma_2$ is a harmonic morphism and $u$ is harmonic on $\Gamma_2$, then $u \circ f$ is harmonic on $\Gamma_1$.  In other words, $\Gamma \mapsto \mathcal{U}(\Gamma)$ is a contravariant functor from the category of $\mathbb{F}$-networks to the category of $\mathbb{F}$-vector spaces.
\end{lemma}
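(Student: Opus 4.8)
The plan is to obtain harmonicity of $u \circ f$ as an immediate corollary of the displayed computation preceding the statement, and then to dispatch the functoriality claim by purely formal means. First I would record the identity already derived above,
\[
\Delta(u \circ f)(p) = n \, \Delta u(f(p)) \qquad (p \in V_1^\circ),
\]
where $n \geq 0$ is the fiber size supplied by condition (5); the final term is evaluated at $f(p)$, since the sum producing it runs over the star of $f(p)$. The substantive content sits entirely inside this one line: one must split the star of $p$ into collapsed and non-collapsed edges, observe that $d(u \circ f)$ vanishes on the collapsed ones by (3), that $d(u \circ f)(e) = du(f(e))$ on the rest by (2), that weights are preserved, and finally that (5) lets the remaining sum reassemble, with multiplicity $n$, into the Laplacian sum at $f(p)$.

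Granting that identity, the harmonicity assertion is instant. If $p \in V_1^\circ$ then condition (4) forces $f(p) \in V_2^\circ$, so $\Delta u(f(p)) = 0$ because $u \in \mathcal{U}(\Gamma_2)$; hence $\Delta(u \circ f)(p) = n \cdot 0 = 0$, and therefore $u \circ f \in \mathcal{U}(\Gamma_1)$.

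To upgrade this to the functorial statement, I would define the pullback $f^* \colon \mathcal{U}(\Gamma_2) \to \mathcal{U}(\Gamma_1)$ by $f^*(u) = u \circ f$. The previous paragraph shows $f^*$ is well-defined, and linearity holds pointwise, since $(u_1 + u_2) \circ f = (u_1 \circ f) + (u_2 \circ f)$ and $(\lambda u) \circ f = \lambda \, (u \circ f)$. Contravariant functoriality then reduces to the two identities $\id^* = \id$ and $(g \circ f)^* = f^* \circ g^*$ for composable harmonic morphisms $f \colon \Gamma_1 \to \Gamma_2$ and $g \colon \Gamma_2 \to \Gamma_3$. Both are forced by elementary properties of composition of set maps, namely $u \circ \id = u$ and $u \circ (g \circ f) = (u \circ g) \circ f$. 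Here I freely use the fact, asserted just above, that harmonic morphisms compose to harmonic morphisms, so that both source and target are genuine categories.

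I expect no genuine obstacle: once the star-counting identity is in place, the lemma is a two-line corollary followed by formal bookkeeping. If anything needs care, it is precisely that identity --- keeping the collapsed edges from contaminating the sum and invoking the constant fiber size of (5) correctly --- but that verification has already been carried out in the exposition leading to the statement.
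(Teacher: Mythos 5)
Your proposal is correct and follows the same route as the paper, which derives the identity $\Delta(u\circ f)(p) = n\,\Delta u(f(p))$ from conditions (2), (3), (5) and then invokes condition (4); the paper simply leaves the functoriality bookkeeping implicit ("the reader may verify"). You even silently correct the paper's typographical slip, where the final term of the displayed computation is written $n\,\Delta u(p)$ but should be evaluated at $f(p)$, as you do.
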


\begin{remark*}
In fact, given conditions (1) - (4) and given any fixed edge weights, condition (5) is equivalent to saying that $f$ ``locally preserves harmonicity'' in the sense that $\Delta u(p) = 0$ implies $\Delta(u \circ f)(p) = 0$ for all $p \in V^\circ$ and any function $u: V \to \mathbb{F}$.
\end{remark*}

Harmonic morphisms include several standard types of maps.  First, we say $f: \tilde{G} \to G$ is a {\bf covering map} if it satisfies the following conditions: $f$ maps vertices to vertices and oriented edges to oriented edges, $f$ preserves the reverses and endpoints of edges, $f$ is surjective for both vertices and edges, $f(p) \in V^\circ$ if and only if $p \in \tilde{V}^\circ$, and $f$ maps $\{e \in \tilde{E}: e_+ = p\}$ bijectively onto $\{e \in E: e_+ = f(p)\}$ for all $p \in V$.  (This agrees with the topological definition of covering map in the special case of graphs.)  Any covering map is a ``local isomorphism.''

A {\bf covering $\partial$-graph} of $G$ is a $\partial$-graph $\tilde{G}$ together with a covering map $\tilde{G} \to G$.  Concretely, covering $\partial$-graphs are constructed as follows:  Take a finite or countable set $S$, and define
\[
\tilde{V} = V \times S, \qquad \tilde{E} = E \times S, \qquad \tilde{V}^\circ = V^\circ \times S.
\]
Each oriented edge $(e,s)$ has the natural ending point $(e,s)_+ = (e_+,s)$.  However, we will ``mix up'' the reverses and starting points among the edges in each fiber $f^{-1}(e)$.  For each $e \in E$, choose a permutation $\sigma_e \in \Perm(S)$ such that $\sigma_{\overline{e}} = \sigma_e^{-1}$, and then set
\[
\overline{(e,s)} = (\overline{e}, \sigma_e(s)),
\]
so that $(e,s)_- = (e_-, \sigma_e(s))$.  In other words, we take several copies of $G$, then cut the edges in half and glue them together in a different arrangement in each fiber. The covering map $\tilde{G} \to G$ is given by the projections $V \times S \to V$ and $E \times S \to E$.  Up to isomorphism, all covering $\partial$-graphs are constructed this way, assuming $G$ is connected.

{\bf Branched covering maps} are like covering maps except that they allow \emph{ramification}: $f$ is not required to map $\{e \in \tilde{E}: e_+ = p\}$ bijectively onto $\{e \in E: e_+ = f(p)\}$.  Instead, this mapping must be $n$-to-$1$ for some $n > 0$ which may depend on $p$.  This can be viewed as a discrete analogue of the behavior of the analytic function $z^n$ in a neighborhood of the origin, in keeping with the analogy between graphs and Riemann-surfaces in the literature \cite{urakawa}, \cite{bakernorine}, \cite{BobenkoGunther}.  Concretely, some branched covering spaces can be obtained from covering spaces by gluing some vertices in each fiber $f^{-1}(p)$ together.

{\bf Box products} furnish another class of harmonic morphisms.  Given two $\partial$-graphs $G_1$ and $G_2$, we define $G_1 \Box G_2$ as the graph $G$ with
\[
V = V_1 \times V_2, \qquad E = E_1 \times V_2 \sqcup V_1 \times E_2, \qquad V^\circ = V_1^\circ \times V_2^\circ.
\]
The reverses and endpoints of edges are defined by
\[
\overline{(e,p)} = (\overline{e},p), \quad (e,p)_+ = (e_+,p), \quad (e,p)_- = (e_-,p) \text{ for } (e,p) \in E_1 \times V_2
\]
and a symmetrical formula for $(p,e) \in V_1 \times E_2$.  The ``natural'' projection map $G_1 \Box G_2 \to G_1$ is a harmonic morphism.  It collapses all the edges in $V_1 \times E_2$ into vertices.

Similar to our construction of covering spaces, we can define a {\bf twisted box-product} by choosing a permutation of $V_1$ for each edge $e \in E_1$ with $\sigma_{\overline{e}} = \sigma_e^{-1}$ and defining
\[
(e,p)_+ = (e_+,p), \qquad \overline{(e,p)} = (\overline{e}, \sigma(e) p).
\]
When we twist in the first factor but leave the edges in $V_1 \times E_2$ untwisted, then the map $G \to G_1$ is a harmonic morphism, although the map $G \to G_2$ is not since the two endpoints of the collapsed edges are not mapped to the same vertex.

Finally, harmonic morphisms include the inclusion maps of {\bf harmonic subgraphs}.  We say $G'$ is a harmonic subgraph of $G$ if
\[
V' \subset V, \quad E' \subset E, \quad (V')^\circ \subset V^\circ,
\]
and for each $p \in (V')^\circ$, the star $\{e \in E: e_+ = p\}$ is contained in $E'$.  Note that a $\partial$-graph $G'$ is a harmonic subgraph of $G$ if and only if it a subgraph and the inclusion map is a harmonic morphism, provided we assume $G'$ has no isolated interior vertices.  The inclusion maps of harmonic subgraphs are characterized as harmonic morphisms which are globally injective and such that $\{e: e_+ = p\} \to \{e: e_+ = f(p)\}$ is bijective for each interior vertex $p$.

Roughly speaking, a harmonic morphism is locally some mixture of a branched covering map, the projection of a twisted box product, and the inclusion of a harmonic subgraph.

In order to pull back scaffolds and layer-stripping operations, we will have to exclude ramification.  We define an {\bf unramified harmonic morphism (UHM)} as a harmonic morphism such that $\{e: e_+ = p\} \to \{e: e_+ = f(p)\}$ is \emph{bijective} for each interior vertex $p$ and \emph{injective} for each boundary vertex $p$.  Inuitively, a scaffold is something like a foliation of a Riemann surface.  If a map between Riemann surfaces is locally injective, then a foliation can be pulled back by taking preimages, but the preimage of a foliation will not be a foliation near branching points.

Note that $\partial$-graphs and UHMs form a category.  Moreover, covering maps, the projections of twisted box products, and the inclusions of harmonic subgraphs are UHMs.  

\subsection{Operations with Subgraphs} \label{subsec:subgraphs}

Harmonic subgraphs are used implicitly or explicitly in most papers about electrical networks.  In particular, layer-stripping operations produce harmonic subgraphs (see \S \ref{subsec:layerstrippingops}).  Harmonic continuation proceeds by extending harmonic functions defined on harmonic subnetworks.  Local network equivalences such as $Y$-$\Delta$ transformations (see \cite{JR}) work by replacing one harmonic subnetwork with a different harmonic subnetwork with the same boundary behavior.  Thus, it is well worth our while to develop the language and basic properties of harmonic subgraphs and subnetworks.

If $f: G_1 \to G_2$ is a harmonic morphism and $H$ is a harmonic subgraph of $G_2$, then we can define the {\bf pullback $f^{-1}(H)$} as the harmonic subgraph of $G_1$ whose vertex and edge sets are the preimages of the vertex and edge sets of $H$, and whose interior vertices are $f^{-1}(V^\circ(H)) \cap V^\circ(G_1)$.  \textbf{Intersections and unions of harmonic subgraphs} are defined by taking the intersections and unions of the respective sets $V$, $V^\circ$, and $E$.  For instance, $V^\circ(\bigcup_\alpha \Gamma_\alpha) = \bigcup_\alpha V^\circ(\Gamma_\alpha)$.  For a harmonic subgraph $H \subset G$, we define the {\bf complement} $G \setminus H$ by
\[
V(G \setminus H) = V(G) \setminus V^\circ(H), \qquad E(G \setminus H) = E(G) \setminus E(H), \qquad V^\circ(G \setminus H) = V^\circ(G) \setminus V(H).
\]
This does not satisfy all the properties of a set-theoretic complement, but it is the best we can do in a harmonic subgraph.  We define similar operations for subnetworks.

The way that harmonic subnetworks and boundary behavior interact in general is well-known and unsurprising.  Roughly speaking,
\begin{itemize}
	\item \emph{Gluing:} If we glue together a collection of networks along boundary vertices, then the boundary behavior of the larger network depends only on the boundary behaviors of the smaller ones (see e.g. \cite{card}).
	\item \emph{Splicing:} If $\Gamma'$ is obtained by replacing some part of $\Gamma$ by another part with the same boundary behavior, then $\Gamma$ and $\Gamma'$ have the same boundary behavior (see e.g. \cite{DI}).
	\item \emph{Recoverability:} A subnetwork of a recoverable network is recoverable (see e.g. \cite{card} Theorem 2.9, \cite{JRsub}).
\end{itemize}


To make this precise, we define a \textbf{subgraph partition of} $G$ as a collection of harmonic subgraphs $\{G_\alpha\}$ such that
\begin{itemize}
	\item $V(G) = \bigcup_\alpha V(G_\alpha)$,
	\item $E(G)$ is the disjoint union of $E(G_\alpha)$.
	\item $V^\circ(G_\alpha)$ is disjoint from $V(G_\beta)$ for any $\alpha \neq \beta$.
\end{itemize}
Note that $\bigcup_\alpha G_\alpha$ is not $G$, but $G$ is obtained from $\bigcup_\alpha G_\alpha$ by changing some boundary vertices to interior vertices.  A {\bf subnetwork partition} is defined the same way except with the extra information of edge weights.

\begin{proposition}[Gluing] \label{prop:subnetworkgluing}
If $\{\Gamma_\alpha\}$ is subnetwork partition of $\Gamma$, then $\Lambda(\Gamma)$ can be computed from $\Lambda(\Gamma_\alpha)$ and the identifications between vertices in $\partial V(\Gamma_\alpha)$ and $\partial V(\Gamma_\beta)$ in the larger network.
\end{proposition}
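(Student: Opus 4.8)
The plan is to reduce everything to the single algebraic fact that, because $E(\Gamma)$ is the disjoint union of the $E(\Gamma_\alpha)$, the Laplacian decomposes as a sum of the piecewise Laplacians. Concretely, for any potential $u\colon V(\Gamma)\to\F$ and any vertex $p$, writing $u_\alpha := u|_{V(\Gamma_\alpha)}$, I would first establish
\[
\Delta_\Gamma u(p) = \sum_{\alpha\,:\,p\in V(\Gamma_\alpha)} \Delta_{\Gamma_\alpha} u_\alpha(p),
\]
which is immediate from the definition $\Delta u(p)=\sum_{e_+=p} w(e)\,du(e)$ once we sort the oriented edges at $p$ according to which piece they belong to. Local finiteness guarantees this sum is finite even when the partition is infinite. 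Next I would classify the interior vertices of $\Gamma$ into two kinds, using the third axiom of a subnetwork partition (that $V^\circ(\Gamma_\alpha)$ is disjoint from $V(\Gamma_\beta)$ for $\alpha\neq\beta$): the \emph{private} interior vertices, which lie in $V^\circ(\Gamma_\alpha)$ for a unique $\alpha$ and in no other piece, and the \emph{seam} vertices, which are boundary in every piece containing them but have been promoted to interior when forming $\Gamma$.

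With this decomposition in hand, I would characterize $\mathcal{U}(\Gamma)$. A potential $u$ is harmonic on $\Gamma$ precisely when $\Delta_\Gamma u$ vanishes at every interior vertex; splitting into the two kinds above, this is equivalent to the conjunction of (a) each $u_\alpha\in\mathcal{U}(\Gamma_\alpha)$, which encodes exactly the vanishing of $\Delta_\Gamma u$ at the private interior vertices, and (b) the current-cancellation condition $\sum_\alpha \Delta_{\Gamma_\alpha} u_\alpha(p)=0$ at each seam vertex $p$. Conversely---and this is what makes gluing legitimate---a family of harmonic functions $u_\alpha\in\mathcal{U}(\Gamma_\alpha)$ that agree in value wherever two pieces share a vertex glues to a single well-defined potential $u$ on $V(\Gamma)=\bigcup_\alpha V(\Gamma_\alpha)$; the disjointness axiom ensures the only shared vertices are boundary vertices of the pieces, so no interior value of one piece can conflict with another. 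This glued $u$ lies in $\mathcal{U}(\Gamma)$ if and only if (b) holds.

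It then remains to repackage this at the level of boundary data. For harmonic $u$ assembled from $(u_\alpha)$, its boundary datum on $\Gamma$ is read off piecewise: at $p\in\partial V(\Gamma)$ the potential is the common value $u_\alpha(p)$ and the net current is $\sum_{\alpha} \Delta_{\Gamma_\alpha}u_\alpha(p)$, both expressible through the boundary data of the pieces. I would therefore define the fibered product
\[
\Lambda' = \Bigl\{(x_\alpha)\in\textstyle\prod_\alpha \Lambda(\Gamma_\alpha)\;:\;\text{potentials agree on identified vertices, currents sum to } 0 \text{ at each seam vertex}\Bigr\},
\]
together with the linear map $\Lambda'\to\F^{\partial V(\Gamma)}\times\F^{\partial V(\Gamma)}$ sending $(x_\alpha)$ to the boundary datum just described. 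The two inclusions $\Lambda(\Gamma)\subseteq\im$ and $\im\subseteq\Lambda(\Gamma)$ follow directly from the characterization of $\mathcal{U}(\Gamma)$: restriction of a harmonic $u$ produces an element of $\Lambda'$ mapping to the boundary datum of $u$, and conversely any $(x_\alpha)\in\Lambda'$ can be realized by choosing harmonic representatives $u_\alpha$ (whose boundary potentials already agree on overlaps, hence glue) and invoking (b). This exhibits $\Lambda(\Gamma)$ as the image of a map built entirely from the $\Lambda(\Gamma_\alpha)$ and the gluing data, which is the assertion.

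The main obstacle I anticipate is bookkeeping rather than conceptual: one must track that a boundary vertex of $\Gamma$ may be shared by several pieces (so potentials are forced equal and currents added), and that the choice of realizer $u_\alpha$ for a given boundary datum $x_\alpha$ is immaterial on the overlaps precisely because the potential part of $x_\alpha$ already pins down $u_\alpha$ on all of $\partial V(\Gamma_\alpha)$, while the remaining freedom stays private to each piece. No compactness or solvability of any Dirichlet problem is invoked, so the argument is insensitive to the field $\F$ and to whether the network is finite or countably infinite.
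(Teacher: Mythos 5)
Your proof is correct and takes essentially the same approach as the paper: both form the subspace of $\prod_\alpha \Lambda(\Gamma_\alpha)$ cut out by potential-agreement and current-cancellation conditions (your $\Lambda'$, the paper's $T$) and identify $\Lambda(\Gamma)$ as its image under the evident boundary-data map, with surjectivity proved by gluing harmonic representatives and the reverse inclusion by restriction. Your explicit Laplacian decomposition and the private/seam classification of interior vertices simply spell out details the paper's proof leaves implicit.
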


\begin{proof}
Let $S = \bigcup_\alpha \partial V(\Gamma_\alpha)$.  Let $T \subset \prod_\alpha \Lambda(\Gamma_\alpha)$ be the set of points $((\phi_\alpha,\psi_\alpha))$ where
\begin{enumerate}[a.]
	\item If $p \in V(\Gamma_\alpha) \cap V(\Gamma_\beta)$, then $\phi_\alpha(p) = \phi_\beta(p)$.
	\item If $p \in S \cap V^\circ(\Gamma)$, then
	\[
	\sum_{\alpha: \alpha \in \partial V(\Gamma_\alpha)} \psi_\alpha(p) = 0.
	\]
	Since $p$ is an endpoint of only finitely many edges, and each edge is in only one subnetwork, the sum has only fintitely many nonzero terms.
\end{enumerate}
Define $F: T \to \mathbb{F}^{\partial V} \times \mathbb{F}^{\partial V}$ by $\prod_\alpha (\phi_\alpha, \psi_\alpha) \mapsto (\phi,\psi)$, where
\begin{enumerate}
	\item $\phi(p) = \phi_\alpha(p)$ for $p \in \partial V(\Gamma)$.
	\item $\psi(p) = \sum_{\alpha: p \in \partial V(G_\alpha)} \psi_\alpha(p)$,
\end{enumerate}
which is well-defined by definition of $T$.  Then $\Lambda(\Gamma) = F(T)$.  Indeed, if $((\phi_\alpha, \psi_\alpha)) \in T$ and $(\phi_\alpha,\psi_\alpha)$ is the boundary data of a harmonic function $(u_\alpha,c_\alpha)$, then (a) and (b) guarantee that they paste together to a harmonic function on $\Gamma$, and (1) and (2) describe how to find its boundary data.  Conversely, given any harmonic function on $\Gamma$, the restrictions to $\Gamma_\alpha$ will be harmonic and their boundary data will be in $T$.  Since we have described how to find $\Lambda(\Gamma)$ from $\Lambda(\Gamma_\alpha)$, we are done.
\end{proof}

\begin{corollary}[Splicing]
Suppose that $\{\Gamma_\alpha\}$ and $\{\Gamma_\alpha'\}$ are subnetwork partitions of $\Gamma$ and $\Gamma'$ respectively such that $\partial V(\Gamma_\alpha) = \partial V(\Gamma_\alpha)$, $\partial V(\Gamma) = \partial V(\Gamma')$.  If $\Lambda(\Gamma_\alpha) = \Lambda(\Gamma_\alpha')$ for all $\alpha$, then $\Lambda(\Gamma) = \Lambda(\Gamma')$.
\end{corollary}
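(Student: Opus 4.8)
The plan is to reduce the statement entirely to Proposition~\ref{prop:subnetworkgluing}, whose content is precisely that $\Lambda(\Gamma)$ is a \emph{function} of the family $(\Lambda(\Gamma_\alpha))_\alpha$ together with the combinatorial pattern of how the boundary vertices of the pieces are identified and which of them become interior in the glued network. Since $\Gamma$ and $\Gamma'$ feed the same two kinds of input into this function, they should produce the same boundary behavior, and the corollary amounts to making ``same input'' precise.

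Concretely, I would return to the explicit objects built in the proof of Proposition~\ref{prop:subnetworkgluing}: the subspace $T \subset \prod_\alpha \Lambda(\Gamma_\alpha)$ cut out by conditions (a) and (b), and the linear map $F$ given by clauses (1) and (2), so that $\Lambda(\Gamma) = F(T)$. The goal is to verify that the primed partition yields the \emph{same} $T$ and the \emph{same} $F$, whence $\Lambda(\Gamma') = F(T) = \Lambda(\Gamma)$. This requires three combinatorial facts. First, $S := \bigcup_\alpha \partial V(\Gamma_\alpha)$ equals $\bigcup_\alpha \partial V(\Gamma_\alpha')$, immediate from $\partial V(\Gamma_\alpha) = \partial V(\Gamma_\alpha')$. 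Second, the identification pattern agrees, i.e. $V(\Gamma_\alpha) \cap V(\Gamma_\beta) = V(\Gamma_\alpha') \cap V(\Gamma_\beta')$ for $\alpha \neq \beta$. Third, the interior/boundary bookkeeping matches: $\partial V(\Gamma) = \partial V(\Gamma')$ is assumed, and I also need $S \cap V^\circ(\Gamma) = S \cap V^\circ(\Gamma')$.

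For the second fact I would invoke the subgraph-partition axiom that $V^\circ(\Gamma_\alpha)$ is disjoint from $V(\Gamma_\beta)$ for $\alpha \neq \beta$: this forces any vertex shared by two distinct pieces to be a boundary vertex of each, so $V(\Gamma_\alpha) \cap V(\Gamma_\beta) = \partial V(\Gamma_\alpha) \cap \partial V(\Gamma_\beta)$, and the right-hand side is unchanged under priming. For the third, since $S = S'$ and the overall boundaries coincide, a vertex of $S$ is interior in $\Gamma$ exactly when it fails to lie in $\partial V(\Gamma)$, so $S \cap V^\circ(\Gamma) = S \setminus \partial V(\Gamma) = S \setminus \partial V(\Gamma') = S \cap V^\circ(\Gamma')$. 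With these in hand, conditions (a),(b) and clauses (1),(2) are read off from data identical for the two partitions, so $T$ and $F$ genuinely coincide and the conclusion follows. I expect the only real friction to be this last bookkeeping point: one must be careful that ``being interior in the glued network'' is governed purely by membership in $\partial V(\Gamma)$ rather than by any edge structure internal to the pieces, and it is exactly this observation that makes the argument robust to the fact that $\Gamma$ and $\Gamma'$ are genuinely different ambient graphs sharing only their boundary data.
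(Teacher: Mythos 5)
Your proof is correct and takes exactly the route the paper intends: the Splicing corollary is stated without proof precisely because it is an immediate consequence of Proposition~\ref{prop:subnetworkgluing}, whose conclusion is that $\Lambda(\Gamma)$ is determined by the $\Lambda(\Gamma_\alpha)$ together with the identification pattern among the $\partial V(\Gamma_\alpha)$ and the interior/boundary status of glued vertices. Your verification that these inputs coincide for the two partitions (in particular that shared vertices lie in $\partial V(\Gamma_\alpha) \cap \partial V(\Gamma_\beta)$ by the partition axiom, and that $S \cap V^\circ(\Gamma) = S \setminus \partial V(\Gamma)$) is just the bookkeeping the paper leaves implicit.
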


\begin{corollary}[Recoverability]
If a $\partial$-graph $G$ is recoverable over $\mathbb{F}$, then so is any harmonic subgraph.
\end{corollary}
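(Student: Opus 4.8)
The plan is to deduce recoverability of a harmonic subgraph $H \subset G$ from recoverability of $G$ by combining it with its complement $G \setminus H$ and invoking the Splicing corollary. The governing idea is that the boundary behavior of $H$, glued to a \emph{fixed} weighting of the rest of the graph, determines the boundary behavior of all of $G$; so if two weightings of $H$ yield the same boundary behavior, the corresponding weightings of $G$ do too, and recoverability of $G$ forces them to agree.

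Concretely, I would first check that $\{H,\, G \setminus H\}$ is a subnetwork partition of $G$ in the sense defined above. This amounts to verifying that $V(G) = V(H) \cup V(G \setminus H)$ and $E(G) = E(H) \sqcup E(G \setminus H)$, which are immediate from the definitions of the complement once we note $V^\circ(H) \subset V(H)$ and $E(H) \subset E(G)$; and that $V^\circ(H)$ is disjoint from $V(G \setminus H) = V(G) \setminus V^\circ(H)$ while $V^\circ(G \setminus H) = V^\circ(G) \setminus V(H)$ is disjoint from $V(H)$, both of which hold by construction. I would also confirm that $G \setminus H$ really is a harmonic subgraph: if $p \in V^\circ(G \setminus H)$ then $p \notin V(H)$, so no edge of $H$ is incident to $p$, whence the entire star $\{e : e_+ = p\}$ lies in $E(G) \setminus E(H) = E(G \setminus H)$.

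Now suppose $w_1'$ and $w_2'$ are weightings of $H$ with $\Lambda(H, w_1') = \Lambda(H, w_2')$. Fix any weighting $w''$ of $G \setminus H$ (for instance, all weights equal to $1$, which is legitimate since $1 \neq 0$ in $\mathbb{F}$). Because $E(G) = E(H) \sqcup E(G \setminus H)$, each $w_i'$ glues with $w''$ to a single weighting $w_i$ of $G$, yielding subnetwork partitions $\{(H, w_i'),\, (G \setminus H, w'')\}$ of $(G, w_i)$ for $i = 1, 2$. These two partitions have the same underlying $\partial$-graph $G$, the same overall boundary $\partial V$, and the boundary vertex sets of corresponding pieces coincide; moreover their piecewise boundary behaviors agree, since $\Lambda(H, w_1') = \Lambda(H, w_2')$ by hypothesis and $\Lambda(G\setminus H, w'') = \Lambda(G \setminus H, w'')$ trivially. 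By the Splicing corollary (a consequence of Proposition~\ref{prop:subnetworkgluing}) we conclude $\Lambda(G, w_1) = \Lambda(G, w_2)$. Since $G$ is recoverable, $w_1 = w_2$, and restricting to $E(H)$ gives $w_1' = w_2'$. Hence $w' \mapsto \Lambda(H, w')$ is injective and $H$ is recoverable.

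The only real content lies in the bookkeeping of the first step---confirming that the complement $G \setminus H$ behaves like a harmonic subgraph and that the pair forms a legitimate subnetwork partition, despite the fact that some boundary vertices of $H$ may be interior in $G$ (these are exactly the vertices that get reinteriorized when $G$ is reassembled from the partition, as noted in the remark following the definition of subgraph partition). Once the partition is in place, the argument is purely formal: everything is powered by the gluing/splicing machinery, and recoverability of $G$ does the rest.
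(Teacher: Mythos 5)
Your proof is correct and is essentially the paper's own argument: the paper likewise partitions $G$ into $H$ and its complement $G \setminus H$, fixes an arbitrary weighting on the complement, and invokes the gluing/splicing machinery, phrasing the conclusion contrapositively (two distinct weightings of $H$ with equal boundary behavior would contradict recoverability of $G$) where you argue injectivity directly. The extra bookkeeping you include—checking that $G \setminus H$ is a harmonic subgraph and that the pair forms a subnetwork partition—is a detail the paper asserts without proof, and your verification of it is correct.
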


\begin{proof}
Let $S$ be a subgraph of $G$ and let $S' = G \setminus S$.  Then $S$ and $S'$ form a $\partial$-subgraph partition of $G$.  If $S$ is not recoverable, then there are two networks $\Sigma_1$ and $\Sigma_2$ on $S$ with different edge weights and the same boundary behavior.  Pick some network $\Sigma'$ on $S'$.  Then the networks on $G$ with edge weights given by $\Sigma_1 \cup \Sigma'$ and $\Sigma_2 \cup \Sigma'$ have the same boundary behavior but different edge weights, so $G$ is not recoverable.
\end{proof}

\subsection{Boundary Wedge-Sums}

There is one case of gluing networks together where the behavior of the smaller pieces is determined by the behavior of the whole.  We say $G$ is the \emph{boundary wedge-sum} of two harmonic subgraphs $G_1$ and $G_2$ if $G_1 \cup G_2 = G$ and $G_1 \cap G_2$ consists of a single boundary vertex $p$.  Then
\begin{lemma} \label{lem:wedgesumrecovery}
Suppose $\Gamma$ is the boundary wedge-sum of $\Gamma_1$ and $\Gamma_2$.  Assume either $\Gamma_1$ or $\Gamma_2$ is finite.  Then $\Lambda(\Gamma_1)$ and $\Lambda(\Gamma_2)$ are determined by $\Lambda(\Gamma)$.  In particular, a boundary wedge-sum of a recoverable $\partial$-graph and a finite recoverable $\partial$-graph must be recoverable.
\end{lemma}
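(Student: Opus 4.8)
The plan is to write down explicit linear formulas that extract $\Lambda(\Gamma_1)$ and $\Lambda(\Gamma_2)$ directly from $\Lambda(\Gamma)$, so that knowing $\Lambda(\Gamma)$ pins down the behavior of each piece. Assume without loss of generality that $\Gamma_2$ is the finite piece. First I would record the combinatorics of the wedge-sum. Writing $\partial V_i = \partial V(\Gamma_i)$, the hypothesis $G_1 \cap G_2 = \{p\}$ with $p$ a boundary vertex gives $\partial V(\Gamma) = \partial V_1 \cup \partial V_2$ with $\partial V_1 \cap \partial V_2 = \{p\}$, and the oriented edges split as a disjoint union $E_1 \sqcup E_2$. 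For a harmonic $u$ on $\Gamma$, the restrictions $u_1 = u|_{G_1}$ and $u_2 = u|_{G_2}$ are harmonic on their pieces, since $G_1, G_2$ are harmonic subgraphs and hence contain the entire star of each of their interior vertices. Comparing Laplacians: for $q \in \partial V_i \setminus \{p\}$ every edge of $G$ at $q$ lies in $G_i$, so $\Delta_\Gamma u(q) = \Delta_{\Gamma_i} u_i(q)$, whereas at the shared vertex the stars split disjointly and $\Delta_\Gamma u(p) = \Delta_{\Gamma_1} u_1(p) + \Delta_{\Gamma_2} u_2(p)$.

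Thus, given $(\phi,\psi) = (u|_{\partial V}, \Delta_\Gamma u|_{\partial V}) \in \Lambda(\Gamma)$, the only datum of the pieces not immediately readable off is how $\psi(p)$ splits into the two contributions $\Delta_{\Gamma_1} u_1(p)$ and $\Delta_{\Gamma_2} u_2(p)$. This splitting is the main obstacle, and it is exactly where finiteness enters: on any finite network the total net current vanishes, since pairing each oriented edge with its reverse gives
\[
\sum_{q \in V(\Gamma_2)} \Delta_{\Gamma_2} v(q) = \sum_{e \in E_2} w(e)\, dv(e) = 0
\]
for every $v$, because the pair $\{e,\overline{e}\}$ contributes $w(e)(dv(e) + dv(\overline{e})) = 0$. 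Applying this to $u_2$, whose Laplacian vanishes at every interior vertex of $\Gamma_2$, yields $\Delta_{\Gamma_2} u_2(p) = -\sum_{q \in \partial V_2 \setminus \{p\}} \psi(q)$, an expression entirely in observable data. I would then define linear maps $R_i : \Lambda(\Gamma) \to \F^{\partial V_i} \times \F^{\partial V_i}$ by $R_i(\phi,\psi) = (\phi|_{\partial V_i}, \psi^{(i)})$, where $\psi^{(i)}$ agrees with $\psi$ on $\partial V_i \setminus \{p\}$, and at $p$ we set $\psi^{(2)}(p) = -\sum_{q \in \partial V_2 \setminus \{p\}} \psi(q)$ and $\psi^{(1)}(p) = \psi(p) - \psi^{(2)}(p)$. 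By the Laplacian comparison and the total-current identity, $R_i(\phi,\psi)$ is precisely the boundary data of $u_i$, so $R_i(\Lambda(\Gamma)) \subseteq \Lambda(\Gamma_i)$; note that an infinite piece could absorb or emit current at $p$ invisibly, which is exactly why finiteness of one side is needed to make this split canonical.

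It remains to prove surjectivity, where the constant function does the work. Given $(\phi_i,\psi_i) \in \Lambda(\Gamma_i)$ realized by a harmonic $u_i$, I would extend $u_i$ across the other piece by the constant function equal to $u_i(p)$; constants are harmonic and carry zero current on every edge (independently of whether the piece is infinite, by local finiteness), so the extension $u$ is harmonic on $\Gamma$, agrees with $u_i$ on $G_i$, and the foreign piece contributes zero net current at $p$. A direct check then shows $R_i$ returns $(\phi_i,\psi_i)$ on the boundary data of $u$, giving $R_i(\Lambda(\Gamma)) = \Lambda(\Gamma_i)$; hence both $\Lambda(\Gamma_1)$ and $\Lambda(\Gamma_2)$ are determined by $\Lambda(\Gamma)$. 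Finally, for the ``in particular'' statement, if $G_1$ is recoverable and $G_2$ is finite and recoverable, then two weightings of $G$ with equal boundary behavior have equal behavior on each piece by what was just shown; recoverability of $G_1$ and of $G_2$ forces the weights to agree on $E_1$ and on $E_2$, and since $E = E_1 \sqcup E_2$ they agree everywhere, so $G$ is recoverable.
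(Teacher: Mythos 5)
Your proof is correct and follows essentially the same route as the paper's: you use the fact that net currents sum to zero on the finite piece to recover the splitting of the current at the wedge vertex $p$, and you establish surjectivity of the restriction maps by extending harmonic functions as constants across the other piece. You simply make explicit (via the maps $R_i$ and the edge-pairing argument for $\sum_q \Delta v(q) = 0$) what the paper states more briefly.
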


\begin{proof}
Assume $\Gamma_1$ is finite.  For a potential on $\Gamma_1$, the net current at all vertices of a network must sum to zero, and in particular, if $u$ is harmonic, then $\sum_{p \in \partial V} \Delta u_1(p) = 0$.  Let $q$ be the vertex of $\Gamma_1 \cap \Gamma_2$.  If $u$ is a harmonic function on $\Gamma$, then the boundary data of $u_1 = u|_{\Gamma_1}$ is clearly determined by the boundary data of $u$ except possibly for $\Delta u_1(q)$.  But since the net currents sum to zero, $\Delta u_1(q)$ is also known.  But knowing the contribution from $\Gamma_1$ to $\Delta u(q)$, we also know the contribution from $\Gamma_2$, and hence the boundary data of $u_2 = u|_{\Gamma_2}$ is also known.

We thus have a maps $\Lambda(\Gamma) \to \Lambda(\Gamma_1)$ and $\Lambda(\Gamma) \to \Lambda(\Gamma_2)$ induced by restriction.  They are surjective because any harmonic function on $\Gamma_1$ or $\Gamma_2$ can be extended to $\Gamma$ by setting it to be constant on the other subnetwork.
\end{proof}

\subsection{Layer-Stripping Operations} \label{subsec:layerstrippingops}

The following definitions and results are inspired by \cite{CIM}, \S 10 and 11, and related results in \cite{CM}.

A {\bf boundary edge} is an edge $e$ with $e_+, e_- \in \partial V$.  The $\partial$-graph $G \setminus e$ obtained by {\bf deleting a boundary edge} $e$ (and leaving the sets $V$ and $V^\circ$ unchanged) is a harmonic subgraph of $G$.

A {\bf boundary spike} is an edge $e$ with endpoints $p \in \partial V$ and $q \in V^{\circ}$ such that $p$ has degree $1$.  We form the $\partial$-graph $G / e$ by {\bf contracting the boundary spike}, where $E(\Gamma / e) = E(\Gamma) \setminus \{e,\overline{e}\}$, and $V(\Gamma / e) = V(\Gamma) / \sim$, where $\sim$ is the equivalence relation given by $p \sim q$.  The vertex $\{p,q\}$ in $\Gamma / e$ is declared to be a boundary vertex.  We can (and will) identify $G / e$ with a harmonic subgraph of $G$ by mapping $\{p,q\}$ to $q$.

An {\bf isolated boundary vertex} is a boundary vertex $p$ of degree $0$, and $G \setminus p$ is the harmonic subgraph formed by deleting it.

We take the terms {\bf boundary spike contraction}, {\bf boundary edge deletion}, and {\bf isolated boundary vertex deletion} to allow multiple (even infinitely many) contractions or deletions and to include the trivial identity transformation (removing zero boundary spikes, boundary edges, or isolated boundary vertices).  For a contraction of multiple boundary spikes, we require that the spikes do not share any endpoints.  We refer to these transformations collectively as {\bf layer-stripping operations}.

An important fact is that layer-stripping operations pull back to sequences of layer-stripping operations:

\begin{lemma} \label{lem:layerstrippingfunctoriality}
If $f: G \to H$ is an unramified harmonic morphism, $H_1 \subset H_2 \subset H$ and $H_1$ is obtained from $H_2$ by a layer-stripping operation, then $f^{-1}(H_1)$ is obtained from $f^{-1}(H_2)$ by a sequence of layer-stripping operations.
\end{lemma}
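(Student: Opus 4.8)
The plan is to reduce the statement to the three elementary layer-stripping operations and to pull each one back by inspecting the local picture at every lifted vertex, using the defining star conditions of a UHM. Since a general layer-stripping operation is one of the three types (boundary edge deletion, isolated boundary vertex deletion, boundary spike contraction) applied to a set of edges or vertices with disjoint support, it suffices to pull back a single occurrence of each type; the preimage will in general involve many edges or vertices, but layer-stripping operations already permit (even infinitely) many simultaneous deletions or contractions. Throughout I use that $E(f^{-1}(H_i))$ consists only of the non-collapsed preimages of edges of $H_i$, so that degrees computed \emph{inside} the pullback are governed by the star maps $\{\tilde e : \tilde e_+ = \tilde x\} \to \{e : e_+ = f(\tilde x)\}$, which are bijective when $\tilde x \in V^\circ(G)$ and injective when $\tilde x \in \partial V(G)$.

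The two deletion cases are quick. If $H_1 = H_2 \setminus e$ for a boundary edge $e$, so $e_+, e_- \in \partial V(H_2)$, then for any lift $\tilde e$ with $f(\tilde e) = e$ the endpoints satisfy $f(\tilde e_\pm) = e_\pm \in \partial V(H_2)$, whence $\tilde e_\pm \notin f^{-1}(V^\circ(H_2))$ and both are boundary vertices of $f^{-1}(H_2)$; thus every such $\tilde e$ is a boundary edge, and deleting all of them (a single boundary edge deletion, since $f^{-1}(H_1)$ and $f^{-1}(H_2)$ have equal vertex sets) yields $f^{-1}(H_1)$. Similarly, if $H_1 = H_2 \setminus p$ for an isolated boundary vertex $p$, then each $\tilde p \in f^{-1}(p)$ is boundary in $f^{-1}(H_2)$ because $f(\tilde p) = p \in \partial V(H_2)$, and its pullback-degree equals the number of non-collapsed lifts of the star of $p$; as $p$ has degree $0$ this star is empty, so $\tilde p$ is isolated and one isolated-boundary-vertex deletion removes the whole fiber $f^{-1}(p)$, giving $f^{-1}(H_1)$.

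The substantive case is a boundary spike contraction, $H_1 = H_2 / e$ with $e_- = p \in \partial V(H_2)$ of degree $1$ and $e_+ = q \in V^\circ(H_2)$. Fix a lift $\tilde e$ with $f(\tilde e) = e$ and write $\tilde p = \tilde e_-$, $\tilde q = \tilde e_+$, so $f(\tilde p) = p$ and $f(\tilde q) = q$. First I would show that $\tilde p$ is always a boundary vertex of $f^{-1}(H_2)$ of degree $1$: it is boundary because $f(\tilde p) = p \in \partial V(H_2)$; and since the star map at $\tilde p$ is injective or bijective onto the $H$-star of $p$, while the only edge of that star lying in $E(H_2)$ is $\overline{e}$ (as $p$ has $H_2$-degree $1$), the fiber $\{\tilde e' : \tilde e'_+ = \tilde p,\ f(\tilde e') \in E(H_2)\}$ has exactly one element, namely $\overline{\tilde e}$. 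The behavior at $\tilde q$ then splits by a dichotomy. If $\tilde q \in V^\circ(G)$, then $\tilde q$ is interior in $f^{-1}(H_2)$ but boundary in $f^{-1}(H_1)$ (because $q \notin V^\circ(H_1)$), so $\tilde e$ is a genuine boundary spike whose contraction deletes $\tilde p$ and $\tilde e$ and promotes $\tilde q$ to the boundary, exactly as needed. If instead $\tilde q \in \partial V(G)$, then $\tilde q$ is already boundary in $f^{-1}(H_2)$, so $\tilde e$ is a boundary edge; deleting it leaves $\tilde p$ with degree $0$, and a subsequent isolated-boundary-vertex deletion removes $\tilde p$, again matching the fact that $f^{-1}(H_1)$ drops the fiber $f^{-1}(p)$ while retaining $\tilde q$ as a boundary vertex. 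Distinct interior lifts $\tilde q$ carry distinct spikes, and the degree-$1$ computation shows distinct lifts never share a $\tilde p$-endpoint, so the spikes have pairwise disjoint endpoints and may be contracted simultaneously. Assembling the branches, $f^{-1}(H_1)$ is obtained from $f^{-1}(H_2)$ by one boundary edge deletion (the boundary-endpoint lifts of $e$), then one isolated-boundary-vertex deletion (the now-isolated $\tilde p$'s), then one boundary spike contraction (the interior-endpoint lifts of $e$).

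I expect the spike case to be the crux, for two reasons. First, the interior endpoint $q$ may have preimages that are interior \emph{or} boundary in $G$ (condition (4) only pushes interior vertices forward, never backward), which is precisely why a single contraction upstairs pulls back to a mixture of all three operation types. Second, the verification that each $\tilde p$ has pullback-degree exactly $1$ rests squarely on the injective/bijective star conditions that distinguish a UHM from a general harmonic morphism, together with the convention that collapsed edges do not enter the pullback; ramification would corrupt this count, which is exactly why the hypothesis cannot be weakened.
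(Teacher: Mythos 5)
Your overall architecture (reduce to the three operation types, then the interior/boundary dichotomy for lifts of a spike) is exactly the paper's, but your blanket convention that $E(f^{-1}(H_i))$ ``consists only of the non-collapsed preimages of edges of $H_i$'' is not available, and it assumes away the one genuinely new phenomenon this lemma has to confront. A UHM may collapse edges: the star conditions in the definition only constrain non-collapsed edges, and the paper explicitly lists the projection $G_1 \Box G_2 \to G_1$, which collapses every edge of $V_1 \times E_2$, as a UHM. The paper's pullback $f^{-1}(H)$ is defined to contain every edge of $G$ whose image lies in $H$, \emph{including} edges collapsed onto vertices of $H$; and it must, since omitting a collapsed edge attached to an interior vertex of $f^{-1}(H)$ would violate the star condition in the definition of harmonic subgraph, so under your convention $f^{-1}(H)$ is in general not even a harmonic subgraph and the lemma would be asserted of the wrong object. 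Concretely, this breaks your degree count: a lift $\tilde p$ of the spike's boundary endpoint $p$ may carry, besides its unique lift of the spike, several collapsed edges lying over $p$, so $\tilde p$ need not have degree $1$ and the lifted spike is not yet contractible. These collapsed edges are boundary edges of $f^{-1}(H_2)$ (both endpoints lie over $p \in \partial V(H_2)$) and must be deleted first; this extra preliminary deletion is precisely what the paper's proof adds in both the isolated-vertex case and the spike case.

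There is a second, independent gap: in the spike case you remove only those vertices of the fiber $f^{-1}(p)$ that are endpoints of lifts of $e$. But $V(f^{-1}(H_1)) = f^{-1}\bigl(V(H_2) \setminus \{p\}\bigr)$, so the \emph{entire} fiber must disappear, and since the star map at a boundary vertex of $G$ is only injective, there can be $\tilde p \in f^{-1}(p)$ incident to no lift of $e$ at all. For example, let $H_2$ be a single spike $pq$ and let $G$ have vertices $\tilde p_1, \tilde p_2, \tilde q$ with one edge $\tilde p_1 \tilde q$ mapping to the spike and $\tilde p_2$ an isolated boundary vertex mapping to $p$; this is a UHM, but your three assembled steps terminate with $\tilde p_2$ still present, whereas $f^{-1}(H_2/e) = \{\tilde q\}$. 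The repair is the paper's closing step: after the contractions and deletions, delete the remaining isolated boundary vertices lying over $p$. Both repairs are local and your dichotomy for the lifts themselves is sound, but as written the proof does not produce the correct pullback.
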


\begin{proof}
Suppose that $H_2$ is obtained from $H_1$ by deleting boundary edges.  Then $f^{-1}(H_2)$ is obtained from $f^{-1}(H_1)$ by deleting boundary edges.

Suppose that $H_2$ is obtained from $H_1$ by deleting isolated boundary vertices.  The preimage of the isolated boundary vertices may contain some collapsed edges.  The collapsed edges are boundary edges in $f^{-1}(H_1)$, so we can delete them and then delete the now-isolated boundary vertices in the preimage of the isolated boundary vertices of $H_1$.

Suppose that $H_2$ is obtained from $H_1$ by contracting boundary spikes.  Note that some edges in $f^{-1}(H_1)$ may map to the boundary endpoints of the spikes in $H_1$.  In this case, they are boundary edges, so we can delete them.  Now suppose an edge $e$ maps to a boundary spike and $e_+$ corresponds to the boundary endpoint and $e_-$ corresponds to the interior endpoint.  There are two possibilities: If $e_-$ is interior, then $e$ is a boundary spike, so we can contract it.  If $e_-$ is boundary, then $e$ is a boundary edge, so we can delete it and then delete the isolated boundary vertex $e_+$.  Finally, there may be some isolated boundary vertices of $f^{-1}(H_1)$ that map to the boundary endpoint of the spikes in $H_1$, and we can delete them.  These are the only possibilities; thus, by a sequence of layer-stripping operations, we can obtain $f^{-1}(H_2)$ from $f^{-1}(H_1)$.
\end{proof}

\begin{remark*}
The result of Lemma \ref{lem:layerstrippingfunctoriality} does not quite show that sequences of layer-stripping operations pull back \emph{functorially} since a given transformation could be broken up into layer-stripping operations in multiple ways.  However, careful analysis of the last proof shows that a three-step operation of boundary edge deletion, isolated boundary vertex deletion, and boundary spike contraction will pull back to another operation of the same type.  Moreover, the decomposition into three steps is unique.  Thus, three-step layer-stripping operations pull back functorially.
\end{remark*}

\begin{remark*}
Lemma \ref{lem:layerstrippingfunctoriality} fails for harmonic morphisms in general: If $p$ is the boundary endpoint of a boundary spike $e$, then a vertex in $f^{-1}(p)$ might not have degree $1$ since there can be multiple preimages of $e$ attached to it.
\end{remark*}

We say a $\partial$-graph $G$ is {\bf layerable} if there is a filtration of subgraphs
\[
G = G_0 \supset G_1 \supset \dots
\]
indexed by $\N$ or $\{0,\dots,n\}$ such that $G_{j+1}$ is obtained from $G_j$ by layer-stripping operation and $\bigcap_{j=0}^\infty G_j = \varnothing$.  In this case, we say $\{G_j\}$ is a {\bf layerable filtration} of $G$.  The previous lemma immediately implies
\begin{lemma}
If $f: G \to H$ is a UHM and $H$ is layerable, then $G$ is layerable.
\end{lemma}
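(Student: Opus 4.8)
The plan is to pull back a layerable filtration of $H$ term by term and then refine it into a filtration of $G$ whose consecutive terms differ by a single layer-stripping operation. So let $H = H_0 \supset H_1 \supset \dots$ be a layerable filtration of $H$ (indexed by $\N$ or by $\{0,\dots,n\}$), so that each $H_{j+1}$ is obtained from $H_j$ by a single layer-stripping operation and $\bigcap_j H_j = \varnothing$. Setting $G_j = f^{-1}(H_j)$, monotonicity of the pullback gives a descending chain $G_0 \supset G_1 \supset \dots$ of harmonic subgraphs of $G$. The top of this chain is all of $G$: every edge of $G$ either maps to an edge of $H$ or is collapsed onto a vertex of $H$, so it lies in $f^{-1}(H_0) = f^{-1}(H) = G$, and every interior vertex of $G$ maps to an interior vertex of $H$ by condition (4), so the interior structure agrees as well.

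First I would apply Lemma \ref{lem:layerstrippingfunctoriality} to each consecutive pair. Taking $H_1 = H_{j+1} \subset H_2 = H_j \subset H$ in that lemma, we learn that $G_{j+1} = f^{-1}(H_{j+1})$ is obtained from $G_j = f^{-1}(H_j)$ by a \emph{finite} sequence of layer-stripping operations; indeed, inspection of the proof of that lemma shows each single layer-stripping operation on $H$ pulls back to a bounded number of them on $G$ (a boundary-edge deletion, a boundary-spike contraction, and some isolated-boundary-vertex deletions). I would then \emph{refine} the chain $\{G_j\}$ by inserting, between $G_j$ and $G_{j+1}$, the intermediate harmonic subgraphs produced by this finite sequence. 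The result is a descending chain $G = G_0' \supset G_1' \supset \dots$, still indexed by $\N$ (or a finite set) because only finitely many terms are inserted in each gap, and now every consecutive pair differs by a single layer-stripping operation, as the definition of layerability requires.

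It remains to check that $\bigcap_k G_k' = \varnothing$. Since the original pullbacks $\{G_j\}$ form a cofinal subsequence of the refined chain, it suffices to show $\bigcap_j G_j = \varnothing$, and this follows directly from $\bigcap_j H_j = \varnothing$: if a vertex, a non-collapsed edge, or a collapsed edge of $G$ lay in every $G_j = f^{-1}(H_j)$, then its image (a vertex or edge of $H$) would lie in every $H_j$, contradicting $\bigcap_j H_j = \varnothing$. Hence $\{G_k'\}$ is a layerable filtration and $G$ is layerable.

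The work is genuinely light because Lemma \ref{lem:layerstrippingfunctoriality} already does the essential job of pushing layer-stripping through the morphism. The only points demanding care are the two bookkeeping issues above: that the sequences inserted during refinement are finite (so the refined filtration is still legitimately indexed), and that the pullback of the empty subgraph is empty once collapsed edges are accounted for. I expect the mild subtlety about collapsed edges --- which must be counted as part of $f^{-1}(H_j)$ exactly when they map into $V(H_j)$, as already used in the proof of Lemma \ref{lem:layerstrippingfunctoriality} --- to be the only place where one has to think, and it is resolved by the same convention established there.
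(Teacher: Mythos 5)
Your proof is correct and takes exactly the route the paper intends: the paper states this lemma with no written proof beyond ``the previous lemma immediately implies,'' and your argument is precisely that implication carried out carefully --- pull back the filtration via Lemma \ref{lem:layerstrippingfunctoriality}, refine each gap by the finitely many intermediate subgraphs it produces, and note that $\bigcap_j f^{-1}(H_j) = f^{-1}\bigl(\bigcap_j H_j\bigr) = \varnothing$ once collapsed edges are accounted for. The bookkeeping points you flag (finiteness of each inserted block, and the treatment of collapsed edges in the pullback) are the right ones, and you resolve both correctly.
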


As noted in the introduction, if $\Gamma'$ is obtained from $\Gamma$ by a layer-stripping operation, then we can compute the boundary behavior of $\Gamma'$ from that of $\Gamma$ and vice versa.  Recall this is one of the steps in the layer-stripping strategy for the inverse problem described in \S \ref{subsec:strategy}.

\begin{lemma} \label{lem:reductionelectrical}
Suppose that $\Gamma'$ is obtained from $\Gamma$ by a sequence of layer-stripping operations.
\begin{itemize}
	\item Any harmonic function $u'$ on $\Gamma'$ extends to a harmonic function $u$ on $\Gamma$.
	\item The extension $u$ is uniquely determined by $u'$ and the values of $u$ on vertices which are deleted as isolated boundary vertices.
	\item Knowing the weights of the edges removed, we can compute $\Lambda(\Gamma)$ from $\Lambda(\Gamma')$ and vice versa.
\end{itemize}
\end{lemma}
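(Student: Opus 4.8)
The three bullet points are naturally proved by reducing to the case of a \emph{single} layer-stripping operation and then chaining. Since the hypothesis is that $\Gamma'$ is obtained from $\Gamma$ by a \emph{sequence} of operations, I would first set up an induction on the length of the sequence, so that the heart of the argument is the one-step case: $\Gamma'$ obtained from $\Gamma$ by deleting a single boundary edge, contracting a single boundary spike, or deleting a single isolated boundary vertex. (For infinite sequences, which the definition of layerability allows, I will need a separate limiting argument; see below.) For each of the three one-step operations I would verify the three claims directly, reusing the explicit computations already sketched in \S\ref{subsec:strategy}.

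\emph{The one-step cases.} For a boundary spike $e$ with boundary endpoint $p$ (degree $1$) and interior endpoint $q$, a harmonic function $u'$ on $\Gamma' = \Gamma/e$ is extended to $\Gamma$ by declaring $u(p) = u'(q) + w(e)^{-1}\Delta u'(q)$, exactly as in the overview; this is the unique value making $\Delta u(q) = 0$ in $\Gamma$, so existence and uniqueness of the extension both hold, and $p$ is not deleted as an isolated vertex, so the ``isolated boundary vertex'' clause is vacuous here. For deleting a boundary edge $e$, every harmonic $u'$ on $\Gamma\setminus e$ is \emph{already} a harmonic function on $\Gamma$ (deleting an edge only changes which currents are summed at the two boundary endpoints $e_\pm$, both of which are boundary, so harmonicity at interior vertices is untouched), giving a unique extension with no freedom. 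For deleting an isolated boundary vertex $p$ (degree $0$), a harmonic $u'$ on $\Gamma\setminus p$ extends to $\Gamma$ by choosing $u(p)$ arbitrarily; harmonicity is unaffected since $p$ is incident to no edges, and this is exactly the one value recorded in the second bullet. The linear-algebra claim of the third bullet is handled as in \S\ref{subsec:strategy}: in each case the extension and its boundary data are related by a single invertible row operation (the map $\Xi$ for spikes and edges, and a coordinate insertion/deletion for isolated vertices), so $\Lambda(\Gamma)$ and $\Lambda(\Gamma')$ determine each other once $w(e)$ is known.

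\emph{Chaining and the main obstacle.} Composing the one-step results along a finite sequence is routine: uniqueness of extensions composes, the set of ``isolated boundary vertices deleted'' accumulates exactly the free parameters, and the composite of invertible transformations between boundary-behavior spaces is again invertible. The genuinely delicate point is the \textbf{infinite} sequence permitted by a layerable filtration $G = G_0 \supset G_1 \supset \cdots$ with $\bigcap_j G_j = \varnothing$. Here I cannot simply compose finitely many extensions, because a given vertex or edge may be stripped only at a late stage, and I must produce a \emph{single} harmonic function on all of $\Gamma$ in the limit. The plan is to exploit local finiteness together with the emptiness of the intersection: each vertex $v$ and each edge lies in only finitely many $G_j$, so every vertex is eventually stripped, and the value of the extension at $v$ is determined (or freely chosen, if $v$ is stripped as an isolated vertex) at the finite stage where $v$ is removed and is never revisited afterward. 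I would argue that these stagewise-defined values cohere into a well-defined function $u$ on $V(\Gamma)$, that harmonicity at each interior vertex $p$ is checked at the finite stage where $p$ is first stripped (a local, finite condition by local finiteness), and that uniqueness holds modulo exactly the values assigned at isolated-vertex deletions. This limiting/coherence argument for the infinite case is where I expect the real work to lie; the finite case and the three one-step verifications are essentially bookkeeping built on the explicit row operations already exhibited in \S\ref{subsec:strategy}.
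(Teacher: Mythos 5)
Your one-step case analysis (spike contraction, boundary edge deletion, isolated vertex deletion) and the finite chaining coincide with the paper's own proof, which simply reverses the operations and makes the same three observations, and both arguments reduce the third bullet to the row operations $\Xi$ of \S \ref{subsec:strategy}. One small mismatch: in this paper a \emph{single} layer-stripping operation may contract or delete infinitely many pairwise disjoint spikes or edges simultaneously, so the base case of your induction should be ``one operation'' rather than ``one edge.'' This costs nothing, because the extension formulas at disjoint removals are independent of one another, but it means the reduction to single-edge removals is not literally an induction on the number of edges removed.

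The genuine gap is your limiting argument for infinite sequences. The determination runs in the wrong direction: when a spike $e$ with boundary endpoint $p$ and interior endpoint $q$ is contracted at stage $j$, the extension value $u(p) = u'(q) + w(e)^{-1}\Delta u'(q)$ is computed from values of $u$ at vertices surviving into $G_{j+1}$, i.e.\ at vertices stripped \emph{later}. For a finite sequence this recursion is grounded at $\Gamma'$; for an infinite filtration $G = G_0 \supset G_1 \supset \cdots$ with $\bigcap_j G_j = \varnothing$ there is no base case, and your ``stagewise-defined values'' are never pinned down. In fact the second bullet is simply false for infinite sequences: let $\Gamma$ be the half-infinite path with vertices $0,1,2,\dots$, all weights $1$, $\partial V = \{0\}$, and let $G_j$ be the path starting at vertex $j$; each step contracts one boundary spike, $\bigcap_j G_j = \varnothing$, and no vertex is ever deleted as an isolated boundary vertex, yet the empty harmonic function on $\Gamma' = \varnothing$ admits the two-parameter family of extensions $u(n) = a + bn$. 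So the lemma must be read as a statement about finite sequences of operations; this is all the paper's proof (reversing the operations) actually covers, and all that is needed, since Theorem \ref{thm:solvablerecoverable} applies the lemma to one layer-stripping operation at a time.
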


\begin{proof}
To obtain $\Gamma$ from $\Gamma'$, we must add isolated boundary vertices, add boundary spikes, and add boundary edges.  When we add isolated boundary vertices, the potential on the new vertices can be chosen freely.  When we add boundary spikes, the potential on the boundary endpoints of the spikes must be chosen to make the net current at the interior vertices be zero.  When we add boundary edges, there are no new vertices and the Laplacian at the interior vertices is unchanged, so there is nothing to do.  This proves the first and second claims.  The third follows because at each step, the boundary data of the harmonic extension can be determined from the boundary data of the original function without knowing its values on the interior of $\Gamma'$, and similarly, the boundary data of $u$ determines the boundary data of $u'$.
\end{proof}

\section{Scaffolds} \label{sec:scaffolds}

\subsection{Definition}

As explained in \S \ref{subsec:formalizing}, scaffolds are a set of oriented edges designed to model the flow of information in harmonic continuation.  The definition of scaffold is phrased in terms of \emph{increasing paths}.

Recall $E$ is the set of oriented edges.  A \textbf{path} is a sequence $(e_j)$ of oriented edges indexed by $\{0,\dots,n\}$ or $\N$, such that $(e_j)_+ = (e_{j+1})_-$.  A \textbf{path from $p$ to $q$} is a path $(e_0, \dots, e_n)$ such that $(e_0)_- = p$ and $(e_n)_+ = q$.  A \textbf{cycle} is a nonempty path $(e_0, \dots, e_n)$ with $(e_0)_- = (e_n)_+$.  We say that empty sequence is a path from $p$ to $p$.  If $e \in E$, then a \textbf{path from $e$ to $q$} is path $(e_0,\dots,e_n)$ such that $e = e_0$ and $(e_n)_+ = q$.

If $f: G \to H$ is a UHM and $(e_j)$ is a path from $p$ to $q$ in $G$, then the \textbf{pushforward} $f_* (e_j)$ is defined by taking the sequence $(f(e_j))$, deleting the terms that are collapsed by $f$ into vertices, and reindexing the remaining terms in order such that the zero index is preserved.  This defines a path because $f((e_j)_+) = f((e_j)_-)$ for each deleted term.

Let $S \subset E$.  We say that a path is \textbf{increasing} with respect to $S$ if there are at most two consecutive oriented edges in $E \setminus S$ and no edges in $\overline{S}$.  It is \textbf{decreasing} with respect to $S$ if it is increasing with respect to $\overline{S}$.

\begin{lemma}
Suppose that $f: G \to H$ is a UHM and $S \subset E(H)$.
\begin{itemize}
	\item If $(e_j)$ is a path from $p$ to $q$, then $f_* (e_j)$ is a path from $f(p)$ to $f(q)$.
	\item If $(e_j)$ is a cycle, then $f_* (e_j)$ is a cycle.
	\item If $(e_j)$ is increasing with respect to $f^{-1}(S)$, then $f_* (e_j)$ is increasing with respect to $S$.
	\item If $(e_j)$ is increasing or decreasing with respect to $f^{-1}(S)$ and has infinitely many terms, then $f_* (e_j)$ has infinitely many terms.
\end{itemize}
\end{lemma}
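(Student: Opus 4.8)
The plan is to reduce all four items to one bookkeeping observation: an edge $e_j$ is deleted in forming $f_*(e_j)$ exactly when $f(e_j)$ is a vertex, and for such a \emph{collapsed} edge property (3) of a harmonic morphism gives $f((e_j)_-) = f((e_j)_+) = f(e_j)$. The first item (paths) then follows by telescoping across runs of collapsed edges. If $f(e_i)$ and $f(e_j)$ are surviving images that become consecutive in $f_*(e_j)$, so that every $e_m$ with $i < m < j$ is collapsed, then
\[
f(e_i)_+ = f((e_i)_+) = f((e_{i+1})_-) = f(e_{i+1}) = f((e_{i+1})_+) = \dots = f((e_j)_-) = f(e_j)_-,
\]
using property (2) at the two ends and property (3) together with the path relation $(e_m)_+ = (e_{m+1})_-$ in the middle. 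Hence consecutive surviving images share an endpoint, so $f_*(e_j)$ is a path; telescoping from index $0$ shows it begins at $f(p)$, and for a finite path it ends at $f(q)$ (when every edge collapses it is the empty path, and the same telescoping forces $f(p) = f(q)$, so this is still a path from $f(p)$ to $f(q)$). I would also record here that for an infinite index set the reindexing is legitimate provided infinitely many edges survive, which is exactly what the fourth item supplies.

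For the increasing statement the point is that collapsed edges — and, among the non-$S$ edges, only those — are the ones that disappear, while both defining constraints of ``increasing'' transfer. Since $f$ preserves reverses (property (2)), $\overline{f^{-1}(S)} = f^{-1}(\overline S)$, so $e_j \in f^{-1}(S) \iff f(e_j) \in S$ and $\overline{e_j} \in f^{-1}(S) \iff f(e_j) \in \overline S$. Thus a surviving image can lie in $\overline S$ only if its source lies in $f^{-1}(\overline S)$, which an increasing path forbids; this gives the ``no $\overline S$'' half. For the ``at most two consecutive edges of $E \setminus S$'' half, note every collapsed edge lies in $E(G) \setminus f^{-1}(S)$, because $f^{-1}(S)$-edges map to genuine edges of $S$ and are never collapsed. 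So if three surviving images in $E(H) \setminus S$ were consecutive in $f_*(e_j)$, their sources, together with all the collapsed edges interspersed between them, would form a run of at least three consecutive edges of $E(G) \setminus f^{-1}(S)$, contradicting increasingness. The fourth item uses the same mechanism: an infinite increasing (resp.\ decreasing) path has at most two consecutive edges in $E \setminus f^{-1}(S)$ (resp.\ $E \setminus f^{-1}(\overline S)$), and collapsed edges belong to that set, so the path cannot terminate in an infinite tail of collapsed edges; infinitely many edges therefore survive and $f_*(e_j)$ is infinite.

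The cycle statement is the path statement specialized to $p = q$, so $f_*(e_j)$ is automatically a closed path, and the only thing left to verify — the place I expect the genuine difficulty — is that it is \emph{nonempty}, i.e.\ that the cycle is not made up entirely of collapsed edges. A fully collapsed cycle lies inside a single fiber $f^{-1}(v)$ and consists only of edges of $E \setminus f^{-1}(S)$, so in the setting where the lemma is actually applied (increasing cycles, via the third item) the increasingness bound forces such a cycle to have length at most two; any increasing cycle of length at least three necessarily contains an edge of $f^{-1}(S)$, which is non-collapsed, so its pushforward is a bona fide cycle. The delicate configurations are therefore the short fully-collapsed cycles — a collapsed self-loop, or a collapsed back-and-forth $(e,\overline e)$ — which a bare cycle could a priori contain and whose pushforward is empty; ruling these out (or, equivalently, reading the second item as asserting nonemptiness whenever the cycle carries a non-collapsed edge) is the step that needs the most care, and it is the only place where the four claims are not a direct consequence of the telescoping identity and the inclusion of collapsed edges into $E \setminus f^{-1}(S)$.
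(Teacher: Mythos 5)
Your proof is correct and follows essentially the same route as the paper's: the paper's entire argument for the third and fourth bullets is precisely the observation you isolate, namely that collapsed edges lie outside $f^{-1}(S)$, so a run of consecutive non-$S$ edges in $f_*(e_j)$ lifts (with the interspersed collapsed edges) to a run of consecutive non-$f^{-1}(S)$ edges in $(e_j)$, and an infinite increasing or decreasing path must contain infinitely many edges of $f^{-1}(S)$, all of which survive pushforward. The paper dismisses the first two bullets as ``immediate,'' so your telescoping identity is just a written-out version of what it leaves unsaid. Your caveat about the cycle statement is a genuine observation about the paper's statement rather than a defect in your argument: a cycle all of whose edges are collapsed (e.g.\ a collapsed self-loop, which unramified harmonic morphisms do permit) pushes forward to the empty path, which under the paper's definitions is a path but not a cycle, so the second bullet is literally false in that degenerate case. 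Your proposed repair---read the second bullet as asserting that $f_*(e_j)$ is a cycle whenever $(e_j)$ contains a non-collapsed edge, which is automatic for increasing cycles of length at least three---is the right one, and nothing downstream is affected: the scaffold-pullback lemma (Lemma \ref{lem:scaffunctor}) only invokes the path, increasing-path, and infinite-path statements, since scaffold condition (2) is phrased in terms of infinite decreasing paths rather than cycles.
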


\begin{proof}
The first two properties are immediate.  To prove the third, note that there is at most one consecutive edge not in $f^{-1}(S)$ for $(e_j)$, which implies that there is at most one consecutive edge not in $S$ for $f_* (e_j)$ since none of the collapsed edges are in $f^{-1}(S)$.  To prove the fourth property, note if $(e_j)$ has infinitely many terms, then it has infinitely many in $f^{-1}(S)$ since it cannot have more than one in a row that is not in $f^{-1}(S)$.  Thus, $f_* (e_j)$ has infinitely many terms in $S$.  
\end{proof}

We say $S \subset E$ is a \textbf{scaffold} if $S \cap \overline{S} = \varnothing$ and
\begin{enumerate}
	\item $e \mapsto e_+$ and $e \mapsto e_-$ are injective on $S$,
	\item there is no decreasing path indexed by $\N$,
	\item if there is a increasing path from $p \in V^\circ$ to $y \in V^\circ$, then either $p \in S_+$ or $y \in S_-$.
\end{enumerate}
For pronounceability, we will say $p$ is an \textbf{output} of $S$ if $p \in S_+$ and it is an \textbf{input} if $p \in S_-$.

To picture what is happening, note (1) is equivalent to saying that $p$ is the input of at most one edge in $S$ and it is the output of at most one edge in $S$, and applying (3) to the trivial path from $p$ to $p$ shows that each interior vertex is \emph{either} an input or an output or both, but there is no such requirement for the boundary vertices.

This implies that the oriented edges of $S$ form vertex-disjoint directed paths which span all the interior vertices; each path has a well-defined start point because there are no decreasing infinite paths by (2).  In fact, (2) is equivalent to saying there is a no decreasing cycle \emph{and} no decreasing path with infinitely many distinct edges.  However, the path formed by edges in $S$ may increase infinitely.  Because of (3), a path formed by edges in $S$ cannot terminate at an interior vertex at both endpoints, although it can terminate at an interior vertex at one endpoint.

We can imagine the whole structure as a rickety scaffold | the paths of $S$ are vertical ladders and the edges not in $S$ form bridges between them.  Condition (2) says that you cannot walk in a loop by climbing up ladders and walking across bridges (without crossing two bridges in a row).  Condition (3) says that you cannot get infinitely far down by going down ladders and across bridges.  Condition (4) says that if you are at the bottom of a ladder at an interior point in the graph, then climbing up ladders and walking across bridges will never bring you to an interior point at the top of a ladder.

We partition $V \sqcup E$ into three sets.  We say an edge or vertex is in the \textbf{End} $\End S$ if there a decreasing path from this vertex or edge to an interior vertex that is not an output.  Symmetrically, an edge or vertex is in the \textbf{Beginning} $\Beg S$ if there is a increasing path from this vertex or edge to an interior vertex that is not an input.  Condition (3) guarantees that the End and Beginning are disjoint.  An edge or vertex is in the \textbf{middle} $\Mid S$ if it not in the End or Beginning.

The next lemma shows that scaffolds can be pulled back functorially.

\begin{lemma} \label{lem:scaffunctor}
Let $f: G \to H$ be a UHM.  If $S \subset E(H)$ is a scaffold, then $f^{-1}(S)$ is a scaffold.  Moreover,
\begin{align*}
\Beg f^{-1}(S) &\subset f^{-1}(\Beg S), \\
\End f^{-1}(S) &\subset f^{-1}(\End S), \\
\Mid f^{-1}(S) &\supset f^{-1}(\Mid S).
\end{align*}
\end{lemma}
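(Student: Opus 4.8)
The plan is to verify directly that $f^{-1}(S)$ satisfies the three defining conditions of a scaffold, and then to deduce the three set inclusions, with the $\Mid$ inclusion falling out formally from the other two. Throughout, the two engines are the previous pushforward lemma (increasing paths push forward to increasing paths, and infinite increasing or decreasing paths push forward to infinite ones) and the defining feature of a UHM: the star map $\{e : e_+ = p\} \to \{e : e_+ = f(p)\}$ is bijective at interior vertices and injective at boundary vertices. First I would record the bookkeeping facts that $\overline{f^{-1}(S)} = f^{-1}(\overline{S})$ and hence $f^{-1}(S) \cap \overline{f^{-1}(S)} = \varnothing$ (using $S \cap \overline{S} = \varnothing$ together with $f(\overline{e}) = \overline{f(e)}$), and that a collapsed edge lies in neither $f^{-1}(S)$ nor $f^{-1}(\overline{S})$, so collapsed edges never obstruct increasing paths.

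For condition (1), suppose $e, e' \in f^{-1}(S)$ share an endpoint $e_+ = e'_+ = p$. Then $f(e), f(e') \in S$ have the common output $f(p)$, so injectivity of $e \mapsto e_+$ on $S$ forces $f(e) = f(e')$, and then injectivity of the star map at $p$ (which holds whether $p$ is interior or boundary) gives $e = e'$; the argument for $e \mapsto e_-$ is identical after reversing orientations. For condition (2), an infinite decreasing path with respect to $f^{-1}(S)$ is an infinite increasing path with respect to $f^{-1}(\overline{S})$, so by the pushforward lemma it maps to an infinite decreasing path with respect to $S$, contradicting that $S$ is a scaffold. For condition (3), an increasing path from $p$ to $y$ with $p, y$ interior pushes forward to an increasing path from $f(p)$ to $f(y)$, both interior since $f$ preserves interior vertices, so condition (3) for $S$ gives $f(p) \in S_+$ or $f(y) \in S_-$.

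The crux is then to lift this conclusion back to $G$, and this is exactly where bijectivity (rather than mere injectivity) at interior vertices is indispensable: if $f(p) \in S_+$, surjectivity of the star map at the interior vertex $p$ produces an edge $e$ with $e_+ = p$ and $f(e)$ equal to the given element of $S$, so $e \in f^{-1}(S)$ and $p \in (f^{-1}(S))_+$; the dual argument at $y$ (lifting $\overline{g}$ and reversing orientation) handles $f(y) \in S_-$. This establishes that $f^{-1}(S)$ is a scaffold. I expect this lifting step, together with keeping the orientation conventions straight when passing between $S_+$, $S_-$, and their reverses, to be the main place where care is required; everything else is a direct transport of the witnessing path.

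For the inclusions, take $x \in \Beg f^{-1}(S)$, witnessed by an increasing path from $x$ to an interior vertex $z \notin (f^{-1}(S))_-$. Pushing the path forward gives an increasing path from $f(x)$ to the interior vertex $f(z)$, and I claim $f(z) \notin S_-$: otherwise the same surjective-lifting argument at the interior vertex $z$ would produce an edge exhibiting $z \in (f^{-1}(S))_-$, a contradiction. Hence $f(x) \in \Beg S$, i.e. $x \in f^{-1}(\Beg S)$, giving $\Beg f^{-1}(S) \subset f^{-1}(\Beg S)$; the $\End$ inclusion is the mirror image with decreasing paths and $S_+$. Finally, since $\Mid$ is by definition the complement of $\Beg \cup \End$ and preimage commutes with complements and unions, the two inclusions just proved yield $\Beg f^{-1}(S) \cup \End f^{-1}(S) \subset f^{-1}(\Beg S) \cup f^{-1}(\End S)$, and taking complements gives $\Mid f^{-1}(S) \supset f^{-1}(\Mid S)$, completing the proof.
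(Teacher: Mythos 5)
Your proof is correct and follows essentially the same route as the paper: disjointness and injectivity of the endpoint maps via star-map injectivity for UHMs, then conditions (2), (3) and the $\Beg$/$\End$/$\Mid$ inclusions via the pushforward lemma. The only difference is one of detail: where the paper declares those last steps ``immediate from the pushforward properties of paths,'' you spell out the lifting of $S_+$/$S_-$ membership back through $f$ using surjectivity of the star map at interior vertices (and the complement argument for $\Mid$), which is indeed the point that makes the argument go through.
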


\begin{proof}
Clearly, $f^{-1}(S) = \overline{f^{-1}(S)} = f^{-1}(S \cap \overline{S}) = \varnothing$.  To show $e \mapsto e_+$ is injective on $f^{-1}(S)$, suppose $e_+ = e_+' = p$.  Then $f(e)_+ = f(e')_+ = f(p)$ and $f(e), f(e') \in S$, so that $f(e) = f(e')$.  But by definition of UHM, $f$ restricts to an injection $\{\tilde{e} \in E(G): e_+ = p\} \to \{\tilde{e} \in E(H): e_+ = f(p)\}$.  This implies that $e = e'$.  Properties (2) and (3) of a scaffold are immediate from the pushforward properties of paths, and so are the claims about End, Middle, and Beginning.
\end{proof}

\begin{corollary}
Let $\Scaf G$ to be the set of scaffolds on $G$ with the partial order given by inclusion as subsets of $E(G)$.  Then $G \mapsto \Scaf G$ is a contravariant functor from the category of $\partial$-graphs and UHMs to the category of posets.
\end{corollary}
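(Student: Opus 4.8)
The plan is to exhibit the data of a contravariant functor and then check the two axioms, leaning on Lemma \ref{lem:scaffunctor} for the only step with any content. On objects we send $G$ to the poset $\Scaf G$ of scaffolds ordered by inclusion of subsets of $E(G)$. On a UHM $f: G \to H$ we define $\Scaf f: \Scaf H \to \Scaf G$ by $\Scaf f(S) = f^{-1}(S)$. Lemma \ref{lem:scaffunctor} is exactly what guarantees that $f^{-1}(S)$ is again a scaffold, so this assignment really does land in $\Scaf G$; note that the arrow reverses direction, since $f$ goes from $G$ to $H$ while $\Scaf f$ goes from $\Scaf H$ to $\Scaf G$, which is the contravariance we want.

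First I would verify that $\Scaf f$ is a morphism in the category of posets, that is, that it is order-preserving. If $S \subseteq T$ are scaffolds on $H$, then set-theoretically $f^{-1}(S) \subseteq f^{-1}(T)$, so $\Scaf f$ respects the inclusion order. This needs nothing beyond the elementary behavior of preimages. Next come the functor axioms. For the identity $\id_G$, the induced map on edge sets is the identity, so $\id_G^{-1}(S) = S$ and hence $\Scaf(\id_G) = \id_{\Scaf G}$. For composition I must check $\Scaf(g \circ f) = \Scaf(f) \circ \Scaf(g)$ for UHMs $f: G \to H$ and $g: H \to K$; here $g \circ f$ is again a UHM because $\partial$-graphs and UHMs form a category, as noted in \S \ref{subsec:harmonicmorphisms}. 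Unwinding the definitions, this amounts to the identity $(g \circ f)^{-1}(S) = f^{-1}(g^{-1}(S))$ for every $S \subseteq E(K)$.

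The one point deserving care---and essentially the only place where the geometry of harmonic morphisms intervenes---is the role of collapsed edges, since $f$ and $g$ may send some edges to vertices. Because $S$ consists of genuine oriented edges, an edge $e \in E(G)$ lies in $(g \circ f)^{-1}(S)$ exactly when $f(e)$ is an edge and $g(f(e)) \in S$: the alternative that $f(e)$ is a vertex is automatically excluded, as then $g(f(e))$ is a vertex and cannot lie in $S$. The identical unwinding shows that $e \in f^{-1}(g^{-1}(S))$ under precisely the same condition, so the two preimages coincide as subsets of $E(G)$, and therefore as scaffolds. This completes the verification of both axioms. I do not expect a genuine obstacle here: the substantive work---that preimages of scaffolds are scaffolds, that the ordering is preserved, and that Beginning, Middle, and End behave correctly---has already been isolated in Lemma \ref{lem:scaffunctor}, leaving only the standard contravariance of the preimage operation together with the bookkeeping for collapsed edges.
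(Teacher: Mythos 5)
Your proof is correct and follows exactly the route the paper intends: the paper states this corollary immediately after Lemma \ref{lem:scaffunctor} with no further argument, leaving precisely the routine verification you supply (order-preservation of preimages, the identity axiom, and $(g \circ f)^{-1}(S) = f^{-1}(g^{-1}(S))$ with the observation that collapsed edges cannot land in $S$). Your handling of collapsed edges under composition is the right bookkeeping point and is handled correctly.
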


We will use the following equivalent characterization of scaffolds in terms of local properties and a global partial order on $E / \bar{~}$.  Here a {\bf partial order on $E / \bar{~}$} is a partial order on the set of unoriented edges, but for notational convenience, we view it as a partial order on the set of oriented edges in which $e$ and $\overline{e}$ occupy the same position in the partial order.
\begin{lemma} \label{lem:scafequivalentdef}
Let $S \subset E(G)$ with $S \cap \overline{S} = \varnothing$.  Then $S$ is a scaffold if and only if there exists a strict partial order $\prec$ on $E / \bar{~}$ such that
\begin{enumerate}[A.]
	\item \emph{(Local Comparison Conditions)} if $e \in S$ and $e_+ = e_+'$, then $e \prec e'$, and If $e \in S$ and $e_- = e_+'$, then $e \succ e'$;
	\item \emph{(Partial Well-Order)} Every subset has a minimal element;
	\item \emph{(Input/Output Alternative)} If $e \preceq e'$, $p = e_+ \in V^\circ$, and $q = e_+' \in V^\circ$, then either $p \in S_+$ or $q \in S_-$.
\end{enumerate}
\end{lemma}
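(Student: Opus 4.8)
The plan is to prove the two implications separately, resting both on a single \emph{monotonicity bridge}: for any strict order $\prec$ satisfying the local comparison conditions (A), the edges traversed along an increasing path are non-decreasing in $\prec$, and those along a decreasing path are non-increasing, with a \emph{strict} step each time an edge of $S$ (resp.\ $\overline{S}$) is used. To set this up I would inspect two consecutive path edges $e_j, e_{j+1}$ meeting at $v = (e_j)_+ = (e_{j+1})_-$. If $e_{j+1} \in S$, then $v = (e_{j+1})_-$ and $e_j$ is incident to $v$, so the second clause of (A) gives $e_j \prec e_{j+1}$; if instead $e_j \in S$, then $v = (e_j)_+$ and the first clause, applied to $\overline{e_{j+1}}$ (whose head is $v$), gives $e_j \prec e_{j+1}$. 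The only delicate configuration is a maximal run of consecutive edges lying in $E \setminus S$; here I must combine the ``no edges in $\overline{S}$'' requirement with the bound on the length of such a run to show the two ends of the run stay comparable. I expect this step to be the main obstacle, since it is exactly where the combinatorics of consecutive non-scaffold edges enters.

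Assume first that such a $\prec$ exists; I verify the three scaffold axioms. For injectivity (axiom~(1)), if $e \neq e'$ are both in $S$ with $e_+ = e_+'$, then (A) forces $e \prec e'$ and $e' \prec e$, contradicting strictness; the case $e_- = e_-'$ is identical after applying (A) to $\overline{e'}$, whose head is $e_-$. For the absence of an infinite decreasing path (axiom~(2)), the bridge turns such a path into a $\prec$-nonincreasing sequence with a strict descent at each edge of $\overline{S}$, of which there are infinitely many; this is an infinite strictly descending chain, contradicting the partial well-order (B). For the input/output alternative (axiom~(3)), an increasing path from an interior $p$ to an interior $y$ has, by the bridge, $\overline{e_0} \preceq e_n$, where $(\overline{e_0})_+ = p$ and $(e_n)_+ = y$; condition (C) then yields $p \in S_+$ or $y \in S_-$.

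For the converse, assume $S$ is a scaffold and build $\prec$ as the transitive closure of the relation generated by the two clauses of (A) (each edge of $S$ placed below every edge incident to its head and above every edge incident to its tail). Condition (A) then holds by construction. To see that $\prec$ is a genuine strict order I must exclude cycles in this generating relation; unwinding such a cycle yields a closed increasing walk in $G$, and since reversing an increasing cycle produces a decreasing cycle, axiom~(2) rules it out. The same unwinding sends an infinite $\prec$-descending chain to an infinite decreasing path, again excluded by~(2), which gives the partial well-order (B). Finally, a comparison $e \preceq e'$ unwinds to an increasing path from $e_+$ to $e_+'$, so when both heads are interior axiom~(3) delivers $p \in S_+$ or $q \in S_-$, which is condition (C).

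The technical heart of the converse, and the place most likely to require care, is the ``unwinding'' step: translating a chain in the transitive closure back into an honest increasing path in $G$ with consistent orientations, so that axioms~(2) and~(3) can be invoked. This bookkeeping is the mirror image of the monotonicity bridge used in the forward direction, and getting the orientation conventions to line up across the two directions is where I would concentrate the effort.
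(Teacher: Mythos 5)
Your overall architecture --- deduce the three scaffold axioms from (A)--(C) via a path-to-chain ``bridge,'' and conversely build the order from the local relations and unwind chains back into paths --- is exactly the structure of the paper's (very terse) proof. The problem is the step you flag as ``the main obstacle'': comparing the two ends of a run of consecutive edges of $E \setminus S$. Under the reading of the definition you are using (runs of non-scaffold edges of length two allowed), this step is not merely hard, it is impossible, and the lemma itself becomes false. Indeed, if $f, g \notin S$ are adjacent, neither clause of (A) relates them, since both clauses require an edge of $S$ at the shared vertex. Concretely: take the path $p \to w \to u \to z \to y$ with $s = (p \to w) \in S$, $f = wu \notin S$, $g = uz \notin S$, $t = (z \to y) \in S$, adjoin two more scaffold edges $t_w = (w \to c)$ and $t_z = (d \to z)$, and let $p,w,z,y$ be interior and $u,c,d$ boundary. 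The transitive closure of the local relations forced by (A) is $s \prec f \prec t_w$ and $t_z \prec g \prec t$; this is a strict partial order satisfying (A), (B), and (C) (a violation of (C) would need an edge with head $p$ below an edge with head $y$, i.e.\ $s \preceq t$, which fails), yet the path $(s,f,g,t)$ runs from $p \notin S_+$ to $y \notin S_-$, so scaffold condition (3) fails if that path counts as increasing. The resolution is that the paper's definition is to be read as forbidding \emph{any} two consecutive edges of $E \setminus S$: this is how increasing paths are described in \S \ref{subsec:formalizing} (``no two blue edges in a row'') and how they are used in the proof of the pushforward lemma preceding Lemma \ref{lem:scaffunctor} (``at most one consecutive edge not in $f^{-1}(S)$''). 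With that reading, every consecutive pair of edges in an increasing path contains an edge of $S$, your two case computations at the shared vertex cover all cases, every step of the bridge is strict, and your first direction closes with no residual obstacle.

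Granting that reading, your second direction is essentially the paper's as well: the transitive closure of the local relations of (A) generates the same order as the paper's definition ($e \prec e'$ iff $e \neq e'$ and there is an increasing path from $e$ to $e'$), so the difference is cosmetic. The unwinding bookkeeping you defer is real but routine once you record, for each local comparison, its witness vertex: consecutive witnesses are joined by the intermediate edge oriented from one witness to the next, scaffold axiom (1) (plus excluding self-loops) rules out inconsistent orientation demands, and when two successive witnesses coincide the intermediate edge is simply dropped, leaving two scaffold edges that are already path-consecutive. Two small holes to patch on both sides: first, (A) must be understood with $e'$ ranging over edges distinct from $e$ in $E/\bar{~}$, else clause two applied to $e' = \overline{e}$ forces $e \succ e$; second, your derivation of axiom (3) from (C) via $\overline{e_0} \preceq e_n$ presupposes a nonempty path, so trivial paths at an interior vertex $p$ (and, in the converse, the pairs $e \preceq e$ and $e \preceq \overline{e}$ demanded by (C)) must be handled separately by applying (C), respectively axiom (3), to a single edge with head $p$ --- which also shows both the paper's proof and yours silently require $p$ to have at least one incident edge.
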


\begin{proof}
If $S$ is a scaffold, then we define the partial order by $e \prec e'$ if $e \neq e'$ and there is an increasing path from $e$ to $e'$.  Since there are no cycles by (2), this relation is irreflexive.  The local comparison conditions follow from the definition of increasing path.  The partial well-order condition follows from (2), and the input-output alternative follows from (3).

Conversely, suppose we have a partial order satisfying (A) - (C).  Then (A) implies that if $e$ and $e'$ are distinct elements of $S$, then $e_+ \neq e_+'$.  The partial order properties and (B) imply that there are no increasing cycles or decreasing infinite paths, hence (2) is satisfied.  Finally, (C) implies (3).
\end{proof}

\begin{remark*}
There can be more than one partial order associated to a given scaffold.  More precisely, we can take any partial order that contains the partial order constructed in the proof.  The partial order constructed in the proof will be called the partial order {\bf induced by} $S$.
\end{remark*}

\subsection{Scaffolds and Layerability} \label{subsec:scaflayerability}

There is a connection between scaffolds and layerability that stems from the following simple observation:

\begin{lemma} \label{lem:minimality}
Let $S$ be a scaffold on $G$ and $\prec$ the partial order induced by $S$.  Let $e \not \in \End S$.  If $e$ is minimal with respect to $\prec$, then it is a boundary spike or boundary edge.
\end{lemma}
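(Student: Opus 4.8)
The plan is to turn the global minimality of $e$ into purely local conditions at its two endpoints, and then read off that any interior endpoint would push $e$ (in one of its orientations) into $\End S$. I will work with the partial order $\prec$ induced by $S$ through its local description in Lemma~\ref{lem:scafequivalentdef}: for $g \in S$ one has $g \prec g'$ whenever $g_+ = g'_+$, and $g \succ g'$ whenever $g_- = g'_+$. These are the only relations I need, so the argument is entirely local and uses neither condition (2) nor condition (3) of a scaffold.

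First I case on whether the unoriented edge underlying $e$ belongs to $S$. Suppose $e \in S$ (choosing this orientation). Applying the second local comparison condition with the distinguished $S$-edge equal to $e$, every edge $h$ with $h_+ = e_-$ satisfies $h \prec e$; by minimality the only such $h$ is $\overline e$, so $\deg(e_-) = 1$. Suppose instead that neither orientation lies in $S$. Then an $S$-edge with head $e_+$ would, by the first local comparison condition, be a predecessor of $e$, and an $S$-edge with head $e_- = \overline e{}_+$ would be a predecessor of $\overline e$; since $\overline e$ and $e$ occupy the same position in $\prec$, minimality forces $e_+ \notin S_+$ and $e_- \notin S_+$, i.e.\ neither endpoint is an output.

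Now I bring in the hypothesis $e \notin \End S$. The mechanism is that a one-edge decreasing path detects the End: if $g \notin S$ and $g_+$ is an interior non-output, then $(g)$ is a decreasing path to an interior non-output, so $g \in \End S$. In the case $e \in S$: were $e_-$ interior, its unique in-edge $\overline e$ would lie outside $S$, making $e_-$ a non-output, and then $(\overline e)$ would witness $\overline e \in \End S$; so $e_-$ is in fact a boundary vertex, and since $\deg(e_-) = 1$ the edge $e$ is a boundary spike when $e_+ \in V^\circ$ and a boundary edge when $e_+ \in \partial V$. In the case where the edge avoids $S$: were $e_+$ interior it would be an interior non-output, so $(e)$ would give $e \in \End S$; were $e_-$ interior it would likewise be an interior non-output, so $(\overline e)$ would give $\overline e \in \End S$. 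Hence both endpoints are boundary and $e$ is a boundary edge.

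I expect the only real subtlety to be the orientation bookkeeping around $\End S$. Because a decreasing path may not begin with an $S$-edge, an edge $e \in S$ is automatically \emph{outside} $\End S$ while its reverse need not be; the genuine obstruction in the case $e \in S$ lives at the tail and is detected by $\overline e$. Thus the hypothesis $e \notin \End S$ must be read for the unoriented edge (equivalently, neither orientation of $e$ lies in $\End S$), and the lone configuration it rules out is precisely a degree-$1$ interior tail of a minimal $S$-edge. Once this point is fixed, nothing else is delicate: every step is a zero- or one-edge path together with a local comparison.
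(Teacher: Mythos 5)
Your proof is correct and follows essentially the same route as the paper's: split on whether $e \in S \cup \overline{S}$, use minimality together with the local comparison conditions to pin down the local structure ($e_-$ of degree $1$ in the first case, neither endpoint an output in the second), and then invoke $e \not\in \End S$ to force the relevant endpoints to be boundary vertices. Your explicit treatment of the orientation bookkeeping — reading $e \not\in \End S$ for the unoriented edge, with the one-edge decreasing path $(\overline{e})$ as the witness — makes precise a point the paper's proof passes over silently.
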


\begin{proof}
Suppose that $e \in S \cup \overline{S}$ and assume without loss of generality that $e \in S$.  If $e$ is minimal, then the local comparison conditions imply that $e_-$ has no neighbors.  Since $e \not \in \End S$, $e_-$ cannot be an interior vertex.  Thus, $e$ is a boundary spike if $e_+ \in V^\circ$ and a boundary edge if $e_+ \in \partial V$.

Suppose that $e \not \in S \cup \overline{S}$.  If $e$ is minimal, then $e_+$ and $e_-$ cannot be in $S_+$ by the local comparison conditions.  Since $e_+$ and $e_-$ are not in $\End S$, they must be boundary vertices.  Thus, $e$ is a boundary edge.
\end{proof}

By induction, this ought to imply that if $S$ is a scaffold with $\End S = \varnothing$, then $G$ is layerable.  Conversely, given a layerable filtration $G = G_0 \supset G_1 \supset \dots$, then we can create a scaffold with $\End S = \varnothing$ by letting $S$ be the set of boundary spikes removed at some step of the filtration and defining a partial order on the edges based on which step of the filtration they are removed at.  The next technical lemma makes this idea precise; it may be omitted on a first reading.

\begin{lemma} \label{lem:layerableequiv}
For a $\partial$-graph $G$ with no isolated interior vertices, the following are equivalent:
\begin{enumerate}[a.]
	\item $G$ is layerable.
	\item There exists a scaffold $S$ with $\End S = \varnothing$.
	\item For any $e \in E(G)$, there is a scaffold $S$ with $e \not \in \End S$.
	\item For any $e \in E(G)$, there is a finite partial layerable filtration $G = G_0 \supset \dots \supset G_n$ with $e \not \in E(G_n)$.
\end{enumerate}
\end{lemma}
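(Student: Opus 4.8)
The plan is to prove the equivalences as a single cycle (a) $\Rightarrow$ (b) $\Rightarrow$ (c) $\Rightarrow$ (d) $\Rightarrow$ (a). The two ``global'' conditions (a) and (b) will be linked to the two ``local/finite'' conditions (c) and (d) by a down-set argument in one direction and a countable exhaustion argument in the other. The hypothesis that $G$ has no isolated interior vertices is used only to rule out degeneracies: an isolated interior vertex can never be removed and lies in no scaffold edge, so it would defeat every one of (a)--(d) at once.

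For (a) $\Rightarrow$ (b) I would take a layerable filtration $G = G_0 \supset G_1 \supset \cdots$ with $\bigcap_j G_j = \varnothing$. Because the intersection is empty, every edge is deleted or contracted at a well-defined finite stage $t(e) \in \N$ (with $t(e) = t(\overline e)$), and every interior vertex, being removed eventually, must first turn into a boundary vertex, which can only happen by contracting a spike having it as interior endpoint. Let $S$ be the set of all contracted boundary spikes, each oriented so that $e_+$ is its interior endpoint, and set $e \prec e'$ iff $t(e) < t(e')$. I would then verify the three conditions of Lemma \ref{lem:scafequivalentdef}: the local comparison conditions hold because every other edge at an interior vertex $q$ survives until after the spike into $q$ is contracted, while the spike's boundary endpoint has degree one, which makes the second comparison vacuous; the partial well-order is immediate since $t$ takes values in $\N$; and the input/output alternative holds because \emph{every} interior vertex is an output. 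This last fact also yields $\End S = \varnothing$, since a point of the End can only be witnessed by an interior non-output. The implication (b) $\Rightarrow$ (c) is then trivial, as the same $S$ works for every $e$.

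For (c) $\Rightarrow$ (d), fix $e$ and a scaffold $S$ with $e \not\in \End S$, and let $\prec$ be the induced order. The key finiteness input is that the down-set $D = \{e' : e' \preceq e\}$ is \emph{finite}: it is exactly the set of edges reachable from $e$ by decreasing paths, which form a finitely-branching tree (local finiteness) with no infinite branch (scaffold condition (2)), so König's lemma applies. Since $\End S$ is upward closed under $\preceq$, every element of $D$ avoids $\End S$; hence each $\preceq$-minimal element of $D$ is $\prec$-minimal in $E$ and, by Lemma \ref{lem:minimality}, is a boundary spike or boundary edge. I would then choose a linear extension $e'_1 \prec' \cdots \prec' e'_m = e$ of $D$ and strip the $e'_i$ in this order. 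At each stage the restricted set $S \cap E(G_{i-1})$ is again a scaffold, since its increasing paths are inherited from those of $G$, so conditions (1)--(3) and the containment $\End(S \cap E(G_{i-1})) \subseteq \End S$ all survive the restriction; and $e'_i$ is minimal there, so Lemma \ref{lem:minimality} again makes it a boundary spike or edge of $G_{i-1}$. After $m$ steps $e$ has been removed, giving the required finite partial filtration.

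The main obstacle is (d) $\Rightarrow$ (a), which must manufacture a single global filtration out of infinitely many unrelated finite ones. I would do this by a countable exhaustion: enumerate the edges $e_1, e_2, \dots$ and build an increasing chain of finite partial filtrations, at stage $k$ appending operations that remove $e_k$ from the current graph $H_{k-1}$, so that the union is a filtration with empty intersection. For this to work I need condition (d) to be \emph{inherited} by layer-strippings of $G$, that is, that $H_{k-1}$ again satisfies (d). I expect this heredity to be the delicate point. Deleting a boundary edge only lowers degrees and therefore leaves every operation of a given filtration valid (and can only create new spikes), so one may simply replay the filtration; but contracting a boundary spike $f$ converts its interior endpoint into a boundary vertex, and replaying a filtration of $G$ may require reinterpreting later operations, e.g.\ a spike contraction at that vertex becomes a boundary-edge deletion possibly followed by an isolated-vertex deletion. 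Establishing that such a reinterpretation is always available is a case analysis in the same spirit as the pullback argument of Lemma \ref{lem:layerstrippingfunctoriality}, and it is the one place where genuine care is required. Once heredity is in hand, the exhaustion closes the cycle and proves the lemma.
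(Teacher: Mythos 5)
Your implications (a)$\Rightarrow$(b)$\Rightarrow$(c)$\Rightarrow$(d) are correct and essentially identical to the paper's: the scaffold of contracted spikes ordered by removal time, and finiteness of the down-set $D = \{e' : e' \preceq e\}$ followed by stripping $D$ in nondecreasing order. (The paper gets finiteness of $D$ from a minimal-counterexample argument rather than K\"onig's lemma; the two are interchangeable. One small slip: the second comparison condition at the boundary endpoint $p$ of a contracted spike is not vacuous, because $p$ is only required to have degree $1$ in the graph $G_j$ where the contraction happens, not in $G$; the condition still holds, since every other edge of $G$ at $p$ was removed at a strictly earlier stage and is therefore $\prec$ the spike.)

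The gap is in (d)$\Rightarrow$(a), which you leave open at exactly its crux: heredity of (d) under layer-stripping. No new case analysis is needed there --- Lemma \ref{lem:layerstrippingfunctoriality}, already proved, does the whole job once you notice that the inclusion of a harmonic subgraph is a UHM. If $H$ is the current graph, obtained from $G$ by layer-stripping operations, and $G = G_0 \supset \dots \supset G_n$ is a finite partial filtration removing $e_k$, then applying Lemma \ref{lem:layerstrippingfunctoriality} to the inclusion $H \hookrightarrow G$ shows that each step $H \cap G_j \supset H \cap G_{j+1}$ (the preimage of $G_j \supset G_{j+1}$) is a sequence of layer-stripping operations; hence $H \supset H \cap G_1 \supset \dots \supset H \cap G_n$ is a partial filtration of $H$ with $e_k \notin E(H \cap G_n)$. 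This intersection trick is precisely the paper's proof, so your ``reinterpretation'' worry dissolves into a citation. Two further corrections. First, a union of such finite filtrations removes every edge but not every vertex, so its intersection is not yet empty as the definition of layerable demands; one must interleave isolated-boundary-vertex deletions, and this is where (and only where) the hypothesis of no isolated interior vertices is used. Second, your opening remark that an isolated interior vertex ``would defeat every one of (a)--(d) at once'' is false for (d): condition (d) only asks to remove edges, so a graph consisting of one boundary edge plus a far-away isolated interior vertex satisfies (d) while failing (a), (b), and (c). The hypothesis is therefore not a blanket nondegeneracy assumption; it is exactly what makes (d)$\Rightarrow$(a) true.
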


\begin{proof}
(a) $\implies$ (b).  Consider a layerable filtration $G = G_0 \supset G_1 \supset \dots$, and assume withoul loss of generality that each step in the filtration only involves one type of layer-stripping operation.  Let $S$ be the set of oriented edges that are removed as boundary spikes at some step of the filtration, oriented so that $e_-$ is the boundary vertex and $e_+$ is the interior vertex.  Define a partial order $e \prec e'$ if $e$ is removed at a strictly earlier step than $e'$.  Then similar reasoning as in Lemma \ref{lem:minimality} shows that the local comparison conditions are satisfied.  The partial well-order condition is satisfied because any subset has an edge removed at the minimal-indexed step of the filtration.

The input-output alternative and $\End S = \varnothing$ will be satisfied if we show that every interior vertex is in $S_+$.  But every interior vertex must be removed at some step of the filtration.  To be removed, it must have been changed into a boundary vertex, and the only way that can happen is if it was the interior endpoint of some boundary spike which was contracted.  Hence, the vertex must be in $S_+$.

(b) $\implies$ (c) is trivial.

(c) $\implies$ (d).  Consider a scaffold $S$ with the induced partial order.  I claim that for any $e \in E(G)$, there are only finitely many edges $e \preceq e_0$.  If we suppose not, then there is a minimal edge $e_0$ for which the claim does not hold.  There are only finitely many edges $e_1, \dots, e_n$ which incident to and less than $e_0$, and $\{e \preceq e_0\} = \bigcup_{j=1}^n \{e \preceq e_j\} \cup \{e_0\}$ since any increasing path which ends at $e_0$ must pass through one of the $e_j$'s.  By minimality of $e_0$, $\{e \preceq e_j\}$ is finite, which implies $\{e \preceq e_0\}$ is finite, which is a contradiction.

Now choose $e$.  Let $e_1, \dots, e_k = e$ be the edges $\preceq e$.  We can assume they are listed in some nondecreasing order.  Let $G_0 = G$.  Then $e_1$ is a minimal edge in $G_0$.  By the Lemma, this edge is a boundary spike or boundary edge.  Let $G_1$ be the graph formed by deleting or contracting this edge as appropriate.  Then $e_2$ is a minimal edge in $G_1$, hence a boundary spike or boundary edge.  So (d) follows by induction.

(d) $\implies$ (a).  We assumed in \S 1 that our graphs have countably many edges, so we can write them in a sequence $e_1,e_2,\dots$.  For each $e_n$, choose a $k_n$ and a sequence of subgraphs $G = G_{n,1} \supset \dots \supset G_{n,k_n}$ as in (d).  Then consider the following filtration:
\begin{align*}
&G = G_{1,1}, \quad G_{1,2}, \quad \dots \quad G_{1,k_1}, \\
&G_{1,k_1} \cap G_{2,1}, \quad G_{1,k_1} \cap G_{2,2}, \quad \dots \quad G_{1,k_1} \cap G_{2,k_2} \\
&G_{1,k_1} \cap G_{2,k_2} \cap G_{3,1}, \quad \dots \quad G_{1,k_1} \cap G_{2,k_2} \cap G_{3,k_3} \\
& \dots \dots
\end{align*}
The consecutive elements of this sequence are obtained by a sequence of layer-stripping operations (by Lemma \ref{lem:layerstrippingfunctoriality} applied to inclusion maps of subgraphs).  Thus, we have a layerable filtration which removes all the edges in the graph (but not necessarily all the vertices).  We can obtain a new filtration by replacing each layer-stripping operation with two layer-stripping operations | first remove the edges in the original layer-stripping, then remove any isolated boundary vertices.  The new filtration will remove all the vertices in the graph as well as all the edges since there are no isolated interior vertices.
\end{proof}

\subsection{Scaffolds and Harmonic Continuation} \label{subsec:scafcontinuation}

Recall from the introduction that to recover boundary spikes and boundary edges, we had to prove two claims:  First, there was an \emph{existence} statement that there was some harmonic function with specified boundary conditions, and second there was a \emph{uniqueness} statement that any harmonic function with these boundary conditions was forced to be zero on some region of the network.

We will show how to use scaffolds to verify both the existence and uniqueness claims.  The idea is exactly the same as in \S \ref{subsec:recoveryexample}, though there are some technical subtleties.  First, for each statement, we will only assume a scaffold is defined on a relevant subgraph, so that our statements can be used for harmonic continuation in more general situations (although we will not make full use of this generality).  For infinite $\partial$-graphs it is convenient to use Zorn's-lemma type arguments, which necessitates proof by contradiction.

In both statements, we will use harmonic functions defined on subgraphs, but the subgraphs we use must be compatible with the partial order $\prec$ induced by our scaffold.  We thus make the following definitions.

\begin{definition}
Let $\prec$ be a partial order on $E / \bar{~}$.  We say $T \subset E$ is an {\bf initial subset} if $e \prec e' \in T$ implies $e \in T$.
\end{definition}

\begin{definition}
Let $T \subset E'$.  The {\bf subgraph induced by $T$} is the subgraph $G_T \subseteq G$ with edge set given by $T$ and vertex set given by the endpoints of $T$, where a vertex is interior if and only if it is interior in $G$ and all edges incident to it are in $T$.  In other words, we use the largest possible set of interior vertices that will make $G_T$ a harmonic subgraph.
\end{definition}

\begin{definition}
Given a partial order on $E / \bar{~}$, an {\bf initial subgraph} of $G$ is a subgraph induced by an initial set of edges.
\end{definition}

\begin{lemma}[HC: Uniqueness] \label{lem:hcexistence}
Let $\Gamma'$ be a subnetwork of $\Gamma$ and $S$ a scaffold on $\Gamma'$.  Assume that $V(\Gamma') \setminus S_+ \subset \partial V(\Gamma)$.  If $u$ is a harmonic function on $\Gamma$, then the values of $u$ on $\Gamma'$ are uniquely determined by
\[
u \restriction \partial V(\Gamma) \cap V(\Gamma') \text{ and } \Delta u \restriction \partial V(\Gamma) \cap S_-.
\]
\end{lemma}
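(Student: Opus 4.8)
The plan is to prove uniqueness by showing that the specified boundary data, together with harmonicity, forces $u$ to be determined on all of $\Gamma'$, proceeding along the partial order $\prec$ induced by the scaffold $S$. The key insight is that each edge in $S$ carries information ``forward,'' and the condition $V(\Gamma') \setminus S_+ \subset \partial V(\Gamma)$ guarantees that every interior vertex of $\Gamma$ lying in $\Gamma'$ is an output of $S$, i.e., has an $S$-edge entering it. Since two harmonic functions agreeing on the prescribed data differ by a harmonic function $v$ with $v = 0$ on $\partial V(\Gamma) \cap V(\Gamma')$ and $\Delta v = 0$ on $\partial V(\Gamma) \cap S_-$, it suffices to show such a $v$ vanishes identically on $\Gamma'$.

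First I would set up the contradiction and invoke the partial well-order. Suppose $v$ is not identically zero on $V(\Gamma')$; equivalently, suppose some potential or some current fails to be forced to zero. I would consider the set of edges $e \in E(\Gamma')$ for which the ``continuation to zero'' fails — either $v$ is nonzero on an endpoint, or the current $w(e)dv(e)$ is nonzero — and take a $\prec$-minimal such edge $e_0$, using the Partial Well-Order condition (B) from Lemma \ref{lem:scafequivalentdef}. The goal is to derive a contradiction at $e_0$ by examining the two cases of whether $e_0 \in S \cup \overline{S}$ or not, exactly mirroring the worked example in \S \ref{subsec:recoveryexample} where blue edges (not in $S$) propagate potential-equals-zero into current-equals-zero, and orange edges (in $S$) propagate current into potential.

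The heart of the argument is the local step. If $e_0 \notin S \cup \overline{S}$, then by the Local Comparison Conditions (A), every $S$-edge incident to $(e_0)_+$ or $(e_0)_-$ is $\prec e_0$, so by minimality the potential $v$ already vanishes at both endpoints of $e_0$, whence the current on $e_0$ is zero — contradicting that $e_0$ was a failure. If $e_0 \in S$, I would use that $(e_0)_-$ is, by minimality and the comparison conditions, a vertex at which all \emph{other} incident edges precede $e_0$ and hence carry zero current and zero potential; if $(e_0)_-$ is interior in $\Gamma$ then harmonicity $\Delta v((e_0)_-) = 0$ forces the current on $e_0$ to zero, and combined with $v((e_0)_-) = 0$ this forces $v((e_0)_+) = 0$; if $(e_0)_-$ is a boundary vertex of $\Gamma$ then it lies in $\partial V(\Gamma) \cap S_-$, so the hypothesis $\Delta v \restriction \partial V(\Gamma) \cap S_- = 0$ supplies the needed current relation directly. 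Either way we again reach a contradiction, completing the induction.

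The step I expect to be the main obstacle is handling interior vertices of $\Gamma'$ that are \emph{boundary} vertices of $\Gamma$ but whose full star in $\Gamma$ is not contained in $\Gamma'$: the Laplacian $\Delta v$ computed in $\Gamma$ may receive contributions from edges outside $\Gamma'$, so I cannot naively apply Kirchhoff's law within the subnetwork. This is precisely why the hypothesis is phrased using $\partial V(\Gamma)$ rather than $\partial V(\Gamma')$, and why the current hypothesis is imposed on $\partial V(\Gamma) \cap S_-$ rather than all of $\partial V(\Gamma)$ — the condition $V(\Gamma') \setminus S_+ \subset \partial V(\Gamma)$ ensures that every vertex interior to $\Gamma$ is an output, so the only vertices where we must \emph{input} current data are boundary vertices of $\Gamma$ that are inputs of $S$, exactly where the data is supplied. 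I would need to verify carefully that the minimal edge $e_0$ never requires Kirchhoff balance at a vertex where $v$'s Laplacian is contaminated by edges external to $\Gamma'$; the scaffold axioms, via the characterization in Lemma \ref{lem:scafequivalentdef}, are designed to make this bookkeeping go through, but it is the place where a sloppy argument would break.
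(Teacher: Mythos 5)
Your proposal is essentially the paper's own proof. The paper also inducts along the partial order induced by $S$: it tracks the set $T$ of edges of $\Gamma'$ on whose endpoints the two harmonic functions agree (rather than a single difference function, an immaterial change), takes a minimal element of $E(\Gamma') \setminus T$, and runs exactly your two cases --- for a minimal bad edge not in $S \cup \overline{S}$, each endpoint is either in $\partial V(\Gamma)$ or, by the hypothesis $V(\Gamma') \setminus S_+ \subset \partial V(\Gamma)$, the output of an earlier edge of $S$, so its potential is already forced; for a minimal bad edge $e \in S$, Kirchhoff's law at $e_-$ supplies the current on $e$, the current datum coming from $\Delta u \restriction \partial V(\Gamma) \cap S_-$ if $e_- \in \partial V(\Gamma)$ and from harmonicity if $e_-$ is interior.

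However, the obstacle you flag in your final paragraph is genuine, and neither your sketch nor the paper's proof discharges it: the scaffold axioms alone do \emph{not} make the bookkeeping go through. Kirchhoff's law at $e_-$ involves every edge of $\Gamma$ incident to $e_-$, while the comparison conditions of Lemma \ref{lem:scafequivalentdef} only order edges of $\Gamma'$; the paper's assertion that ``all the other edges incident to $e_-$ are in $T$'' silently assumes that no $\Gamma$-edge at $e_-$ lies outside $E(\Gamma')$, and this does not follow from the stated hypotheses. A concrete counterexample: over $\mathbb{F}_3$, let $\Gamma$ be a triangle on vertices $a, c, d$ with all edge weights equal to $1$, where $a, d$ are boundary and $c$ is interior; let $\Gamma'$ consist of the single edge $ac$ with both endpoints declared boundary in $\Gamma'$, and let $S = \{a \to c\}$. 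Every hypothesis of the lemma holds, yet the function $u(a) = 0$, $u(c) = 2$, $u(d) = 1$ is harmonic on $\Gamma$ with $u(a) = \Delta u(a) = 0$, so $u$ and the zero function have the same prescribed data but differ on $V(\Gamma')$. The missing hypothesis is that the full $\Gamma$-star of every vertex of $S_-$ is contained in $E(\Gamma')$. This is exactly what holds in the one place the lemma is applied (Lemma \ref{lem:recovery}): there $\Gamma'$ is induced by the downward-closed edge set $\{e' \prec e\}$ of a scaffold defined on all of $\Gamma$, so the comparison conditions plus transitivity force every $\Gamma$-edge incident to an input vertex of the restricted scaffold to lie in $\Gamma'$ already. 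With that hypothesis added, both your argument and the paper's are complete.
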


\begin{proof}
Let $u$ and $v$ be two harmonic functions on $\Gamma$ such that
\[
u \restriction \partial V(\Gamma) \cap V(\Gamma') = v \restriction \partial V(\Gamma) \cap V(\Gamma')
\]
and
\[
\Delta u \restriction \partial V(\Gamma) \cap S_- =  \Delta v \restriction \partial V(\Gamma) \cap S_-.
\]
Let $T$ be the set of oriented edges $e$ of $\Gamma'$ such that $u$ and $v$ agree on both endpoints of $e$.  Suppose for contradiction $T$ is not all of $E(\Gamma')$.  Then there is a minimal element of $E(\Gamma') \setminus T$ with respect to the scaffold $S$.
\begin{itemize}
	\item If $e \in S \cup \overline{S}$, we can assume $e \in S$.  If $e_- \in \partial V(\Gamma)$, then $u(e_-) = v(e_-)$ and $\Delta u(e_-) = \Delta v(e_-)$ by assumption.  Moreover, all the other edges incident to $e_-$ are in $T$ by minimality of $e$.  This forces $du(e) = dv(e)$ and hence $u(e_+) = v(e_+)$, which is a contradiction.  On the other hand, if $e_- \not \in \partial V(\Gamma)$, then $e_- \in S_+$ by assumption.  Thus, $e_-$ is the output of some other edge $e'$ in $S$.  By minimality of $e$, we have $e' \in T$ and hence $u(e_-) = v(e_-)$.  Moreover, $\Delta u(e_-) = 0 = \Delta v(e_-)$ since $e_-$ is interior.  Since all the other edges incident to $e_-$ are in $T$, we once again have a contradiction.
	\item Suppose $e \not \in S \cup \overline{S}$.  Each endpoint of $e$ must either be a boundary vertex or the output of some edge in $S \cap T$.  In either case, $u(e_+) = v(e_+)$ and $u(e_-) = v(e_-)$, which implies $e \in T$, which is a contradiction. \qedhere
\end{itemize}
\end{proof}

\begin{lemma}[HC: Existence] \label{lem:hcuniqueness}
Let $\Gamma'$ be a subnetwork of $\Gamma$ and $S$ a scaffold on $\Gamma'$.  Assume that $V(\Gamma') \setminus S_- \subset \partial V(\Gamma)$.  Then any harmonic function on $\Gamma \setminus \Gamma'$ extends to a harmonic function on $\Gamma$.
\end{lemma}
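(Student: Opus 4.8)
The plan is to build the extension by \emph{discrete harmonic continuation} along the partial order $\prec$ induced by $S$ (Lemma \ref{lem:scafequivalentdef}), running the same machine as in the Uniqueness lemma (Lemma \ref{lem:hcexistence}) but forwards: instead of forcing two functions to agree, I successively \emph{define} the one function we want. Write $v$ for the given harmonic function on $\Gamma \setminus \Gamma'$. Then $u$ is already pinned down on $V(\Gamma) \setminus V^\circ(\Gamma')$, and the only freedom is the choice of $u$ on the interior vertices $V^\circ(\Gamma')$. First I would observe that harmonicity of the eventual $u$ is automatic at every interior vertex $p$ of $\Gamma$ lying outside $V(\Gamma')$: all edges at such a $p$, and all of its neighbours' values, belong to $\Gamma \setminus \Gamma'$, so $\Delta_\Gamma u(p) = \Delta_{\Gamma \setminus \Gamma'} v(p) = 0$. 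Thus the whole task reduces to choosing $u$ on $V^\circ(\Gamma')$ so that Kirchhoff's law holds at the interior-in-$\Gamma$ vertices of $\Gamma'$; and by the hypothesis $V(\Gamma') \setminus S_- \subset \partial V(\Gamma)$, \emph{every} such vertex is an input, hence carries exactly one outgoing scaffold edge down which its net current can be pushed. This is the exact sense in which the hypothesis removes every ``obstacle to existence.''

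Because $\Gamma'$ may be infinite, I would organize the construction as a Zorn's-lemma argument. Consider the poset of \emph{partial extensions}: pairs $(T, u_T)$ where $T \subset E(\Gamma')$ is initial with respect to $\prec$, and $u_T$ is a function on $V(\Gamma) \setminus V^\circ(\Gamma')$ together with the endpoints of $T$, agreeing with $v$ off $V^\circ(\Gamma')$ and satisfying $\Delta_\Gamma u_T(p) = 0$ at every interior-in-$\Gamma$ vertex $p$ whose outgoing scaffold edge lies in $T$. Order these by extension. A chain has an upper bound: the union of the $T$'s is again initial, and the $u_T$'s agree on overlaps and so admit a common extension. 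Zorn then yields a maximal element $(T^\ast, u^\ast)$, and the proof is complete once I show $T^\ast = E(\Gamma')$, since then every interior vertex of $\Gamma'$ has all its incident edges in $T^\ast$ and the balance conditions recorded there, together with the automatic balance off $\Gamma'$, make $u^\ast$ harmonic on all of $\Gamma$.

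The heart of the matter is the extension step, which is where I expect the only real difficulty. Suppose $T^\ast \neq E(\Gamma')$ and choose $e$ minimal with respect to $\prec$ among the edges outside $T^\ast$; minimality is available because $\prec$ is a partial well-order (condition B of Lemma \ref{lem:scafequivalentdef}), and $T^\ast \cup \{e, \overline e\}$ is again initial. If $e \in S \cup \overline{S}$, say $e \in S$, then the local comparison conditions (condition A) force every edge incident to the tail $e_-$ other than $e$ to precede $e$, hence to lie in $T^\ast$; so $u^\ast$ is already defined at $e_-$ and at all neighbours of $e_-$ save possibly $e_+$ (defining $u^\ast(e_-)$ itself from $v$, or by an arbitrary choice if $e_-$ is an interior start of its path not yet covered). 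When $e_-$ is interior in $\Gamma$ I solve the single equation $\Delta_\Gamma u(e_-) = 0$ for the one remaining unknown $u(e_+)$, which is always possible because its coefficient is $\pm w(e) \neq 0$; when $e_-$ is boundary in $\Gamma$ there is no constraint there and $u(e_+)$ may be set freely. The crucial consistency point is that the value $u(e_+)$ I assign has not been pinned at any earlier stage: since $e \mapsto e_+$ is injective on $S$, the edge $e$ is the unique scaffold edge with head $e_+$, so no previously processed scaffold edge could have determined $u(e_+)$, and the input/output alternative (condition C) prevents $e_+$ from being simultaneously forced by an already-balanced interior vertex. The case $e \notin S \cup \overline{S}$ adds no new vertex once one checks, again via conditions A and C, that both endpoints of $e$ are already covered. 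In every case $(T^\ast \cup \{e, \overline e\}, u^\ast)$ is a strictly larger partial extension, contradicting maximality; hence $T^\ast = E(\Gamma')$ and $u^\ast$ is the required harmonic extension. The subtle part to nail down carefully will be precisely this no-clobber guarantee — that harmonic continuation never revisits a vertex whose value the scaffold has already committed — and I would expect to lean on injectivity of $e \mapsto e_+$ together with condition C exactly as above.
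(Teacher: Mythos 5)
You follow the paper's own proof almost line for line: the same Zorn's lemma over partial extensions indexed by $\prec$-initial edge sets, the same passage to a $\prec$-minimal unprocessed edge $e$, and the same extension step in which $u(e_+)$ is chosen to kill the net current at $e_-$. The bookkeeping (initial segments, partial well-order, local comparison, the case $e \notin S \cup \overline{S}$) is sound. The gap is in the step you yourself flag as the crux. Your two reasons for the ``no-clobber'' property --- injectivity of $e \mapsto e_+$ on $S$, and condition (C) --- rule out interference from other scaffold edges and from previously balanced vertices, but not the third possibility: that $u(e_+)$ is pinned by the \emph{initial data}. As you note at the outset, $u$ must agree with $v$ on all of $V(\Gamma) \setminus V^\circ(\Gamma')$, and this set contains every vertex of $\Gamma'$ that is not interior in $\Gamma'$ --- in particular every boundary vertex of $\Gamma$ lying in $V(\Gamma')$. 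The hypothesis $V(\Gamma') \setminus S_- \subset \partial V(\Gamma)$ only makes $e_+$ an input of $S$ or a boundary vertex of $\Gamma$; it does \emph{not} make $e_+ \in V^\circ(\Gamma')$. If $e_- \in V^\circ(\Gamma)$ while $e_+ \notin V^\circ(\Gamma')$, then $\Delta u(e_-) = 0$ is an equation with no free unknown, and the extension step collapses.

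This cannot be repaired by a cleverer argument, because the statement as given is false. Let $\Gamma$ be the path $a - b - c$ with $a, c \in \partial V$, $b \in V^\circ$, $w(ab) = 1$, $w(bc) = -1$. Take $\Gamma' = \Gamma$ and $S = \{a \to b,\ b \to c\}$. All scaffold axioms hold (heads $b,c$ and tails $a,b$ are distinct, the graph is finite, and $b \in S_+ \cap S_-$), and $V(\Gamma') \setminus S_- = \{c\} \subset \partial V(\Gamma)$. But $\Gamma \setminus \Gamma'$ is the pair of isolated boundary vertices $a, c$, and the vacuously harmonic function $v(a) = 0$, $v(c) = 1$ admits no harmonic extension, since $\Delta u(b) = (u(b) - u(a)) - (u(b) - u(c)) = u(c) - u(a) = 1$ for every choice of $u(b)$; the scaffold edge $b \to c$ has interior tail and a head carrying prescribed data, which is exactly the bad case. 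In fairness, this lacuna is inherited from the paper: its proof asserts that $e_+$ is a boundary vertex of $\Sigma^{**}$ and then ``chooses the potential on $e_+$,'' overlooking that when $e_+ \in \partial V(\Gamma)$ that potential was already fixed by $v_0$; and the bad configuration occurs in the intended application (in Lemma \ref{lem:recovery}, the scaffold restricted to $\Gamma_2$ has edges with interior tails ending at boundary vertices of $\Gamma$, such as the orange edge into the upper-right boundary vertex in \S \ref{subsec:recoveryexample}). A correct statement needs an additional hypothesis --- for instance, that every $e \in S$ with $e_- \in V^\circ(\Gamma)$ satisfies $e_+ \in V^\circ(\Gamma')$ --- or must prescribe the initial data only away from the heads of such edges; with that strengthening, your argument (and the paper's) does go through.
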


\begin{proof}
We can assume without loss of generality that $\Gamma$ has no isolated vertices.

Let $v_0$ be any harmonic function on $\Gamma \setminus \Gamma'$.  Let $\mathcal{Z}$ be the collection of pairs $(\Sigma, v)$ such that
\begin{itemize}
	\item $\Sigma$ is a subnetwork of $\Gamma$ which contains $\Gamma \setminus \Gamma'$.
	\item $v$ is a harmonic function on $\Sigma$ which agrees with $v_0$ on $\Gamma \setminus \Gamma'$.
	\item The subnetwork of $\Gamma$ induced by $E(\Sigma)$ is $\Sigma$ itself.
	\item $\Sigma \cap \Gamma'$ is an initial subnetwork of $\Gamma'$ with respect to $S$.
\end{itemize}
Define a partial order on $\mathcal{Z}$ by setting $(\Sigma, v) \leq (\Sigma',v')$ if $\Sigma$ is a subnetwork of $\Sigma'$ and $v'|_{\Sigma} = v$.  Note that any chain in $\mathcal{Z}$ has an upper bound given by taking the union.  Thus, by Zorn's lemma, $\mathcal{Z}$ has a maximal element $(\Sigma^*, v^*)$.

We claim that $\Sigma^*$ is all of $\Gamma$.  It suffices to show that $E(\Sigma^*)$ contains all of $E(\Gamma')$.  Suppose not.  Then there is a minimal element $e$ of $E(\Gamma') \setminus E(\Sigma^*)$ with respect to $S$.  Consider two cases:
\begin{itemize}
	\item Suppose $e \in S \cup \overline{S}$ and assume that $e \in S$.  Let $\Sigma^{**}$ be the subnetwork of $\Gamma$ induced by $E(\Sigma^*) \cup \{e,\overline{e}\}$.  Note that $\Sigma^{**} \cap \Gamma'$ is an initial subnetwork because $e$ was minimal.  By assumption, $e_+$ is either a boundary vertex of $\Gamma$ or the input of some element of $S$; if it is the input of $e' \in S$, then $e \prec e'$, so $e' \not \in E(\Sigma^*)$.  In either case, $e_+$ must be a boundary vertex of $\Sigma^{**}$.  We can extend $v^*$ to $\Sigma^{**}$ by choosing the potential on $e_+$ so as to make the net current at $e_-$ zero.\footnote{If $e_-$ is a boundary vertex, even this is unnecessary.}  Because $e_+$ is a boundary vertex of $\Sigma^{**}$, this is sufficient to guarantee that the extension $v^{**}$ is harmonic.  This contradicts the maximality of $(\Sigma^*, v^*)$.
	\item Suppose that $e \not \in S \cup \overline{S}$.  Let $\Sigma^{**}$ be the subnetwork of $\Gamma$ induced by $E(\Sigma^*) \cup \{e,\overline{e}\}$.  By the same argument as before, $e_-$ and $e_+$ are boundary vertices of $\Sigma^{**}$.  Hence, $\Sigma^{**}$ has no new interior vertices relative to $\Sigma^*$.  Thus, $v^*$ is harmonic on $\Sigma^{**}$, so once again, we have a contradiction to maximality of $(\Sigma^*, v^*)$. \qedhere
\end{itemize}
\end{proof}

\subsection{Recovery of Boundary Spikes and Boundary Edges}

In \S \ref{subsec:formalizing}, we mentioned sufficient conditions for recovery boundary spikes and boundary edges:  We say that a boundary spike is {\bf recoverable by the scaffold} $S$ if $e$ is not in $S \cup \overline{S}$ and $e$ is in the Middle of $S$.  We say that a boundary edge is {\bf recoverable by the scaffold} $S$ if $e$ is in $S \cup \overline{S}$ and $e$ is in the Middle of $S$.

\begin{lemma} \label{lem:recovery}
If a boundary spike or boundary edge $e$ of $G$ is recoverable by a scaffold, then $w(e)$ is uniquely determined by $\Lambda(\Gamma)$ for any network on $G$ over any field $\mathbb{F}$.
\end{lemma}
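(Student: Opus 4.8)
The plan is to run the argument of \S\ref{subsec:recoveryexample} with the two harmonic-continuation lemmas doing the work, reducing first to the existence/uniqueness claims isolated there. For a spike $e$, orient it so that $e_+ = q \in V^\circ$ and $e_- = p \in \partial V$ has degree $1$. The two claims are: (\emph{uniqueness}) every harmonic $u$ with $u = 0$ on a set $P \subseteq \partial V$ and $\Delta u = 0$ on a set $Q \subseteq \partial V$ has $u(q) = 0$; and (\emph{existence}) such a $u$ exists with additionally $u(p) = 1$. Granting both, $\Delta u(p) = w(e)(u(p) - u(q)) = w(e)$ — using only $\deg p = 1$ — so any $(\phi,\psi) \in \Lambda(\Gamma)$ with $\phi|_P = 0$, $\psi|_Q = 0$, $\phi(p) = 1$ has $\psi(p) = w(e)$; existence guarantees one such pair lies in $\Lambda(\Gamma)$ and it is located by solving linear equations, so $w(e)$ is read off from $\Lambda(\Gamma)$. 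The boundary-edge case is identical except that $e \in S \cup \overline{S}$, $P$ also contains the neighbors of $p$ other than $q$, the prescribed value is $u(q) = 1$, and one gets $\Delta u(p) = -w(e)$.

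The engine for uniqueness is Lemma \ref{lem:hcexistence} and for existence Lemma \ref{lem:hcuniqueness}; the real content is producing the subnetworks on which these apply. The starting observation, which is exactly where $e \in \Mid S$ enters, is that $q$ is simultaneously an input and an output of $S$: the one-edge path $(e)$ is both increasing and decreasing (as $e \notin S \cup \overline{S}$), so if $q$ were an interior non-output then $e \in \End S$, and if $q$ were an interior non-input then $e \in \Beg S$, each contradicting $e \in \Mid S$. Hence $q \in S_+ \cap S_-$, the scaffold path through $q$ has an edge $\hat{e} \in S$ entering $q$ and an edge $\check{e} \in S$ leaving $q$, and the local comparison conditions give $\hat{e} \prec e \prec \check{e}$. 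For uniqueness I would take $\Gamma_1$ to be a subnetwork containing $q$ and $\hat{e}$, assembled from edges $\preceq \hat{e}$ (so $e \notin \Gamma_1$, whence $p \notin V(\Gamma_1)$ and $p \notin P$), but pruned so as to contain no vertex interior in $\Gamma$ other than outputs; then $V(\Gamma_1) \setminus S_+ \subseteq \partial V(\Gamma)$, and comparing the zero function against any $u$ with $u = 0$ on $P = \partial V(\Gamma) \cap V(\Gamma_1)$ and $\Delta u = 0$ on $Q = \partial V(\Gamma) \cap S_-$, Lemma \ref{lem:hcexistence} forces $u \equiv 0$ on $\Gamma_1$, in particular $u(q) = 0$.

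For existence I would dually take $\Gamma_2$ to be an ``end'' subnetwork containing $q$ and $\check{e}$, assembled from edges $\succeq \check{e}$ and pruned to contain no vertex interior in $\Gamma$ other than inputs, so that $V(\Gamma_2) \setminus S_- \subseteq \partial V(\Gamma)$. The function pinned down by the uniqueness stage — zero on the beginning region with $u(p) = 1$ on the spike — is harmonic on $\Gamma \setminus \Gamma_2$: the only interior vertex requiring a check is $q$, where the current $w(e)$ injected along the spike is carried off along $\check{e}$, which is precisely why $q \in S_-$ is needed. Lemma \ref{lem:hcuniqueness} then extends it to a harmonic function on all of $\Gamma$, exhibiting the required $(\phi,\psi) \in \Lambda(\Gamma)$.

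The main obstacle is the pruning step in the definitions of $\Gamma_1$ and $\Gamma_2$: I must show one can take a subnetwork that contains $q$ and the relevant half of the scaffold path through it while excluding every interior non-output (resp. non-input) \emph{entirely} — not merely demoting them to boundary vertices, since the hypotheses of Lemmas \ref{lem:hcexistence} and \ref{lem:hcuniqueness} refer to $\partial V(\Gamma)$, not to $\partial V(\Gamma_i)$. The crux is a reachability statement: every edge incident to an interior non-output either points into it (hence lies in $\End S$, via a one-edge decreasing path) or, if it points out and avoids $\End S$, is incomparable to $\hat{e}$ (which one proves by reversing an increasing path to obtain a decreasing path from $\overline{e}$ landing on that non-output, contradicting $e \in \Mid S$). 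Verifying that $S$ restricts to a scaffold on each $\Gamma_i$, and handling the spike and boundary-edge cases uniformly while tracking the sign of $w(e)$, is then routine bookkeeping.
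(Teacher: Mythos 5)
Your route is the paper's own: split $\Gamma$ at $e$ along the partial order induced by $S$, run the HC: Uniqueness lemma (Lemma \ref{lem:hcexistence}) on the initial piece and the HC: Existence lemma (Lemma \ref{lem:hcuniqueness}) on the final piece, and read $w(e)$ off from the net current at the degree-one endpoint. Your preliminary observations are correct, and are explicit versions of steps the paper leaves implicit: $e \in \Mid S$ with $e \notin S \cup \overline{S}$ does force $q \in S_+ \cap S_-$, the local comparison conditions do give $\hat{e} \prec e \prec \check{e}$, and your down-set/up-set $\{e' \preceq \hat{e}\}$, $\{e' \succeq \check{e}\}$ versus the paper's $\{e' \prec e\}$, $\{e' \succ e\}$ is an immaterial difference. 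The ``pruning'' you flag as the main obstacle is also not really an obstacle: since $e \in \Mid S$, the down-set cannot meet $\End S$, so it contains no interior non-outputs, and the local comparison conditions automatically place each such vertex's outgoing $S$-edge inside the down-set; that is the paper's one-line verification of the hypothesis of Lemma \ref{lem:hcexistence}.

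The genuine gap is your choice $Q = \partial V(\Gamma) \cap S_-$, with $S$ the \emph{full} scaffold; the paper takes $Q = \partial V(\Gamma) \cap (S_1)_-$ where $S_1 = S \cap E(\Gamma_1)$, and the difference is not cosmetic. The same $(P,Q)$ must serve both halves, and while your larger $Q$ only helps the uniqueness half, it breaks the existence half: Lemma \ref{lem:hcuniqueness} produces an extension with zero net current at \emph{interior} vertices of $\Gamma$ and gives no control whatsoever over $\Delta u$ at boundary vertices whose scaffold edges lie in $\Gamma_2$ or in the region incomparable to $e$. Concretely, take boundary vertices $p,a,b,c$, interior vertices $q,s$, edges $pq, aq, qb, bs, sc$, and $S = \{a \to q,\ q \to b,\ b \to s,\ s \to c\}$; then every edge is in $\Mid S$, $e = pq$ is a boundary spike recoverable by $S$, and your $Q$ is $\{a,b\}$. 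Any harmonic $u$ with $u(a) = 0$, $\Delta u(a) = 0$, $u(p) = 1$ has $u(q) = 0$ and hence, by harmonicity at $q$, $u(b) = -w(pq)/w(qb) \neq 0$; so every witness of your existence claim has $u(b) \neq 0$, whereas your construction necessarily pins $u(b) = 0$ (it must assign $b$ some value, since $b \in V(\Gamma \setminus \Gamma_2)$, and you set it to zero). Thus your prescribed data cannot witness the claim — indeed here it admits no harmonic extension at all — and no appeal to Lemma \ref{lem:hcuniqueness} repairs this, since $b$ is a boundary vertex. With the paper's $Q$, the problem evaporates: for $r \in \partial V(\Gamma) \cap (S_1)_-$, local comparison forces every edge at $r$ to be $\preceq$ the $S_1$-edge leaving $r$, hence to lie in $\Gamma_1$, where the constructed function vanishes, so $\Delta u|_Q = 0$ holds for free. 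Incidentally, the same example shows that prescribing $u \equiv 0$ on \emph{all} of $\Gamma \setminus \Gamma_2$ (which the paper's written proof also does) is itself too rigid — the value at a vertex like $b$ must be left for the continuation to choose — so this step deserves more care than either write-up gives; but once $Q$ is shrunk to $(S_1)_-$ and the zero-prescription is confined accordingly, your argument coincides with the paper's.
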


\begin{proof}
We shall handle the case of a boundary edge and leave the case of a boundary spike to the reader, since the example from \S \ref{subsec:recoveryexample} focused on boundary spikes.  Let $e$ be a boundary edge which is recoverable using the scaffold $S$, and suppose $e \in S$.  Let $\Gamma_1$ be the subnetwork induced by $\{e' \prec e\}$ and let $\Gamma_2$ by the subnetwork induced by $\{e' \succ e\}$.  Let $S_1 = S \cap E(\Gamma_1)$ and $S_2 = S \cap E(\Gamma_2)$.  Let
\[
P = [\partial V(\Gamma) \cap V(\Gamma_1)] \cup \{e_-\}, \quad Q = \partial V(\Gamma) \cap (S_1)_-.
\]
(In the definition of $P$, we include $e_-$ to handle the case where the only neighbor of $e_-$ is $e_+$; otherwise listing $e_-$ is redundant.)

First, we verify the uniqueness claim that any harmonic function $u$ with $u|_P = 0$ and $\Delta u|_Q = 0$ must be zero on $e_-$ and all its neighbors.  We apply Lemma \ref{lem:hcuniqueness} to $\Gamma_1$ with the scaffold $S_1$.  The hypothesis that $V(\Gamma_1) \setminus (S_1)_+ \subset \partial V(\Gamma)$ is met; indeed, since $e$ is in the middle, $\Gamma_1$ cannot intersect the End and hence any interior vertex of $\Gamma$ which is in $\Gamma_1$ must be the output of some edge in $S_1$.  The conclusion of the lemma tells us that $u$ must be zero on $\Gamma_1$.  In particular, for any $e' \neq e$ incident to $e_-$, we have $u = 0$ on the endpoints of $e'$, so $u$ is zero on the neighbors of $e_-$ and on $e_-$ itself since $e_- \in P$.

Next, we verify the existence claim that there is a harmonic function $u$ with $u|_P = 0$ and $\Delta u|_Q = 0$ and $u(e_+) = 1$ by applying Lemma \ref{lem:hcexistence} to $\Gamma_2$.  We define $u$ to be zero on all of $\Gamma \setminus \Gamma_2$ except that $u(e_+) = 1$.  Note that in $\Gamma \setminus \Gamma_2$, $e_+$ is a boundary vertex and its only neighbor is $e_-$ which is also a boundary vertex, and thus $u$ is harmonic on $\Gamma \setminus \Gamma_2$.  Moreover, since $\Gamma_2$ does not intersect the Beginning of $S$, the hypotheses of Lemma \ref{lem:hcexistence} are met, so $u$ extends to a harmonic function on $\Gamma$.

The existence and uniqueness claims demonstrate recoverability of $w(e)$ as explained in \S \ref{subsec:recoveryexample}.
\end{proof}

\subsection{Two Sufficient Conditions for Recoverability} \label{subsec:solvable}

We now have all the pieces in place to formulate sufficient conditions for recoverability.  We will give two different conditions | \emph{recoverability by scaffolds} and \emph{total layerability} | since unfortunately the more general condition can be harder to test.  As promised, we will show that recoverability by scaffolds ``pull backs'' under UHMs.

\begin{definition}
Suppose that $G$ has a layerable filtration
\[
G = G_0 \supset G_1 \supset \dots
\]
where $G_{j+1}$ is obtained from $G_j$ by a layer-stripping operation.  Suppose that each boundary edge deleted and each boundary spike contracted is recoverable using a scaffold of $G_j$.  Then we say that $G$ is {\bf recoverable by scaffolds}.
\end{definition}

\begin{theorem} \label{thm:solvablerecoverable}
A $\partial$-graph which is recoverable by scaffolds is recoverable over any field $\mathbb{F}$.
\end{theorem}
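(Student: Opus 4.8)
The plan is to make the layer-stripping strategy of \S\ref{subsec:strategy} rigorous by combining Lemmas \ref{lem:recovery} and \ref{lem:reductionelectrical} along the given filtration. Fix a $\partial$-graph $G$ recoverable by scaffolds, with layerable filtration $G = G_0 \supset G_1 \supset \dots$ in which $G_{j+1}$ is obtained from $G_j$ by a layer-stripping operation whose deleted boundary edges and contracted boundary spikes are each recoverable by a scaffold of $G_j$. To prove injectivity of $w \mapsto \Lambda(G,w)$, I would take two weightings $w$ and $\tilde w$ with $\Lambda(G,w) = \Lambda(G,\tilde w)$ and show $w = \tilde w$ edge by edge, inducting on $j$ to establish the two statements: (i) $w$ and $\tilde w$ agree on every edge removed in passing from $G_j$ to $G_{j+1}$, and (ii) $\Lambda(G_j, w|_{G_j}) = \Lambda(G_j, \tilde w|_{G_j})$.

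The base case is (ii) for $j=0$, which is the hypothesis. For the inductive step, assume (ii) at stage $j$. Each edge $e$ removed at the next step is a boundary edge or boundary spike recoverable by a scaffold of $G_j$; crucially, a scaffold is a purely combinatorial object on the underlying $\partial$-graph, so the \emph{same} scaffold certifies recoverability for both networks $(G_j, w|_{G_j})$ and $(G_j, \tilde w|_{G_j})$. Lemma \ref{lem:recovery} then says $w(e)$ is determined by $\Lambda(G_j, w|_{G_j})$ and $\tilde w(e)$ by $\Lambda(G_j, \tilde w|_{G_j})$ via the same dependence; by the inductive hypothesis (ii) these boundary behaviors coincide, so $w(e) = \tilde w(e)$, giving (i) at stage $j$. (For an isolated-boundary-vertex deletion no edges are removed and (i) is vacuous.) Now apply Lemma \ref{lem:reductionelectrical}: since $G_{j+1}$ is obtained from $G_j$ by a single layer-stripping operation and the removed weights are now known to agree, the prescribed procedure computing $\Lambda(G_{j+1}, \cdot)$ from $\Lambda(G_j, \cdot)$ and those weights, applied to $w$ and to $\tilde w$, yields $\Lambda(G_{j+1}, w|_{G_{j+1}}) = \Lambda(G_{j+1}, \tilde w|_{G_{j+1}})$, which is (ii) at stage $j+1$.

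Finally, because the filtration is layerable we have $\bigcap_{j} G_j = \varnothing$, so every edge of $G$ lies in some $E(G_j) \setminus E(G_{j+1})$ and is therefore removed at a finite stage. Statement (i), now known for all $j$, shows $w(e) = \tilde w(e)$ for every edge $e$, i.e.\ $w = \tilde w$. I expect no serious obstacle here: all the analytic content has been isolated into the harmonic-continuation Lemmas \ref{lem:hcexistence} and \ref{lem:hcuniqueness}, hence into Lemma \ref{lem:recovery}, and into the bookkeeping of Lemma \ref{lem:reductionelectrical}. The only points requiring care are conceptual rather than computational: that recoverability-by-a-scaffold depends on $G_j$ but not on the edge weights, so the recovered value is genuinely a function of the boundary behavior alone; and that the induction over the possibly infinite filtration reaches every edge, which is exactly what $\bigcap_j G_j = \varnothing$ guarantees.
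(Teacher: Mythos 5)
Your proof is correct and follows essentially the same route as the paper: the paper's own (brief) argument also iterates Lemma \ref{lem:recovery} to recover the weights removed at each stage of the filtration and Lemma \ref{lem:reductionelectrical} to pass to the boundary behavior of the next subnetwork, concluding because the filtration exhausts all edges. Your write-up merely formalizes this as an induction on the two statements (i) and (ii), with the (correct) observation that scaffolds are weight-independent, so no new ideas are needed beyond what the paper uses.
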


\begin{proof}
This follows from the layer-stripping strategy laid out in \S \ref{subsec:strategy}.  We recover the edge weights iteratively by recovering the boundary spikes and boundary edges at each step of the filtration which witnesses recoverability by scaffolds.  The weights of the boundary edges and boundary spikes can be recovered using Lemma \ref{lem:recovery} and the boundary behavior of the smaller subnetwork can be found using Lemma \ref{lem:reductionelectrical}.  Since the filtration will exhaust all the edges in the $\partial$-graph, all the edge weights are determined by $\Lambda(\Gamma)$.
\end{proof}

Recoverability by scaffolds is an annoying condition to check because it requires induction.  A more symmetrical and (it turns out) stronger condition is total layerability.  We say a $\partial$-graph $G$ is {\bf totally layerable} if for any oriented edge $e$, there exists a scaffold $S$ with $e \in S \cup \overline{S}$ and $e$ in the Middle of $S$, and there exists a scaffold $S$ with $e \not \in S \cup \overline{S}$ and $e$ in the Middle of $S$.

\begin{theorem}
If $G$ is totally layerable, then $G$ is layerable and $G$ is recoverable by scaffolds.  In fact, any layerable filtration can be used for the process of recovery by scaffolds.
\end{theorem}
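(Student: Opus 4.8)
The plan is to derive both assertions from the functoriality of scaffolds under inclusions of harmonic subgraphs (Lemma \ref{lem:scaffunctor}) together with the characterization of layerability in Lemma \ref{lem:layerableequiv}. For layerability, total layerability provides, for every edge $e$, a scaffold $S$ on $G$ with $e \in \Mid S$; since $\Mid S$ is disjoint from $\End S$, we have $e \notin \End S$, which is exactly condition (c) of Lemma \ref{lem:layerableequiv}, so $G$ is layerable. The standing hypothesis that $G$ has no isolated interior vertices is automatic: any scaffold forces each interior vertex to be an input or output (apply condition (3) to the trivial path from the vertex to itself), hence to be incident to a scaffold edge, and total layerability produces a scaffold as soon as $G$ has an edge; the edgeless case is trivial.

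The substance is the second claim, which I would prove by showing that total layerability is inherited by every subgraph of a layerable filtration in a way compatible with the type of each removed edge. Fix any layerable filtration $G = G_0 \supset G_1 \supset \dots$. Each $G_j$ is a harmonic subgraph of $G$, since boundary edge deletions, boundary spike contractions, and isolated boundary vertex deletions all yield harmonic subgraphs (\S\ref{subsec:layerstrippingops}) and these compose; hence the inclusion $\iota_j \colon G_j \hookrightarrow G$ is a UHM. By Lemma \ref{lem:scaffunctor}, every scaffold $S$ on $G$ pulls back to a scaffold $\iota_j^{-1}(S) = S \cap E(G_j)$ on $G_j$ with $\Mid \iota_j^{-1}(S) \supseteq \iota_j^{-1}(\Mid S)$, and for an edge $e \in E(G_j)$ one has $e \in \iota_j^{-1}(S) \cup \overline{\iota_j^{-1}(S)}$ if and only if $e \in S \cup \overline{S}$.

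Now let $e$ be an edge removed in the step $G_j \supset G_{j+1}$. If $e$ is removed as a boundary edge, I would use total layerability of $G$ to choose a scaffold $S$ on $G$ with $e \in S \cup \overline{S}$ and $e \in \Mid S$; then $\iota_j^{-1}(S)$ is a scaffold on $G_j$ with $e$ still in $S \cup \overline{S}$ and, since $e \in E(G_j)$ and $e \in \Mid S$, with $e \in \Mid \iota_j^{-1}(S)$, so $e$ is recoverable by this scaffold in $G_j$. If $e$ is instead removed as a boundary spike, I would select the other flavor guaranteed by total layerability, namely a scaffold with $e \notin S \cup \overline{S}$ and $e \in \Mid S$, and pull it back the same way. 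Since isolated boundary vertex deletions remove no edges, every boundary edge deleted and every boundary spike contracted in the filtration is recoverable by a scaffold of the relevant $G_j$, and by Lemma \ref{lem:recovery} its weight is then determined by $\Lambda(G_j)$. This is precisely recoverability by scaffolds, realized by the given (arbitrary) filtration.

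The main obstacle, and the very reason total layerability is defined to demand scaffolds of both flavors, is the type-matching: whether a fixed edge is removed as a spike or as an edge depends on the filtration, since an earlier spike contraction can turn a later edge's interior endpoint into a boundary vertex, so one cannot predict in advance which kind of scaffold will be needed. Having both flavors available on $G$, combined with the facts that pullback enlarges the Middle and preserves $(S \cup \overline{S})$-membership on surviving edges (both from Lemma \ref{lem:scaffunctor}), lets us always supply the correct scaffold on $G_j$ no matter how the filtration is chosen. The only routine verifications are that each $G_j$ is genuinely a harmonic subgraph, so that $\iota_j$ is a UHM, and the bookkeeping identity for $\iota_j^{-1}(S)$ above.
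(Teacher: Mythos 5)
Your proof is correct and takes essentially the same route as the paper's: layerability is deduced from condition (c) of Lemma \ref{lem:layerableequiv} (an edge in $\Mid S$ is not in $\End S$), and recoverability along an arbitrary layerable filtration is obtained by pulling back the appropriate flavor of scaffold guaranteed by total layerability through the inclusion $G_j \hookrightarrow G$ via Lemma \ref{lem:scaffunctor}, matching $e \in S \cup \overline{S}$ to boundary-edge deletions and $e \notin S \cup \overline{S}$ to spike contractions. Your explicit check of the no-isolated-interior-vertices hypothesis of Lemma \ref{lem:layerableequiv} is a detail the paper leaves implicit, not a difference in method.
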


\begin{proof}
Layerability of $G$ is nontrivial and follows from Lemma \ref{lem:layerableequiv}.  Given any layerable filtration of $G$, the boundary spikes and boundary edges removed are recoverable by scaffolds since we can find an appropriate scaffold on $G$ and restrict it to $G_j$ using functoriality (Lemma \ref{lem:scaffunctor}).
\end{proof}

\begin{theorem} \label{thm:solvablepullback}
Suppose that $f: G \to H$ is an unramified harmonic morphism. If $H$ is recoverable by scaffolds, then so is $G$.
\end{theorem}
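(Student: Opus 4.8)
The plan is to pull back, via $f^{-1}$, the layerable filtration that witnesses recoverability by scaffolds for $H$, and to show the pulled-back filtration witnesses recoverability by scaffolds for $G$. So let $H = H_0 \supset H_1 \supset \cdots$ be a layerable filtration in which each boundary edge or boundary spike removed in passing from $H_j$ to $H_{j+1}$ is recoverable by some scaffold $S^{(j)}$ of $H_j$. Applying $f^{-1}$ gives $f^{-1}(H_0) \supset f^{-1}(H_1) \supset \cdots$; by Lemma \ref{lem:layerstrippingfunctoriality} each $f^{-1}(H_{j+1})$ is obtained from $f^{-1}(H_j)$ by a (finite or countable) sequence of layer-stripping operations, and concatenating these sequences as in the proof of Lemma \ref{lem:layerableequiv} yields a single layerable filtration of $G$. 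Its intersection is empty because $f^{-1}$ commutes with intersections, so $\bigcap_j f^{-1}(H_j) = f^{-1}(\bigcap_j H_j) = \varnothing$. What remains is to produce, for each edge removed in this refined filtration, a scaffold of the ambient subgraph recovering it; Theorem \ref{thm:solvablerecoverable} then finishes the proof.

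For an edge $e$ of $H_j$ recovered by $S = S^{(j)}$, the natural candidate is $f^{-1}(S)$, which is a scaffold of $f^{-1}(H_j)$ by Lemma \ref{lem:scaffunctor}, with $f^{-1}(\Mid S) \subseteq \Mid f^{-1}(S)$. Any preimage edge $\tilde{e}$ of $e$ therefore lies in $\Mid f^{-1}(S)$, and $\tilde{e} \in f^{-1}(S) \cup \overline{f^{-1}(S)}$ exactly when $e \in S \cup \overline{S}$. Since $f$ sends interior vertices to interior vertices, every preimage of a boundary vertex is boundary, and since the star map of a UHM is injective, a boundary edge of $H_j$ pulls back only to boundary edges, each inheriting the membership $\tilde{e} \in f^{-1}(S)\cup\overline{f^{-1}(S)}$ required to recover it. Likewise a boundary spike whose interior endpoint has an \emph{interior} preimage pulls back to a boundary spike with $\tilde{e} \notin f^{-1}(S) \cup \overline{f^{-1}(S)}$. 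In both of these type-preserving cases, restricting $f^{-1}(S)$ to the current subgraph (an inclusion, hence a UHM, which keeps $\tilde{e}$ in the Middle by Lemma \ref{lem:scaffunctor}) and applying Lemma \ref{lem:recovery} recovers $w(\tilde{e})$.

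The one case that $f^{-1}(S)$ does not handle directly, and the heart of the argument, is a boundary spike $e \notin S$ of $H_j$ whose interior endpoint $q$ has a preimage $\tilde{q}$ that is a \emph{boundary} vertex of $G$. Then $\tilde{e}$ is a boundary edge whose other endpoint, lying over the degree-$1$ endpoint $p$, again has degree $\le 1$, yet $\tilde{e} \notin f^{-1}(S)\cup\overline{f^{-1}(S)}$, so $f^{-1}(S)$ cannot recover it as a boundary edge (which requires the edge to lie in the scaffold). Here I would modify the scaffold: set $S' = (f^{-1}(S) \setminus \{\tilde{e}'\}) \cup \{\tilde{e}\}$, orienting $\tilde{e}$ so that it terminates at the degree-$1$ vertex over $p$ and letting $\tilde{e}'$ be the unique scaffold edge of $f^{-1}(S)$, if any, that conflicts with $\tilde{e}$ at $\tilde{q}$ under the injectivity requirement. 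The degree-$1$ endpoint guarantees no conflict at that end, so at most one edge is removed. The point that makes this legal is that $e \in \Mid S$ forbids the scaffold path through $q$ from ending at an interior vertex immediately after $q$ or beginning at one immediately before $q$: an edge $e_2 \in S$ leaving $q$ and ending the path at an interior (hence non-input) vertex $r$ would make $(e,e_2)$ an increasing path placing $e \in \Beg S$, and symmetrically an edge $e_1 \in S$ entering $q$ and starting the path at an interior non-output vertex $s$ would make $(e,\overline{e_1})$ a decreasing path placing $e \in \End S$. Because the star maps at interior vertices of a UHM are bijective, the preimage path mirrors the image path locally, so the neighbor of $\tilde{q}$ that loses an incident scaffold edge when $\tilde{e}'$ is deleted retains an input or an output and is not orphaned. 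I would then verify the axioms of Lemma \ref{lem:scafequivalentdef} for $S'$: injectivity of the endpoint maps (immediate after the surgery), the partial well-order (no cycle passes through the degree-$1$ dead end, and deletions create none), and the input/output alternative (only $\tilde{q}$ and the vertex over $p$, both boundary, change status); together with $\tilde{e} \in \Mid S'$ this lets Lemma \ref{lem:recovery} recover $w(\tilde{e})$.

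The main obstacle is exactly this spike-to-edge degeneration and the verification that the surgered scaffold $S'$ still satisfies the input/output alternative while keeping $\tilde{e}$ in its Middle; the two tools that make it go through are the degree-$1$ structure of the preimage of the spike's boundary endpoint and the exclusion of orphaning furnished by $e \in \Mid S$. Once every removed edge is shown recoverable by a scaffold, the refined filtration witnesses that $G$ is recoverable by scaffolds, as desired.
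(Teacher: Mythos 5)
Your overall strategy is the paper's: pull back the witnessing filtration via Lemma \ref{lem:layerstrippingfunctoriality}, pull back the witnessing scaffolds via Lemma \ref{lem:scaffunctor}, and repair the scaffold by surgery when a boundary spike of $H_j$ degenerates to a boundary edge of $f^{-1}(H_j)$. Your surgery itself (adjoin $\tilde{e}$ oriented into the vertex over $p$, delete the unique conflicting scaffold edge at $\tilde{q}$, and use $e \in \Mid S$ to show the head of the deleted edge keeps an input or is a boundary vertex, so nothing is orphaned) is essentially Step 3 of the spike case in the paper's proof.

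However, there is a genuine gap: you never consider edges of $G$ that $f$ \emph{collapses} to vertices. A UHM may collapse edges --- the star-map condition only counts edges sent to edges --- and the projection $G_1 \Box G_2 \to G_1$ of a box product, one of the principal applications of this theorem, collapses every edge of $V_1 \times E_2$. This breaks your argument in two places. First, your type-preservation claims fail: a vertex $\tilde{p}$ of $G$ lying over the degree-one endpoint $p$ of a spike can carry arbitrarily many collapsed edges, so the preimage of a spike need not be a spike (contrary to your second paragraph), and in your third paragraph the endpoint of $\tilde{e}$ over $p$ need not have degree $\le 1$; both your conflict count and your ``no cycle through the degree-$1$ dead end'' argument rest on that degree bound. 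The paper avoids this by first deleting, inside $f^{-1}(H_j)$, all boundary edges collapsed onto the boundary endpoints of the spikes (its Step 1), and only then handling the preimages of the spikes, ``thanks to Step 1.'' Second, and more fundamentally, those collapsed edges are themselves edges of $G$ whose weights must be recovered: Lemma \ref{lem:layerstrippingfunctoriality} produces their deletions as boundary-edge deletions in the pulled-back filtration (both over contracted spikes and over deleted isolated boundary vertices), and recoverability by scaffolds demands a witnessing scaffold for each such deletion. Your proposal supplies none, and $f^{-1}(S)$ cannot serve as is: a collapsed edge is never in $f^{-1}(S) \cup \overline{f^{-1}(S)}$, whereas Lemma \ref{lem:recovery} requires a boundary edge to lie \emph{in} the scaffold, so a further modification of $f^{-1}(S)$ (or, over deleted isolated vertices, an ad hoc scaffold on the interior-free components) is needed. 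If you insert the paper's preliminary collapsed-edge steps and their recovery, the rest of your argument goes through.
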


\begin{proof}
Let $\{H_j\}$ be a layer-stripping filtration of $H$ which witnesses recoverability by scaffolds.  We proceed by cases following the same outline as in Lemma \ref{lem:layerstrippingfunctoriality}.

First, suppose $H_{j+1}$ is obtained from $H_j$ by deleting boundary edges.  The scaffolds witnessing recoverability of the boundary edges in $H_j$ will pull back to scaffolds on $H_{j+1}$ witnessing the recoverability of the edges in the preimage.

Second, suppose $H_{j+1}$ is obtained from $H_j$ by deleting isolated boundary vertices.  We must delete boundary edges in $f^{-1}(H_j)$ that map to the isolated boundary vertices.  Such boundary edges are in components with no interior vertices.  We can easily define a scaffold on each of these components.  On the rest of the $\partial$-graph, use any scaffold induced from $H_j$.  If we define a scaffold on each connected component, then that defines a scaffold on the whole $\partial$-graph.  After deleting the boundary edges from $f^{-1}(H_j)$, we simply delete isolated boundary vertices and no scaffold is required.

Third, suppose that $H_{j+1}$ is obtained from $H_j$ by contracting boundary spikes.  Then consider the following steps:
\begin{enumerate}
	\item We must delete boundary edges of $f^{-1}(H_j)$ that map to the boundary vertices of the spikes in $H_j$.  Suppose $e$ is such an edge in $f^{-1}(H_j)$ and that $e'$ is the corresponding spike in $H_j''$.  There is a scaffold $S$ on $H_j$ where $e'$ is not in $S \cup \overline{S}$ and $e \in \Mid S$.  Then $e'$ is not in $f^{-1}(S) \cup \overline{f^{-1}(S)}$ since $e'$ is collapsed by $f$ and it is in the Middle by Lemma \ref{lem:scaffunctor}, so we are done.
	\item Next, we must contract boundary spikes of $f^{-1}(H_j)$ that map to the boundary spikes of $H_j$.  In this case, we can again pull back the scaffolds witnessing recoverability of the spikes in $H_j$.
	\item We must also delete boundary edges in $f^{-1}(H_j)$ that map to the contracted spikes in $H_j$.  Suppose $e$ is a boundary edge and $f(e)$ is one of the contracted spikes of $H_j''$, where $e_+$ corresponds to the boundary endpoint of the spike and $e_-$ to the interior endpoint of the spike in $H_j$.  By assumption, there is a scaffold $S$ on $H_j$ where $f(e) \not \in S \cup \overline{S}$ and is in the Middle.  Let $S' = f^{-1}(S)$.  The boundary edge $e$ is not in $S' \cup \overline{S}'$, so we will modify $S'$.  Let $S''$ be obtained from $S'$ by adding $e$ and removing any edge $e'$ with $e_-' = e_-$.  The latter step is necessary so that $e_-$ will not be the input of two edges in $S'$; however we do not have to worry about this problem at $e_+$ since $e_+$ has degree $1$ thanks to Step 1.  Now if $e_-' = e_-$ and we remove $e'$ from $S$, that may produce new interior vertices which are not the outputs of edges in $S$.  However, since $e$ was in the Middle of $S'$, the new interior vertex which is not an output must be in the Middle or End of $S'$ and hence will not cause a problem, and $e$ will still be in the Middle of $S''$.
	\item It only remains to delete isolated boundary vertices which map to the boundary endpoints of spikes in $H_j$.  This step does not require a scaffold.
\end{enumerate}
\end{proof}

The proof of this last result is similar in spirit to the last one but easier and left as an exercise:

\begin{proposition}~
\begin{enumerate}
	\item If $f: G \to H$ is UHM which does not collapse any edges and $H$ is totally layerable, then $G$ is totally layerable.
	\item A boundary wedge-sum of two totally layerable $\partial$-graphs is totally layerable.
	\item A box product of two totally layerable $\partial$-graphs is totally layerable.
\end{enumerate}
\end{proposition}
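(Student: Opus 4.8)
The plan is to handle parts (1) and (3) by the functoriality of scaffolds under unramified harmonic morphisms (Lemma~\ref{lem:scaffunctor}), since in both cases the relevant edge maps to an honest edge of a totally layerable target; part (2) is the genuinely different case and is handled by gluing a scaffold on $G_1$ to one on $G_2$ along the shared vertex.

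For part (1), fix an oriented edge $e$ of $G$. Because $f$ collapses no edge, $f(e)$ is an oriented edge of $H$, so I may invoke total layerability of $H$ at $f(e)$: choose a scaffold $S$ on $H$ with $f(e)\in S\cup\overline{S}$ and $f(e)\in\Mid S$, and a scaffold $S'$ with $f(e)\notin S'\cup\overline{S'}$ and $f(e)\in\Mid S'$. By Lemma~\ref{lem:scaffunctor}, $f^{-1}(S)$ and $f^{-1}(S')$ are scaffolds on $G$ with $\Mid f^{-1}(S)\supseteq f^{-1}(\Mid S)$, so $e\in f^{-1}(\Mid S)\subseteq\Mid f^{-1}(S)$ and likewise for $S'$. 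Finally, since $f(\overline{e})=\overline{f(e)}$ whenever $f(e)$ is not collapsed, $e\in f^{-1}(S)\cup\overline{f^{-1}(S)}$ if and only if $f(e)\in S\cup\overline{S}$, and similarly for $S'$; hence $f^{-1}(S)$ and $f^{-1}(S')$ witness total layerability of $G$ at $e$. Part (3) is the same argument applied to the two projections $\pi_1\colon G_1\Box G_2\to G_1$ and $\pi_2\colon G_1\Box G_2\to G_2$, which are both UHMs. Although each projection collapses edges, every edge of $G_1\Box G_2$ lies in exactly one of $E_1\times V_2$ or $V_1\times E_2$, and $\pi_1$ does not collapse edges of the first family while $\pi_2$ does not collapse those of the second. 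So given an edge $e$, I would pick the projection $\pi_i$ not collapsing $e$, choose scaffolds on $G_i$ witnessing total layerability at $\pi_i(e)$, and pull them back exactly as above (concretely $\pi_1^{-1}(S_1)=S_1\times V_2$).

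The real work is part (2), and the obstacle is the single shared boundary vertex $p$. Every edge of $G=G_1\cup G_2$ lies in exactly one of $E(G_1)$, $E(G_2)$; say $e\in E(G_1)$. I would take a scaffold $S_1$ on $G_1$ witnessing the desired condition at $e$ (so $e\in\Mid S_1$ with the prescribed membership in $S_1\cup\overline{S_1}$), take a scaffold $S_2$ on $G_2$ covering all of $V^\circ(G_2)$, set $S=S_1\cup S_2$, and build its partial order by extending the orders induced by $S_1$ and $S_2$ (Lemma~\ref{lem:scafequivalentdef}). Since $E(G_1)$ and $E(G_2)$ meet only at the boundary vertex $p$, the scaffold axioms for $S$ reduce to those for $S_1$ and $S_2$ together with purely local checks at $p$: condition~(1) requires $p$ to be the output of at most one and the input of at most one edge of $S$, and the input/output alternative (condition~(3)) must survive for increasing paths that cross from $V^\circ(G_1)$ through $p$ into $V^\circ(G_2)$. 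Condition~(2) then comes for free, since any excursion of a decreasing path from one factor through $p$ into the other and back would constitute a decreasing cycle in that factor, which its scaffold forbids.

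I expect the technical heart to be the choice of $S_2$. To keep $e$ in the Middle of $S$ one must block both increasing and decreasing paths that leave $e$, cross $p$, and terminate at an interior vertex of $G_2$ that is not an input, respectively not an output; the clean way to arrange this is to take $S_2$ with $\Beg S_2=\End S_2=\varnothing$, i.e.\ a \emph{saturated} scaffold in which every interior vertex of $G_2$ is simultaneously an input and an output. With such an $S_2$, the cross-factor instances of condition~(3) hold automatically (every interior vertex of $G_2$ is an input and an output), and $e\notin\Beg S$, $e\notin\End S$ follow from $e\in\Mid S_1$ once one checks that any decreasing or increasing path from $e$ landing at a $G_1$-interior non-output or non-input must in fact stay inside $G_1$. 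The two points I would need to establish carefully are, first, that total layerability of $G_2$ indeed furnishes a saturated scaffold (this is precisely where total layerability, rather than mere layerability, is used, as the lone spike and the length-two path already fail total layerability), and second, the local compatibility at $p$: choosing $S_1$ and the saturated $S_2$ so that their uses of $p$ do not collide, i.e.\ so that at most one edge of $S$ ends at $p$ and at most one begins there, with any path of $S_1$ through $p$ merged with the path structure of $S_2$ into a single path. The symmetric items (2) and (3) of the proposition then follow by applying the same construction on each factor, using that total layerability of both $G_1$ and $G_2$ provides the room to make these compatible choices.
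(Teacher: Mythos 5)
Your parts (1) and (3) are correct, and they are exactly the intended use of Lemma~\ref{lem:scaffunctor}: in (1) the hypothesis that $f$ collapses no edges is precisely what lets you invoke total layerability of $H$ at $f(e)$, and in (3) choosing, for each edge of $G_1\Box G_2$, the projection that does not collapse that edge (with $\pi_1^{-1}(S_1)=S_1\times V_2$) reduces part (3) to the same computation.

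Part (2) is where the proposal has a genuine gap, and it is the first point you flag yourself: the claim that total layerability of $G_2$ furnishes a \emph{saturated} scaffold, i.e.\ one with $\Beg S_2=\End S_2=\varnothing$. Nothing in the paper produces such a scaffold, and it does not follow formally from the definition. Total layerability is an edge-by-edge condition --- for each oriented edge there exists \emph{some} scaffold placing that edge in the Middle --- and there is no mechanism given for merging these many different scaffolds into a single scaffold whose Beginning and End are simultaneously empty. The available tools give strictly less: layerability (which total layerability implies, via Lemma~\ref{lem:layerableequiv}) yields a scaffold with $\End S=\varnothing$, and reversal in the finite case yields one with $\Beg S=\varnothing$, but not both at once. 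Nor does total layerability yield the weaker property that would actually suffice for your gluing, namely a scaffold $S_2$ with the \emph{vertex} $p$ in $\Mid S_2$: a scaffold witnessing that a single edge $a$ incident to $p$ lies in the Middle only controls increasing and decreasing paths leaving $p$ through $a$, not those leaving $p$ through its other edges. So your conditional argument (``given a saturated $S_2$ compatible at $p$, the union $S_1\cup S_2$ works'') is essentially sound, but its hypothesis has not been supplied, and supplying it requires a new idea, not a routine verification.

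Your second flagged point, the collision of $S_1$ and $S_2$ at $p$, is also less routine than the phrase ``local compatibility'' suggests, although here the paper does contain the right technique (Lemma~\ref{lem:scafdelete}). The subtlety is that deleting an edge $c$ from a scaffold to restore injectivity at $p$ is not harmless: once $c$ is removed, both $c$ and $\overline{c}$ become usable by increasing paths, so deletions can create new increasing paths and cycles, and they create a new interior non-input or non-output at the other endpoint of $c$; one must then argue, as in Lemma~\ref{lem:scafdelete}, that every new defect lands strictly before or strictly after $e$ in the induced partial order. A cleaner variant that handles both of your difficulties at once would be to arrange that \emph{no} edge of $S_1\cup S_2$ is incident to $p$: then any path crossing $p$ must use two consecutive edges outside $S_1\cup S_2$, hence is neither increasing nor decreasing, so the two sides cannot interact and every scaffold condition reduces to the separate factors. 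But producing a scaffold on $G_1$ that still witnesses $e$ while avoiding $p$, and a scaffold on $G_2$ avoiding $p$, again requires more than total layerability as defined, so this route too needs a supplementary lemma rather than a routine check.
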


\subsection{An Example}

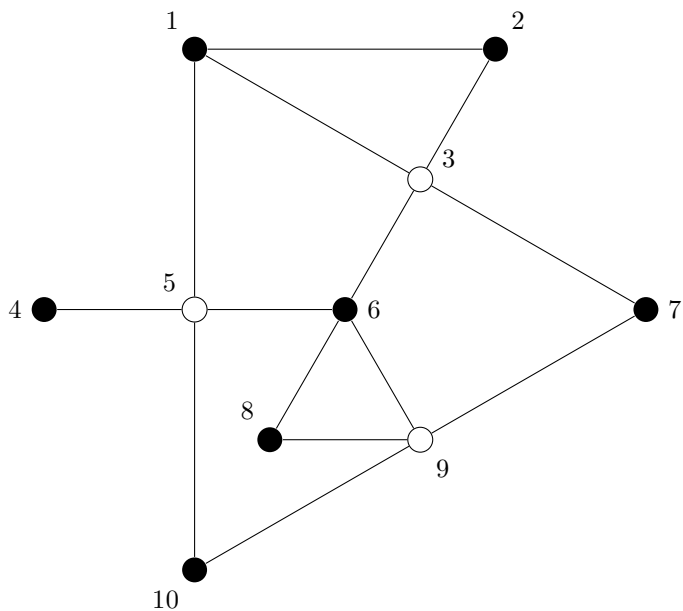
\begin{figure}

\caption{A $\partial$-graph which is solvable but not totally layerable.} \label{fig:ntlexgraph}

\begin{center}
\begin{tikzpicture}
	\node[circle,fill] (1) at (120:4) [label=120:$1$] {};
	\node[circle,fill] (2) at (60:4) [label=60:$2$] {};
	\node[circle,draw] (3) at (60:2) [label=15:$3$] {};
	\node[circle,fill] (4) at (-4,0) [label=180:$4$] {};
	\node[circle,draw] (5) at (-2,0) [label=135:$5$] {};
	\node[circle,fill] (6) at (0,0) [label=0:$6$] {};
	\node[circle,fill] (7) at (0:4) [label=0:$7$] {};
	\node[circle,fill] (8) at (240:2) [label=120:$8$] {};
	\node[circle,draw] (9) at (300:2) [label=300:$9$] {};
	\node[circle,fill] (10) at (240:4) [label=240:$10$] {};
	
	\draw (1) to (2);
	\draw (1) to (3);
	\draw (1) to (5);
	\draw (2) to (3);
	\draw (3) to (6);
	\draw (3) to (7);
	\draw (4) to (5);
	\draw (5) to (6);
	\draw (5) to (10);
	\draw (6) to (8);
	\draw (6) to (9);
	\draw (7) to (9);
	\draw (8) to (9);
	\draw (9) to (10);
\end{tikzpicture}
\end{center}

\end{figure}

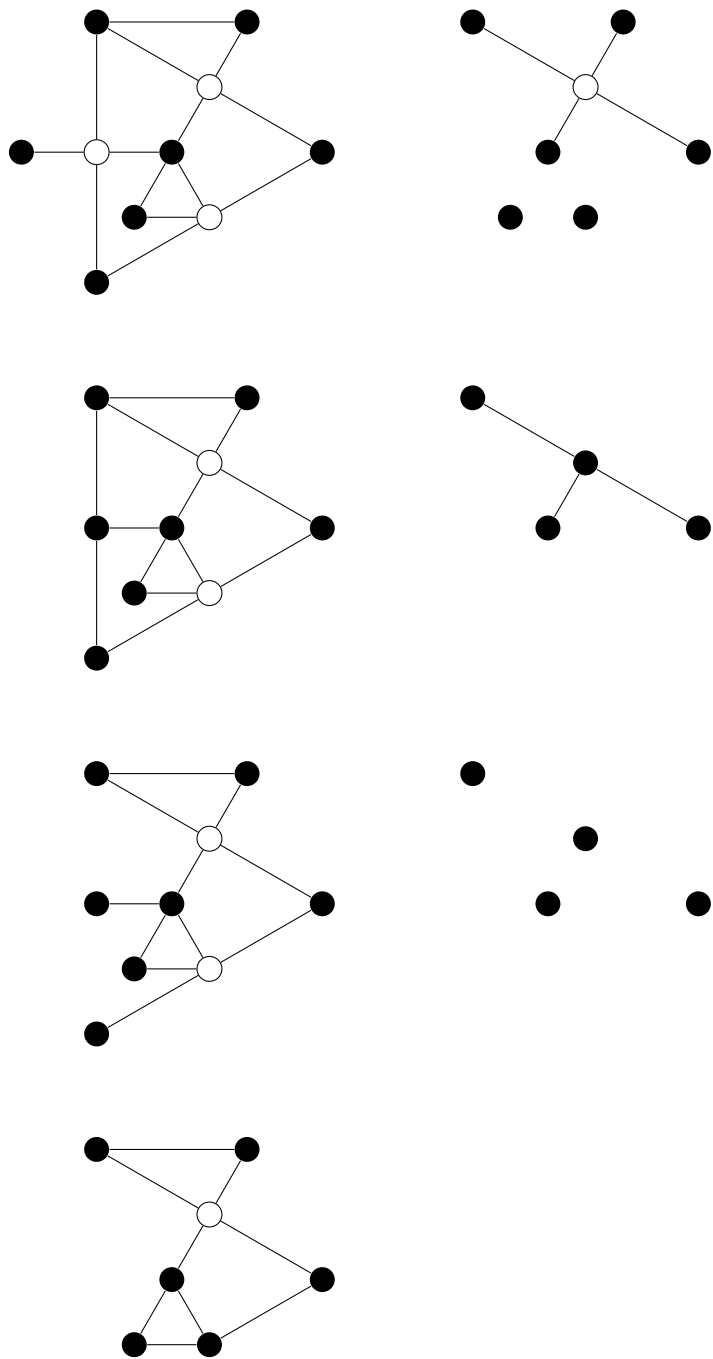
\begin{figure}

\caption{A filtration of the $\partial$-graph in Figure \ref{fig:ntlexgraph} which witnesses recoverability by scaffolds.} \label{fig:ntlexfiltration}

\begin{center}
\begin{tikzpicture}[scale = 0.5]

	\begin{scope}
		\node[circle,fill] (1) at (120:4) {};
		\node[circle,fill] (2) at (60:4) {};
		\node[circle,draw] (3) at (60:2) {};
		\node[circle,fill] (4) at (-4,0) {};
		\node[circle,draw] (5) at (-2,0) {};
		\node[circle,fill] (6) at (0,0) {};
		\node[circle,fill] (7) at (0:4) {};
		\node[circle,fill] (8) at (240:2) {};
		\node[circle,draw] (9) at (300:2) {};
		\node[circle,fill] (10) at (240:4) {};
	
		\draw (1) to (2);
		\draw (1) to (3);
		\draw (1) to (5);
		\draw (2) to (3);
		\draw (3) to (6);
		\draw (3) to (7);
		\draw (4) to (5);
		\draw (5) to (6);
		\draw (5) to (10);
		\draw (6) to (8);
		\draw (6) to (9);
		\draw (7) to (9);
		\draw (8) to (9);
		\draw (9) to (10);
	\end{scope}
		
	\begin{scope}[shift={(0,-10)}]
		\node[circle,fill] (1) at (120:4) {};
		\node[circle,fill] (2) at (60:4) {};
		\node[circle,draw] (3) at (60:2) {};
		\node[circle,fill] (5) at (-2,0) {};
		\node[circle,fill] (6) at (0,0) {};
		\node[circle,fill] (7) at (0:4) {};
		\node[circle,fill] (8) at (240:2) {};
		\node[circle,draw] (9) at (300:2) {};
		\node[circle,fill] (10) at (240:4) {};
	
		\draw (1) to (2);
		\draw (1) to (3);
		\draw (1) to (5);
		\draw (2) to (3);
		\draw (3) to (6);
		\draw (3) to (7);
		\draw (5) to (6);
		\draw (5) to (10);
		\draw (6) to (8);
		\draw (6) to (9);
		\draw (7) to (9);
		\draw (8) to (9);
		\draw (9) to (10);
	\end{scope}
	
	\begin{scope}[shift={(0,-20)}]
		\node[circle,fill] (1) at (120:4) {};
		\node[circle,fill] (2) at (60:4) {};
		\node[circle,draw] (3) at (60:2) {};
		\node[circle,fill] (5) at (-2,0) {};
		\node[circle,fill] (6) at (0,0) {};
		\node[circle,fill] (7) at (0:4) {};
		\node[circle,fill] (8) at (240:2) {};
		\node[circle,draw] (9) at (300:2) {};
		\node[circle,fill] (10) at (240:4) {};
	
		\draw (1) to (2);
		\draw (1) to (3);
		\draw (2) to (3);
		\draw (3) to (6);
		\draw (3) to (7);
		\draw (5) to (6);
		\draw (6) to (8);
		\draw (6) to (9);
		\draw (7) to (9);
		\draw (8) to (9);
		\draw (9) to (10);
	\end{scope}
	
	\begin{scope}[shift={(0,-30)}]
		\node[circle,fill] (1) at (120:4) {};
		\node[circle,fill] (2) at (60:4) {};
		\node[circle,draw] (3) at (60:2) {};
		\node[circle,fill] (6) at (0,0) {};
		\node[circle,fill] (7) at (0:4) {};
		\node[circle,fill] (8) at (240:2) {};
		\node[circle,fill] (9) at (300:2) {};
	
		\draw (1) to (2);
		\draw (1) to (3);
		\draw (2) to (3);
		\draw (3) to (6);
		\draw (3) to (7);
		\draw (6) to (8);
		\draw (6) to (9);
		\draw (7) to (9);
		\draw (8) to (9);
	\end{scope}

	\begin{scope}[shift={(10,0)}]
		\node[circle,fill] (1) at (120:4) {};
		\node[circle,fill] (2) at (60:4) {};
		\node[circle,draw] (3) at (60:2) {};
		\node[circle,fill] (6) at (0,0) {};
		\node[circle,fill] (7) at (0:4) {};
		\node[circle,fill] (8) at (240:2) {};
		\node[circle,fill] (9) at (300:2) {};
	
		\draw (1) to (3);
		\draw (2) to (3);
		\draw (3) to (6);
		\draw (3) to (7);
	\end{scope}

	\begin{scope}[shift={(10,-10)}]
		\node[circle,fill] (1) at (120:4) {};
		\node[circle,fill] (3) at (60:2) {};
		\node[circle,fill] (6) at (0,0) {};
		\node[circle,fill] (7) at (0:4) {};
	
		\draw (1) to (3);
		\draw (3) to (6);
		\draw (3) to (7);
	\end{scope}
	
	\begin{scope}[shift={(10,-20)}]
		\node[circle,fill] (1) at (120:4) {};
		\node[circle,fill] (3) at (60:2) {};
		\node[circle,fill] (6) at (0,0) {};
		\node[circle,fill] (7) at (0:4) {};
	\end{scope}	
	
\end{tikzpicture}
\end{center}

\end{figure}

\begin{figure}

\caption{A scaffold on the graph from Figure \ref{fig:ntlexgraph}.  The numbering indicates one possible total order of the edges which is compatible with the partial order induced by the scaffold.} \label{fig:ntlexscaffold}

\begin{center}
\begin{tikzpicture}
	\node[circle,fill] (1) at (120:4) {};
	\node[circle,fill] (2) at (60:4) {};
	\node[circle,draw] (3) at (60:2) {};
	\node[circle,fill] (4) at (-4,0) {};
	\node[circle,draw] (5) at (-2,0) {};
	\node[circle,fill] (6) at (0,0) {};
	\node[circle,fill] (7) at (0:4) {};
	\node[circle,fill] (8) at (240:2) {};
	\node[circle,draw] (9) at (300:2) {};
	\node[circle,fill] (10) at (240:4) {};
	
	\draw[blue] (6) to node[auto] {$1$} (8);
	\draw[orange] (8) to node[auto] {$2$} (9) [arrow inside={end=stealth,opt={orange, scale=2}}{0.5}];
	\draw[blue] (9) to node[auto] {$3$} (10);
	\draw[orange] (10) to node[auto] {$4$} (5) [arrow inside={end=stealth,opt={orange, scale=2}}{0.5}];
	\draw[blue] (4) to node[auto] {$5$} (5);
	\draw[blue] (5) to node[auto] {$6$} (6);
	\draw[blue] (6) to node[auto] {$7$} (9);
	\draw[orange] (5) to node[auto] {$8$} (1) [arrow inside={end=stealth,opt={orange, scale=2}}{0.5}];
	\draw[orange] (6) to node[auto] {$9$} (3) [arrow inside={end=stealth,opt={orange, scale=2}}{0.5}];
	\draw[orange] (9) to node[auto] {$10$} (7) [arrow inside={end=stealth,opt={orange, scale=2}}{0.5}];
	\draw[blue] (1) to node[auto] {$11$} (3);
	\draw[blue] (3) to node[auto] {$12$} (7);
	\draw[orange] (3) to node[auto] {$13$} (2) [arrow inside={end=stealth,opt={orange, scale=2}}{0.5}];
	\draw[blue] (1) to node[auto] {$14$} (2);

\end{tikzpicture}
\end{center}

\end{figure}
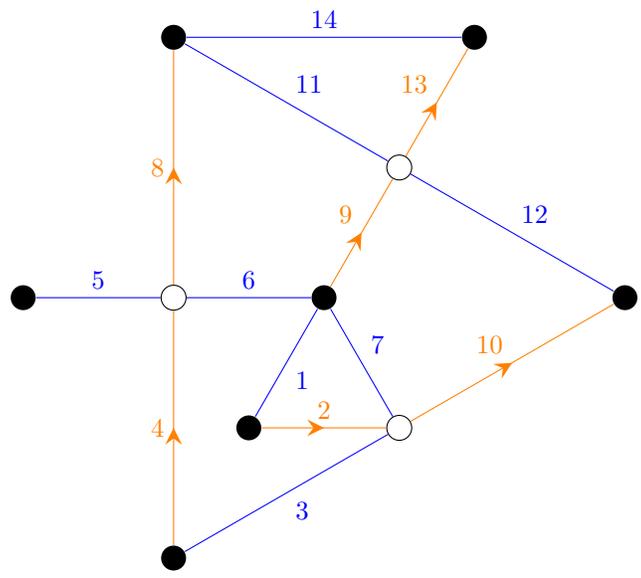

Total layerability and recoverability by scaffolds are not equivalent in general.  Figure \ref{fig:ntlexgraph} shows a $\partial$-graph which is recoverable by scaffolds, but not totally layerable.  A filtration witnessing recoverability is shown in Figure \ref{fig:ntlexfiltration}.  A scaffold used for recovering the first boundary spike (and several of the later steps is shown in Figure \ref{fig:ntlexscaffold}.  The construction of the scaffolds for the remaining steps is left as an exercise.

However, this $\partial$-graph is not totally layerable.  Let's index the vertices as in Figure \ref{fig:ntlexgraph} and denote by $(i,j)$ the oriented edge from vertex $i$ to vertex $j$.  I claim that there does not exist a scaffold in which $(1,2)$ is in $S \cup \overline{S}$ and $\Mid S$.

Suppose for contradiction such a scaffold $S$ exists.\footnote{For best results when reading this proof, the reader should keep referring to Figure \ref{fig:ntlexgraph} and mark in pencil the ladders and planks in each of the scenarios considered.  Time and space constraints prevent me from including figures for each case.}  As a result of Lemma \ref{lem:scafdelete} in \S \ref{subsec:IOscaffolds}, we can assume without loss of generality that each boundary vertex is incident to at most one edge in $S$.  For finite $\partial$-graphs, $S$ is a scaffold if and only if $\overline{S}$ is a scaffold, so we can also assume $(1,2) \in S$.  Now we have
\[
(1,3) \prec (1,2) \prec (2,3).
\]
Since $(1,2)$ is assumed to be in the middle of $S$, we know $3$ is both the input and the output of some edge in $S$.  Hence, $(3,6)$ and $(3,7)$ are both in $S \cup \overline{S}$, and one must be oriented going into $3$ and one going out.  Since each boundary vertex (in particular, vertex $6$ or $7$) is incident to at most one edge in $S$, we conclude that $(9,7)$ and $(9,6)$ are not in $S \cup \overline{S}$.

We now treat two cases:
\begin{itemize}
	\item Suppose $(6,3) \in S$ and $(3,7) \in S$ (in that orientation).  Then
	\[
	(9,7) \succ (3,7) \succ (2,3) \succ (1,2),
	\]
	and so $(9,7) \not \in \Beg \mathcal{S}$.  Hence, $9$ must be the input of some edge in $S$.  We already know $(9,7)$ and $(9,6)$ are not in $S$, either $(9,8)$ or $(9,10)$ must be in $S$.  It cannot be $(9,8)$ because in that case
	\[
	(9,8) \prec (8,6) \prec (6,3) \prec (3,7) \prec (7,9) \prec (9,8),
	\]
	a contradiction.  So suppose $(9,10)$ is a ladder with $9$ as its foot.  In that case, $5$ is incident to only one edge in $S$, namely $(4,5)$, since all the other boundary vertices adjacent to $5$ already some edge in $S$ incident to them.  Since $(5,1) \prec (1,2) \in \Mid S$, vertex $5$ must be the output of some edge in $S$.  But since
	\[
	(5,10) \succ (9,10) \succ (9,7) \succ (3,7) \succ (2,3) \succ (1,2),
	\]
	vertex $5$ must be the input of some edge in $S$ as well.  This contradicts the fact that $5$ can only be incident to one edge in $S$.
	
	\item Suppose $(7,3) \in S$ and $(3,6) \in S$ (in that orientation).  By similar reasoning as before, since $(9,7) \prec (1,2)$, we have $(9,7) \not \in \End S$, hence $9$ must be the output of some edge in $S$.  We know $(8,9)$ cannot be in $S$, since then we would have
	\[
	(8,9) \prec (9,7) \prec (7,3) \prec (3,6) \prec (6,8) \prec (8,9).
	\]
	On the other hand, if $(10,9)$ is in $S$, then similar to before, $5$ can only be incident to one edge in $S$.  Since
	\[
	(5,10) \prec (10,9) \prec (9,7) \prec (7,3) \prec (3,1) \prec (1,2) \in \Mid S,
	\]
	we know $5$ must be the output of some edge in $S$.  But since
	\[
	(5,6) \succ (6,3) \succ (2,3) \succ (1,2) \in \Mid S,
	\]
	we know $5$ must be the input of some edge in $S$.  Thus, we have another contradiction.
\end{itemize}

\section{IO-Graphs and Factorization} \label{sec:IO}

\subsection{The Category of Linear Relations}

As explained in \S \ref{subsec:mixeddata} and \S \ref{subsec:gluing}, we consider certain linear relations of boundary data on electrical networks.  In preparation, we state the definition and basic properties of the category of linear relations.  The proofs are left as exercises.

\begin{definition}
In the {\bf category of linear relations} $\F$\textbf{-LinRel}, the objects are finite-dimensional vector spaces over $\F$.  A morphism $T: V \rightsquigarrow W$ is a linear relation between $V$ and $W$, that is, a linear subspace of $V \times W$.  Here we use ``$\rightsquigarrow$'' to emphasize that $T$ is not necessarily a function.  If $S: U \rightsquigarrow V$ and $T: V \rightsquigarrow W$, then we define $T \circ S: U \rightsquigarrow W$ by
\[
T \circ S = \{(x,z): \exists y \in V \text{ such that } (x,y) \in T \text{ and } (y,z) \in S\}.
\]
The identity morphism $\id_V$ is the diagonal subspace of $V \times V$.
\end{definition}

\begin{definition}
If $T: V \rightsquigarrow W$, we define $\overline{T}: W \rightsquigarrow V$ by $\{(y,x): (x,y) \in T\}$.  Generalizing the notions of kernel and image from linear algebra, we define
\[
\ker T = \{x \in V: (x,0) \in T\}, \qquad \im T = \{y \in W: (x,y) \in T \text{ for some } x\}.
\]
\end{definition}

\begin{definition}
Note that $T$ defines a linear isomorphism $\im \overline{T} / \ker T \to \im T / \ker \overline{T}$.  We define
\[
\rank T = \dim \im \overline{T} - \dim \ker T = \dim \im T - \dim \ker \overline{T} = \rank \overline{T}.
\]
\end{definition}

\begin{lemma} \label{lem:rankinequality}
$\rank(T \circ S) \leq \min(\rank T, \rank S)$.
\end{lemma}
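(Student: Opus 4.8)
The plan is to establish the two inequalities $\rank(T \circ S) \leq \rank S$ and $\rank(T \circ S) \leq \rank T$ separately, then take the minimum. Throughout I use the formula $\rank R = \dim \im \overline{R} - \dim \ker R$ from the definition of rank, where for a relation $R \colon A \rightsquigarrow B$ the space $\im \overline{R}$ is the ``domain'' $\{a : (a,b) \in R \text{ for some } b\}$ and $\ker R = \{a : (a,0) \in R\}$.

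For the bound by $\rank S$, I would first record two inclusions that formalize the idea that composing $T$ on the outside can only shrink the domain and enlarge the zero-fiber. Concretely, if $(x,z) \in T \circ S$ then there is some $y$ with $(x,y) \in S$, so $x \in \im \overline{S}$; hence $\im \overline{T \circ S} \subseteq \im \overline{S}$. On the other hand, if $(x,0) \in S$ then taking $y = 0$ (legitimate since $0 \in \ker T$, as $T$ is a subspace) gives $(x,0) \in T \circ S$; hence $\ker S \subseteq \ker(T \circ S)$. Combining these, and using that a larger kernel subtracts more,
\[
\rank(T \circ S) = \dim \im \overline{T \circ S} - \dim \ker(T \circ S) \leq \dim \im \overline{S} - \dim \ker S = \rank S.
\]

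The bound by $\rank T$ then follows by a duality trick rather than by repeating the argument. I would check directly from the definitions that $\overline{T \circ S} = \overline{S} \circ \overline{T}$: both consist exactly of the pairs $(z,x)$ for which there exists $y$ with $(x,y) \in S$ and $(y,z) \in T$. Applying the inequality just proved to the composite $\overline{S} \circ \overline{T}$, now with $\overline{T}$ playing the role of the inner relation, gives $\rank(\overline{S} \circ \overline{T}) \leq \rank \overline{T}$. Since $\rank \overline{R} = \rank R$ for every relation $R$ (this is part of the definition of $\rank$), this reads $\rank(T \circ S) \leq \rank T$, and the two bounds together yield the claim.

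I do not expect a serious obstacle here; the entire content sits in the two elementary set inclusions, which are immediate once one unwinds the definition of composition, and in the observation that the reverse of a composition is the composition of the reverses in the opposite order. The latter makes the second inequality essentially free, so the only thing to be careful about is getting the direction of each inclusion right and remembering that enlarging $\ker$ decreases the rank.
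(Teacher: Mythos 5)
Your proof is correct. Note that the paper offers no proof of this lemma at all---it appears in the subsection on linear relations with the remark that ``the proofs are left as exercises''---so there is no argument of the paper's to compare against; your write-up simply fills that gap. The two set inclusions $\im \overline{T \circ S} \subseteq \im \overline{S}$ and $\ker S \subseteq \ker(T \circ S)$ are exactly what is needed, both point in the right direction for the rank formula $\rank R = \dim \im \overline{R} - \dim \ker R$, and the identity $\overline{T \circ S} = \overline{S} \circ \overline{T}$ together with $\rank \overline{R} = \rank R$ (which the paper builds into its definition of rank) gives the second bound for free. One incidental merit of your argument: you used the intended reading of composition, namely $(x,z) \in T \circ S$ iff there is $y$ with $(x,y) \in S$ and $(y,z) \in T$; the paper's displayed formula for $T \circ S$ has $S$ and $T$ transposed, which is a typo given the declared types $S: U \rightsquigarrow V$, $T: V \rightsquigarrow W$, and your reading is the only one that type-checks.
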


\begin{lemma} \label{lem:relmorphisms}
Let $T: V \rightsquigarrow W$.  Then
\begin{itemize}
	\item $T$ is an isomorphism if and only if $T$ defines a linear bijection $V \to W$.
	\item $T$ is a monomorphism if and only if $\im \overline{T} = V$ and $\ker T = 0$.
	\item $T$ is an epimorphism if and only if $\im T = W$ and $\ker \overline{T} = 0$.
\end{itemize}
\end{lemma}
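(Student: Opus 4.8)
The plan is to prove the three equivalences in a convenient order: first the monomorphism criterion directly, then the epimorphism criterion by duality, and finally the isomorphism criterion by combining the two. Throughout I will use the reverse operation $\overline{\,\cdot\,}$ together with the elementary fact that it is a contravariant involution, i.e. $\overline{\overline{T}} = T$, $\overline{\id_V} = \id_V$, and $\overline{T \circ S} = \overline{S} \circ \overline{T}$; the last identity is immediate from the definition of composition and is one of the exercises mentioned above. I will also fix the standard convention $(u,y) \in T \circ A \Leftrightarrow \exists x,\ (u,x) \in A,\ (x,y) \in T$ (correcting the evident transposition in the displayed definition of $T \circ S$), so that the directions of the chases below are consistent.

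For the monomorphism criterion, the ($\Leftarrow$) direction is a direct diagram chase. Assuming $\im \overline{T} = V$ (so the domain of $T$ is all of $V$) and $\ker T = 0$, I take any $A, B : X \rightsquigarrow V$ with $T \circ A = T \circ B$ and an element $(u,x) \in A$. Totality provides $y$ with $(x,y) \in T$, hence $(u,y) \in T \circ A = T \circ B$, which yields $x'$ with $(u,x') \in B$ and $(x',y) \in T$; then $(x - x', 0) \in T$ forces $x = x'$ via $\ker T = 0$, so $(u,x) \in B$, and symmetry gives $A = B$. For the ($\Rightarrow$) direction I argue contrapositively, producing explicit test morphisms out of $X = \F$. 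If $\ker T \neq 0$, pick $0 \neq x_0$ with $(x_0,0) \in T$ and compare the zero relation $A = \{(u,0) : u \in \F\}$ with the graph $B = \{(u, u x_0) : u \in \F\}$; a short computation using that the fiber of $T$ over $u x_0$ is a coset of $\ker \overline{T}$ shows $T \circ A = T \circ B = \F \times \ker \overline{T}$, while $A \neq B$. If instead $\im \overline{T} \neq V$, pick $v_0 \in V \setminus \im \overline{T}$ and compare the zero morphism with $\{(u, u v_0) : u \in \F\}$; since $u v_0 \notin \im \overline{T}$ for $u \neq 0$, both compositions collapse to $\{0\} \times \ker \overline{T}$. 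Either way $T$ fails to be left-cancellable.

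The epimorphism criterion then follows formally: applying $\overline{\,\cdot\,}$ converts left-cancellability of $\overline{T}$ into right-cancellability of $T$, so $T$ is epi if and only if $\overline{T}$ is mono, which by the criterion just proved (applied to $\overline{T} : W \rightsquigarrow V$) means $\im \overline{\overline{T}} = \im T = W$ and $\ker \overline{T} = 0$. Finally, for the isomorphism criterion, the ($\Leftarrow$) direction checks that if $T$ is the graph of a linear bijection $\phi$ then $\overline{T}$ is the graph of $\phi^{-1}$ and $\overline{T} \circ T = \id_V$, $T \circ \overline{T} = \id_W$, so $T$ is invertible. For ($\Rightarrow$), an isomorphism is both a monomorphism and an epimorphism in any category, so all four conditions $\im \overline{T} = V$, $\ker T = 0$, $\im T = W$, $\ker \overline{T} = 0$ hold; totality together with $\ker \overline{T} = 0$ makes $T$ the graph of an everywhere-defined, single-valued linear map $\phi : V \to W$, and then $\im T = W$ gives surjectivity while $\ker T = \ker \phi = 0$ gives injectivity, so $\phi$ is a bijection.

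I expect the main obstacle to be the ($\Rightarrow$) direction of the monomorphism criterion: the test morphisms must genuinely distinguish $A$ from $B$ yet become equal after composing with $T$, and verifying this rests on describing each fiber $\{y : (x,y) \in T\}$ as a coset of $\ker \overline{T}$ and tracking how composition quotients by these cosets. Everything else is either a routine chase or a formal consequence of the involution $\overline{\,\cdot\,}$.
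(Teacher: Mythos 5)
A preliminary remark: the paper gives no proof of Lemma \ref{lem:relmorphisms} to compare against --- it and the neighboring facts about $\F$\textbf{-LinRel} are explicitly left as exercises --- so your argument must be judged on its own. Most of it is correct: the ($\Leftarrow$) diagram chase for the monomorphism criterion works, the passage from monomorphisms to epimorphisms via the contravariant involution $T \mapsto \overline{T}$ is legitimate (and $\overline{T \circ S} = \overline{S} \circ \overline{T}$ does hold under your corrected composition convention), and the isomorphism bullet follows correctly once the first two bullets are in place. You were also right to flag and repair the transposition typo in the paper's displayed definition of $T \circ S$.

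The genuine problem is Case 2 of the ($\Rightarrow$) direction of the monomorphism criterion, i.e.\ the case $\im \overline{T} \neq V$. Your comparison morphisms are ``the zero morphism'' --- which, by the convention you set in Case 1, is the graph $A = \{(u,0): u \in \F\}$ of the zero map --- and $B = \{(u, uv_0) : u \in \F\}$ with $v_0 \notin \im \overline{T}$. The claim that ``both compositions collapse to $\{0\} \times \ker \overline{T}$'' is false for $A$: for \emph{every} $u \in \F$ the point $(u,0) \in A$ feeds $0$ into $T$, and the fiber of $T$ over $0$ is $\ker \overline{T} \ni 0$, so $T \circ A = \F \times \ker \overline{T}$, whereas $T \circ B = \{0\} \times \ker \overline{T}$. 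These are never equal (e.g.\ $(1,0)$ lies in the first but not the second), so this pair does not witness failure of left-cancellability; and since your epimorphism bullet is the dual of this one and your isomorphism bullet invokes both, the gap propagates to all three statements. The repair is one line: take the comparison morphism to be the zero \emph{subspace} $\{(0,0)\} \subset \F \times V$ rather than the graph of the zero map; then $T \circ \{(0,0)\} = \{0\} \times \ker \overline{T} = T \circ B$ while $\{(0,0)\} \neq B$ because $v_0 \neq 0$. Note that the two cases genuinely require different test pairs --- in Case 1 the zero subspace would fail for the symmetric reason, since there $T \circ B = \F \times \ker \overline{T}$ --- so this is not merely a terminological slip about what ``zero morphism'' means. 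With that one substitution, the rest of your proof goes through.
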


\begin{lemma} \label{lem:kernelcomp}
If $S$ is an epimorphism, then
\[
\dim \ker T \circ S = \dim \ker T + \dim \ker S.
\]
Symmetrically, if $T$ is a monomorphism, then
\[
\dim \ker \overline{T \circ S} = \dim \ker \overline{T} + \dim \ker \overline{S}.
\]
\end{lemma}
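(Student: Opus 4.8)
The plan is to prove the first identity by producing a short exact sequence
\[
0 \longrightarrow \ker S \longrightarrow \ker(T \circ S) \stackrel{\phi}{\longrightarrow} \ker T \longrightarrow 0,
\]
and then to deduce the second identity by duality. First I would unwind the definitions: an element $x$ lies in $\ker(T \circ S)$ exactly when there is some $y \in V$ with $(x,y) \in S$ and $y \in \ker T$. The inclusion $\ker S \hookrightarrow \ker(T \circ S)$ is then immediate, since if $(x,0) \in S$ then $(x,0) \in T \circ S$ because $T$ is a subspace containing the origin.

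The heart of the argument is the map $\phi$ sending $x \in \ker(T \circ S)$ to a witness $y \in \ker T$, and this is exactly where the hypothesis that $S$ is an epimorphism enters. Well-definedness is the main obstacle, since a priori several $y$ could witness membership of $x$. But if $(x,y_1), (x,y_2) \in S$ with $y_1, y_2 \in \ker T$, then $(0, y_1 - y_2) \in S$, so $y_1 - y_2 \in \ker \overline{S}$; because $S$ is an epimorphism, Lemma \ref{lem:relmorphisms} gives $\ker \overline{S} = 0$, forcing $y_1 = y_2$. Thus $\phi$ is a well-defined linear map. I would then read off $\ker \phi = \ker S$ directly, and establish surjectivity onto $\ker T$ using the other half of the epimorphism condition, $\im S = V$: given $y \in \ker T$, choose $x$ with $(x,y) \in S$, so that $x \in \ker(T \circ S)$ and $\phi(x) = y$. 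Rank-nullity for $\phi$ then yields $\dim \ker(T \circ S) = \dim \ker S + \dim \ker T$.

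Finally, for the symmetric statement I would reduce to the case just proved. A direct check from the definitions shows $\overline{T \circ S} = \overline{S} \circ \overline{T}$, and that $T$ is a monomorphism if and only if $\overline{T}$ is an epimorphism, both being equivalent via Lemma \ref{lem:relmorphisms} to $\im \overline{T} = V$ together with $\ker T = 0$. Applying the first identity to the composite $\overline{S} \circ \overline{T}$, with the epimorphism $\overline{T}$ playing the role of the right-hand factor, gives $\dim \ker(\overline{S} \circ \overline{T}) = \dim \ker \overline{S} + \dim \ker \overline{T}$, which is precisely the claimed formula for $\dim \ker \overline{T \circ S}$.
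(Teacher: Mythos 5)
Your proof is correct. The paper itself offers no proof to compare against — the lemmas of \S 4.1 on linear relations are explicitly left as exercises — so your argument simply fills that gap, and it does so cleanly: the hypothesis that $S$ is an epimorphism is used exactly where it must be, with $\ker \overline{S} = 0$ giving uniqueness of the witness $y$ (hence well-definedness of $\phi$) and $\im S = V$ giving surjectivity of $\phi$ onto $\ker T$, after which rank-nullity and the duality $\overline{T \circ S} = \overline{S} \circ \overline{T}$ finish the job. Two small remarks: you silently use the intended definition of composition ($(x,y) \in S$ and $(y,z) \in T$), which is the right reading — the paper's displayed formula swaps the roles of $S$ and $T$ and does not typecheck as written; and linearity of $\phi$ deserves one line (if $y_1, y_2$ witness $x_1, x_2$, then $y_1 + y_2$ witnesses $x_1 + x_2$, so uniqueness of witnesses forces additivity, and similarly for scalars), though this is routine given the well-definedness you established.
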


\begin{lemma} \label{lem:relmorphisms2}
If $S: U \rightsquigarrow V$ is an epimorphism and $T: V \rightsquigarrow W$ is a monomorphism, then
\begin{itemize}
	\item $\rank(T \circ S) = \dim V$.
	\item $\dim \ker T \circ S = \dim \ker S$.
	\item $\dim \ker \overline{T \circ S} = \dim \ker \overline{T}$.
\end{itemize}
\end{lemma}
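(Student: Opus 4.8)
The plan is to read off all three statements from explicit descriptions of the fundamental subspaces of the composite $T \circ S$, obtained straight from the definition of relational composition. First I would translate the hypotheses using Lemma \ref{lem:relmorphisms}: since $S$ is an epimorphism, $\im S = V$ and $\ker \overline{S} = 0$; since $T$ is a monomorphism, $\im \overline{T} = V$ and $\ker T = 0$. In particular $\rank S = \rank T = \dim V$, so Lemma \ref{lem:rankinequality} already yields the upper bound $\rank(T \circ S) \le \dim V$; the remaining content is a matching lower bound, which will fall out of the image computation.

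Next I would compute the two kernels directly. For $\ker(T \circ S)$: a vector $x \in U$ lies in it iff there is $y \in V$ with $(x,y) \in S$ and $(y,0) \in T$; but $\ker T = 0$ forces $y = 0$, so the condition becomes $(x,0) \in S$, giving $\ker(T \circ S) = \ker S$ as subspaces. Symmetrically, $z \in \ker \overline{T \circ S}$ iff some $y$ satisfies $(0,y) \in S$ and $(y,z) \in T$; since $\ker \overline{S} = 0$ forces $y = 0$, this reduces to $(0,z) \in T$, so $\ker \overline{T \circ S} = \ker \overline{T}$. Taking dimensions gives the second and third bullets. (Alternatively these follow from Lemma \ref{lem:kernelcomp} upon substituting $\ker T = 0$ and $\ker \overline{S} = 0$, but computing the subspaces directly is cleaner.)

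Finally, for the rank I would compute $\im(T \circ S)$. A vector $z \in W$ is in the image iff there are $x, y$ with $(x,y) \in S$ and $(y,z) \in T$; surjectivity of $S$ (that $\im S = V$) lets $y$ range over all of $V$, so the condition collapses to $(y,z) \in T$ for some $y$, i.e. $\im(T \circ S) = \im T$. Then
\[
\rank(T \circ S) = \dim \im(T \circ S) - \dim \ker \overline{T \circ S} = \dim \im T - \dim \ker \overline{T} = \rank T = \dim V,
\]
the last equality holding because $T$ is a monomorphism. This settles the first bullet.

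The argument has no serious obstacle; the only points requiring care are keeping the composition convention straight and tracking which hypothesis eliminates the intermediate vector $y$. In the image computation it is surjectivity of $S$ that lets $y$ range over all of $V$, whereas in the two kernel computations it is injectivity ($\ker T = 0$, respectively $\ker \overline{S} = 0$) that forces $y = 0$. Once $\im(T \circ S) = \im T$ is established, the lower bound on the rank is automatic and matches the upper bound from Lemma \ref{lem:rankinequality}.
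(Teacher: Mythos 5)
Your proof is correct, and since the paper explicitly leaves the proofs in this section as exercises, there is no authorial proof to diverge from; your direct computation of the fundamental subspaces of the composite ($\ker(T\circ S)=\ker S$, $\ker\overline{T\circ S}=\ker\overline{T}$, $\im(T\circ S)=\im T$, then the definition of rank) is exactly the intended argument. One detail you handled well: the paper's definition of composition in the linear-relations section contains a typo (it swaps the roles of $S$ and $T$), and you correctly follow the consistent convention $(x,y)\in S$, $(y,z)\in T$ given earlier in \S \ref{subsec:gluing}, isolating precisely which hypothesis ($\ker T=0$, $\ker\overline{S}=0$, or $\im S=V$) eliminates the intermediate vector in each of the three computations.
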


\subsection{The Category of IO-Graphs}

The following constructions are adapted from \cite{BF}.  The motivation is explained in \S \ref{subsec:gluing}.

\begin{definition}
We define the {\bf category of IO-graphs} as follows:  The objects are finite sets.  A morphism $P \to Q$ is an equivalence class of triples $(G,i,j)$, where $G$ is a finite graph and $i: P \to V(G)$ and $j: Q \to V(G)$ are injective maps called {\bf labelling functions}.  Here we say that $(G,i,j)$ and $(G',i',j')$ are equivalent if there is a graph isomorphism $f: G \to G'$ such that the following commutes:
\begin{center}
	\begin{tikzpicture}[scale = 1.3]
		\node (P) at (1,0) {$P$};
		\node (Q) at (-1,0) {$Q$};
		\node (V1) at (0,1) {$V(G)$};
		\node (V2) at (0,-1) {$V(G')$};
		
		\draw[->] (P) to node[auto,swap] {$i$} (V1);
		\draw[->] (Q) to node[auto] {$j$} (V1);
		\draw[->] (P) to node[auto] {$i'$} (V2);
		\draw[->] (Q) to node[auto,swap] {$j'$} (V2);
		\draw[->] (V1) to node[auto] {$f$} (V2);
	\end{tikzpicture}
\end{center}
We denote the morphism by $[G,i,j]$.

We define the composition of two morphisms $[G,i,j]: P \to Q$ and $[G',i',j']: Q \to R$ as follows:  Let $G^*$ be ``the'' disjoint union of $G$ and $G'$ modulo the identifications $j(q) \sim i'(q)$ for $q \in Q$.  We let $[G',i',j'] \circ [G,i,j] = [G^*,i,j']$, where we view $i$ and $j'$ as functions from $P$ and $R$ into $V(G^*)$ by precomposing with $V(G) \to V(G^*)$ and $V(G') \to V(G^*)$.
\end{definition}

\begin{definition}
For a morphism $[G,i,j]$, we call the vertices $i(P)$ {\bf inputs} and the vertices of $j(Q)$ {\bf outputs}.
\end{definition}

It is an standard exercise to verify that composition is well-defined and associative.  Moreover, the identity transformation $P \to P$ is given by a graph with vertex set $P$ and no edges, and $i: P \to P$ and $j: P \to P$ are the identity function.

\begin{definition}
The {\bf category of IO-networks} $\F$\textbf{-IO-net} is defined in the same way, with graphs replaced by networks without specified boundary vertices.  An isomorphism of networks is assumed to preserve the edge weights.  We denote the network by $\Gamma$ and the morphism by $[\Gamma,i,j]$, hoping that the distinctions will be made clear by context.
\end{definition}

\begin{definition}
The {\bf IO-boundary behavior functor} $X: \F \text{\bf -IO-net} \to \F \text{\bf -LinRel}$ is defined as follows.  For a finite set $P$, we define $X(P) = (\F^P)^2$.  Now suppose $[\Gamma,i,j]: P \to Q$ is a morphism of $\F$\text\bf{-IO-net}.  Let $i_*$ and $j_*$ be the inclusions $\F^P \to \F^V$ and $\F^Q \to \F^V$ and let $i^*$ and $j^*$ be the projections $\F^V \to \F^P$ and $\F^V \to \F^Q$.  We define $X([\Gamma,i,j]): (\F^P)^2 \rightsquigarrow (\F^Q)^2$ as the set of all
\[
(x,y) = ((x_1,x_2), (y_1,y_2)) \in (\F^P)^2 \times (\F^Q)^2
\]
such that there exists $u \in \F^V$ with
\[
i^* u = x_1, \quad j^* u = x_2, \quad \Delta u = j_* y_2 - i_* x_2.
\]
\end{definition}

This definition says that $u$ agrees with $x_1$ on $P$ and $y_1$ on $Q$.  Moreover, $\Delta u(p)$ is zero for any vertex $p$ which is not in $i(P)$ or $j(Q)$, so $u$ is harmonic on the network with $\partial V = i(P) \cup j(Q)$.  The boundary current is $\Delta u(i(p)) = x_2(p)$ for $-i(p) \in i(P) \setminus j(Q)$, it is $\Delta u(j(q)) = y_2(q)$ for $j(q) \in j(Q) \setminus i(P)$, and it is $\Delta u(r) = y_2(q) - x_2(p)$ whenever $r = i(p) = j(q)$.  Thus, it matches the description in \S \ref{subsec:gluing}.  The verification that $X$ is a functor is left as an exercise (see \S \ref{subsec:gluing} and Proposition \ref{prop:subnetworkgluing}).

\subsection{Elementary IO-network Morphisms} \label{subsec:elementarymorphisms}

We define the following types of {\bf elementary IO-graph morphisms}, and call them uncreatively {\bf type 1}, \dots, {\bf type 4}:
\begin{enumerate}
	\item A graph where each component is either an isolated vertex or two vertices connected by a single edge.  Each of the isolated vertices is both an input and an output.  Each edge has one endpoint as an input and one as an output.
	\item A graph where all the vertices are both inputs and outputs with some edges between them.
	\item A graph with no edges in which all the vertices are inputs, but not all are outputs.  The non-outputs are called {\bf input stubs}.
	\item A graph with no edges in which all the vertices are outputs, but not all are inputs.  The non-inputs are called {\bf output stubs}.
\end{enumerate}
The {\bf elementary IO-network morphisms} are defined the same way, except with weights attached to the edges.

Let us compute $X$ for each of the elementary IO-network morphisms.  If $[\Gamma,i,j]$ is type 1, then $X([G,i,j])$ is an isomorphism.  Explicitly, let us index the inputs and the outputs by $[n] = \{0,\dots,n\}$, such that the input and output in the same component have the same index.  Assume that the components with edges correspond to indices $1, \dots, k$ with weights $w_1, \dots, w_k$.  Then $(x,y) \in X([G,i,j])$ if and only if
\[
\begin{pmatrix} y_1 \\ y_2 \end{pmatrix} = \begin{pmatrix} 1 & \sum_{j=1}^k w_j^{-1} E_{j,j} \\ 0 & 1 \end{pmatrix} \begin{pmatrix} x_1 \\ x_2 \end{pmatrix},
\]
where the blocks are $n \times n$ and $E_{i,j}$ is the matrix with a $1$ in the $(i,j)$ entry and zeros elsewhere.

\begin{remark*}
Note that for the case of $1$ edge, this is the transformation as in \S \ref{subsec:strategy} for changing the boundary data when adding a boundary spike.  We will explain this more fully in \S \ref{subsec:IOlayerstripping}.
\end{remark*}

If $[\Gamma,i,j]$ is type 2, then $X([G,i,j])$ is an isomorphism.  Explicitly, let us index the inputs and outputs by $[n]$ with the input and output indices equal to each other.  Suppose that we have edges between vertices $p_j$ and $q_j$ with weight $w_j$ for $j = 1, \dots, k$.  Then $X([G,i,j])$ is given by
\[
\begin{pmatrix} y_1 \\ y_2 \end{pmatrix} = \begin{pmatrix} 1 & 0 \\ \sum_{j=1}^k w_j(E_{p_j,p_j} + E_{q_j,q_j} - E_{p_j,q_j} - E_{q_j,p_j}) & 1 \end{pmatrix} \begin{pmatrix} x_1 \\ x_2 \end{pmatrix}
\]

If $[\Gamma,i,j]$ is type 3, then $X([\Gamma,i,j])$ is an epimorphism.  Explicitly, let us index the inputs by $[n]$ and the outputs by $[k] \subset [n]$.  Then
\[
X([\Gamma,i,j]) = \{(x,y): x_1|_{[k]} = y_1, x_1|_{[n] \setminus [k]} = 0\}.
\]
The dimension of the kernel is $n - k$.

If $[\Gamma,i,j]$ is type 4, then $X([\Gamma,i,j])$ is a monomorphism and the same formula holds switching the inputs and outputs.

\subsection{Elementary Factorizations and the Rank-Connection Principle} \label{subsec:elementary}

An {\bf elementary factorization} of $[\Gamma,i,j]: P \to Q$ is a factorization
\[
[\Gamma,i,j] = [\Gamma_n,i_n,j_n] \circ \dots \circ [\Gamma_1,i_1,j_1],
\]
where each factor is an elementary IO-network morphism and the type 3 morphisms come \emph{before} the type 4 morphisms in the order of composition.  If $[\Gamma_k, i_k, j_k]: P_{k-1} \to P_k$, then we define the {\bf width} of the factorization to be $\min_k |P_k|$.

\begin{remark*}
In an elementary factorization, the type 3 networks represent obstacles to existence of harmonic extensions and the type 4 networks represent obstacles to uniqueness.
\end{remark*}

With all the setup we have done it is now easy to show that the rank-connection principle holds for $[\Gamma,i,j]: P \to Q$ whenever it has an elementary factorization.  We first handle the algebraic quantities:

\begin{lemma} \label{lem:rankwidth}
Suppose $[\Gamma,i,j]: P \to Q$ admits an elementary factorization.  Then
\begin{itemize}
	\item $\rank X([\Gamma,i,j])$ is twice the width of the factorization.
	\item $\dim \ker X([\Gamma,i,j])$ is the number of input stubs.
	\item $\dim \im \overline{X([\Gamma,i,j])}$ is twice the width plus the number of input stubs.
\end{itemize}
The same holds with $P$ and $Q$ reversed.  In particular, all these quantities are independent of the choice of edge weights and of the field.
\end{lemma}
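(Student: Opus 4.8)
The plan is to use functoriality of $X$ to write the boundary behavior as the composition of the images of the elementary factors, then to collapse that long composition into a single factorization of the form $\text{monomorphism} \circ \text{epimorphism}$, after which all three counts fall out of the abstract lemmas on linear relations (Lemmas \ref{lem:relmorphisms}, \ref{lem:kernelcomp}, and \ref{lem:relmorphisms2}). Throughout I write $X(\Gamma_k)$ for $X([\Gamma_k,i_k,j_k])$.

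First I would track how $|P_k|$ evolves along the factorization. By the computations of \S\ref{subsec:elementarymorphisms}, a type 1 or type 2 factor is an isomorphism and keeps the dimension fixed; a type 3 factor is an epimorphism that lowers $|P_k|$ by its number of input stubs; and a type 4 factor is a monomorphism that raises $|P_k|$ by its number of output stubs. Because the definition of an elementary factorization forces every type 3 factor to precede every type 4 factor, I set $k^*$ to be the largest index of a type 3 factor (or $0$ if there are none). Then factors $1,\dots,k^*$ are of types 1, 2, 3 only, so $|P_k|$ is non-increasing on $0 \le k \le k^*$, while factors $k^*+1,\dots,n$ are of types 1, 2, 4 only, so $|P_k|$ is non-decreasing there. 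Hence the width $m=\min_k |P_k|$ equals $|P_{k^*}|$, and I can write $X([\Gamma,i,j]) = M \circ E$ with
\[
E = X(\Gamma_{k^*}) \circ \dots \circ X(\Gamma_1), \qquad M = X(\Gamma_n) \circ \dots \circ X(\Gamma_{k^*+1}),
\]
both passing through the space $(\F^{P_{k^*}})^2$ of dimension $2m$. Since a composition of isomorphisms and epimorphisms is an epimorphism and a composition of isomorphisms and monomorphisms is a monomorphism, $E$ is an epimorphism and $M$ is a monomorphism.

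Next I would feed this into Lemma \ref{lem:relmorphisms2} with $S=E$ and $T=M$, which immediately gives $\rank X([\Gamma,i,j]) = \dim (\F^{P_{k^*}})^2 = 2m$ (the first bullet), together with $\dim \ker X([\Gamma,i,j]) = \dim \ker E$ and $\dim \ker \overline{X([\Gamma,i,j])} = \dim \ker \overline{M}$. To evaluate $\dim \ker E$, I would use that $E$ is a composition of epimorphisms and apply Lemma \ref{lem:kernelcomp} inductively, so $\dim \ker E = \sum_{k \le k^*} \dim \ker X(\Gamma_k)$. Isomorphisms contribute $0$, while each type 3 factor contributes exactly its number of input stubs; the sum is therefore the total number of input stubs, proving the second bullet. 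The third bullet then follows from the identity $\dim \im \overline{T} = \rank T + \dim \ker T$ (immediate from the definition of $\rank$), which yields $\dim \im \overline{X([\Gamma,i,j])} = 2m + (\text{number of input stubs})$.

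For the statement with $P$ and $Q$ reversed I would argue purely at the level of relations via $\overline{M \circ E} = \overline{E} \circ \overline{M}$, where $\overline{M}$ is now an epimorphism and $\overline{E}$ a monomorphism (reverses swap mono and epi by Lemma \ref{lem:relmorphisms}). The same two lemmas give $\rank \overline{X([\Gamma,i,j])} = \rank X([\Gamma,i,j]) = 2m$ and $\dim \ker \overline{X([\Gamma,i,j])} = \dim \ker \overline{M} = \sum_{k>k^*} \dim \ker \overline{X(\Gamma_k)}$, which by the symmetric description of type 4 factors totals the number of output stubs; and then $\dim \im X([\Gamma,i,j]) = 2m + (\text{number of output stubs})$. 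Independence of the field and the weights is then automatic, since $m$, the input-stub count, and the output-stub count are combinatorial data of the factorization. The main obstacle is the bookkeeping in the first step: one must verify that the type-3-before-type-4 condition really does force the minimum of $|P_k|$ to occur at a single clean transition index $k^*$, so that the whole composition splits as a monomorphism after an epimorphism. Once that split is in hand, everything reduces to the abstract relation lemmas.
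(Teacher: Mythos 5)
Your proposal is correct and follows essentially the same route as the paper's proof: split the factorization at the point where types 1, 2, 3 end and types 1, 2, 4 begin, so that $X([\Gamma,i,j])$ becomes a monomorphism composed with an epimorphism through a space of dimension twice the width, then invoke Lemma \ref{lem:relmorphisms2} for the rank, Lemma \ref{lem:kernelcomp} for the kernel count, and rank--nullity for the third bullet. The only cosmetic difference is that you locate the split at the last type 3 factor while the paper picks an index of minimal $|P_\ell|$; these coincide in substance, since both yield the same epi-then-mono decomposition.
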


\begin{proof}
Suppose
\[
[\Gamma,i,j] = [\Gamma_n,i_n,j_n] \circ \dots \circ [\Gamma_1,i_1,j_1],
\]
where $[\Gamma_k,i_k,j_k]: P_{k-1} \to P_k$.  Pick $P_\ell$ with $|P_\ell|$ minimal, so that $|P_\ell|$ is the width of the factorization.  Since the type 3 networks decrease the size of $P_k$ and the type 4 networks increase it, we know that the type 3 networks come before $P_\ell$ and the type 4 networks come after $P_\ell$.  Let $\Sigma_1$ be the morphism $P \to P_\ell$ and $\Sigma_2$ the morphism $P_\ell \to Q$ formed by composing the morphisms in the factorization.  Then from our description of the boundary behavior for elementary networks, $X(\Sigma_1)$ is a composition of epimorphisms, hence an epimorphism and $X(\Sigma_2)$ is a composition of monomorphisms, hence a monomorphism.  Thus, Lemma \ref{lem:relmorphisms2} implies that $\rank X([\Gamma,i,j])$ is the dimension of $(\F^{P_\ell})^2$, which is twice the width of the factorization.

Now the epimorphisms $[\Gamma_1,i_1,j_1], \dots, [\Gamma_k,i_k,j_k]$ have a zero-dimensional kernel in the case of type 1 and 2, and in the case of a type 3 network, the dimension is the number of input stubs.  This implies by Lemma \ref{lem:kernelcomp} that the dimension of $\ker X(\Sigma_1)$ is the total number of input stubs.  Similarly, the dimension of $\ker \overline{X(\Sigma_2)}$ is the total number of output stubs.  This proves the first and second claims, and the third claim follows by ``rank-nullity'' arithmetic.
\end{proof}

Now we must deal with the geometric quantity of the maximum size connection, whose precise definition is as follows:
\begin{definition}
Let $G$ be a $\partial$-graph and let $P, Q \subset \partial V(G)$; then a {\bf connection from $P$ to $Q$} is a collection of disjoint paths in $G$ such that each path has its starting point in $P$ and no other points in $P$ and has its ending point in $Q$ and no other points in $Q$.  Note that this makes sense even if $P$ and $Q$ intersect, but in this case the only paths allowable for vertices in $P \cap Q$ are the trivial paths connecting a vertex to itself.  We define $m(P,Q)$ to be the maximum size connection from $P$ to $Q$.  If $[G,i,j]: P \to Q$ is an IO-graph morphism, then we define $m([G,i,j]) = m(i(P), j(Q))$ in the $\partial$-graph with $\partial V = i(P) \cup j(Q)$.
\end{definition}

\begin{lemma} \label{lem:connectionwidth}
Suppose that $[G,i,j]: P \to Q$ admits an elementary factorization.  Then the maximum size connection between $P$ and $Q$ is the width of the factorization.  In particular, the width of the factorization only depends on $[G,i,j]$.
\end{lemma}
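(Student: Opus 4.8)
The plan is to prove the two inequalities ``$m(P,Q) \le \text{width}$'' and ``$m(P,Q) \ge \text{width}$'' separately, exhibiting a narrow vertex cut for the first and constructing an explicit connection for the second. First I would fix a good ``waist'' of the factorization. Writing $[\Gamma_k,i_k,j_k]\colon P_{k-1}\to P_k$, the sizes $|P_k|$ are unchanged by type 1 and type 2 factors, strictly decreased by type 3, and strictly increased by type 4; since all type 3 factors precede all type 4 factors, there is an index $\ell$ with every type 3 factor among $\Gamma_1,\dots,\Gamma_\ell$ and every type 4 factor among $\Gamma_{\ell+1},\dots,\Gamma_n$. Then $|P_k|$ is non-increasing for $k\le\ell$ and non-decreasing for $k\ge\ell$, so $|P_\ell|=\min_k|P_k|$ is exactly the width (this is the same $\ell$ used in the proof of Lemma \ref{lem:rankwidth}). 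Let $\Sigma_1$ be the composite of the first $\ell$ factors and $\Sigma_2$ the composite of the rest, and let $W_\ell\subset V(G)$ be the set of vertices carrying the labels of $P_\ell$, so $|W_\ell|$ equals the width. By the definition of composition, $V(\Sigma_1)\cap V(\Sigma_2)=W_\ell$ and every edge of $G$ lies in exactly one of $\Sigma_1,\Sigma_2$, so no edge joins $V(\Sigma_1)\setminus W_\ell$ to $V(\Sigma_2)\setminus W_\ell$.

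For the upper bound I would argue that $W_\ell$ meets every path counted by a connection. Since $i(P)\subset V(\Sigma_1)$ and $j(Q)\subset V(\Sigma_2)$, while $j(Q)\cap V(\Sigma_1)\subset W_\ell$ and $i(P)\cap V(\Sigma_2)\subset W_\ell$ (a vertex lying in both $\Sigma_1$ and $\Sigma_2$ is a glued vertex, hence in $W_\ell$), any $i(P)$--$j(Q)$ path either lies wholly in $V(\Sigma_1)$ and ends in $W_\ell$, lies wholly in $V(\Sigma_2)$ and starts in $W_\ell$, or crosses from one side to the other and so passes through $W_\ell$; the trivial paths at vertices of $i(P)\cap j(Q)\subset W_\ell$ are handled the same way. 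As the paths of a connection are vertex-disjoint, a connection has at most $|W_\ell|$ paths, so $m(P,Q)$ is at most the width.

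For the lower bound I would build a connection whose size equals the width by tracing strands layer by layer, by induction on the number of factors. The invariant maintained through $\Sigma_1$ is a family of $|P_k|$ vertex-disjoint paths from $P$ ending at the $|P_k|$ vertices currently labeled by $P_k$ (so at every stage all active positions are occupied, since the sizes are non-increasing in $\Sigma_1$). Appending a type 2 factor, or an isolated-vertex component of a type 1 factor, leaves each path at its vertex; appending an edge-component of a type 1 factor extends the relevant path along that edge to the new output vertex; appending a type 3 factor simply discards the paths ending at the stub positions. Because the position-to-position correspondence induced by a type 1 factor is injective and the newly introduced vertices are genuinely new, disjointness is preserved, and after $\Sigma_1$ we obtain width-many disjoint paths from $P$ onto all of $W_\ell$. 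Running the symmetric argument through $\Sigma_2$ (where sizes are non-decreasing and type 4 factors merely introduce unreached output stubs) extends these to width-many disjoint paths reaching $Q$, which concatenate at $W_\ell$ into a connection of size equal to the width; hence $m(P,Q)$ is at least the width. The ``in particular'' is then immediate, since $m(P,Q)$ is intrinsic to $[G,i,j]$.

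I expect the main obstacle to be the bookkeeping in the lower-bound construction: verifying from the gluing rules that distinct labeled positions at each layer really are distinct vertices of $G$, that a type 1 factor moves strands injectively without collisions with vertices built at earlier stages, and that a traced path never re-enters $i(P)$ or $j(Q)$ except at its endpoints (so that it is admissible for a connection). These are exactly the points where the combinatorial definition of composition must be invoked carefully rather than read off the pictures.
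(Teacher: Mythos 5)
Your proposal is correct and follows essentially the same route as the paper: split the factorization at the waist of minimal size, observe that every $i(P)$--$j(Q)$ path must cross that waist (upper bound), and trace vertex-disjoint strands through the layers, concatenating at the waist, to exhibit a connection of full width (lower bound). The only cosmetic difference is that you trace strands forward from $P$, discarding them at type 3 stubs, whereas the paper runs the pre-waist half backwards (from all of the waist to a subset of $i(P)$); the resulting connection and all key steps are the same.
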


\begin{proof}
Note that if $[G,i,j]$ is a type 1, 2, or 3 elementary morphism, then there are disjoint paths from any subset of $j(Q)$ to some subset of $i(P)$.  If $[G,i,j]: P \to Q$ is composed of several elementary morphisms of types 1, 2, and 3 we can join the paths in each elementary morphism together to conclude that there are disjoint paths from all of $j(Q)$ to some subset of $i(P)$.  Similarly, if $[G,i,j]$ is composed of type 1, 2, and 4 elementary morphisms, then there are disjoint paths from all of $i(P)$ to some subset of $j(Q)$.

If $[G,i,j]$ admits an elementary factorization, then it can be written as $[G_2,i_2,j_2] \circ [G_1,i_1,j_1]$, where $[G_1,i_1,j_1]: P \to R$ is composed of types 1, 2, 3 and $[G_2, i_2, j_2]: R \to Q$ is composed of types 1, 2, 4.  There are paths through $G_1$ connecting all the vertices of $j_1(R)$ to some of the vertices of $i_1(P)$ and paths through $G_2$ connecting all the vertices of $i_2(R)$ to some of the vertices of $j_2(Q)$.  Joining these paths together provides a connection of size $|R|$ from $i(P)$ ot $j(Q)$.  Thus, the maximum size connection is at least as large as the width $|R|$.

On the other hand, any path from $i(P)$ to $j(Q)$ must pass through $i_2(R)$.  Thus, there can be at most $|R|$ disjoint paths.
\end{proof}

Combining these two lemmas, together with the fact that $|P|$ is is the width of the factorization plus the number of input stubs, yields
\begin{theorem}[Rank-Connection Principle 1] \label{thm:RC1}
If $[\Gamma,i,j]: P \to Q$ admits an elementary factorization, then
\begin{itemize}
	\item $\rank X([\Gamma,i,j]) = 2 m([\Gamma,i,j])$;
	\item $\dim \ker X([\Gamma,i,j]) = |P| - m([\Gamma,i,j])$;
	\item $\dim \im \overline{X([\Gamma,i,j])} = |P| + m([\Gamma,i,j])$.
\end{itemize}
The same holds with $P$ and $Q$ reversed.
\end{theorem}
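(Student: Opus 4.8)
The plan is to combine Lemma~\ref{lem:rankwidth} and Lemma~\ref{lem:connectionwidth} with one elementary bookkeeping identity relating $|P|$, the width of the factorization, and the number of input stubs. Write $w$ for the width and $s$ for the total number of input stubs in a fixed elementary factorization of $[\Gamma,i,j]$. Lemma~\ref{lem:connectionwidth} already identifies $w = m([\Gamma,i,j])$, so the first bullet is immediate: by Lemma~\ref{lem:rankwidth}, $\rank X([\Gamma,i,j]) = 2w = 2 m([\Gamma,i,j])$.

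For the remaining two bullets I would first establish the counting identity $|P| = w + s$. To see this, track how the label-set size $|P_k|$ changes along the factorization $[\Gamma_n,i_n,j_n]\circ\dots\circ[\Gamma_1,i_1,j_1]$. Type~1 and type~2 morphisms preserve the size, each type~3 morphism decreases it by exactly its number of input stubs, and each type~4 morphism increases it. Because the definition of an elementary factorization places all type~3 morphisms before all type~4 morphisms, the sequence $|P_0|, |P_1|, \dots, |P_n|$ is non-increasing until it reaches its minimum $w$ and non-decreasing thereafter. Hence the total drop from $|P_0| = |P|$ down to the minimum $w$ equals the sum of the input-stub counts over all type~3 morphisms, that is, $|P| - w = s$. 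Now Lemma~\ref{lem:rankwidth} gives $\dim\ker X([\Gamma,i,j]) = s = |P| - w = |P| - m([\Gamma,i,j])$, the second bullet, and $\dim\im\overline{X([\Gamma,i,j])} = 2w + s = 2 m([\Gamma,i,j]) + (|P| - m([\Gamma,i,j])) = |P| + m([\Gamma,i,j])$, the third.

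For the clause that the same identities hold with $P$ and $Q$ interchanged, I would invoke the symmetric halves of Lemmas~\ref{lem:rankwidth} and~\ref{lem:connectionwidth} together with the mirror-image counting identity $|Q| = w + t$, where $t$ is the number of output stubs. This identity follows from the same monotonicity argument applied to the size sequence read from the $Q$ end, since output stubs occur in the type~4 morphisms and account for the increases in $|P_k|$. Because the maximum connection is symmetric, $m(Q,P) = m(P,Q) = w$, the identical computation goes through with $|Q|$ and $t$ in the roles previously played by $|P|$ and $s$.

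I do not anticipate a genuine obstacle here, since the analytic content is entirely carried by the two preceding lemmas; the only point requiring care is the monotonicity argument for the size sequence, which relies crucially on the ordering constraint (type~3 before type~4) built into the definition of an elementary factorization. Without that constraint the minimum width need not be attained at the type~3/type~4 interface, and the clean identity $|P| = w + s$ could fail.
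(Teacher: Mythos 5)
Your proposal is correct and follows essentially the same route as the paper, which proves the theorem by combining Lemma~\ref{lem:rankwidth} and Lemma~\ref{lem:connectionwidth} with precisely the counting identity $|P| = \text{width} + \#(\text{input stubs})$ that you establish. The paper merely asserts this identity without proof, whereas you supply the monotonicity argument (type 3 morphisms strictly decrease $|P_k|$, type 4 strictly increase it, and the ordering constraint forces the minimum at the interface), which is the same observation the paper uses inside the proof of Lemma~\ref{lem:rankwidth} itself.
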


\subsection{Elementary IO-Graph Morphisms and Layer-Stripping Operations} \label{subsec:IOlayerstripping}

We can think of IO-networks as \emph{transformations} to apply to ordinary networks in the following way.  If $\Gamma_0$ is a finite network, then we can view $\Gamma_0$ as a morphism $[\Gamma_0,i_0,j_0]: \varnothing \to [n]$, where $n = |\partial V|$ and $j_0$ is any labelling of $\partial V$ by $[n]$.  In this case, $X([\Gamma_0,i_0,j_0]) = \{0\} \times \Lambda(\Gamma_0)$.

Now if $[\Gamma,i,j]: [n] \to [m]$ is an IO-network morphism, then $[\Gamma,i,j] \circ [\Gamma_0,i_0,j_0]$ is a network with $m$ boundary vertices.  This gives a geometric meaning to the postcomposition map
\[
[\Gamma,i,j] \circ -: \Hom_{\F\mathbf{-IO-net}}(\varnothing,[n]) \to \Hom_{\F\mathbf{-IO-net}}(\varnothing,[m]).
\]
In the case of elementary morphisms, postcomposition yields the following:
\begin{itemize}
	\item If $[\Gamma,i,j]$ is type 1, then postcomposing it adds some boundary spikes to the network.
	\item If $[\Gamma,i,j]$ is type 2, then postcomposing it adds some boundary edges to the network.
	\item If $[\Gamma,i,j]$ is type 3, then postcomposing it changes some boundary vertices to interior.
	\item If $[\Gamma,i,j]$ is type 4, then postcomposing it adds some isolated boundary vertices.
\end{itemize}
Thus, elementary morphisms are a geometric realization of the inverses of layer-stripping operations.  By application of the functor $X$, we see that each of these transformations modifies the boundary behavior $\Lambda$ of the original network by postcomposing with the linear relation corresponding to the elementary morphism.

In particular, if $\Gamma'$ is obtained from $\Gamma$ by attaching a boundary spike or boundary edge, then we have
\[
\Lambda(\Gamma') = \Xi \cdot \Lambda(\Gamma),
\]
where $\Xi$ is the invertible matrix corresponding to the elementary morphism described in \S \ref{subsec:elementarymorphisms}.

Now if $\Gamma$ is a finite layerable network, then we can express the morphism $[\Gamma,i,j]: \varnothing \to [n]$ as
\[
[\Gamma,i,j] = [\Gamma_n,i_n,j_n] \circ \dots \circ [\Gamma_0,i_0,j_0],
\]
where $[\Gamma_0,i_0,j_0]: \varnothing \to [n]$ is a network with $V = \partial V = [n]$ and no edges, and the other morphisms are type 1 or type 2.  This implies that
\[
\Lambda(\Gamma) = \Xi_n \circ \dots \circ \Xi_1(\F^n \times \{0\})
\]
since $\Lambda(\Gamma_0) = \F^n \times \{0\} \subset \F^n \times \F^n$.

\begin{remark*}
This is an efficient way to compute a basis for $\Lambda(\Gamma)$ since each $\Xi_j$ amounts to $1$ row operation for each boundary spike added or $4$ row operations for each boundary edge added (as a consequence of the formula for the $\Xi$ matrices).
\end{remark*}

In the same spirit as \cite{LP}, these considerations lead us to define the {\bf electrical linear group} $EL_n(\F)$ as the group of matrices generated by
\[
\{X([\Gamma,i,j]) \text{ for } [\Gamma,i,j]: [n] \to [n] \text{ is type 1 or type 2}\}.
\]
It is generated by the matrices corresponding to morphisms with only one edge, which we name as follows:
\[
\Xi_j(a) = \begin{pmatrix} 1 & aE_{j,j} \\ 0 & 1 \end{pmatrix}, \quad \Xi_{i,j}(a) = \begin{pmatrix} 1 & 0 \\ a(E_{i,i} - E_{i,j} - E_{j,i} + E_{j,j}) & 1 \end{pmatrix}.
\]
Note that $a \mapsto \Xi_j(a)$ and $a \mapsto \Xi_{i,j}(a)$ are group homomorphisms from the addtive group $\F$ to the multiplicative group $GL_{2n}(\F)$.  In particular, the inverse of each generator is another generator of the same type.  Thus, in terms of boundary data, contracting a spike of weight $w$ is equivalent to adding a spike of weight $-w$, and the same goes for boundary edges.

We define the {\bf electrical Grassmannian} $EG_n(\F)$ as the set of subspaces $\Lambda \subset \F^{2n}$ that are the boundary behaviors of some electrical network with $\partial V = [n]$.  Then by construction, $EL_n$ acts on $EG_n$ by applying the transformation $\Xi \in EL_n$ to the subspace $\Lambda \in EG_n$, which corresponds to a sequence of operations of adding boundary spikes or boundary edges.

\begin{remark*}
In \S \ref{sec:symplectic}, we will give a explicit characterization of $EL_n$ and $EG_n$ in terms of symplectic transformations and Lagrangian subspaces of $\F^{2n}$ using ideas from \cite{LP} and \cite{BF}.  The matrices $\Xi_j$ and $\Xi_{i,j}$ were written down in \cite{WJ}, but without any explanation of their significance in terms of boundary behavior.
\end{remark*}

\subsection{Elementary Factorizations and Scaffolds} \label{subsec:IOscaffolds}

As explained in \S \ref{subsec:gluing}, elementary factorizations and scaffolds are two ways of geometrically modeling the same process of harmonic continuation.  Here we give the conversion between the two frameworks, so that we can use the two tools interchangeably when convenient.  For an example, refer to Figure \ref{fig:scaffactorization}.

\begin{proposition} \label{prop:IOscaffolds}
Let $[G,i,j]: P \to Q$ and consider $G$ as a $\partial$-graph with $\partial V = i(P) \cup j(Q)$.  The following are equivalent:
\begin{enumerate}
	\item $[G,i,j]$ admits an elementary factorization.
	\item There is a scaffold on $S$ on $G$ such that $S_- \cap \partial V = i(P) \setminus j(Q)$ and $S_+ \cap \partial V = j(Q) \setminus i(P)$.
\end{enumerate}
One can always arrange that the input stubs are $V \setminus j(Q) \setminus S_-$, the output stubs are $V \setminus i(P) \setminus S_+$, and $S$ is precisely the set of edges in the type 1 networks oriented from input to output.
\end{proposition}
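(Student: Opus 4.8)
The plan is to prove the two implications separately, using the partial-order characterization of scaffolds in Lemma \ref{lem:scafequivalentdef} as the common bridge, and to read off the ``moreover'' refinement directly from the constructions. The basic bookkeeping device is that composition distributes the edge set of $G$ among the factors, with type 3 and type 4 slices contributing no edges; so an edge of $G$ lives in exactly one factor, and I may identify $S$ with the set of type 1 edges oriented from input to output. The other half of the dictionary is that each vertex $v$ is \emph{alive} on a contiguous range of slices, being born as a global input, as the head $e_+$ of an incoming type 1 edge, or as a type 4 output stub, and dying as a global output, as the tail $e_-$ of an outgoing type 1 edge, or as a type 3 input stub.

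For (2) $\implies$ (1), given a scaffold $S$ with the stated boundary conditions I take the induced partial order $\prec$ and choose a linear extension (possible since $G$ is finite, so condition (B) is automatic). I then sweep through the unoriented edges in this order, emitting a type 1 factor for each edge of $S$ and a type 2 factor for each edge of $E\setminus(S\cup\overline S)$, while maintaining the active label set starting from $i(P)$. The local comparison conditions (A) guarantee that when an edge is processed both endpoints are alive, and that the head of a type 1 edge is born exactly then (injectivity of $e\mapsto e_+$, $e\mapsto e_-$ prevents any vertex from being born or retired twice). Interior vertices not in $S_+$ are emitted as type 4 output stubs at birth and those not in $S_-$ as type 3 input stubs at death; the boundary conditions $S_-\cap\partial V=i(P)\setminus j(Q)$ and $S_+\cap\partial V=j(Q)\setminus i(P)$ ensure these stubs are exactly $V\setminus j(Q)\setminus S_-$ and $V\setminus i(P)\setminus S_+$. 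The only global requirement for a legal factorization is that all type 3 factors precede all type 4 factors, and this is precisely where condition (3) enters: since there is no increasing path from an interior non-output ($\notin S_+$) to an interior non-input ($\notin S_-$), no chain of processed edges forces a type 3 retirement after a type 4 birth, so the linear extension can be chosen to batch all existence-obstacles before all uniqueness-obstacles.

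For (1) $\implies$ (2), I define $S$ as the type 1 edges oriented from input to output and set $e\prec e'$ whenever the slice containing $e$ precedes the slice containing $e'$, then verify (A)--(C). Condition (A) holds because the head of a type 1 edge is born in its slice, so every other edge at that head lies strictly later and every edge into its tail strictly earlier; (B) holds since the slice index is finite-valued. For (C), suppose $e\preceq e'$ with $p=e_+$, $q=e'_+$ interior, $p\notin S_+$, $q\notin S_-$; then $p$ is born by a type 4 factor and $q$ retired by a type 3 factor, so the slice of $e$ lies after $p$'s birth and the slice of $e'$ before $q$'s retirement, and since every type 3 factor precedes every type 4 factor the slice of $e'$ strictly precedes that of $e$, contradicting $e\preceq e'$. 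Hence $S$ is a scaffold. The boundary conditions then follow by matching: a vertex of $i(P)\setminus j(Q)$ dies while alive and, after the harmless reorganization that makes its final incident edge a type 1 edge with it as tail, lies in $S_-$ (dually for $j(Q)\setminus i(P)$ and $S_+$).

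The step I expect to be the main obstacle is the equivalence in both directions between ``all type 3 factors precede all type 4 factors'' and scaffold condition (3)/(C); making this precise requires the vertex birth/death bookkeeping above to be watertight. In the forward direction the boundary-condition matching is the subtler point, since a given factorization may retire a boundary input through a type 3 stub; I handle this by a mild normalization of the factorization (or, equivalently, a local modification of $S$ in the style of the scaffold-editing arguments of Theorem \ref{thm:solvablepullback}), noting that the degenerate case of an isolated vertex of $i(P)\setminus j(Q)$ or $j(Q)\setminus i(P)$ admits no incident scaffold edge and must be excluded or treated trivially.
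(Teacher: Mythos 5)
Your proposal follows essentially the same route as the paper's proof: in both directions the bridge is Lemma \ref{lem:scafequivalentdef}, with $S$ taken to be the type 1 edges oriented from input to output and the slice order as the partial order, and your linear-extension sweep for (2) $\implies$ (1) is the same mechanism as the paper's peeling of minimal edges from the front (Beginning/Middle) and maximal edges from the back (End). Your verification of condition (C) is the paper's contrapositive argument, with the stub types labelled correctly (the paper's text actually interchanges ``input stub'' and ``output stub'' there). Your reduction of ``all type 3 before all type 4'' to scaffold condition (3) is also sound: condition (C) says that no edge incident to a vertex of $V^\circ\setminus S_+$ lies below an edge incident to a vertex of $V^\circ\setminus S_-$ in the induced order, which is exactly what permits a linear extension placing every stub retirement before every stub birth, \emph{provided all stubs are interior} --- which is the case under the equality reading of (2).

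The genuine gap is the ``harmless reorganization''/``mild normalization'' in (1) $\implies$ (2), and it cannot be repaired, because the equalities in (2) do not follow from (1). Let $G$ be a single edge with endpoints $a,b$, and take $i(P)=\{a,b\}$, $j(Q)=\{b\}$. An elementary factorization exists (a type 2 slice carrying the edge, then a type 3 slice retiring $a$), but any scaffold with $a\in S_-$ must contain the oriented edge $(a,b)$, forcing $b\in S_+\cap\partial V$ while $j(Q)\setminus i(P)=\varnothing$; so no scaffold satisfies the stated equalities. This is precisely the obstruction your normalization would have to overcome: turning the last edge at $a$ into a type 1 edge with tail $a$ requires its head $b$ to be \emph{born} at that slice, which is impossible since $b\in i(P)$ is alive from the start. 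Note that $a$ has degree $1$, so your proviso excluding isolated vertices does not catch this case. (The paper's own proof of this direction has the same hole: it checks the scaffold axioms for $S$ but never verifies the boundary conditions, which fail in this example where $S=\varnothing$.) The correct repair is to weaken (2) to containments, $S_-\cap\partial V\subseteq i(P)\setminus j(Q)$ and $S_+\cap\partial V\subseteq j(Q)\setminus i(P)$; then your naive construction of $S$ satisfies them with no normalization, and the ``moreover'' clause is automatic. But under that weaker hypothesis your sweep for the converse needs the ordering condition strengthened beyond condition (3), which constrains only interior vertices: one must require that there is no increasing path from a vertex of $V\setminus i(P)\setminus S_+$ to a vertex of $V\setminus j(Q)\setminus S_-$. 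Otherwise the batching fails; for instance on the path $u-a-v$ with $i(P)=\{u\}$, $j(Q)=\{v\}$, $a$ interior, the scaffold $S=\{(u,a)\}$ satisfies the containments, yet $v$ must be a type 4 stub and $a$ a type 3 stub, and the increasing path from $v$ to $a$ forces the retirement after the birth (one must instead use $S=\{(u,a),(a,v)\}$). With that strengthened hypothesis, your sweep goes through verbatim.
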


\begin{proof}
First, suppose $[G,i,j]$ has an elementary factorization into $[G_1,i_1,j_1]$, \dots , $[G_n,i_n,j_n]$.  Define the scaffold $S$ as the set of oriented edges that are in the type 1 networks, oriented from the input side to the output side.  To check this is a scaffold, we use the characterization in terms of a partial order and local comparison conditions from Lemma \ref{lem:scafequivalentdef}.  The partial order is defined by $e' \prec e$ if the elementary morphism that contains $e'$ comes before the elementary morphism that contains $e$ in the order of composition.  If $e \in S$ and $e'$ is incident to its starting endpoint $e_-$, then $e'$ must be in some elementary morphism before $e$, and similarly, if $e'$ is incident to $e_+$, then $e'$ must come after $e$, so the local comparison condition (A) in Lemma \ref{lem:scafequivalentdef} is satisfied.  Condition (B) is trivial since the $\partial$-graph is finite.  To prove condition (C) by contrapositive, suppose $p \in V^\circ \setminus S_+$ and $q \in V^\circ \setminus S_-$.  Then $p$ is forced to be an input stub in some $[G_k,i_k,j_k]$ and $q$ is an output stub in some $[G_\ell,i_\ell,j_\ell]$.  Then $k < \ell$.  Any edge incident to $p$ must come before step $k$ and any edge incident to $q$ must come after step $\ell$, so (C) holds.

Conversely, suppose that $G$ admits a scaffold $S$ satisfying (2).  Let $\prec$ be the induced partial order on $E / \bar{~}$. The elementary factorization is constructed, roughly speaking, by starting with a minimal edge and ``peeling off'' elementary morphisms corresponding to the edges in the Beginning and Middle, and next starting with the maximal edge and ``peeling off'' elementary morphisms corresponding to the edges in the End.  More precisely, the Beginning or Middle is nonempty, we first check if there is an isolated boundary vertex in $i(P) \setminus j(Q)$, and if there is, then we factor out a type 3 morphism from $[G,i,j]$.  If there are no such isolated vertices, then a minimal edge must be a boundary spike or boundary edge with boundary endpoints in $i(P)$ by similar reasoning as in Lemma \ref{lem:minimality}, and in this case, we can factor out a type 1 or type 2 morphism.  We repeat this inductively until the Beginning and Middle are empty, and then if the End is nonempty, we ``peel off'' layers from the End using maximal edges instead of minimal ones.  The reader may verify that this produces an elementary factorization as we claim.
\end{proof}

\begin{remark*}
When choosing an elementary factorization from a scaffold, one can always arrange that $e$ is in the Middle if and only if it comes after the type 3 networks and before the type 4 networks.  On the other hand, when constructing a scaffold from an elementary factorization, the Middle of the scaffold might be strictly larger than the ``middle'' of the factorization.
\end{remark*}

There are many scaffolds that do not fit the description in the last proposition.  The scaffolds on the last proposition always have $S_+ \cap S_- \cap \partial V = \varnothing$, but this is not true of all scaffolds.  If $f: G \to H$ is a UHM and $S$ is a scaffold on $H$ with $S_+ \cap S_- \cap \partial V(H) = 0$, then $f^{-1}(S)$ does not necessarily satisfy the same conditions, since a boundary vertex of $G$ might map to an interior vertex of $H$ which is in $S_+ \cap S_-$.  This is one reason why elementary factorizations cannot be pulled back functorially under UHMs in any nice way.

However, given any scaffold $S$, it is possible to construct one for which $S_+ \cap S_- \cap \partial V$ is empty by removing some edges from $S$.  In the next lemma, we will carry out this process while leaving any given edge in the Middle of the scaffold untouched.  Essentially the same proof works even when there are no edges in the Middle, but we leave this case to the reader.  Once $S_- \cap S_+ \cap \partial V = \varnothing$, then we can construct an elementary factorization as in the last Proposition, for properly defined $i, P, j, Q$.

\begin{lemma} \label{lem:scafdelete}
Suppose $S$ is a scaffold on $G$ with $e$ in the Middle.  Then we can obtain a scaffold $S'$ by deleting edges $\neq e$ such that $e$ is still in the Middle of $S'$ and $S_+' \cap S_-' \cap \partial V = \varnothing$.
\end{lemma}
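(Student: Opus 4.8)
The plan is to delete, at each boundary vertex lying in $S_+ \cap S_-$, exactly one of its two incident $S$-edges, thereby turning every boundary ``through-vertex'' into an endpoint of an $S'$-path. I first record the structural facts I will lean on. The edges of $S$ partition into vertex-disjoint directed paths covering $V^\circ$, and by the injectivity in condition (1) each vertex carries at most one incoming and one outgoing $S$-edge; hence the members of $S_+ \cap S_- \cap \partial V$ are precisely the boundary vertices occurring strictly inside such a path. Moreover, condition (3) forbids a finite $S$-path from having both endpoints interior, since the path itself would be an increasing path from an interior non-output (its start) to an interior non-input (its end); so each finite path has at most one interior endpoint. The cut direction is then fixed so as never to isolate an interior vertex: on a path whose (unique, if any) interior endpoint is its terminal vertex I cut the incoming edge at every bad boundary vertex, on a path whose interior endpoint is initial I cut the outgoing edge, and on a path with both endpoints in $\partial V$ a consistent choice works. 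Because we only ever cut edges incident to boundary vertices, an interior vertex can lose an $S$-edge only as the neighbor of a bad boundary vertex, and the rule protects interior endpoints (single edge) and leaves interior through-vertices at least one of their two. Thus every interior vertex remains an input or output of $S'$, which is exactly what condition (3) demands for the trivial comparisons. If the rule ever calls for deleting $e$, I cut the opposite edge there instead and check that this substitution is still isolation-free.

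Next I would verify that $S'$ is a scaffold, working directly with increasing and decreasing paths. The subtlety is that deleting an edge $f$ leaves its forward traversal available (now as a non-$S'$ step), newly permits the reverse step $\overline{f}$, and reclassifies $f$ from a ``ladder rung'' to a ``bridge'' so that long runs through formerly-$S$ edges now press against the ``at most two consecutive non-$S'$ edges'' ceiling. Injectivity is inherited from $S' \subseteq S$. For conditions (2) and (3) I would argue that any infinite decreasing $S'$-path, and any increasing $S'$-path from an interior non-output of $S'$ to an interior non-input of $S'$, must route through one of the cut boundary vertices in a manner that either reconstructs a forbidden increasing (or infinite decreasing) $S$-path, contradicting (2) or (3) for $S$, or contradicts the no-isolation property established above.

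Keeping $e$ in $\Mid S'$ is where the hypothesis $e \in \Mid S$ is used decisively. The only new targets for $\Beg S'$ are interior path-ends exposed by an incoming cut, and the only new targets for $\End S'$ are interior path-starts exposed by an outgoing cut. For a newly-exposed interior end $u$, sitting just before a bad boundary vertex $p$ on a path running downstream to an interior terminal, the edge $u \to p$ satisfies $u \to p \prec g$ for the last edge $g$ into that terminal; were $e \prec (u \to p)$ we would get $e \prec g$, hence $e \in \Beg S$, contrary to hypothesis. So $e$ is order-separated from every such $u$, and symmetrically (extending a decreasing path backward through the all-$S$ upstream portion to an interior non-output) from every newly-exposed start. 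Combined with the reduction from the previous paragraph — any $S'$-path witnessing $e \in \Beg S'$ or $e \in \End S'$ descends to an $S$-path with the same endpoints — this order-separation forces $e \in \Mid S'$.

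The step I expect to be the main obstacle is precisely the control of the altered path structure underlying the last two paragraphs. Because deleting an $S$-edge simultaneously opens a backward move along its reverse and tightens the consecutive-bridge count, the increasing/decreasing relation for $S'$ is neither a sub- nor a super-relation of that for $S$, so condition (3) for $S'$ and the Middle-preservation for $e$ must both be re-derived from the geometry of the cuts rather than inherited from $S$. The delicate bookkeeping is to confirm that every $S'$-path reaching a newly-exposed endpoint genuinely reduces to an $S$-path (so that the order-separation applies), and to make a globally consistent choice of cut directions on paths with both endpoints on $\partial V$, where no interior endpoint is available to invoke and one must instead exploit the two boundary terminals together with the position of $e$ in the induced order $\prec$.
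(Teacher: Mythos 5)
Your opening move coincides with the paper's: at each vertex of $S_+ \cap S_- \cap \partial V$, delete exactly one of the two incident scaffold edges; and your order-separation computations for paths of $S$ that have an interior endpoint are sound as far as they go. But the proposal has two genuine gaps, both of which you flag yourself, and neither is minor bookkeeping. First, your cutting rule is defined only on paths of $S$ with an interior endpoint; for paths running from boundary to boundary (and infinite paths starting on the boundary) you give no rule, deferring to ``the position of $e$ in the induced order.'' That deferred case is not residual, it is the main one: if $e \in S$, then $e \in \Mid S$ forces the path of $S$ containing $e$ to have \emph{no} interior endpoint (an interior start of that path would witness $e \in \End S$, an interior terminal would witness $e \in \Beg S$), so the distinguished edge always lies on exactly the kind of path your rule does not cover. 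Second, your verification of conditions (2)--(3) for $S'$ and of $e \in \Mid S'$ rests on the unproven claim that every offending $S'$-increasing or $S'$-decreasing path ``descends to an $S$-path with the same endpoints.'' You rightly call this the main obstacle, but it is also the wrong tool: an $S'$-increasing path may traverse the reverse of a deleted edge, which is flatly forbidden for $S$-paths, and nothing guarantees an $S$-admissible substitute with the same endpoints exists.

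The paper closes both gaps with one device your argument lacks. Extend the partial order $\prec$ induced by $S$ to a total order in which the Beginning precedes the Middle precedes the End. At a bad vertex $r$ with incoming edge $e_1$ and outgoing edge $e_2$, local comparison gives $e_1 \prec e_2$, so in the total order either $e_1 \prec e$ or $e_2 \succ e$; delete $e_1$ in the first case and $e_2$ otherwise. This single per-vertex rule needs no case split on path types; it never selects $e$ (if $e = e_1$ the rule deletes $e_2$, and if $e = e_2$ then $e_1 \prec e$ holds, so it deletes $e_1$); it automatically does the ``isolation protection'' you engineered by hand (if the tail of $e_1$ were an interior non-output, then $e_1 \in \End S$, hence $e_1 \succ e$ and $e_1$ is not the edge deleted, and symmetrically for $e_2$); and it places every edge entering a new interior non-input strictly before $e$ and every edge leaving a new interior non-output strictly after $e$. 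The step that replaces your reduction claim is Lemma \ref{lem:scafequivalentdef}: since $S' \subset S$, the local comparison conditions (A) for $S'$ are inherited by this same order, and condition (A) forces every $S'$-increasing path to be monotone with respect to it. Monotonicity, together with the placement of old obstacles (Beginning before $e$, End after $e$) and of the new ones (before or after $e$ by the deletion rule), kills any violation of the input/output alternative and any $S'$-increasing path from $e$ to a non-input or $S'$-decreasing path from $e$ to a non-output, so the altered path structure of $S'$ never has to be compared with that of $S$ at all. As written, your proposal is an incomplete proof; adopting the total-order rule and the order-theoretic certification would complete it.
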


\begin{proof}
Let $R = S_+ \cap S_- \cap \partial V$.  Each $p \in R$ is the output of one edge in $S$ and the input of one edge in $S$.  We will delete one of these two edges from $S$.  But in doing this we will make $V^\circ \setminus S_+$ or $V^\circ \setminus S_-$ larger, so we must make sure that none of the ``input stubs'' come after the ``output stubs'' and $e$ is still in the Middle.  Let $\prec$ be the partial order induced by $S$; then $\prec$ can be completed to a total order such that the edges in the Beginning come before those in the Middle which come before those in the End.  If $r \in R$ and $r = (e_1)_+ = (e_2)_-$, then $e_1 \prec e_2$.  Hence, either $e_1 \prec e$ or $e_2 \succ e$.  If $e_1 \prec e$, we delete $e_1$ from $S$, and otherwise we delete $e_2$ from $S$.  The new vertices in $V^\circ \setminus S_-'$ will be before $e$ and the new vertices in $V^\circ \setminus S_+'$ will be after $e$, so we are done.
\end{proof}

The preceding lemma and remarks yield the following proposition, which allows us to test recoverability by scaffolds using elementary factorizations instead of scaffolds:
\begin{proposition}
Let $G$ be a finite $\partial$-graph and $e \in E(G)$.  The following are equivalent:
\begin{itemize}
	\item There exists a scaffold $S$ with $e$ in the middle of $S$ and in $S \cup \overline{S}$ (resp.\ not in $S \cup \overline{S}$).
	\item There exists an elementary factorization of some morphism $[G,i,j]$ where $e$ is in a type 1 (resp.\ type 2) network that comes after the type 3 networks and before the type 4 networks.
\end{itemize}
\end{proposition}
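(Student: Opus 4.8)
The statement is essentially a repackaging of Proposition \ref{prop:IOscaffolds} together with Lemma \ref{lem:scafdelete} and the remark following Proposition \ref{prop:IOscaffolds}; the plan is to chase the correspondence in both directions, keeping careful track of where $e$ lands. The key dictionary is that, in the scaffold $S$ produced by Proposition \ref{prop:IOscaffolds}, $S$ is exactly the set of edges appearing in the type 1 networks (oriented from input to output), while the type 2 networks carry the edges not in $S \cup \overline{S}$. Thus ``$e \in S \cup \overline{S}$'' should match ``$e$ lies in a type 1 network'' and ``$e \notin S \cup \overline{S}$'' should match ``$e$ lies in a type 2 network.'' The remaining point is to match ``$e \in \Mid S$'' with ``$e$ comes after the type 3 networks and before the type 4 networks,'' which is precisely the content of the remark after Proposition \ref{prop:IOscaffolds}.

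For the forward direction, suppose there is a scaffold $S$ with $e \in \Mid S$ and, in the first case, $e \in S \cup \overline{S}$; replacing $e$ by $\overline{e}$ if necessary I may assume $e \in S$. First I would apply Lemma \ref{lem:scafdelete} to obtain a scaffold $S'$ gotten from $S$ by deleting edges $\neq e$, with $e$ still in $\Mid S'$ and $S'_+ \cap S'_- \cap \partial V = \varnothing$. Since only edges other than $e$ are deleted, $e \in S'$ still holds in the first case, and in the second case $S' \cup \overline{S'} \subseteq S \cup \overline{S}$ forces $e \notin S' \cup \overline{S'}$. Using $S'_+ \cap S'_- \cap \partial V = \varnothing$ I would then partition $\partial V$ into the pure inputs $A = S'_- \cap \partial V$, the pure outputs $B = S'_+ \cap \partial V$ (disjoint from $A$ by the deletion), and the rest $C$, and define injective labelling functions $i, j$ realizing $i(P) = A \cup C$ and $j(Q) = B \cup C$, so that $i(P) \setminus j(Q) = A = S'_- \cap \partial V$ and $j(Q) \setminus i(P) = B = S'_+ \cap \partial V$. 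This is exactly hypothesis (2) of Proposition \ref{prop:IOscaffolds}, which then yields an elementary factorization of $[G,i,j]$ in which $S'$ is the set of type 1 edges. Hence $e$ sits in a type 1 network in the first case and in a type 2 network in the second; and because $e \in \Mid S'$, the remark after Proposition \ref{prop:IOscaffolds} lets me arrange the factorization so that this network comes after all type 3 networks and before all type 4 networks.

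For the reverse direction I would run Proposition \ref{prop:IOscaffolds} in the direction (1)$\Rightarrow$(2): given an elementary factorization with $e$ in a type 1 (resp.\ type 2) network situated after the type 3 and before the type 4 networks, I take $S$ to be the set of type 1 edges oriented from input to output. Then $e \in S \cup \overline{S}$ in the type 1 case and $e \notin S \cup \overline{S}$ in the type 2 case, by the dictionary above. That $e \in \Mid S$ follows from the second half of the remark after Proposition \ref{prop:IOscaffolds}: the Middle of the scaffold built from a factorization always contains the ``middle'' of that factorization (the edges lying after the type 3 and before the type 4 networks), and $e$ lies in that middle by hypothesis. I expect the only delicate point to be the bookkeeping in the forward direction: checking that the labelling functions $i, j$ assembled from the partition $A, B, C$ are injective and genuinely produce an IO-graph whose boundary $i(P) \cup j(Q)$ is the original $\partial V$, and confirming that the deletions in Lemma \ref{lem:scafdelete} neither move $e$ out of $S'$ nor into $S' \cup \overline{S'}$. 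Both are routine once the disjointness $A \cap B = \varnothing$ supplied by Lemma \ref{lem:scafdelete} is in hand.
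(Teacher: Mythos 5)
Your proposal is correct and follows essentially the same route the paper intends: the paper derives this proposition directly from Lemma \ref{lem:scafdelete}, Proposition \ref{prop:IOscaffolds}, and the remark following it, and your argument spells out exactly that chain (cleaning the scaffold so $S'_+ \cap S'_- \cap \partial V = \varnothing$, building the labelling from the pure inputs/outputs, and using the remark to align ``Middle'' with ``after type 3, before type 4'' in both directions). The bookkeeping you flag as delicate is handled correctly.
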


\section{Layering $\partial$-Graphs on Surfaces} \label{sec:surfaces}

We have not yet given a general way of constructing scaffolds and elementary factorizations from scratch.  We have only constructed scaffolds from other scaffolds or converted between elementary factorizations and scaffolds.  This section will describe how to construct scaffolds and elementary factorizations for graphs on surfaces using medial graphs.  We outline an approach that potentially applies to general surfaces and execute it for critical circular planar $\partial$-graphs and $\partial$-graphs in the half-plane.

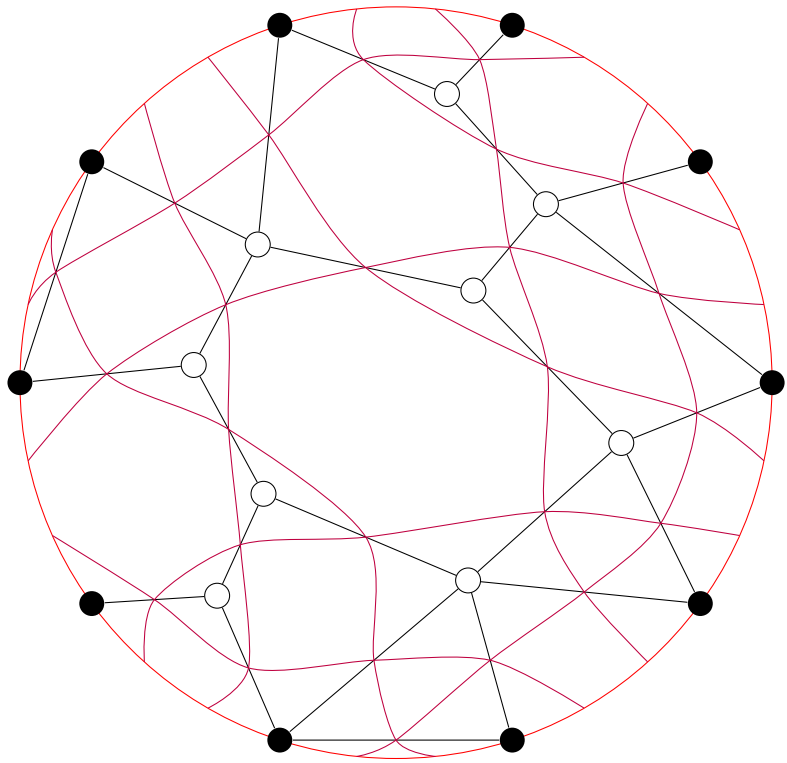
\begin{figure}
\caption{A lensless strand arrangement for a $\partial$-graph embedded on the disk.  The medial strands are purple.  As an exercise, color in all the cells which have vertices of $G$.} \label{fig:diskstrands}

\begin{center}
\begin{tikzpicture}[yscale=-1]
	\draw[red] (0,0) circle (5);
	
	\node[circle,fill] (A) at (0:5) {};
	\node[circle,fill] (B) at (36:5) {};
	\node[circle,fill] (C) at (72:5) {};
	\node[circle,fill] (D) at (108:5) {};
	\node[circle,fill] (E) at (144:5) {};
	\node[circle,fill] (F) at (180:5) {};
	\node[circle,fill] (G) at (216:5) {};
	\node[circle,fill] (H) at (252:5) {};
	\node[circle,fill] (I) at (288:5) {};
	\node[circle,fill] (J) at (324:5) {};
	\node[circle,draw] (K) at (15:3.1) {};
	\node[circle,draw] (L) at (70:2.8) {};
	\node[circle,draw] (M) at (130:3.7) {};
	\node[circle,draw] (N) at (140:2.3) {};
	\node[circle,draw] (O) at (185:2.7) {};
	\node[circle,draw] (P) at (225:2.6) {};
	\node[circle,draw] (Q) at (280:3.9) {};
	\node[circle,draw] (R) at (310:3.1) {};
	\node[circle,draw] (S) at (310:1.6) {};
	
	\draw (A) to (K); \draw (A) to (R); \draw (B) to (K); \draw (B) to (L); \draw (C) to (D);
	\draw (C) to (L); \draw (D) to (L); \draw (D) to (M); \draw (E) to (M); \draw (F) to (G);
	\draw (F) to (O); \draw (G) to (P); \draw (H) to (P); \draw (H) to (Q); \draw (I) to (Q);
	\draw (J) to (R); \draw (K) to (L); \draw (K) to (S); \draw (L) to (N); \draw (M) to (N);
	\draw (N) to (O); \draw (O) to (P); \draw (P) to (S); \draw (Q) to (R); \draw (R) to (S);
	
	\begin{scope}[purple]
		\draw plot[smooth,tension = 0.5] coordinates {(12:5) (barycentric cs:A=1,K=1) (barycentric cs:K=1,S=1) (barycentric cs:S=1,P=1) (barycentric cs:H=1,P=1) (240:5)};
		\draw plot[smooth,tension = 0.5] coordinates {(24:5) (barycentric cs:B=1,K=1) (barycentric cs:K=1,L=1) (barycentric cs:L=1,N=1) (barycentric cs:M=1,N=1) (barycentric cs:E=1,M=1) (132:5)};
		\draw plot[smooth,tension = 0.5] coordinates {(48:5) (barycentric cs:B=1,L=1) (barycentric cs:K=1,L=1) (barycentric cs:K=1,S=1) (barycentric cs:R=1,S=1) (barycentric cs:Q=1,R=1) (barycentric cs:I=1,Q=1) (276:5)};
		\draw plot[smooth,tension = 0.5] coordinates {(60:5) (barycentric cs:C=1,L=1) (barycentric cs:D=1,L=1) (barycentric cs:D=1,M=1) (barycentric cs:E=1,M=1) (156:5)};
		\draw plot[smooth,tension = 0.5] coordinates {(84:5) (barycentric cs:C=1,D=1) (barycentric cs:D=1,L=1) (barycentric cs:N=1,L=1) (barycentric cs:N=1,O=1) (barycentric cs:F=1,O=1) (barycentric cs:F=1,G=1) (204:5)};
		\draw plot[smooth,tension = 0.5] coordinates {(96:5) (barycentric cs:C=1,D=1) (barycentric cs:C=1,L=1) (barycentric cs:B=1,L=1) (barycentric cs:B=1,K=1) (barycentric cs:A=1,K=1) (barycentric cs:A=1,R=1) (barycentric cs:J=1,R=1) (312:5)};
		\draw plot[smooth,tension = 0.5] coordinates {(120:5) (barycentric cs:D=1,M=1) (barycentric cs:M=1,N=1) (barycentric cs:N=1,O=1) (barycentric cs:O=1,P=1) (barycentric cs:P=1,G=1) (228:5)};
		\draw plot[smooth,tension = 0.5] coordinates {(168:5) (barycentric cs:F=1,O=1) (barycentric cs:O=1,P=1) (barycentric cs:P=1,S=1) (barycentric cs:S=1,R=1) (barycentric cs:R=1,A=1) (348:5)};
		\draw plot[smooth,tension = 0.5] coordinates {(192:5) (barycentric cs:F=1,G=1) (barycentric cs:G=1,P=1) (barycentric cs:P=1,H=1) (barycentric cs:H=1,Q=1) (barycentric cs:Q=1,I=1) (300:5)};
		\draw plot[smooth,tension = 0.5] coordinates {(264:5) (barycentric cs:H=1,Q=1) (barycentric cs:Q=1,R=1) (barycentric cs:R=1,J=1)  (336:5)};
	\end{scope}
	
\end{tikzpicture}
\end{center}
\end{figure}

Any connected graph embedded in the disk has a \emph{medial graph}.  Medial graphs were invented by Steinitz and their construction is decribed in \cite{CIM}, \S 6 (for instance).  A medial graph on the disk is shown in Figure \ref{fig:diskstrands}.  The medial graph is an invaluable tool for studying circular planar networks and was central to the results of \cite{CIM}, \cite{dVGV}, \cite{WJ}, and \cite{IZ}.  This is not surprising since medial graphs are related to layer-stripping:  Examining Figure \ref{fig:diskstrands}, we can see that boundary edges and boundary spikes correspond to small boundary triangles in the medial graph, and contracting a spike or deleting a boundary edge corresponds to uncrossing the medial strands that meet at the empty boundary triangle (see \cite{CM}).

\subsection{Medial Strand Arrangements}

The construction of the medial graph in \cite{CM} works for connected graphs that are embedded nicely on a surface.  However, applying layer-stripping operations (or just passing to a subgraph) might easily produce a graph which is disconnected, has isolated bounary or interior vertices, or is not embedded as nicely.  To apply our theory of scaffolds and elementary factorizations without hiccups, we need medial graphs to make sense even in these degenerate cases.

Unfortunately, this means that ``the'' medial graph will no longer be well-defined.  But this is not really a problem.  We will be happy as long there is \emph{some} medial graph that we can manipulate.  Rather than \emph{constructing} the medial graph as in \cite{CM}, we will say what it means for a medial graph to be \emph{compatible} with a given embedded graph as in \cite{WJ}.  We will also make the definition work for infinite $\partial$-graphs in order to understand the infinite supercritical half-planar graphs of \cite{IZ}.

The medial graph depicted in Figure \ref{fig:diskstrands} can be viewed as an embedded graph where the interior vertices correspond to the edges of $G$ and each have degree $4$.  However, it will be more consistent with our later manipulations to view the medial graph as a \emph{collection of curves} where only two curves (or segments of curves) intersect at any point.  In general, the curves can be loops and are allowed to be infinite if $S$ is not compact.  We will formalize our requirements in the definition of ``strand arrangement'' below.

The compatibility between the medial strands and the graph is roughly described as follows:  As in Figure \ref{fig:diskstrands}, half of the medial cells contain vertices of $G$ and half of them do not.  If we color the cells with vertices black and the cells without vertices white, then two cells that share an edge have opposite colors.  The black cells are in bijective correspondence with the vertices of $G$, and the medial vertices are in bijective correspondence with the edges of $G$.

For simplicity and to rule out pathologies, we will assume that all our graph embeddings and medial strands are smooth.  We recall the following terminology and facts from basic manifold theory:
\begin{itemize}
	\item The \emph{smooth $1$-manifolds with boundary} are $[0,1]$, $S^1$, $[0,+\infty)$, and $\R$ (up to diffeomorphism).  The boundaries of $[0,1]$ and $[0,+\infty)$ are the sets of endpoints and the boundaries of $S^1$ and $\R$ are empty.
	\item If $J$ is a $1$-manifold with boundary and $S$ is a smooth $2$-manifold with boundary then $f: J \to M$ is an \emph{immersion} if and only if $f' \neq 0$.
	\item Suppose $J_1$ and $J_2$ are $1$-manifolds with boundary and $f_1: J_1 \to S$ and $f_2: J_2 \to S$ are smooth maps and that $f_1(t_1) = f_2(t_2) = x$.  Then the intersection is \emph{transversal} if and only if $f_1'(t_1)$ and $f_2'(t_2)$ are linearly independent.
	\item A continuous map $f: X \to Y$ between topological spaces is \emph{proper} if the preimage of a compact set is compact.
\end{itemize}

\begin{definition}
For any graph $G$, there is a corresponding topological space, the quotient space obtained from $E \times [0,1]$ by identifying $(e,t)$ with $(\overline{e},1-t)$ and identifying $(e,0)$ and $(e',0)$ if $e_+ = (e')_+$.  We will call this topological space $G$ as well since no confusion will result.
\end{definition}

\begin{definition}
A {\bf smooth embedding} of a $\partial$-graph on a surface with boundary $S$ is a proper continuous injective map $f: G \to S$ such that
\begin{itemize}
	\item $f(G) \cap \partial S = \partial V(G)$,
	\item $f |_e$ is a smooth immersion of $[0,1] \to S$,
	\item For each vertex $p$ and edges with $e_- = e_-' = p$, $(f|_{e})'(0)$ and $(f|_{e'})'(0)$ are not positive scalar multiples of each other.  That is, the edges all exit $p$ in different directions.
\end{itemize}
\end{definition}

\begin{remark*}
Note that we do not require the components of $S^\circ \setminus G$ are homeomorphic to the disk.  Thus, the embeddings may be rather degenerate.
\end{remark*}

\begin{definition}
A {\bf strand arrangement} on a surface with boundary $S$ is a collection of curves $\{s_\alpha\}$ in $S$ such that:
\begin{itemize}
	\item Each $s_\alpha$ admits a smooth parametrization $f_\alpha$ by a $1$-manifold with boundary $J_\alpha$, which is an immersion.
	\item For each $\alpha$, we have $f_\alpha^{-1}(\partial S) = \partial J_\alpha$.
	\item The map $F: \bigsqcup_\alpha J_\alpha \to S$ induced by $f_\alpha$ is proper.
	\item Only two segments of curves can intersect at any point and they cannot intersect on $\partial S$, that is, $\# F^{-1}(x) \leq 2$ and is $\# F^{-1}(x) \leq 1$ if $x \in \partial S$.
	\item The intersections between two curves and the intersections of a curve with itself are transversal.
\end{itemize}
The curves $s_\alpha$ are called {\bf strands}, the intersections points are called {\bf vertices}, and the components of $S^\circ \setminus \bigcup_\alpha s_\alpha$ are called {\bf cells}.  Note that the parametrizations $f_\alpha$ are injective except at the vertices and the cells are open in $S$.
\end{definition}

\begin{definition}
Note that the boundary of a cell $\mathcal{A}$ consists of segments of strands.  Two cells $\mathcal{A}$ and $\mathcal{B}$ are {\bf adjacent} if their boundaries share some nontrivial curve segment.  Given a strand arrangement $\{s_\alpha\}$, a {\bf two-coloring} of the cells is an assignment of ``white'' or ``black'' to each cell such that adjacent cells have opposite colors.
\end{definition}

\begin{definition}
{\bf A medial strand arrangment} $\mathcal{M}$ for a $\partial$-graph $G$ embedded in $S$ is a strand arrangement on $S$ with a two-coloring of the cells such that
\begin{itemize}
	\item Each black cell is homeomorphic to the open disk and has compact closure.\footnote{However, the closure might not be homeomorphic to the closed disk.}
	\item There is a bijection $p \mapsto \mathcal{A}_p$ from $V(G)$ to the black cells of $\mathcal{M}$ such that $p \in \mathcal{A}_p$.
	\item We have $p \in \partial V(G)$ if and only if $\overline{\mathcal{A}_p}$ intersects $\partial S$.
	\item There is a bijection $e \mapsto x_e$ from the unoriented edges of $G$ to the vertices of $\mathcal{M}$ such that $e \cap \mathcal{M} = \{x_e\}$.
	\item The edges of $G$ intersect the strands transversally.
\end{itemize}
\end{definition}

\begin{remark*}
It follows from the definition that an edge $e$ from a vertex $p$ to $q \neq p$ must exit $\mathcal{A}_p$ directly into $\mathcal{A}_q$ through $x_e$.  At $x_e$, $e$ must cross two strands (or possibly two parts of the same strand) as shown in Figure \ref{fig:localmedial}.  If $e$ is a self-loop, then $e$ goes from $\mathcal{A}_p$ into itself and this is the case where $\overline{\mathcal{A}_p}$ is not homeomorphic to the closed disk.
\end{remark*}

\begin{definition}
We say the medial strand arrangement is {\bf nondegenerate} if $\partial \mathcal{A}_p \cap \partial S$ is a single arc for each $p \in \partial V$ (rather than multiple distinct components).
\end{definition}

\begin{figure}

\caption{Compatibility between a graph and a medial strand arrangement near an edge $e$ from $p$ to $q$.} \label{fig:localmedial}

\begin{center}
\begin{tikzpicture}

\fill[black!40] (2,2) -- (2,-2) -- (0,0) -- (2,2);
\fill[black!40] (-2,2) -- (-2,-2) -- (0,0) -- (-2,2);
\draw[thick] (-2,0) -- (2,0);
\draw[purple] (-2,-2) -- (2,2);
\draw[purple] (-2,2) -- (2,-2);
\fill (0,0) circle (0.1);
\node[circle,draw,fill=white] at (-2,0) {$p$};
\node[circle,draw,fill=white] at (2,0) {$q$};

\node at (-1.5,0.5) {$\mathcal{A}_p$};
\node at (1.5,0.5) {$\mathcal{A}_q$};
\node at (0,-0.5) {$x_e$};

\end{tikzpicture}
\end{center}

\end{figure}
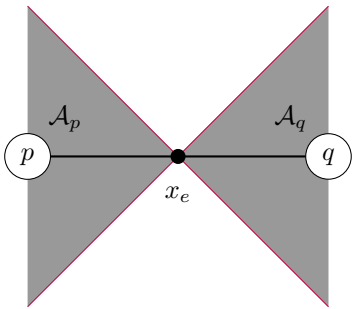

\subsection{Producing Scaffolds from the Medial Strands}

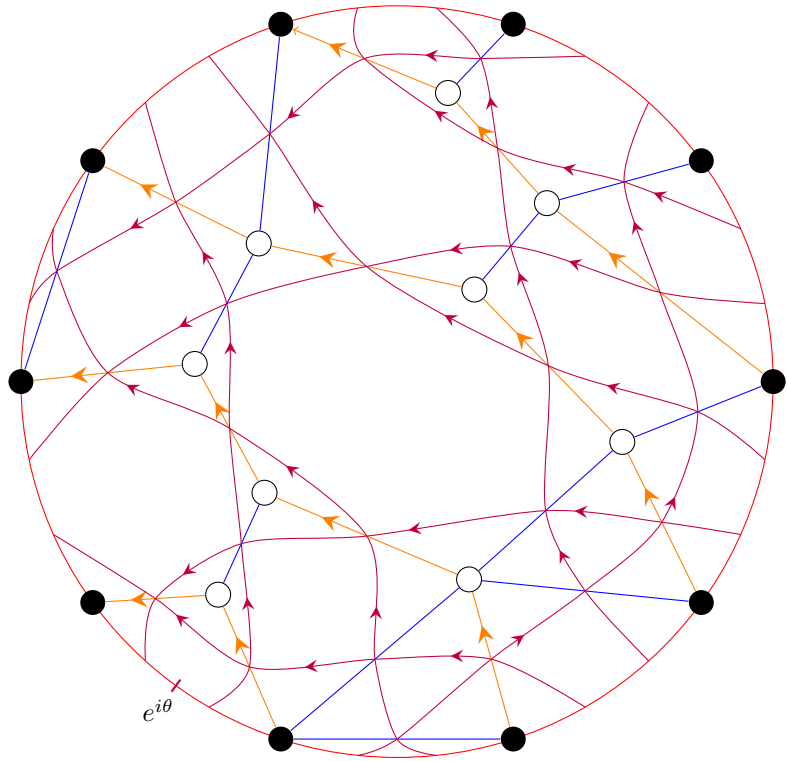
\begin{figure}

\caption{Scaffold produced by orienting medial strands in the disk.  Medial strands are purple.  The edges in $S$ are orange.} \label{fig:diskscaffold}

\begin{center}
\begin{tikzpicture}[yscale=-1]
	\draw[red] (0,0) circle (5);
	
	\node[circle,fill] (A) at (0:5) {};
	\node[circle,fill] (B) at (36:5) {};
	\node[circle,fill] (C) at (72:5) {};
	\node[circle,fill] (D) at (108:5) {};
	\node[circle,fill] (E) at (144:5) {};
	\node[circle,fill] (F) at (180:5) {};
	\node[circle,fill] (G) at (216:5) {};
	\node[circle,fill] (H) at (252:5) {};
	\node[circle,fill] (I) at (288:5) {};
	\node[circle,fill] (J) at (324:5) {};
	\node[circle,draw] (K) at (15:3.1) {};
	\node[circle,draw] (L) at (70:2.8) {};
	\node[circle,draw] (M) at (130:3.7) {};
	\node[circle,draw] (N) at (140:2.3) {};
	\node[circle,draw] (O) at (185:2.7) {};
	\node[circle,draw] (P) at (225:2.6) {};
	\node[circle,draw] (Q) at (280:3.9) {};
	\node[circle,draw] (R) at (310:3.1) {};
	\node[circle,draw] (S) at (310:1.6) {};
	
	\draw[blue] (A) to (K);
	\draw[orange] (A) to (R) [arrow inside={end=stealth,opt={orange, scale=2}}{0.75}];
	\draw[orange] (B) to (K) [arrow inside={end=stealth,opt={orange, scale=2}}{0.75}];
	\draw[blue] (B) to (L);
	\draw[blue] (C) to (D);
	\draw[orange] (C) to (L) [arrow inside={end=stealth,opt={orange, scale=2}}{0.75}];
	\draw[blue] (D) to (L);
	\draw[orange] (D) to (M) [arrow inside={end=stealth,opt={orange, scale=2}}{0.75}];
	\draw[orange] (M) to (E) [arrow inside={end=stealth,opt={orange, scale=2}}{0.75}];
	\draw[blue] (F) to (G);
	\draw[orange] (O) to (F) [arrow inside={end=stealth,opt={orange, scale=2}}{0.75}];
	\draw[orange] (P) to (G) [arrow inside={end=stealth,opt={orange, scale=2}}{0.75}];
	\draw[blue] (H) to (P);
	\draw[->,orange] (Q) to (H) [arrow inside={end=stealth,opt={orange, scale=2}}{0.75}];
	\draw[blue] (I) to (Q);
	\draw[blue] (J) to (R);
	\draw[blue] (K) to (L);
	\draw[orange] (K) to (S) [arrow inside={end=stealth,opt={orange, scale=2}}{0.75}];
	\draw[orange] (L) to (N) [arrow inside={end=stealth,opt={orange, scale=2}}{0.75}];
	\draw[blue] (M) to (N);
	\draw[orange] (N) to (O) [arrow inside={end=stealth,opt={orange, scale=2}}{0.75}];
	\draw[blue] (O) to (P);
	\draw[orange] (S) to (P) [arrow inside={end=stealth,opt={orange, scale=2}}{0.75}];
	\draw[orange] (R) to (Q) [arrow inside={end=stealth,opt={orange, scale=2}}{0.75}];
	\draw[blue] (R) to (S);
	
	\node at (126:5.4) {$e^{i\theta}$};
	\draw[purple,thick] (126:5.1) -- (126:4.9);
	
	\begin{scope}[purple]
		\draw plot[smooth,tension = 0.5] coordinates {(12:5) (barycentric cs:A=1,K=1) (barycentric cs:K=1,S=1) (barycentric cs:S=1,P=1) (barycentric cs:H=1,P=1) (240:5)} [arrow inside={end=stealth,opt={purple,scale=1.5}}{0.25,0.5,0.75}];
		\draw plot[smooth,tension = 0.5] coordinates {(24:5) (barycentric cs:B=1,K=1) (barycentric cs:K=1,L=1) (barycentric cs:L=1,N=1) (barycentric cs:M=1,N=1) (barycentric cs:E=1,M=1) (132:5)} [arrow inside={end=stealth,opt={purple,scale=1.5}}{0.25,0.5,0.85}];
		\draw plot[smooth,tension = 0.5] coordinates {(48:5) (barycentric cs:B=1,L=1) (barycentric cs:K=1,L=1) (barycentric cs:K=1,S=1) (barycentric cs:R=1,S=1) (barycentric cs:Q=1,R=1) (barycentric cs:I=1,Q=1) (276:5)} [arrow inside={end=stealth,opt={purple,scale=1.5}}{0.2,0.6,0.85}];
		\draw plot[smooth,tension = 0.5] coordinates {(60:5) (barycentric cs:C=1,L=1) (barycentric cs:D=1,L=1) (barycentric cs:D=1,M=1) (barycentric cs:E=1,M=1) (156:5)} [arrow inside={end=stealth,opt={purple,scale=1.5}}{0.25,0.5,0.75}];
		\draw plot[smooth,tension = 0.5] coordinates {(84:5) (barycentric cs:C=1,D=1) (barycentric cs:D=1,L=1) (barycentric cs:N=1,L=1) (barycentric cs:N=1,O=1) (barycentric cs:F=1,O=1) (barycentric cs:F=1,G=1) (204:5)} [arrow inside={end=stealth,opt={purple,scale=1.5}}{0.25,0.5,0.75}];
		\draw plot[smooth,tension = 0.5] coordinates {(96:5) (barycentric cs:C=1,D=1) (barycentric cs:C=1,L=1) (barycentric cs:B=1,L=1) (barycentric cs:B=1,K=1) (barycentric cs:A=1,K=1) (barycentric cs:A=1,R=1) (barycentric cs:J=1,R=1) (312:5)} [arrow inside={end=stealth,opt={purple,scale=1.5}}{0.25,0.5,0.85}];
		\draw plot[smooth,tension = 0.5] coordinates {(120:5) (barycentric cs:D=1,M=1) (barycentric cs:M=1,N=1) (barycentric cs:N=1,O=1) (barycentric cs:O=1,P=1) (barycentric cs:P=1,G=1) (228:5)} [arrow inside={end=stealth,opt={purple,scale=1.5}}{0.2,0.6,0.75}];
		\draw plot[smooth,tension = 0.5] coordinates {(348:5) (barycentric cs:R=1,A=1) (barycentric cs:S=1,R=1) (barycentric cs:P=1,S=1) (barycentric cs:O=1,P=1) (barycentric cs:F=1,O=1) (168:5)} [arrow inside={end=stealth,opt={purple,scale=1.5}}{0.25,0.4,0.75}];
		\draw plot[smooth,tension = 0.5] coordinates {(300:5) (barycentric cs:Q=1,I=1) (barycentric cs:H=1,Q=1) (barycentric cs:P=1,H=1) (barycentric cs:G=1,P=1) (barycentric cs:F=1,G=1) (192:5)} [arrow inside={end=stealth,opt={purple,scale=1.5}}{0.25,0.5,0.8}];
		\draw plot[smooth,tension = 0.5] coordinates {(336:5) (barycentric cs:R=1,J=1) (barycentric cs:Q=1,R=1) (barycentric cs:H=1,Q=1) (264:5)} [arrow inside={end=stealth,opt={purple,scale=1.5}}{0.2,0.4,0.7}];
	\end{scope}
	
\end{tikzpicture}
\end{center}
\end{figure}

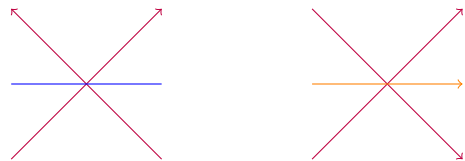
\begin{figure}

\caption{Assignment of oriented edges of a scaffold based on orientations of the medial strands.} \label{fig:assignment}

\begin{center}
	\begin{tikzpicture}
		\draw[->,purple] (-1,-1) -- (1,1);
		\draw[->,purple] (1,-1) -- (-1,1);
		\draw[blue] (-1,0) -- (1,0);
		
		\begin{scope}[shift = {(4,0)}]
			\draw[->,purple] (-1,-1) -- (1,1);
			\draw[->,purple] (-1,1) -- (1,-1);
			\draw[->,orange] (-1,0) -- (1,0);	
		\end{scope}
	\end{tikzpicture}
\end{center}

\end{figure}

We now describe how to produce scaffolds using medial strands (see Figure \ref{fig:diskscaffold}).  We will assign an orientation for each strand and then choose the edges in $S$ as in Figure \ref{fig:assignment}.  Since we assumed that the edges intersect the strands transversally, the assignments of edges in $S$ are determined by looking at any circular ordering of the tangent vectors to the curves at $x_e$.  The partial order on $E / \bar{~}$ induced by the scaffold will correspond to the partial order on the medial vertices defined by $x \prec y$ if there is a positively oriented path from $x$ to $y$ along the medial strands.

Not all orientations of the medial strands will produce a scaffold, of course.  We will describe sufficient conditions and prove their validity using Lemma \ref{lem:scafequivalentdef} which characterizes scaffolds in terms of local comparison conditions and a partial order.

\begin{definition}
A {\bf piecewise orientation of $\partial S$} is a division of $\partial S$ into arcs and an assigned orientation for each arc, such that any compact set intersects only finitely many arcs.  An {\bf orientation} of a strand arrangement $\{s_\alpha\}$ is an assigned orientation for each strand $s_\alpha$ specifying whether the parametrization $f_\alpha$ is orientation-preserving or orientation-reversing, together with a piecewise orientation of $\partial S$.
\end{definition}

\begin{definition}
For an orientation $\mathcal{O}$, let $T_\mathcal{O}$ be the set of oriented edges which intersect the medial strands like the oriented orange edge in Figure \ref{fig:assignment}.
\end{definition}

\begin{definition}
For an orientation $\mathcal{O}$, we can define a relation $\prec = \prec_{\mathcal{O}}$ on the medial vertices by $x \prec y$ if there is a positively oriented path from $x$ to $y$ along segments of medial strands and arcs of $\partial S$.  Note that $\prec$ is automatically transitive and it defines a partial order if and only if there are no positively oriented loops in $\mathcal{M}$.  In this case, we say $\mathcal{M}$ is {\bf acyclic}.
\end{definition}

\begin{definition}
Assume $G$ has no self-looping edges.  Suppose $\mathcal{O}$ is an orientation of a medial strand arrangement for $G$ and $\mathcal{A}_p$ is a black cell.  Then $\partial \mathcal{A}_p$ can be bijectively parametrized by $S^1$ and hence given two different orientations.  We say an orientation $\mathcal{O}$ has the {\bf Desired Behavior at } $\mathcal{A}_p$ if $\partial \mathcal{A}_p$ can be oriented and partitioned into two arcs $I$ and $J$, such that $I$ is positively oriented with respect to $\mathcal{O}$ and $J$ is negatively oriented with respect to $\mathcal{O}$.  The Desired Behavior at an interior vertex is shown in Figure \ref{fig:desiredbehavior}.  Note that the definition is independent of which orientation of $\partial \mathcal{A}_p$ is chosen, and thus it makes sense even if the surface is non-orientable.
\end{definition}

\begin{figure}

\caption{Desired Behavior of oriented medial strands on the boundary of medial black cell containing an interior vertex.  Let's orient $\partial \mathcal{A}_p$ counterclockwise.  If the arc $I$ consists of the two sides on the right of the cell and the arc $J$ consists of the three sides on the left, then $I$ is positively oriented and $J$ is negatively oriented with respect to $\mathcal{O}$.} \label{fig:desiredbehavior}

\begin{center}
\begin{tikzpicture}

\fill[black!20] (0:1.2) -- (72:1.2) -- (144:1.2) -- (216:1.2) -- (288:1.2) -- (0:1.2);

\draw[blue] (0,0) -- (0:2);
\draw[orange] (0,0) to (72:2) [arrow inside={end=stealth,opt={orange, scale=2}}{0.35}];
\draw[blue] (0,0) -- (144:2);
\draw[blue] (0,0) -- (216:2);
\draw[orange] (-72:2) to (0,0) [arrow inside={end=stealth,opt={orange, scale=2}}{0.75}];

\draw[fill=white] (0,0) circle (0.15);

\draw[purple] (-24:2) to (96:2) [arrow inside={end=stealth,opt={purple, scale=1.5}}{0.5}];
\draw[purple] (168:2) to (48:2) [arrow inside={end=stealth,opt={purple, scale=1.5}}{0.5}];
\draw[purple] (240:2) to (120:2) [arrow inside={end=stealth,opt={purple, scale=1.5}}{0.5}];
\draw[purple] (312:2) to (192:2) [arrow inside={end=stealth,opt={purple, scale=1.5}}{0.5}];
\draw[purple] (-96:2) to (24:2) [arrow inside={end=stealth,opt={purple, scale=1.5}}{0.5}];

\end{tikzpicture}
\end{center}
\end{figure}
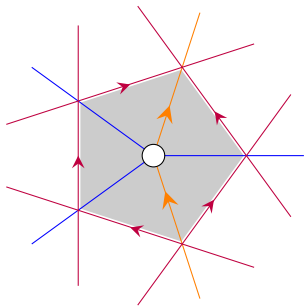

\begin{lemma} \label{lem:orientationscaffold}
Let $G$ be a graph smoothly embedded on $S$, let $\mathcal{M}$ be a medial strand arrangement, and let $\mathcal{O}$ be an orientation of $\mathcal{M}$.  Suppose that
\begin{enumerate}[a.]
	\item $\mathcal{M}$ is acyclic.
	\item There is no negatively oriented path in $\mathcal{M}$ with a proper continuous parametrization by $[0,+\infty)$.
	\item $\mathcal{O}$ has the Desired Behavior at each medial black cell.
\end{enumerate}
Then the set $T_{\mathcal{O}}$ is a scaffold; moreover, every edge is in $\Mid T_{\mathcal{O}}$.
\end{lemma}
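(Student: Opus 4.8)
The plan is to verify directly that $T_{\mathcal{O}}$ satisfies the hypotheses of the partial-order characterization of scaffolds, Lemma \ref{lem:scafequivalentdef}. First note that $T_{\mathcal{O}} \cap \overline{T_{\mathcal{O}}} = \varnothing$: the rule in Figure \ref{fig:assignment} assigns to each unoriented edge at most one orientation, determined by the two strand directions at $x_e$, so an edge and its reverse are never both selected. Since $e \mapsto x_e$ is a bijection from $E/\bar{~}$ onto the medial vertices, I would transport $\prec_{\mathcal{O}}$ to $E/\bar{~}$ by setting $e \prec e'$ iff $x_e \prec_{\mathcal{O}} x_{e'}$; transitivity is automatic and hypothesis (a) makes $\prec$ irreflexive, so it is a strict partial order. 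It then remains to check conditions (A) Local Comparison, (B) Partial Well-Order, and (C) Input/Output Alternative.

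The heart of the argument is a local analysis at each black cell $\mathcal{A}_p$ (recall $G$ has no self-loops, as in the definition of Desired Behavior). Fixing an orientation of $\partial\mathcal{A}_p$, I would compare, at each corner $x_e$ with $e$ incident to $p$, the two crossing strand directions against the boundary traversal. A short case check mirroring Figures \ref{fig:assignment} and \ref{fig:desiredbehavior} shows that $x_e$ is a transition from the negative arc $J$ to the positive arc $I$ exactly when $e \in T_{\mathcal{O}}$ points \emph{into} $p$, a transition from $I$ to $J$ exactly when $e \in T_{\mathcal{O}}$ points \emph{out of} $p$, and otherwise lies in the interior of $I$ or of $J$ (a blue edge). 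Hypothesis (c) says $\partial\mathcal{A}_p$ splits into the two \emph{contiguous} arcs $I$ and $J$, so there is at most one $J\to I$ transition and at most one $I\to J$ transition, i.e.\ at most one $T_{\mathcal{O}}$-edge into $p$ and one out of $p$. Writing $A$ for the point where $J$ meets $I$ and $B$ for the point where $I$ meets $J$, every edge-corner lies on $I$ or $J$ and is reachable from $A$ by a positively oriented path: forward along $I$, or backward along $J$ (traversing $J$ against the chosen orientation of $\partial\mathcal{A}_p$ follows the strand directions, since $J$ is negatively oriented). Thus $x_A \prec x_{e'} \prec x_B$ for every other edge $e'$ at $p$, which is exactly condition (A).

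Next I would show $V^\circ \subset S_+ \cap S_-$. For an interior vertex, $\mathcal{A}_p$ has compact closure homeomorphic to a disk, and $\partial\mathcal{A}_p$ is a closed loop of strand segments. If $J$ were empty the loop would be positively oriented, and if $I$ were empty its reverse would be; either contradicts acyclicity (a). Hence both arcs are nonempty, both transitions $A,B$ occur at edge-corners, and $p$ is both the head of an edge of $T_{\mathcal{O}}$ and the tail of another. Consequently every interior vertex is both an input and an output, so condition (C) holds trivially (its conclusion $p \in S_+$ is always true), and there are no interior vertices outside $S_+$ or outside $S_-$. By the definitions of Beginning and End this forces $\End T_{\mathcal{O}} = \Beg T_{\mathcal{O}} = \varnothing$, which is precisely the final claim that every edge lies in $\Mid T_{\mathcal{O}}$.

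Finally, condition (B) is where hypothesis (b) and the infinite case enter. A strictly $\prec$-decreasing sequence $e_0 \succ e_1 \succ \cdots$ unwinds, by definition of $\prec_{\mathcal{O}}$, into positively oriented medial paths from $x_{e_{k+1}}$ to $x_{e_k}$; concatenating and reversing produces a single negatively oriented path parametrized by $[0,+\infty)$. Because the $e_k$ are distinct and the strand arrangement is locally finite (the map $F$ is proper), this path leaves every compact set, so the parametrization is proper, contradicting (b). Hence there is no infinite descending chain, and by dependent choice every nonempty subset of $E/\bar{~}$ has a minimal element, giving (B); Lemma \ref{lem:scafequivalentdef} then yields that $T_{\mathcal{O}}$ is a scaffold. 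I expect the main obstacle to be the local bookkeeping of the second paragraph—pinning down the correspondence between strand orientations and the two transition corners exactly, and handling boundary cells where $\partial\mathcal{A}_p$ also contains arcs of $\partial S$ carrying the piecewise orientation, so that $A$ or $B$ may lie on $\partial S$ rather than at an edge-corner (which only helps, since then no $T_{\mathcal{O}}$-edge is forced at that transition).
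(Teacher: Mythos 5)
Your proposal is correct and follows essentially the same route as the paper's proof: both verify the hypotheses of Lemma \ref{lem:scafequivalentdef} by transporting $\prec_{\mathcal{O}}$ from the medial vertices to $E / \bar{~}$, using acyclicity (a) for the partial order, hypothesis (b) to rule out infinite descending chains, and the Desired Behavior to show each interior vertex is the head of exactly one $T_{\mathcal{O}}$-edge and the tail of exactly one, which gives the local comparison conditions (A), makes (C) trivial, and forces $\Beg T_{\mathcal{O}} = \End T_{\mathcal{O}} = \varnothing$, i.e.\ everything lies in $\Mid T_{\mathcal{O}}$. In fact your write-up supplies details the paper leaves implicit, notably the corner-by-corner correspondence between the $I$/$J$ transitions and the oriented edges, the nonemptiness of both arcs at interior cells, and the properness argument underlying condition (B).
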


\begin{proof}
We will use Lemma \ref{lem:scafequivalentdef}.  Note that (a) implies $\prec = \prec_{\mathcal{O}}$ defines a partial order on the medial vertices, hence a partial order on $E / \bar{~}$.  Next, (b) implies that there is no infinite decreasing chain $x_0 \succ x_1 \succ \dots$, and hence every subset of $E / \bar{~}$ has a minimal element, which verifies (B) of Lemma \ref{lem:scafequivalentdef}.

Examining Figure \ref{fig:desiredbehavior} and the definition of Desired Behavior, we can see that each interior vertex $p$ has exactly one edge $e$ in $T_\mathcal{O}$ entering it and one $e'$ exiting it | namely, the edges corresponding to the two medial vertices which divide $\partial \mathcal{A}_p$ into the two arcs $I$ and $J$.  Following the oriented arcs $I$ and $J$ shows that all the other edges incident to $p$ are between $e$ and $e'$ with respect to $\prec$, and hence the local comparison conditions (A) of Lemma \ref{lem:scafequivalentdef} are satisfied.  A similar argument verifies that (A) holds for $p \in \partial V$.  Finally, (C) is trivial since every interior vertex is both an input and an output of some edge in $T_\mathcal{O}$.

Therefore, $T_{\mathcal{O}}$ is a scaffold and every vertex and edge is in the Middle of $T_{\mathcal{O}}$.
\end{proof}

\begin{remark*}
There is no reason that we could not divide the strands into segments and give a different orientation to each segment, so long as the segment divisions do not fall on medial vertices.  In this case, the division into two arcs for the Desired Behavior might not fall on a medial vertex, and this will mean that some vertices are not the inputs or outputs of edges in $T_{\mathcal{O}}$.  One has to do more work to determine when this defines a scaffold and what the Middle is.  But this approach is potentially more flexible and adaptable to general surfaces, though we will not need it for the disk.
\end{remark*}

\subsection{Scaffolds for Critical Circular Planar $\partial$-Graphs} \label{subsec:circularscaf}

Lemma \ref{lem:orientationscaffold} provides a strategy for constructing scaffolds, which we will now implement for critical circular planar $\partial$-graphs.  We will show that such $\partial$-graphs are totally layerable, hence recoverable, and thus reprove one of the main results of \cite{CIM} and \cite{dVGV}.

\begin{definition}
A {\bf circular planar $\partial$-graph} is a $\partial$-graph embedded in the surface $\D = \{|z| \leq 1\} \subset \C$ with boundary $\partial \D = \{|z| = 1\}$.
\end{definition}

\begin{definition}
We say a strand arrangement $\mathcal{M}$ is {\bf lensless} if none of the strands intersects itself or forms a loop, and no two strands intersect each other more than once.  Note that for $\D$ or any compact surface with boundary, this implies that each strand is parametrized by $[0,1]$ and has two endpoints on the boundary.
\end{definition}

\begin{definition}
If $G$ is circular planar and admits a lensless strand arrangement, then we say $G$ is {\bf critical}.
\end{definition}

\begin{definition}
Let $\mathcal{M}$ be a lensless strand arrangement in $\D$, and suppose $e^{i\theta} \in \partial \D$ is not the endpoint of any strand.  We define $\mathcal{O}_\theta$ as follows:  If $s$ is a strand with endpoints $e^{ia}$ and $e^{ib}$ such that $\theta < a < b < \theta + 2\pi$, then the positive orientation of $s$ moves from $e^{ia}$ to $e^{ib}$.  We choose an interval $I$ of $\partial \D$ that does not contain any endpoints of strands and orient $\partial \D \setminus I$ counterclockwise and $I$ clockwise.  (See Figure \ref{fig:diskstrands}.)
\end{definition}

\begin{lemma} \label{lem:diskacyclic}
If $\mathcal{M}$ is a lensless strand arrangement in $\D$, then $\mathcal{O}_\theta$ is an acyclic orientation.
\end{lemma}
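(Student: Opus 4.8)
The goal is to show that the relation $\prec_{\mathcal{O}_\theta}$ is a partial order, which by the definition of acyclicity amounts to proving that $\mathcal{M}$, with its strands oriented by $\mathcal{O}_\theta$ together with the oriented boundary arcs, contains no positively oriented closed loop. I would organize the argument as a proof by contradiction, first reducing to the cleanest possible loop. View the oriented strands (cut into segments at the medial vertices) and the oriented boundary arcs as the directed edges of a finite plane digraph whose vertices are the crossings and the strand endpoints. Suppose a positively oriented loop exists, and among all of them choose one, $C$, using the fewest edges. If $C$ passed through a vertex twice, the two forward passages would let me splice out a strictly shorter positively oriented loop (at a crossing both through-directions are forward, and at an endpoint the strand arrives and leaves forward), contradicting minimality; hence $C$ is a simple closed curve, and by the Jordan curve theorem it bounds a disk $\Omega \subset \D$, which I orient so that $\Omega$ lies on its left.

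Next I would dispose of the boundary arcs. Since lenslessness forces each strand to run from its lower-cut-angle endpoint $e^{ia}$ to its higher one $e^{ib}$ with $a < b$, the cut-angle increases across every positively oriented boundary arc: counterclockwise arcs increase $\phi$, and because the reversed arc $I$ straddles the cut, traversing $I$ clockwise also raises the cut-angle by roughly $2\pi$. Thus along the boundary portions of $C$ the cut-angle strictly increases, so a loop touching $\partial\D$ cannot return to its starting cut-angle unless its interior runs between successive boundary visits fail to be cut-angle increasing. Ruling this out reduces the problem to the essential case: a loop living entirely in the interior, made up of strand segments that turn at crossings.

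For such an interior loop the heart of the matter is the turn structure, and this is where lenslessness does the real work. At a crossing of two oriented strands $s_j, s_k$, an elementary computation with the endpoint cut-angles shows that the interleaving condition forces $a_j < a_k \Leftrightarrow b_j < b_k$, and that the forward turn from $s_j$ onto $s_k$ is a left turn precisely when $a_j < a_k$; because two strands cross at most once, the corners of $C$ use distinct strand pairs and each turn has a well-defined sign. Since $C$ is simple its exterior angles sum to $+2\pi$, so $C$ has at least one left turn. The plan is then to show $C$ can be taken to have \emph{only} left turns: a right turn at a corner, combined with the forward orientation, produces a sub-configuration in which two strand arcs bounding a piece of $\Omega$ must meet a second time, i.e.\ a lens, contradicting lenslessness; peeling off such innermost lenses reduces $C$ to an all-left-turn loop. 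But an all-left-turn loop on distinct strands $s_1, \dots, s_m$ would yield $a_{s_1} < a_{s_2} < \dots < a_{s_m} < a_{s_1}$, an impossibility. The main obstacle I anticipate is making this last reduction fully rigorous — precisely extracting a lens from a reflex (right) corner and verifying that the innermost one genuinely violates the lensless hypothesis — so I would concentrate most of the effort there, combining the combinatorics of which endpoints a forward arc can reach with the embeddedness of $C$.
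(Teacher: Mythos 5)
Your steps 1 and 4 are sound, and step 4 is the right engine: for two crossing strands in a lensless arrangement the endpoints interleave, so ``$s_k$ crosses $s_j$ from right to left'' is indeed equivalent to $a_j<a_k$, and an all-left-turn loop dies by the impossible cyclic chain $a_{s_1}<a_{s_2}<\cdots<a_{s_1}$. The fatal gap is step 3, the claim that a right turn forces a lens. Whether the two unused half-strands at a corner point \emph{into} the enclosed region $\Omega$ depends on which side of the positively oriented loop $\Omega$ lies, and your sketch tacitly assumes $\Omega$ is on the left. If $\Omega$ is on the right, a right turn is a \emph{convex} corner of $\Omega$, both continuations $s_j^{\mathrm{out}}$ and $s_k^{\mathrm{in}}$ point out of $\Omega$, and nothing forces any pair of strands to meet twice. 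Even when $\Omega$ is on the left (reflex corner), the Jordan curve theorem only gives that the forward continuation of $s_j$ must cross the loop $C$ somewhere; that is a \emph{single} crossing of $s_j$ with some strand of $C$, not a second crossing of a pair that already crosses, so no lens appears. What that crossing does give is a rerouted, simple, positively oriented loop enclosing strictly smaller area (follow $s_j$ forward from $x$ to its first exit point $y\in C$, then follow $C$ from $y$ back to $x$); if you work with an area-minimal loop this kills reflex corners, but then the correct conclusion is a dichotomy --- $\Omega$ on the left eliminates right turns, $\Omega$ on the right eliminates left turns --- and you finish with either the increasing or the decreasing cyclic chain. That repair works, but its mechanism is minimality plus rerouting, not a violation of lenslessness.

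Two secondary problems: the turning-number claim (``exterior angles sum to $+2\pi$, hence a left turn exists'') is false here because the strands are curved, so the arcs' own curvature contributes to the total turning; and the boundary reduction in step 2 is incomplete --- you need that a positively oriented interior run entering on strand $s$ and exiting at $e^{ib_{s'}}$ satisfies $b_{s'}>a_s$, and this does not chain from pairwise interleaving: two consecutive right turns $s_1\to s_2\to s_3$ give $a_{s_3}<a_{s_2}<a_{s_1}<b_{s_2}$ and $a_{s_2}<b_{s_3}<b_{s_2}$, leaving the order of $b_{s_3}$ and $a_{s_1}$ undetermined, so ruling this case out is essentially the lemma itself. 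For contrast, the paper sidesteps all loop surgery by inducting on the number of strands: it takes the strand $s_n$ whose start $e^{i\alpha_n}$ is closest to the cut, notes (by your same interleaving observation) that no strand crosses $s_n$ from left to right, and then traps any would-be loop: by induction it must use $s_n$, must exit into the left component of $\D\setminus s_n$, and the only way back is along $\partial\D$ through $e^{i\alpha_n}$, which is blocked because the clockwise interval $I$ sits between $e^{i\theta}$ and $e^{i\alpha_n}$ with no strand endpoints in between.
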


\begin{proof}
The proof is by induction on the number of strands.  It clearly holds for one strand.  Suppose it holds for $n - 1$ strands and consider $n$ strands $s_1, \dots, s_n$ with endpoints $e^{i \alpha_j}$ and $e^{i \beta_j}$ with $\theta < \alpha_j < \beta_j < \theta + 2\pi$.  Without loss of generality, $\alpha_n = \min(\alpha_j)$.

From the Jordan curve theorem, we know that $\D \setminus s_j$ has two components, one on the left of $s_j$ and one on the right of $s_j$.  Since the strand arrangement is lensless, $s_j$ can only cross $s_k$ in one direction and the direction can be detected from the positions of the start and end points of $s_j$ and $s_k$ on $\partial \D$.  For any $j \neq n$, we have $\theta < \alpha_n < \alpha_j$, and this implies that $s_j$ either does not cross $s_n$ or $s_j$ crosses $s_n$ from right to left.  Thus, there is no strand that crosses $s_n$ from left to right.

From the induction hypothesis, $s_1, \dots, s_{n-1}$ and the boundary circle do not form any oriented loops.  Thus, if a loop exists it must contain some segment of $s_n$ and clearly it cannot be entirely contained in $s_n$.  Thus, the loop must exit $s_n$ at some point.  After that, it must move into the left component of $\D \setminus s_n$ because no strand crosses $s_n$ from left to right.  But then at some point the loop must return to (or cross) $s_n$ from the left component of $\D \setminus s_n$ at some point $x$.

Since no strand crosses $s_n$ from left to right, the only possibility is that the part of the path before $x$ was part of the oriented boundary.  The boundary crosses $s_n$ from left to right only at the start point $e^{i \alpha_n}$ of $s_n$.  There are no endpoints of strands between $e^{i\theta}$ and $e^{i \alpha_n}$, and we chose an interval $I$ around $e^{i\theta}$ which is oriented clockwise.  Thus, there is no way the loop could have entered the counterclockwise oriented segment between $I$ and $e^{i \alpha_n}$.  This causes a contradiction, so there is no loop.
\end{proof}

\begin{lemma} \label{lem:cellboundary}
Let $\mathcal{A}$ be a cell of a lensless strand arrangement on $\D$.  Let $s_1, \dots, s_n$ be the strands that intersect $\partial \mathcal{A}$, listed in CCW order around $\partial \mathcal{A}$ and oriented in the same direction as the CCW orientation of $\partial \mathcal{A}$ (with $\mathcal{A}$ on the left of each $s_j$).  Let $x_j$ and $y_j$ be respectively the start and end of $s_j$.  Then $x_1, \dots, x_n$ occur in CCW order around $\partial \D$, and so do $y_1, \dots, y_n$.
\end{lemma}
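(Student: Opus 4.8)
The plan is to reduce the global statement to a local comparison at each pair of consecutive bounding strands and then to assemble these comparisons into a cyclic order by a winding-number argument. I would begin by recording the separation geometry. Each $s_j$ is a lensless arc with endpoints $x_j,y_j\in\partial\D$, so by the Jordan curve theorem it cuts $\D$ into a left region $L_j$ and a right region $R_j$. The orientation convention (traverse $\partial\mathcal{A}$ counterclockwise with $\mathcal{A}$ on the left, and orient each $s_j$ from $x_j$ to $y_j$) places $\mathcal{A}\subseteq L_j$ and makes the segment of $s_j$ lying on $\partial\mathcal{A}$ agree with the positive orientation of $s_j$. Tracing $\partial L_j$ with $L_j$ kept on the left then identifies the arcs: $R_j\cap\partial\D$ is exactly the open counterclockwise arc $(x_j,y_j)_{\mathrm{ccw}}$, and $L_j\cap\partial\D$ is $(y_j,x_j)_{\mathrm{ccw}}$.

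The heart of the argument is the transition from $s_j$ to $s_{j+1}$, of which there are two cases. If $\partial\mathcal{A}$ passes through a crossing vertex $v_j=s_j\cap s_{j+1}$ (their unique crossing, by lenslessness), I would set local coordinates at $v_j$ with the forward direction of $s_j$ pointing east, so that $\mathcal{A}$, lying to the left of $s_j$, occupies the upper half-plane side. Because $\partial\mathcal{A}$ is traversed counterclockwise and the cell occupies a convex quadrant at $v_j$, the outgoing tangent (forward direction of $s_{j+1}$) is the incoming tangent rotated counterclockwise by an angle in $(0,\pi)$. Reading off the four quadrants shows that the backward part of $s_{j+1}$ (toward $x_{j+1}$) leaves $v_j$ into $R_j$, while the forward part of $s_j$ (toward $y_j$) leaves $v_j$ into $R_{j+1}$. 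Since lenslessness forbids a second crossing of the two strands, neither arc can return, so $x_{j+1}\in\overline{R_j}$ and $y_j\in\overline{R_{j+1}}$; with the arc identifications above this gives $x_{j+1}\in(x_j,y_j)_{\mathrm{ccw}}$ and $y_j\in(x_{j+1},y_{j+1})_{\mathrm{ccw}}$, i.e. the four points occur in the counterclockwise cyclic order $x_j,x_{j+1},y_j,y_{j+1}$. In the second case $\partial\mathcal{A}$ runs from $s_j$ to $s_{j+1}$ along an arc of $\partial\D$ containing no strand endpoints, so $y_j$ is immediately counterclockwise-before $x_{j+1}$, and the conclusions that $x_j$ is counterclockwise-before $x_{j+1}$ and $y_j$ is counterclockwise-before $y_{j+1}$ follow directly. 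In either case each consecutive step advances both the start-endpoints and the end-endpoints strictly counterclockwise, by a positive amount short of a full turn.

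Finally, I would assemble these $n$ consecutive advances into a genuine cyclic order, and this is the step I expect to be the main obstacle: each comparison only certifies that $x_{j+1}$ is a positive counterclockwise step from $x_j$, so a priori the points could wind around $\partial\D$ several times rather than once. To pin the winding number to $1$, I would use that $\partial\mathcal{A}$ (with $\mathcal{A}$ a topological disk) is a positively oriented Jordan curve of winding number $1$ about any interior point, and translate the single full turn of the boundary traversal into a single counterclockwise turn of the current strand's start-endpoint; equivalently, one can show that the short arcs $(x_j,x_{j+1})_{\mathrm{ccw}}$ are pairwise disjoint and cover $\partial\D$. An alternative, mirroring the induction in Lemma \ref{lem:diskacyclic}, is to delete the bounding strand whose start-endpoint is counterclockwise-extreme and recurse on the merged cell. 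Once the winding is known to be $1$, the positive consecutive steps force $x_1,\dots,x_n$ into counterclockwise cyclic order, and the symmetric chain of relations for the $y_j$ yields the same conclusion for $y_1,\dots,y_n$.
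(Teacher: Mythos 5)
Your local analysis at the crossing vertices is correct, but the assembly step---which you yourself flag as the main obstacle---is a genuine gap, and it cannot be closed by any argument that uses only the data you have extracted. The pairwise relations ``$x_j,x_{j+1},y_j,y_{j+1}$ occur in CCW cyclic order'' are provably insufficient: place $x_j$ at angle $144(j-1)$ degrees and $y_j$ at angle $144(j-1)+200$ degrees for $j=1,\dots,5$ (indices mod $5$). Every consecutive quadruple then satisfies your relation (normalizing $x_j$ to $0$, the four points sit at $0,144,200,344$), and each pair of chords crosses exactly once, yet $x_1,\dots,x_5$ wind \emph{twice} around $\partial\D$ and are not in cyclic order. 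So some global information about the cell must enter the proof, and neither of your proposed repairs supplies it as stated. The winding-number idea requires interpolating the discrete assignment $j\mapsto x_j$ by the arcs $(x_j,x_{j+1})_{\mathrm{ccw}}$, and justifying that interpolation (i.e., that $\partial\mathcal{A}$ is homotopic to the arc-polygon in the complement of a point $p\in\mathcal{A}$) requires knowing that, for each $j$, the region bounded by that arc together with the two tails $h_j$, $h_{j+1}$ (the portions of $s_j$, $s_{j+1}$ from $x_j$, $x_{j+1}$ to the vertex $v_j$) is disjoint from $\mathcal{A}$---a fact you never establish. And the claim that the arcs $(x_j,x_{j+1})_{\mathrm{ccw}}$ ``are pairwise disjoint and cover $\partial\D$'' is not a lemma on the way to the conclusion; it \emph{is} the conclusion.

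The paper closes exactly this gap by proving a stronger statement about each consecutive pair: the CCW arc $C$ of $\partial\D$ from $x_k$ to $x_{k+1}$ contains the start point of \emph{no} other bounding strand whatsoever. The mechanism is the triangle $T$ bounded by $C$, $h_k$, and $h_{k+1}$: a bounding strand starting on $C$ begins inside $T$, and its first exit from $T$ either crosses $h_{k+1}$---after which lenslessness traps it on the side of $s_{k+1}$ opposite to $\mathcal{A}$, so it can never reach $\partial\mathcal{A}$---or crosses $h_k$ before the vertex $z$, which places $z$, and hence points of $\mathcal{A}$ near $z$, on its right, contradicting that $\mathcal{A}$ lies to the left of every bounding strand. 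With this empty-arc statement the cyclic order is immediate (walking CCW from $x_1$, the first start met is $x_2$, then $x_3$, and so on), with no winding-number bookkeeping at all. If you want to salvage your outline instead, you must first prove the triangle-avoidance fact above (your corner analysis plus the Jordan curve theorem does yield it), and then carry out the telescoping homotopy with care: the tails appearing in the $j$-th and $(j{+}1)$-st relations differ by the cell-boundary segment of $s_{j+1}$, which is precisely why the arc-polygon ends up homotopic to $\partial\mathcal{A}$ (winding $1$) rather than cancelling to a null-homotopic loop; getting this bookkeeping wrong yields winding $0$, so the step is not cosmetic.
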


\begin{remark*}
We do not assume in the hypothesis that $s_1, \dots, s_n$ are distinct, although that turns out to be true.
\end{remark*}

\begin{proof}
Note that for a \emph{lensless} strand arrangement on the disk, each medial cell is bounded by a Jordan curve formed by segments of the strands (as can be proved using the Jordan curve theorem and induction on the number of strands).  Hence, the boundary of the cell has a well-defined counterclockwise orientation.

Suppose $\mathcal{A}$ is an interior cell.  Let $z$ be the vertex of $\partial \mathcal{A}$ where $s_1$ and $s_2$ intersect.  Let $C$ be the counterclockwise arc of $\partial D$ from $x_1$ to $x_2$.  Let $h_1$ and $h_2$ be the arcs of $s_1$ and $s_2$ from $x_1$ and $x_2$ to $z$, so that $C$, $h_1$, and $h_2$ bound a triangle $T$.

Suppose for contradiction that there is some other $x_j \in C$.  Let $w$ be the first point where $s_j$ hits $\partial T$.  If $w \in h_2$, then $s_j$ crosses $s_2$ there from left to right.  It cannot intersect $s_2$ again since $\mathcal{M}$ is lensless, but that implies it cannot intersect $\partial \mathcal{A}$ because $\mathcal{A}$ is on the left side of $s_2$.  So suppose $w \in h_1$.  Then at $w$, $s_j$ crosses from the left to the right side of $s_2$, and this occurs before the point $z$ along $s_2$, which implies $z \in \partial \mathcal{A}$ is on the right side of $s_j$.  This also is impossible because $\mathcal{A}$ is supposed to be on the left side of $s_j$.

This contradiction proves that there is no $x_j$ between $x_1$ and $x_2$, and the same argument applies to $x_k$ and $x_{k+1}$ for all $k$, hence $x_1, \dots, x_n$ occur in counterclockwise order.  By a symmetrical argument, $y_1, \dots, y_n$ occur in counterclockwise order.  In the case of a boundary cell, similar reasoning applies except that arcs of $\partial \D$ may intervene between the strand segments; details left to the reader.
\end{proof}

\begin{lemma} \label{lem:diskDB}
Let $\mathcal{M}$ be a lensless strand arrangement on $\D$.  Then $\mathcal{O}_\theta$ has the Desired Behavior at each medial cell $\mathcal{A}$.
\end{lemma}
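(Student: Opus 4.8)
The plan is to fix a medial cell $\mathcal{A}$, orient $\partial\mathcal{A}$ counterclockwise, and show that the boundary segments whose $\mathcal{O}_\theta$-orientation agrees with this traversal form a single arc $I$ while those that disagree form the complementary arc $J$; this is exactly the Desired Behavior. First I would translate positivity of a segment into a condition on $\theta$. By Lemma \ref{lem:cellboundary} I may list the strands $s_1,\dots,s_n$ meeting $\partial\mathcal{A}$ in counterclockwise order, each oriented with $\mathcal{A}$ on its left (so that the local orientation of $s_j$ coincides with the counterclockwise traversal of $\partial\mathcal{A}$) and with start- and endpoints $x_j,y_j$, and I obtain that $x_1,\dots,x_n$ and $y_1,\dots,y_n$ are each counterclockwise-ordered on $\partial\D$. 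Writing $h(e^{i\phi})$ for the counterclockwise angle from $e^{i\theta}$, the orientation $\mathcal{O}_\theta$ runs each strand from its lower-$h$ endpoint to its higher-$h$ endpoint, so the segment $s_j$ is positively oriented exactly when $h(x_j)<h(y_j)$, i.e.\ exactly when $\theta$ does not lie on the counterclockwise arc $\delta_j$ from $x_j$ to $y_j$ (the boundary arc cut off on the far side of $s_j$ from $\mathcal{A}$).

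With this reformulation, Desired Behavior at $\mathcal{A}$ is equivalent to saying the cyclic sequence $j\mapsto \mathbf{1}[\theta\in\delta_j]$ changes value exactly twice. The main step is to pin down, for two consecutive boundary strands $s_j,s_{j+1}$ (which cross at the cell-vertex $v_j$), the cyclic order of their four endpoints: I claim it is $x_j,x_{j+1},y_j,y_{j+1}$. Granting this interleaving, the symmetric difference $\delta_j\triangle\delta_{j+1}$ is precisely the disjoint union of the arc $(x_j,x_{j+1})$ and the arc $(y_j,y_{j+1})$, so $\mathbf{1}[\theta\in\delta_\bullet]$ flips between $j$ and $j+1$ if and only if $\theta$ lies in exactly one of these two arcs. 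Since the $x_j$ are counterclockwise-ordered, the arcs $(x_1,x_2),\dots,(x_n,x_1)$ tile $\partial\D\setminus\{x_1,\dots,x_n\}$, so the generic point $\theta$ lies in exactly one of them; likewise it lies in exactly one arc $(y_j,y_{j+1})$. The interleaving makes these two containments occur at two \emph{distinct} indices, so the sequence flips exactly twice, and the two flip-locations cut $\partial\mathcal{A}$ into the positive arc $I$ and the negative arc $J$.

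I expect the interleaving claim to be the crux. It should follow from the lensless hypothesis by the geometry already used to prove Lemma \ref{lem:cellboundary}: $s_j$ and $s_{j+1}$ meet in a single transversal point $v_j$, they bound a common cell $\mathcal{A}$ lying in the intersection of their two ``left'' half-disks, and the counterclockwise traversal enters $v_j$ along $s_j$ and leaves along $s_{j+1}$; tracking which of the four boundary arcs is adjacent to the cell's quadrant, together with the global counterclockwise orderings of the $x$'s and $y$'s, forces the order $x_j,x_{j+1},y_j,y_{j+1}$. I would isolate this as a short lemma about two oriented crossing strands sharing a left cell, since it is the only place where lenslessness (single crossing, no lenses) is genuinely needed.

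Finally, I would dispatch the boundary cells. Here $\partial\mathcal{A}$ also contains arcs of $\partial\D$, which I would treat exactly like strand-segments using the piecewise orientation of $\partial\D$ built into $\mathcal{O}_\theta$: a boundary arc is positively oriented wherever $\partial\D$ runs counterclockwise, and the single clockwise interval $I$ around $e^{i\theta}$ supplies the unique negatively oriented boundary piece. Thus $e^{i\theta}$ acts as the single ``source'' on $\partial\D$, and the same counting --- one $x$-crossing and one $y$-crossing, with the transitions into and out of the boundary arc containing $e^{i\theta}$ absorbed into this count --- again yields one positive arc and one negative arc. This bookkeeping is routine but must correctly locate $e^{i\theta}$ and the interval $I$ relative to the cell.
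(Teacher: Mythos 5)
Your proposal is correct, and at its core it follows the same route as the paper: both proofs rest on Lemma \ref{lem:cellboundary} and then decide the sign of each bounding strand segment by locating its two endpoints on $\partial\D$ relative to $e^{i\theta}$. The difference is in how the key step is packaged. The paper normalizes angles, writing $x_j = e^{ia_j}$, $y_j = e^{ib_j}$, and asserts ``without loss of generality'' that one can simultaneously have $\theta < a_1 < \dots < a_n < \theta+2\pi$, $b_1 < \dots < b_n < b_1 + 2\pi$, \emph{and} $a_j < b_j < a_j + 2\pi$; the conclusion is then immediate from monotonicity of the $b_j$. But that simultaneous normalization is not a free choice: once each $b_j$ is pinned to the window $(a_j, a_j+2\pi)$, the monotonicity of the $b_j$ is a genuine geometric claim, and it is exactly equivalent to your interleaving claim (cyclic order $x_j, x_{j+1}, y_j, y_{j+1}$ for consecutive bounding strands, which chains around the cell to give $b_1 < \dots < b_n < b_1 + 2\pi$). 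So what you isolate as the crux, and prove by the local quadrant analysis at the crossing $v_j$ plus the single-crossing (lensless) Jordan-curve argument, is precisely the content the paper's proof buries in its ``WLOG''; your flip-counting via $\delta_j \triangle \delta_{j+1} = (x_j,x_{j+1}) \sqcup (y_j,y_{j+1})$ is then a clean and correct way to conclude, since a cyclic $0/1$ sequence with exactly two sign changes consists of two complementary blocks. On the boundary-cell case you are at roughly the same level of detail as the paper (which only says ``casework''); if you flesh it out, the one fact worth stating explicitly is that for the cell whose closure contains $e^{i\theta}$, \emph{every} bounding strand is automatically positively traversed (since $e^{i\theta}$ lies in the closure of the left side of each such strand, $\theta \notin \delta_j$), so the negative arc is exactly the clockwise interval $I$; for the remaining boundary cells, the arcs of $\partial\D$ can be fed into your flip count as degenerate ``strands'' from $y_j$ to $x_{j+1}$ with $\delta = (y_j, x_{j+1})$, and the same two-flip count goes through.
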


\begin{proof}
Consider an interior medial cell $\mathcal{A}$.  Let $s_j$ and $x_j$ and $y_j$ as in Lemma \ref{lem:cellboundary}.  Suppose that $x_j = e^{ia_j}$ and $y_j = e^{ib_j}$.  We can assume without loss of generality that $\theta < a_1 < a_2 < \dots < a_n < \theta + 2\pi$, that $b_1 < \dots < b_n < b_1 + 2\pi$, and that $a_j < b_j < a_j + 2\pi$.  Then whenever $b_j < \theta + 2\pi$, the orientation of $s_j$ given by $\mathcal{O}_\theta$ matches the CCW orientation of $\partial \mathcal{A}$, and whenever $\theta + 2 \pi < b_j$, the orientations are opposite.  Let $k$ be the last index with $b_k < \theta + 2\pi$.  Then $\partial \mathcal{A}$ can be divided into two arcs
\[
\partial \mathcal{A} \cap (s_1 \cup \dots \cup s_k), \qquad \partial \mathcal{A} \cap (s_{k+1} \cup \dots \cup s_n),
\]
such that $\mathcal{O}_\theta$ orients the first arc CCW around $\mathcal{A}$ and the second CW.  This shows that the strands that bound $\mathcal{A}$ have the Desired Behavior.

The case of a boundary cell is similar and follows from casework (which is easier if the medial graph is nondegenerate, but works in the general case).
\end{proof}

It now follows from Lemmas \ref{lem:diskacyclic} and \ref{lem:diskDB} together with Lemma \ref{lem:orientationscaffold} that

\begin{proposition}
Suppose $G$ is a $\partial$-graph on $\D$ with a lensless medial strand arrangement $\mathcal{M}$.  Then $\mathcal{O}_\theta$ defines a scaffold where all edges are in the Middle.
\end{proposition}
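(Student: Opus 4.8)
The plan is to obtain this proposition as a direct corollary of Lemma~\ref{lem:orientationscaffold}, whose conclusion is precisely that $T_{\mathcal{O}}$ is a scaffold with every edge in $\Mid T_{\mathcal{O}}$. Since $G$ is assumed smoothly embedded on the surface-with-boundary $\D$ and $\mathcal{M}$ is a medial strand arrangement with the orientation $\mathcal{O}_\theta$, all that remains is to verify the three hypotheses (a), (b), (c) of that lemma for $\mathcal{O}_\theta$; the conclusion then follows verbatim.

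Hypotheses (a) and (c) will be supplied directly by the two preceding lemmas. Lemma~\ref{lem:diskacyclic} asserts that $\mathcal{O}_\theta$ is acyclic, which is exactly hypothesis (a). Lemma~\ref{lem:diskDB} asserts that $\mathcal{O}_\theta$ has the Desired Behavior at \emph{every} medial cell of a lensless arrangement, hence in particular at every black cell, which is exactly hypothesis (c). So for these two inputs I would simply cite the lemmas, with no further argument required.

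The only hypothesis not handed to us directly is (b): that there is no negatively oriented path in $\mathcal{M}$ admitting a proper continuous parametrization by $[0,+\infty)$. This is where I expect the one genuine (if small) observation to be needed, and it is where compactness of the disk does the work. Since $\D$ is compact, there is no proper continuous map $[0,+\infty) \to \D$ at all: the preimage of the compact set $\D$ under such a map would be the whole domain $[0,+\infty)$, which is not compact, contradicting properness. Hence no negatively oriented path can carry a proper $[0,+\infty)$-parametrization, and (b) holds vacuously. (I would flag in passing that this is exactly the feature that fails for the non-compact half-plane, where (b) must instead be checked by hand.)

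With (a), (b), and (c) all in place, Lemma~\ref{lem:orientationscaffold} applies and yields that $T_{\mathcal{O}_\theta}$ is a scaffold in which every edge lies in the Middle, which is the assertion of the proposition. The main ``obstacle'' is therefore not really an obstacle at all: the geometric substance — the Jordan-curve arguments controlling the crossing directions of the strands and the counterclockwise ordering of their endpoints on $\partial\D$ — has already been absorbed into Lemmas~\ref{lem:diskacyclic} and~\ref{lem:diskDB}, so this statement reduces to a formal assembly of those two results plus the vacuity of (b).
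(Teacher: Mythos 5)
Your proof is correct and takes essentially the same route as the paper, which obtains the proposition by combining Lemma~\ref{lem:diskacyclic} (hypothesis (a)), Lemma~\ref{lem:diskDB} (hypothesis (c)), and Lemma~\ref{lem:orientationscaffold}. Your explicit compactness argument for hypothesis (b) — that no proper continuous map $[0,+\infty) \to \D$ can exist, so the condition holds vacuously — fills in a detail the paper leaves implicit, and it is exactly right.
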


\begin{theorem}[cf.\ \cite{CIM} Theorem 2, \cite{dVGV}, and \cite{WJ} Theorem 6.7] \label{thm:CCPTL}
Any critical circular planar $\partial$-graph is totally layerable, hence recoverable by scaffolds, and recoverable over any field $\F$.
\end{theorem}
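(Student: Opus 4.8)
The plan is to reduce everything to establishing \emph{total layerability}, since the remaining two conclusions follow immediately from results already proved: total layerability implies recoverability by scaffolds (by the theorem stated just after the definition of total layerability), and recoverability by scaffolds implies recoverability over any field $\F$ (Theorem~\ref{thm:solvablerecoverable}). So the entire content of the proof lies in verifying that a critical circular planar $\partial$-graph is totally layerable.

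Criticality supplies a lensless medial strand arrangement $\mathcal{M}$, so the Proposition immediately preceding this theorem applies: for every admissible $\theta$ (that is, every $e^{i\theta} \in \partial\D$ that is not a strand endpoint), the orientation $\mathcal{O}_\theta$ produces a scaffold $T_{\mathcal{O}_\theta}$ in which \emph{every} edge lies in the Middle. This is a crucial simplification. Total layerability requires, for each oriented edge $e$, two scaffolds in which $e$ lies in the Middle---one with $e \in S \cup \overline{S}$ and one with $e \notin S \cup \overline{S}$---and since the Middle condition is automatic for every $T_{\mathcal{O}_\theta}$, all I must do is choose $\theta$ so as to toggle the membership of $e$ in the scaffold.

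Fix an edge $e$, and let $s_1, s_2$ be the two strands crossing at the medial vertex $x_e$; because $\mathcal{M}$ is lensless and strands meet $\partial\D$ only in distinct points, $s_1$ and $s_2$ are distinct and have four distinct boundary endpoints. The assignment rule of Figure~\ref{fig:assignment} shows that whether $e$ belongs to $T_{\mathcal{O}} \cup \overline{T_{\mathcal{O}}}$ depends only on the \emph{relative} orientation of $s_1$ and $s_2$ at $x_e$: comparing the two configurations in that figure, reversing exactly one of the two strand orientations switches $e$ between the ``blue'' case (transverse, not in $S$) and the ``orange'' case (parallel, in $S \cup \overline{S}$). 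Thus membership of $e$ is governed by the parity of two bits, one recording the local orientation of each strand. In $\mathcal{O}_\theta$ the orientation of a strand reverses exactly when $\theta$ sweeps past one of its two endpoints, so as $\theta$ traverses the full circle this parity flips at each of the four endpoints of $s_1 \cup s_2$. Consequently both values are realized on open arcs of $\partial\D$, and within any such arc I may pick an admissible $\theta$ avoiding the finitely many strand endpoints. This produces the two required scaffolds and establishes total layerability.

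The main thing to pin down carefully is the toggling claim---that reversing a single strand at $x_e$ flips $e$ between the two cases of Figure~\ref{fig:assignment}---which I would verify by a short direct inspection of the circular order of the tangent vectors at $x_e$ (the same local analysis used to define $T_{\mathcal{O}}$). I also expect the boundary cells to need a sentence of care, exactly as in Lemmas~\ref{lem:cellboundary} and~\ref{lem:diskDB}, and I would restrict attention to graphs without self-loops, consistent with the hypotheses under which the Desired Behavior and the preceding Proposition were set up.
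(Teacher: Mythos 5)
Your proposal is correct and follows essentially the same route as the paper: reduce to total layerability, invoke the Proposition that every $\mathcal{O}_\theta$ yields a scaffold $T_{\mathcal{O}_\theta}$ with all edges in the Middle, and then toggle membership of a fixed edge $e$ in the scaffold by moving $\theta$ past the endpoints of the two strands crossing at $x_e$. The paper phrases this last step geometrically---the strands $s_1, s_2$ cut $\D$ into four components, and $e \in T_{\mathcal{O}_\theta} \cup \overline{T_{\mathcal{O}_\theta}}$ exactly when $e^{i\theta}$ lies on an arc bounding a component \emph{not} containing $e$---which is the same observation as your parity-flip argument, just with the arcs labelled explicitly.
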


\begin{proof}
Let $e$ be any edge and let $x$ be the corresponding medial vertex, and $s_1$ and $s_2$ the strands that meet there.  Note $s_1$ and $s_2$ divide $\D$ into four components, and $e$ is contained in two opposite components.  If $e^{i\theta}$ is on the boundary of one of the components that contains $e$, then $e$ is an edge which is not in the scaffold $T_{\mathcal{O}_\theta}$, and if $e^{i\theta}$ is on the boundary of one of the other components, then $e$ is in the scaffold.  In either case, $e \in \Mid T_{\mathcal{O}_\theta}$ since all edges are in the Middle.
\end{proof}

Thanks to the general setup of \S \ref{sec:scaffolds}, we also know that

\begin{corollary}
If $f: G \to H$ is a UHM and $H$ is circular planar, then $G$ is recoverable by scaffolds, hence recoverable for any field $\F$.
\end{corollary}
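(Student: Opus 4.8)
The plan is to derive this corollary as a direct composition of three results already established in the excerpt, since the heavy lifting has all been done upstream. The corollary asserts that a UHM $f: G \to H$ transports recoverability from $H$ to $G$ when $H$ is circular planar; I read ``circular planar'' here as \emph{critical} circular planar (i.e.\ $H$ admits a lensless medial strand arrangement), since that is exactly the hypothesis under which $H$ is known to be recoverable by scaffolds. With that reading, no new construction is needed.

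First I would invoke Theorem \ref{thm:CCPTL}, which states that any critical circular planar $\partial$-graph is totally layerable, hence recoverable by scaffolds; this gives that $H$ itself is recoverable by scaffolds. Next I would apply Theorem \ref{thm:solvablepullback}, the functoriality statement asserting precisely that if $f: G \to H$ is an unramified harmonic morphism and $H$ is recoverable by scaffolds, then so is $G$. This is the crux of the whole framework: scaffolds pull back along UHMs by taking preimages (Lemma \ref{lem:scaffunctor}), and layer-stripping operations likewise pull back to sequences of layer-stripping operations (Lemma \ref{lem:layerstrippingfunctoriality}), so the filtration witnessing recoverability of $H$ induces one witnessing recoverability of $G$. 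Having obtained that $G$ is recoverable by scaffolds, I would close with Theorem \ref{thm:solvablerecoverable}, which guarantees that any $\partial$-graph recoverable by scaffolds is recoverable over any field $\F$.

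There is essentially no obstacle to overcome, as each implication is quoted verbatim from a preceding theorem; the proof is a one-line chain $H \text{ critical} \Rightarrow H \text{ rec.\ by scaffolds} \Rightarrow G \text{ rec.\ by scaffolds} \Rightarrow G \text{ recoverable}$. The only point I would be careful about is to confirm that the hypothesis really is criticality and not merely circular planarity, since a circular planar $\partial$-graph that is \emph{not} critical need not admit a lensless strand arrangement and hence need not be recoverable at all; the whole argument rests on feeding a genuinely recoverable-by-scaffolds target graph $H$ into the pullback theorem.
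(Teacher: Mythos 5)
Your proposal is correct and is exactly the argument the paper intends: the corollary is stated without proof precisely because it is the immediate chain Theorem \ref{thm:CCPTL} (criticality gives recoverability by scaffolds for $H$), then Theorem \ref{thm:solvablepullback} (pullback along the UHM), then Theorem \ref{thm:solvablerecoverable} (recoverability over any field). Your caution about the hypothesis is also well placed --- the paper's wording omits ``critical,'' but the statement is only true with that reading, since a non-critical circular planar $\partial$-graph (e.g.\ a series of two edges) need not be recoverable at all.
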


\subsection{Embedded Subgraph Partitions and Elementary Factorizations}

Embeddings and medial strand arrangements provide a way of constructing $\partial$-subgraph partitions of a graph.  Indeed, we can use a collection of curves to cut the surface $S$ into smaller surfaces $S_\alpha$ with piecewise smooth boundary.  Each $S_\alpha$ will correspond to a subgraph $G_\alpha$ of $G$ whose vertices are given by pieces of medial cells in $S_\alpha$ and whose edges are given by the medial vertices in $S_\alpha$.

More precisely, suppose $G$ is a $\partial$-graph embedded on $S$ with medial strand arrangement $\mathcal{M}$.  Let $\mathcal{C}$ be another strand arrangement such that $\mathcal{C} \cup \mathcal{M}$ also forms a strand arrangement.  Let $\{S_\alpha\}$ be the components of $S \setminus \mathcal{C}$.  Assume that for each medial cell $\mathcal{A}$, $\mathcal{A} \cap S_\alpha$ is homeomorphic to a disk.  Then we can define a subgraph $G_\alpha$ of $G$ as follows:
\begin{itemize}
	\item The vertices of $G_\alpha$ are the vertices of $G$ whose medial cells intersect $S_\alpha$.
	\item The edges of $G_\alpha$ are the edges of $G$ whose medial vertices are contained in $S_\alpha^\circ$.
	\item A vertex of $G_\alpha$ is interior if and only if its medial cell is contained in $S_\alpha^\circ$.
\end{itemize}
Then the $G_\alpha$'s form a $\partial$-subgraph partition of $G$ (exercise), and we say that it is an {\bf embedded subgraph partition}.

\begin{remark*}
$G_\alpha$ can be embedded in $S_\alpha$ with medial strand arrangement $\mathcal{M} \cap S_\alpha$.  However, this requires a perturbation of the original embedding of $G$, and it might not be possible to achieve this for all $G_\alpha$'s simultaneously.  Thus, it is better to build our geometric intuition on what happens to the medial cells rather than the original vertices of $G$.
\end{remark*}

An embedding also provides a way to assign input and output vertices to make $G$ into an IO-graph morphism.  Take a partition of $\partial S$ into two sets $D_1$ and $D_2$ (for instance, two arcs of the boundary of a disk).  Then declare $p \in \partial V(G)$ to be input if the closure of its medial cell intersects $D_1$ and output if the closure of its medial cell intersects $D_2$.

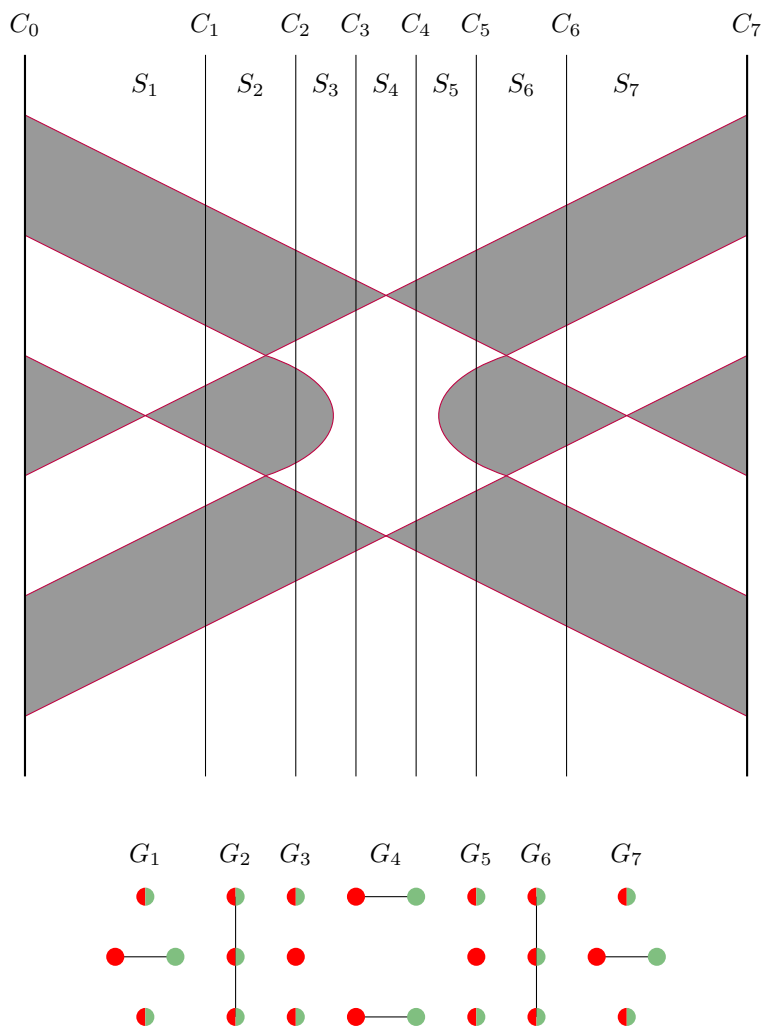
\begin{figure}

\caption{An elementary factorization for a graph embedded in a strip.  This can be viewed as a factorization for a circular planar $\partial$-graph if we identify the top ends of the vertical lines to a point and the bottom ends to another point.} \label{fig:embeddedfactorization}

\begin{center}
\begin{tikzpicture}[scale=0.8]
	
	\draw[draw=purple,fill=black!40] (-6,-1) -- (-6,1) -- (-4,0) -- (-6,-1);
	\draw[draw=purple,fill=black!40] (-4,0) -- (-2,-1) .. controls (-0.5,-0.5) and (-0.5,0.5) .. (-2,1) -- (-4,0);
	\draw[draw=purple,fill=black!40] (-2,-1) -- (0,-2) -- (-6,-5) -- (-6,-3) -- (-2,-1);
	\draw[draw=purple,fill=black!40] (-2,1) -- (0,2) -- (-6,5) -- (-6,3) -- (-2,1);
	\draw[draw=purple,fill=black!40] (6,-1) -- (6,1) -- (4,0) -- (6,-1);
	\draw[draw=purple,fill=black!40] (4,0) -- (2,-1) .. controls (0.5,-0.5) and (0.5,0.5) .. (2,1) -- (4,0);
	\draw[draw=purple,fill=black!40] (2,-1) -- (0,-2) -- (6,-5) -- (6,-3) -- (2,-1);
	\draw[draw=purple,fill=black!40] (2,1) -- (0,2) -- (6,5) -- (6,3) -- (2,1);
	
	\draw[thick] (-6,-6) -- (-6,6);
	\draw[thick] (6,-6) -- (6,6);
	
	\node at (-6,6.5) {$C_0$};
	\node at (-4,5.5) {$S_1$};
	\draw (-3,-6) -- (-3,6);
	\node at (-3,6.5) {$C_1$};
	\node at (-2.25,5.5) {$S_2$};
	\draw (-1.5,-6) -- (-1.5,6);
	\node at (-1.5,6.5) {$C_2$};
	\node at (-1,5.5) {$S_3$};
	\draw (-0.5,-6) -- (-0.5,6);
	\node at (-0.5,6.5) {$C_3$};
	\node at (0,5.5) {$S_4$};
	\draw (0.5,-6) -- (0.5,6);
	\node at (0.5,6.5) {$C_4$};
	\node at (1,5.5) {$S_5$};
	\draw (1.5,-6) -- (1.5,6);
	\node at (1.5,6.5) {$C_5$};
	\node at (2.25,5.5) {$S_6$};
	\draw (3,-6) -- (3,6);
	\node at (3,6.5) {$C_6$};
	\node at (4,5.5) {$S_7$};
	\node at (6,6.5) {$C_7$};

	\begin{scope}[shift = {(0,-9)}]
	
		\node at (-4,1.7) {$G_1$};
		\draw (-4.5,0) -- (-3.5,0);
		\rgvertex{-4}{1}
		\rgvertex{-4}{-1}
		\rvertex{-4.5}{0}
		\gvertex{-3.5}{0}
	
		\node at (-2.5,1.7) {$G_2$};
		\draw (-2.5,1) -- (-2.5,-1);
		\rgvertex{-2.5}{1}
		\rgvertex{-2.5}{0}
		\rgvertex{-2.5}{-1}
	
		\node at (-1.5,1.7) {$G_3$};
		\rgvertex{-1.5}{1}
		\rvertex{-1.5}{0}
		\rgvertex{-1.5}{-1}
	
		\node at (0,1.7) {$G_4$};
		\draw (-0.5,1) -- (0.5,1);
		\draw (-0.5,-1) -- (0.5,-1);
		\rvertex{-0.5}{1}
		\rvertex{-0.5}{-1}
		\gvertex{0.5}{1}
		\gvertex{0.5}{-1}
		
		\node at (1.5,1.7) {$G_5$};
		\rgvertex{1.5}{1}
		\rvertex{1.5}{0}
		\rgvertex{1.5}{-1}	

		\node at (2.5,1.7) {$G_6$};
		\draw (2.5,1) -- (2.5,-1);
		\rgvertex{2.5}{1}
		\rgvertex{2.5}{0}
		\rgvertex{2.5}{-1}
		
		\node at (4,1.7) {$G_7$};
		\draw (4.5,0) -- (3.5,0);
		\rgvertex{4}{1}
		\rgvertex{4}{-1}
		\gvertex{4.5}{0}
		\rvertex{3.5}{0}

	\end{scope}

\end{tikzpicture}
\end{center}

\end{figure}

We can produce an factorization of $G$ into IO-graph morphisms $[G_k, i_k,j_k]$ by using a strand arrangement to ``cut $S$ into thin regions'' $S_1$, \dots, $S_n$ corresponding to $G_1$, \dots, $G_n$ as shown in Figure \ref{fig:embeddedfactorization}.  To state what is happening precisely, suppose that
\begin{itemize}
	\item $C_0$, $C_1$, \dots, $C_n$ are smooth curves.
	\item $C_0 \cup C_n = \partial S$.
	\item $C_1$, \dots, $C_{n-1}$ form a strand arrangement $\mathcal{C}$.
	\item Suppose that $C_{k-1} \cup C_k$ is the piecewise smooth boundary of a surface $S_k$, and that $S = \bigcup_k C_k \cup \bigcup_k S_k$ and $\partial S_k \cap \partial S_{k+1} = C_k$.
	\item $\mathcal{C}$ induces a an embedded subgraph partition of $G$ into $G_1$, \dots, $G_n$ (as described above), where $G_k$ corresponds to $S_k$.
\end{itemize}
Let $P_k$ be the set of vertices $p$ such that $\overline{\mathcal{A}_p} \cap C_k \neq \varnothing$.  Then $G_k$ defines an IO-graph morphism $P_{k-1} \to P_k$, and $[G_n] \circ \dots \circ [G_0]$ is a factorization of $[G]: P_0 \to P_n$.  We call this construction a {\bf embedded factorization}.

\subsection{Elementary Factorizations in the Disk}

Any two ``cut-points'' $e^{i\theta}$ and $e^{i \phi}$ divide $\partial \D$ into two arcs; let $D_1$ be the CCW arc from $e^{i \theta}$ to $e^{i \phi}$ and let $D_2$ be the other arc.  Let $P$ and $Q$ be the sets of vertices of $G$ whose medial cells touch $D_1$ and $D_2$ respectively.  Then $P$ and $Q$ are called a \emph{circular pair}.  $P \cap Q$ contains at most two vertices.  The strands fall into three types:
\begin{itemize}
	\item A strand with both endpoints on $D_1$ is called {\bf $D_1$-reentrant}.
	\item A strand with both endpoints on $D_2$ is called {\bf $D_2$-reentrant}.
	\item A strand with one endpoint on $D_1$ and one on $D_2$ is called {\bf transverse}.
\end{itemize}

The following theorem combines the ``cut-point lemma'' of \cite{CM} (see also \cite{IZ}) with the machinery of elementary factorizations.  For an example, refer to Figure \ref{fig:embeddedfactorization}.

\begin{theorem} \label{thm:circularpairfactorization}
Let $G$ be a $\partial$-graph on $\D$ with a lensless nondegeneral medial strand arrangement $\mathcal{M}$.  Suppose $P$ and $Q$ are a circular pair corresponding to boundary arcs $D_1$ and $D_2$.  Then the IO-graph morphism $[G,i,j]: P \to Q$ represented by $G$ admits an embedded elementary factorization.  Hence, the rank-connection principle holds for $P$ and $Q$ for any network $\Gamma$ on $G$.  Moreover,
\begin{itemize}
	\item $\rank X([\Gamma,i,j]) = 2m(P,Q) = \# \text{\emph{(transverse strands)}} + |P \cap Q|$.
	\item $\dim \ker X([\Gamma,i,j]) = \#(\text{\emph{input stubs}}) = \# (D_1\text{\emph{-reentrant strands}})$.
	\item $\dim \ker \overline{X([\Gamma,i,j])} = \#(\text{\emph{output stubs}}) = \#(D_2\text{\emph{-reentrant strands}})$.
\end{itemize}
\end{theorem}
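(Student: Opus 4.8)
The plan is to build the elementary factorization geometrically by sweeping a family of cutting arcs across $\D$ in the direction dictated by an orientation of $\mathcal{M}$, and then to read off all three numerical quantities from the resulting factorization together with the rank-connection principle of Theorem \ref{thm:RC1}. First I would fix the orientation $\mathcal{O} = \mathcal{O}_\theta$ associated to the cut point $e^{i\theta}$ bounding $D_1$ (with the clockwise interval $I$ chosen near $e^{i\theta}$). By Lemma \ref{lem:diskacyclic} this orientation is acyclic, and by Lemma \ref{lem:diskDB} it has the Desired Behavior at every medial cell, so Lemma \ref{lem:orientationscaffold} produces a scaffold $T_{\mathcal{O}}$ in which every edge lies in $\Mid T_{\mathcal{O}}$. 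The induced partial order $\prec_{\mathcal{O}}$ on the medial vertices records the order of harmonic continuation and will govern the sweep. One then checks from the construction of $\mathcal{O}_\theta$ and the Desired Behavior that the transverse strands flow from the $D_1$ side to the $D_2$ side, so that a boundary vertex whose cell meets only $D_1$ is an input (and not an output) of $T_{\mathcal{O}}$, one whose cell meets only $D_2$ is an output, and a vertex of $P\cap Q$ is both. This is exactly the hypothesis of Proposition \ref{prop:IOscaffolds} with $i(P)$ the cells meeting $D_1$ and $j(Q)$ the cells meeting $D_2$.

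Next I would produce the \emph{embedded} factorization. Sweep a family of arcs $C_0 = D_1, C_1, \dots, C_n = D_2$, each running between the two cut points and transverse to $\mathcal{M}$, chosen as a linear extension of $\prec_{\mathcal{O}}$ so that consecutive arcs $C_{k-1}, C_k$ bound a thin strip $S_k$ containing exactly one ``event'': either a single medial vertex $x_e$ (an edge of $G$) or a single apex of a reentrant strand relative to the sweep. Feeding this into the embedded subgraph partition machinery realizes $[G,i,j] = [G_n]\circ\dots\circ[G_1]$ with $P_k$ the set of vertices whose medial cells meet $C_k$. A strip carrying a crossing $x_e$ gives a type~1 morphism when $e$ runs from an input cell to an output cell and a type~2 morphism when both endpoint-cells lie on the same side; a strip at which a $D_1$-reentrant strand attains its apex is a vertexless event where two input cells merge, giving a type~3 morphism with one input stub; symmetrically a $D_2$-reentrant apex gives a type~4 morphism with one output stub. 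That all type~3 events precede all type~4 events follows because, under $\prec_{\mathcal{O}}$, the $D_1$-reentrant apexes lie in the Beginning and the $D_2$-reentrant apexes in the End of $T_{\mathcal{O}}$; equivalently, this is the ordering guaranteed by the peeling construction in the proof of Proposition \ref{prop:IOscaffolds}. This yields the required embedded elementary factorization, and the rank-connection principle for $P$ and $Q$ over any network $\Gamma$ on $G$ is then immediate from Theorem \ref{thm:RC1}.

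Finally I would match the three quantities to strand types. Lemma \ref{lem:connectionwidth} identifies the width of the factorization with $m(P,Q)$, and then Theorem \ref{thm:RC1} gives $\rank X([\Gamma,i,j]) = 2m(P,Q)$, $\dim\ker X([\Gamma,i,j]) = \#(\text{input stubs})$, and $\dim\ker\overline{X([\Gamma,i,j])} = \#(\text{output stubs})$. The strip analysis above places input stubs in bijection with apexes of $D_1$-reentrant strands and output stubs with apexes of $D_2$-reentrant strands; since each reentrant strand has exactly one apex for the sweep, this gives $\dim\ker X([\Gamma,i,j]) = \#(D_1\text{-reentrant strands})$ and $\dim\ker\overline{X([\Gamma,i,j])} = \#(D_2\text{-reentrant strands})$. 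The remaining equality $2m(P,Q) = \#(\text{transverse strands}) + |P\cap Q|$ is the purely geometric content of the cut-point lemma: a maximal connection from $P$ to $Q$ threads the black cells, its nontrivial paths are flanked by transverse strands, and the trivial paths at vertices of $P\cap Q$ account for the correction term, so that counting transverse strands against the sides of the connection yields the stated identity.

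The hard part will be this last geometric bookkeeping together with the verification in the second paragraph that the sweep can be arranged so that \emph{every} strip is genuinely elementary and so that the type~3/type~4 ordering holds in all the degenerate configurations permitted by the definitions (disconnected pieces, boundary cells meeting $\partial\D$ in a single arc, reentrant strands nesting near $D_1$ or $D_2$). This is essentially the cut-point lemma of \cite{CM}, recast in the language of embedded factorizations; the lenslessness and nondegeneracy hypotheses, via Lemma \ref{lem:cellboundary}, are exactly what make the cells behave well enough for the identification of stubs with reentrant strands and of the width with the transverse count to come out cleanly.
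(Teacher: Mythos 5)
Your reduction to Proposition \ref{prop:IOscaffolds} fails at exactly the configurations the theorem is about. That proposition requires $S_- \cap \partial V = i(P) \setminus j(Q)$ and $S_+ \cap \partial V = j(Q) \setminus i(P)$ \emph{exactly}; in particular a vertex of $P \cap Q$ must be neither an input nor an output, so your assertion that such vertices are ``both'' contradicts, rather than verifies, the hypothesis (repairing this is the whole point of Lemma \ref{lem:scafdelete}). Worse, the claim that every vertex of $P\setminus Q$ is an input and not an output of $T_{\mathcal{O}_\theta}$ is false as soon as reentrant strands are present. Take $G$ to be a single edge joining boundary vertices $p,q$ whose cells both meet $D_1$, so that both medial strands are $D_1$-reentrant: orienting them by $\mathcal{O}_\theta$ makes both strands cross the edge in the same direction, so the edge lies in $T_{\mathcal{O}_\theta}$ oriented, say, $p \to q$, and then $q \in P \setminus Q$ is an output and not an input. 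The same thing happens in general to any cell sitting at the terminal endpoint of a $D_1$-reentrant strand, and cells adjacent to the cut points can be neither inputs nor outputs. These are not peripheral degeneracies: reentrant strands are precisely the source of the input and output stubs that the theorem counts, so the cases where your first paragraph breaks are the ones carrying all of the content.

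Your argument for the type~3/type~4 ordering is also vacuous: by Lemma \ref{lem:orientationscaffold} every edge and vertex lies in $\Mid T_{\mathcal{O}_\theta}$, i.e.\ $\Beg T_{\mathcal{O}_\theta} = \End T_{\mathcal{O}_\theta} = \varnothing$ (every interior vertex is both an input and an output), so nothing can ``lie in the Beginning'' or ``in the End'' of this scaffold; and the fallback appeal to the peeling construction of Proposition \ref{prop:IOscaffolds} is circular, since its hypothesis is what failed above. What actually makes the sweep work --- and what you defer as ``the hard part'' --- is the argument the paper gives: the regions $\Omega_1$, $\Omega_2$ enclosed by the $D_1$- and $D_2$-reentrant strands are disjoint, and there is no positively oriented path from $\Omega_2$ to $\Omega_1$, so the medial vertices in $\D \setminus \overline{\Omega_2}$ form an initial set $W$ for $\prec_{\mathcal{O}_\theta}$; one then peels off type 1, 2, 3 factors from the $D_1$ side using minimal elements of $W$ (with a separate finite induction to handle nested reentrant strands, and strands carrying no medial vertices, which yield isolated boundary vertices), and symmetrically peels type 1, 2, 4 factors from the $D_2$ side using maximal elements, the two processes meeting in an identity morphism. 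Once that construction is in place, Proposition \ref{prop:IOscaffolds} is not needed at all. Your final bookkeeping paragraph is fine modulo this and agrees with the paper's: stubs biject with reentrant strands because factoring out a type 3 (resp.\ 4) piece removes one $D_1$- (resp.\ $D_2$-) reentrant strand while types 1 and 2 remove none, and the transverse count is handled by reducing to the case of disjoint transverse strands.
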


\begin{proof}
Our first goal is to find one of the following:
\begin{enumerate}[a.]
	\item A $D_1$-reentrant medial strand $s$ with no medial vertices on it.  In this case, there is a black cell on one side of $s$.  Because the closure of a medial cell only intersects $D_1$ in one arc, not two, the black medial cell must be one component of $\D \setminus s$ and must represent an isolated boundary vertex of $G$ on $D_1$.
	\item A triangular medial cell formed by two medial strand segments and an arc of $D_1$.  The two strand segments meet at some medial vertex $a$.  If the cell is black, then $a$ represents a boundary spike of $G$ and the black cell is the boundary vertex of the spike and is in $P$ and not $Q$.  If the cell is white, then $a$ represents a boundary edge of $G$ between two vertices in $P$.
\end{enumerate}

Let $e^{i\theta}$ and $e^{i\phi}$ be the two cut points dividing $\partial \D$ into $D_1$ and $D_2$, and consider the orientation $\mathcal{O}_\theta$. The transverse strands are all oriented to start at $D_1$ and end at $D_2$.  We claim that the medial vertices on the $D_1$-reentrant strands come before those on the $D_2$-reentrant strands in the partial order induced by $\mathcal{O}_\theta$, when they are comparable.  Let $\Omega_1$ be the union of the regions bounded by $D_1$-reentrant strands and (arcs of) $D_1$, and let $\Omega_2$ be the union of the regions bounded by the $D_2$-reentrant strands and $D_2$.  Note that $\Omega_1$ and $\Omega_2$ are disjoint by a simple Jordan curve theorem argument.

Moreover, there is no positively oriented path from $\Omega_2$ to $\Omega_1$ since any such path would have to exit $\Omega_2$ at some point $x$.  When it exits, it is moving along some strand $s$ which cannot be $D_2$-reentrant and hence has its starting point on $D_1$.  But $s$ must be crossing a reentrant strand $s'$, from the inside to the outside of the region bounded by $s'$ and $D_2$.  Since $s$ cannot cross $s'$, this implies that the start point of $s$ is inside the closure of the region bounded by $s'$ and $D_2$, and hence $s$ starts on $D_2$, which is a contradiction.

Let $W$ be the set of medial vertices $x$ in $\D \setminus \overline{\Omega_2}$.  The previous argument showed that $W$ is an initial subset of the medial vertices with respect to $\prec$.

Assume (a) does not occur and that $W$ is nonempty, and we will prove (b) occurs.  Let $x_1$ be a minimal element of $W$.  Then two medial strands $s_1$ and $t_1$ meet at $x_1$, and $s$ and $t$ have no medial vertices between $C_1$ and $x_1$.  Let $T_1$ be the triangle formed by $C_1$ and the segments of $s_1$ and $t_2$ from $C_1$ to $x_1$.  Now $T_1$ may be medial cell satisfying (b).  However, if the medial strand arrangement is disconnected, $T_1$ may contain some entire medial strands, which are necessarily $D_1$-reentrant.  In this case, let $\mathcal{M}_1$ be the union of the medial strands contained in $T_1$.  Let $x_2$ be a minimal medial vertex in $\mathcal{M}_1$.  Then $x_2$ is the vertex of a medial triangle $T_2$ by the same reasoning as before.  $T_2$ either satisfies (b) or contains some $\mathcal{M}_2$.  This process must terminate after finitely many steps since $\mathcal{M}_{j+1}$ contains strictly fewer strands than $\mathcal{M}_j$.  Hence, there is a triangle satisfying (b).

Therefore, either (a) or (b) occurs or else there are no $D_1$-reentrant strands and $W$ is empty.  If (a) or (b) occurs, we can write $[G,i,j] = [G',i',j'] \circ [G_1,i_1,j_1]$, where $G_1$ is an elementary IO-graph of type 1, 2, or 3 and the factorization can be represented by cutting $\D$ into two components with a curve $C_1$ from $e^{i\theta}$ to $e^{i \phi}$.

Let $U_1$ be the component of $\D \setminus C_1$ containing $G'$.  Then $U_1$ is homeomorphic to $\D$ (by standard results from topology) and the orientation satisfies all the same properties as before.  (Cutting the disk into two regions with $C_1$ may produce medial cells which intersect $\partial U_1$ in two arcs, but it cannot produce any which intersect $C_1$ in two arcs or $D_2$ in two arcs.)  If there are medial vertices in $W$ contained in $U_1$, we can repeat this process with $U_1$ instead of $\D$ and $C_1$ instead of $D_1$.  After finitely many iterations, we produce curves $C_1$, \dots, $C_k$ which induce an embedded factorization of $G$ into elementary type 1, 2, and 3 morphisms represented by $G_1$, \dots, $G_k$, and some other morphism represented $G^*$, such that $G^*$ is embedded in the region bounded by $C_k$ and $D_2$ and this region contains no vertices of $W$.

Next, we repeat this process starting at $D_2$ instead of $D_1$, using a $D_2$-reentrant strand with no medial vertices or a maximal medial vertex in our partial order, and hence finding a boundary edge, boundary spike, or isolated boundary vertex on the output side $D_2$.  We ``peel off layers'' from the output side rather than the input side.  When this process ends, the two layer-stripping processes from the input side and the output side must meet in the middle and produce a complete factorization.  Indeed, since both processes terminated, there cannot be any more medial vertices or reentrant strands in the region that is left, and hence all the strands are transverse and do not intersect, and the IO-graph morphism in this region is the identity.  Thus, the factorization is complete.

It follows from Theorem \ref{thm:RC1} that the rank-connection principle holds.  Thus, it only remains to establish the relationship between the strands and the number of input stubs, the number of output stubs, and the maximum size connection.

Each time we factored out a type 3 morphism from the input side, we removed a reentrant strand on the input side.  However, we when factored out a type 1 or type 2 morphism, this did not change the number of reentrant strands.  Thus, the $D_1$-reentrant strands correspond to input stubs.  Similarly, the $D_2$-reentrant strands correspond to the output stubs.  Factoring out any of the elementary networks did not change the number of transverse strands or the maximum size connection.  Thus, to prove the claim about the transverse strands, we can reduce to the case where all the strands are transverse and do not intersect each other, and here the claim follows from easy casework.
\end{proof}

\begin{remark*}
The scaffold corresponding to the factorization in Theorem \ref{thm:circularpairfactorization} can be represented by an orientation of the medial strands, if we allow two segments of the same strand to have opposite orientations.
\end{remark*}

\subsection{Supercritical Half-Planar $\partial$-Graphs} \label{subsec:halfplanar}

In \cite{IZ}, a $\partial$-graph $G$ embedded in the upper half-plane $\h \subset \C$ is called {\bf supercritical} if it has compatible lensless medial strand arrangement such that each medial strand begins and ends on $\R$ rather than going off to $\infty$.  \cite{IZ} adapts the techniques of \cite{WJ} to prove recoverabiliy of supercritical half-planar $\partial$-graphs.  We shall prove

\begin{theorem} \label{thm:supercritical}
Any supercritical half-planar $\partial$-graph $G$ is totally layerable, that is, for each edge $e_0$ there is a scaffold $S$ such that $e_0$ is in $S \cup \overline{S}$ and in $\Mid S$ and a scaffold such that $e_0$ is not in $S \cup \overline{S}$ and $e_0 \in \Mid S$.  Moreover, the scaffolds can be chosen so that $\{e: e \succ e_0\}$ is finite.
\end{theorem}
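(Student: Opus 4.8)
The plan is to reduce the theorem to Lemma~\ref{lem:orientationscaffold}, following the same route used for critical circular planar $\partial$-graphs in Theorem~\ref{thm:CCPTL}. Recall that an orientation $\mathcal{O}$ of the medial strand arrangement that is (a) acyclic, (b) admits no negatively oriented path with a proper parametrization by $[0,+\infty)$, and (c) has the Desired Behavior at every black cell yields a scaffold $T_{\mathcal{O}}$ in which \emph{every} edge lies in the Middle. Consequently, once such an $\mathcal{O}$ is produced, the target edge $e_0$ is automatically in $\Mid T_{\mathcal{O}}$, and the only remaining choice --- whether $e_0 \in T_{\mathcal{O}} \cup \overline{T_{\mathcal{O}}}$ --- is governed by which of the four regions cut out of $\h$ by the two medial strands $s_1, s_2$ through the medial vertex $x_0$ of $e_0$ contains the chosen cut point, exactly as in the proof of Theorem~\ref{thm:CCPTL}. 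Thus I would construct, for each $e_0$, two orientations producing the two required scaffolds.

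First I would fix a point $c \in \R$ that is not a strand endpoint and define an orientation $\mathcal{O}_c$ in direct analogy with $\mathcal{O}_\theta$: orient each strand by the cyclic (``counterclockwise-from-$c$'') order of its two endpoints in $\R \cup \{\infty\}$, orient $\R$ counterclockwise, and reverse a short arc around $c$. Because the arrangement is supercritical, every strand begins and ends on $\R$, so all four regions at $x_0$ meet $\R$ and a legal $c$ may be placed in any one of them; placing $c$ in one of the two regions containing $e_0$ forces $e_0 \notin T_{\mathcal{O}_c}$, and placing it in one of the other two forces $e_0 \in T_{\mathcal{O}_c}$. Verifying hypothesis (c) for $\mathcal{O}_c$ is routine and essentially identical to the disk: since the arrangement is lensless, each medial cell is bounded by a Jordan curve, so Lemmas~\ref{lem:cellboundary} and~\ref{lem:diskDB} carry over verbatim to establish the Desired Behavior at every black cell. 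Hypothesis (a) I would obtain by a localization trick that avoids needing a global induction: any positively oriented loop is compact and meets only finitely many strands, so restricting to that finite subarrangement and compactifying $\h \cup \{\infty\}$ to a disk (no accumulation occurs, as only finitely many strands are involved) reduces acyclicity to Lemma~\ref{lem:diskacyclic}.

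The heart of the matter --- and the place where supercriticality is used decisively --- is hypothesis (b) together with the finiteness (``moreover'') clause, both genuinely new features of the non-compact setting: in the compact disk there are no proper rays $[0,+\infty) \to \D$ at all, so (b) is vacuous there, whereas in $\h$ it is a real constraint. The key geometric lemma I would prove is a \emph{confinement} statement. Each strand, being a lensless arc from $\R$ to $\R$, bounds together with an arc of $\R$ a \emph{compact} cap, and a strand can cross a given strand in only one direction --- the same mechanism driving Lemma~\ref{lem:diskacyclic}. Choosing $c$ in the region adjacent to $x_0$ so that the forward directions of $s_1$ and $s_2$ point into their caps, I would show that every positively oriented path issuing from $x_0$ is trapped inside the compact cap of $s_1$ or $s_2$. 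Properness of the medial map $F$ then forces the set of medial vertices with $x \succ x_0$ to be finite, which gives $\{e : e \succ e_0\}$ finite; and, since by (a) a negatively oriented chain of distinct vertices cannot repeat, any infinite decreasing chain would have to escape every compact set, which the confinement (symmetrically applied) and the impossibility of a strand running off to $\infty$ rule out, establishing (b). One must of course check that among the four regions one can simultaneously fix $e_0$'s membership in $S$ and confine the forward cone; this is arranged by choosing the appropriate region in each of the two cases.

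The main obstacle is precisely this confinement lemma. The disk argument proved acyclicity and well-foundedness by induction on the number of strands (Lemma~\ref{lem:diskacyclic}), which is simply unavailable when there are infinitely many strands, and which also does not see the distinction between forward and backward infinite paths that matters for an infinite graph. The crux is therefore to replace that induction by the geometric fact that supercriticality confines forward orbits to compact caps and forbids strands from escaping to infinity, so that properness can convert ``bounded reach'' into ``finitely many medial vertices.'' I expect the careful bookkeeping of how the one-directional crossing rule keeps a positive path inside a cap --- and the verification that the cut point can be positioned to control both confinement and $e_0$'s membership at once --- to be the most delicate part of the proof.
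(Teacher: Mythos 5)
Your reduction to Lemma \ref{lem:orientationscaffold} via a single ``cut-point'' orientation $\mathcal{O}_c$ of whole strands cannot be made to work, and the failure is not in the confinement bookkeeping but in hypothesis (b) itself. Take $G$ to be the bi-infinite path: vertices $V_m$ ($m \in \Z$) placed along $\R$, all boundary, with edges $e_m$ from $V_m$ to $V_{m+1}$ arcing through $\h$. Its medial arrangement consists of strands $\tau_m$, where $\tau_m$ runs from a point just right of $V_{m-1}$ to a point just left of $V_{m+1}$ and crosses exactly $\tau_{m-1}$ and $\tau_{m+1}$, once each; every strand begins and ends on $\R$, so $G$ is supercritical. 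Now fix \emph{any} cut point $c$ and choose $M$ so that $\tau_m$ lies entirely in $(-\infty,c)\times(0,\infty)$ for all $m \le M$. Your orientation $\mathcal{O}_c$ orients each such strand left to right (toward $c$), and along $\tau_m$ the crossing $x_{m-1}=\tau_m\cap\tau_{m-1}$ precedes the crossing $x_m=\tau_m\cap\tau_{m+1}$. Hence the path that starts at $x_{M-1}$, runs backward along $\tau_{M-1}$ to $x_{M-2}$, switches there to $\tau_{M-2}$, and so on, is a negatively oriented path through infinitely many strands (it uses no boundary arcs, so the treatment of $\partial\h$ near $c$ is irrelevant); by properness of the arrangement it escapes every compact set, so it is a negatively oriented ray with a proper parametrization by $[0,+\infty)$. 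Thus hypothesis (b) fails for \emph{every} $c$, in particular for both quadrant placements at $x_0$. Nor is this an artifact of the lemma: in $T_{\mathcal{O}_c}$ every edge $e_m$ with $m \le M-1$ is oriented toward $c$, so $(\overline{e_{M-1}},\overline{e_{M-2}},\dots)$ is an infinite decreasing path and condition (2) in the definition of a scaffold fails outright. $T_{\mathcal{O}_c}$ is simply not a scaffold.

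The precise place where your argument goes wrong is the claim that ``any infinite decreasing chain would have to escape every compact set, which the confinement (symmetrically applied) and the impossibility of a strand running off to $\infty$ rule out.'' Supercriticality forbids a single strand from escaping to $\infty$, but it does not forbid a decreasing chain from escaping through infinitely many compact strands, which is exactly what happens above; and confinement, even if it held, constrains only paths issuing from $x_0$, whereas well-foundedness is a global condition that must be verified at vertices arbitrarily far from $x_0$ and from $c$, where your construction has no leverage. (The forward-cone confinement is itself doubtful --- with $c$ on a quadrant adjacent to $e_0$, a strand with both endpoints on the far side of $c$ can cross the forward portion of $s_1$ and ferry the forward cone out of both caps --- but that is secondary.) This is precisely why the paper warns in \S\ref{subsec:halfplanar} that the construction ``cannot be accomplished by simply orienting each medial strand'': its proof cuts $\h$ into three regions, orients the strand \emph{pieces} of the two outer regions so that decreasing paths flow inward toward the cut (where they are trapped in the cap of the innermost strand and terminate), orients the middle region as a disk, and patches the three scaffolds together, deleting some edges of $S_2$ to restore injectivity. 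Indeed, any scaffold witnessing the theorem for the bi-infinite path must point the edges near $e_0$ toward it and the distant edges away from it, a pattern no cut-point orientation produces; the three-region surgery is not a technical convenience but the essential content of the proof, and any repair of your proposal would have to reinvent it.
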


Finiteness of $\{e: e \succ e_0\}$ implies that the harmonic functions constructed for solving the inverse problem in \S \ref{subsec:scafcontinuation} are \emph{finitely supported}.  This is useful because it allows flexibility in defining the $\Lambda(\Gamma)$ in the infinite case.  For positive real edge weights, one might want to consider the boundary data of finitely supported, bounded, or finite-power harmonic functions rather than all harmonic functions (see \cite{IZ}).  However, harmonic continuation might a priori produce unbounded or infinite-power harmonic functions.  On the other hand, finitely supported functions automatically satisfy whatever growth conditions one wants to impose at infinity.

Without the finiteness condition on $\{e: e \succ e_0\}$, one can prove that a supercritical half-planar $\partial$-graph is totally layerable using an orientation of the medial strands, similar to the method for the disk, but slightly more complicated, because we must make sure every subset has a minimal element.  However, the finiteness condition makes the proof more tricky.  The basic plan is as follows:

Let $t_0 < t_1$ be two points on the real line that are not the endpoints of medial strands.  The goal is to construct a scaffold such that the harmonic continuation process where the ``inputs'' are on $(-\infty,t_0] \cup [t_1,\infty)$ and ``output'' are on $[t_0,t_1]$.  This cannot be accomplished by simply orienting each medial strand.  Instead, we will divide $\h$ into three regions, produce a scaffold on each region, and then patch the scaffolds together.  The most annoying part of the proof is finding the correct way of cutting up $\h$.  This proof is technical and may be omitted on a first reading.

\textbf{Division of $\h$ into Three Regions:}  Each strand divides $\h$ into two components--one is bounded, and we will call it the ``inside,'' and the other is unbounded, and we will call it the ``outside.''  Each strand has an endpoint which is further left on the real axis and one which is further right, and hence there is a left-to-right orientation of each strand.  In the left-to-right orientation of the strand, the inside is on the right of the strand and the outside is on the left.

Let $U$ be the union of all the following regions:
\begin{itemize}
	\item The inside of a $[t_0,t_1]$-reentrant strand.
	\item Any triangle bounded by a segment of a strand with one endpoint on $(-\infty,t_0]$ and one endpoint on $[t_0,t_1]$, a strand with one endpoint on $[t_0,t_1]$ and one endpoint on $[t_1,\infty)$, and a segment of $[t_0,t_1]$. 
\end{itemize}

\begin{claim*}
$U$ is the region to the right of some oriented Jordan arc $C_0$ formed by strand segments and segments of $[t_0,t_1]$ such that
\begin{itemize}
	\item The path starts at $t_0$ and ends at $t_1$.
	\item Each strand used in the path has at least one endpoint on $[t_0,t_1]$.
	\item For each strand segment in the path, the orientation of the path matches the left-to-right orientation of the strand.
	\item For each segment of $[t_0,t_1]$ in the path, the orientation in the path matches the increasing orientation of $[t_0,t_1]$.
\end{itemize}
\end{claim*}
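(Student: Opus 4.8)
The plan is to produce $C_0$ as the right-hand boundary walk of $U$ and to verify its four properties from the lensless hypothesis. First I would fix notation, writing $L=(-\infty,t_0)$, $M=(t_0,t_1)$, $R=(t_1,\infty)$ for the three open arcs into which $t_0,t_1$ cut $\R$ (recall neither is a strand endpoint), and classifying each strand by which arcs contain its two endpoints. Only the \emph{relevant} strands---those of type $LM$, $MM$, or $MR$, that is, with at least one endpoint in $M$---can bound $U$, and there are finitely many of them because the strand arrangement is proper and $[t_0,t_1]$ is compact. The lensless hypothesis supplies the crossing rule: two strands meet at most once, transversally, and they meet iff their endpoint pairs interleave on $\R$; moreover the bounded ``inside'' lies to the right of each strand in its left-to-right orientation, as noted above. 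In particular an $LM$-strand and an $MR$-strand cross iff the $M$-endpoint of the former lies to the right of the $M$-endpoint of the latter, and that crossing is precisely the apex of the triangle appearing in the definition of $U$.

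The technical core is a nesting lemma for the relevant strands: insides of two $MM$-strands are nested or disjoint, an $MM$-inside and an $(LM,MR)$-triangle are nested or disjoint, and the feet of the relevant strands occur along $[t_0,t_1]$ in an order compatible with the interleaving rule. I would prove this by repeated use of the Jordan curve theorem together with lenslessness, in the same style as Lemmas \ref{lem:diskacyclic} and \ref{lem:cellboundary}.

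Granting the nesting lemma, I would construct $C_0$ by a right-hand trace. Start at $t_0$ and walk through $\overline{\h}$ along $\partial U$ keeping $U$ immediately on the right: along a strand we follow its left-to-right orientation, along $[t_0,t_1]$ we follow the increasing orientation, and at each vertex we take the unique outgoing branch that keeps $U$ on the right and $\h^\circ \setminus \overline{U}$ on the left. The nesting lemma guarantees that at every strand crossing, and at every point where the walk meets the axis at a strand foot, the four local sectors are colored consistently, so the next branch always exists, is again a relevant strand traversed left-to-right or an increasing segment of $[t_0,t_1]$, and never leaves the closed slab ``over'' $[t_0,t_1]$. Since the axis segments used are increasing and contained in $[t_0,t_1]$ and each relevant strand foot lies in $M$, the walk cannot overshoot $t_1$ nor stall short of it, so it terminates exactly at $t_1$.

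It then remains to check that $C_0$ is \emph{simple} and that $U$ is exactly its right-hand region. Simplicity follows because a self-crossing of the trace would exhibit a lens between two strand segments, contradicting lenslessness. I expect the genuine obstacle to be the last identification: proving \emph{globally}---not merely sector-by-sector at each vertex---that every point placed in $U$ by the definition lies to the right of $C_0$ and conversely, so that the locally defined pieces (the $MM$-insides and the $(LM,MR)$-triangles) assemble into a single Jordan domain with nothing missed and nothing double-counted. This is exactly where the nesting half of the crossing lemma must be deployed, and it is the step most likely to require careful casework.
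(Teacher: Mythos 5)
Your proposal has a genuine gap: the ``nesting lemma'' at its technical core is false. Two $[t_0,t_1]$-reentrant ($MM$) strands whose feet interleave on $[t_0,t_1]$ --- say feet $a < a' < b < b'$ --- do cross (interleaving forces an odd number of crossings, and lenslessness caps it at one), and then their insides overlap with neither containing the other; the same failure occurs between an $MM$-inside and an $(LM,MR)$-triangle, and between two triangles. Since the definition of $U$ includes \emph{all} such pieces, these partial overlaps are unavoidable, so your right-hand boundary trace cannot be justified by ``nested or disjoint'' casework. This is not a repairable side issue: the configurations your lemma excludes are exactly the ones that make the claim nontrivial.

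The paper's proof is built to handle overlapping pieces directly. It defines property ($*$) (``is the region to the right of a path of the required form''), observes that each building block of $U$ satisfies ($*$), and reduces the claim to: the union of two regions satisfying ($*$) satisfies ($*$). To prove that, it orients every strand left-to-right and $\R$ increasingly, shows this orientation $\mathcal{O}$ is acyclic by transporting Lemma \ref{lem:diskacyclic} through a conformal map of $\h$ onto $\D$, and concludes that the intersection points of the two boundary paths $g_1,g_2$ (extended to infinite paths along $\R$) occur in the same order along each --- otherwise one could close up a positively oriented loop. The merged path $g_3$, which follows the common path and takes the leftmost branch whenever $g_1$ and $g_2$ split, is then well defined and has $U_1 \cup U_2$ on its right even when $U_1$ and $U_2$ properly overlap. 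Your trace keeping $U$ on the right is morally this leftmost merge, but the justification that makes it work is acyclicity plus order-of-intersections, not nesting; replacing your nesting lemma with that statement is what your construction actually needs.
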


\begin{proof}
Let $\mathcal{O}$ be the orientation of $\mathcal{M}$ formed by orienting each strand from left to right and real line from negative to positive.  Then $\mathcal{O}$ is acyclic.  Indeed, any cycle would be formed by only finitely many strands $s_1, \dots, s_n$.  If $F$ is a conformal map of $\h$ onto $\D$ and $e^{i\theta} = F(\infty)$, then the orienation $\mathcal{O}$ of $s_1, \dots, s_n$ corresponds to $\mathcal{O}_\theta$ on the disk.  But we already showed this is acyclic.  Thus, $\mathcal{O}$ defines a partial order on the medial vertices.  This can be extended to a partial order on the medial vertices \emph{and} endpoints of strands such that if two endpoints $x$, $y$ are on the real line with $x < y$ in $\R$, then $x \prec y$.

We say that a region satisfies ($*$) if it is the region to the right of some path satisfying the conditions of the Claim.  Note that $U$ is defined as the union of finitely many regions which satisfy ($*$).  Thus, it suffices to show that if $U_1$ and $U_2$ satisfy ($*$), then so does $U_1 \cup U_2$.  Let $g_1$ and $g_2$ be the corresponding paths and extend them to infinite paths by adjoining an interval of the form $(-\infty,a]$ to the beginning and $[b,+\infty)$ to the end.  The intersection points / intervals of $g_1$ and $g_2$ must occur in increasing order along $g_1$ and in increasing order along $g_2$ since they are both positively oriented paths with respect to $\mathcal{O}$.  Hence, the intersections occur in the same order for $g_1$ and $g_2$.  Thus, we can form a path $g_3$ as follows:  Start at $-\infty$.  As long as $g_1$ and $g_2$ agree, we follow along their common path, and when $g_1$ and $g_2$ split up, we choose the path farther to the left.  Then $U_1 \cup U_2$ is the region to the right of $g_3$, hence satisfies ($*$).
\end{proof}

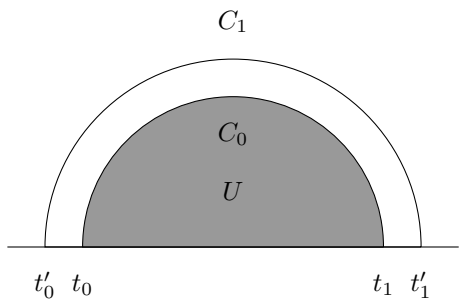
\begin{figure}

\caption{The region $U$ and curves $C_0$ and $C_0'$.}

\begin{center}
\begin{tikzpicture}

\draw (-3,0) -- (3,0);
\draw[fill=black!40] (2,0) arc (0:180:2) -- (2,0);
\draw (2.5,0) arc (0:180:2.5) -- (2.5,0);
\node at (0,1.5) {$C_0$};
\node at (0,3) {$C_1$};
\node at (0,0.75) {$U$};

\node at (-2.5,-0.5) {$t_0'$};
\node at (-2,-0.5) {$t_0$};
\node at (2,-0.5) {$t_1$};
\node at (2.5,-0.5) {$t_1'$};

\end{tikzpicture}
\end{center}

\end{figure}

We next produce another curve $C_0'$ that ``hugs the outside of $C_0$'' but does not contain any medial vertices:

\begin{claim*}
There exists an oriented Jordan arc $C_0'$ such that
\begin{itemize}
	\item $C_0'$ does not contain any medial vertices.
	\item If $s$ is a medial strand with one endpoint on $[t_0,t_1]$ and one endpoint on $(-\infty,t_0] \cup [t_1,+\infty)$, then $C_0'$ intersects $s$ exactly once.
	\item The region to the left of $C_0'$ contains $\overline{U}$ and does not contain any medial vertices not in $\overline{U}$.
	\item The start point $t_0'$ of $C_0'$ is to the left of $t_0$ with no endpoints of strands in between them.  The end point $t_1'$ of $C_0'$ is to the right of $t_1$ with no endpoints of strands in between.
\end{itemize}
\end{claim*}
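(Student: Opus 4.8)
The plan is to construct $C_0'$ as a small outward offset of the arc $C_0$ produced by the preceding Claim, together with two short legs joining it down to the real axis at the prescribed endpoints. First I would record the two finiteness facts that make the offset harmless. Because the strand endpoints form a discrete subset of $\R$ and $[t_0,t_1]$ is compact, only finitely many strands have an endpoint on $[t_0,t_1]$; hence $U$ is a finite union of bounded regions (insides of $[t_0,t_1]$-reentrant strands and triangles), so $\overline{U}$ is compact. Since the medial vertices form a locally finite set, only finitely many lie within distance $1$ of $\overline{U}$, and those not in $\overline{U}$ lie at some positive distance $\delta>0$ from it. Finally, using that $t_0,t_1$ are not strand endpoints and that the endpoints are discrete, I choose $t_0'<t_0$ and $t_1'>t_1$ so close that $[t_0',t_0]$ and $[t_1,t_1']$ contain no strand endpoints.

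Fixing $\epsilon<\delta$, I would build $C_0'$ inside the $\epsilon$-tubular neighborhood of $C_0$ on the side away from $U$: follow $C_0$ displaced outward along the outward normal, rounding each corner of $C_0$ (each medial vertex where two boundary strand-segments meet) to the outside so that the offset misses the vertex, and then join the two ends to $t_0'$ and $t_1'$ by short arcs near $t_0$ and $t_1$. Taking $\epsilon$ small keeps $C_0'$ simple, hence a Jordan arc, and within distance $\epsilon$ of $\overline{U}$. Two of the four conditions are then immediate: $C_0'$ contains no medial vertex, since the rounded corners avoid the vertices of $C_0$ (which lie in $\overline{U}$) and the $\epsilon$-neighborhood avoids every other medial vertex by $\epsilon<\delta$; and the region cut off by $C_0'$ and $[t_0',t_1']$ contains $\overline{U}$ (as $C_0'$ lies strictly outside $C_0$) while containing no medial vertex outside $\overline{U}$ (again by $\epsilon<\delta$). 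The endpoint condition holds by construction.

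The crux is the ``exactly once'' crossing count for a transverse strand $s$ with endpoints $a\in[t_0,t_1]$ and $b\in(-\infty,t_0]\cup[t_1,\infty)$; by the choice of $t_0',t_1'$ we have $b\notin[t_0',t_1']$, so $b$ lies strictly outside the region $U'$ enclosed by $C_0'$ and $[t_0',t_1']$, while $a$ lies on the base $[t_0',t_1']$ of $U'$. I would first give a parity argument: the open strand $s$ meets the base only at $a$, so all its crossings with $\partial U'$ lie on $C_0'$, and since $s$ runs from the boundary point $a$ to the exterior point $b$ it crosses $C_0'$ an odd number of times. To upgrade ``odd'' to ``exactly one,'' I would follow $s$ outward from $a$: either a terminal segment of $s$ at $a$ is part of $C_0$ (which happens exactly when $s$ bounds a triangle of $U$), in which case $C_0'$ runs parallel to $s$ and crosses it only at the corner where $C_0$ turns off $s$ and $C_0'$ rounds that corner to the outside; or $s$ enters $U'$ at $a$, in which case the crossing occurs where $s$ first leaves $U'$. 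In either case the lensless hypothesis (each pair of strands meets at most once) forbids $s$ from re-entering $U'$ afterwards, so there is no second crossing, and together with the parity count this gives exactly one.

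The main obstacle is exactly this last step: forcing the crossing number to be $1$ rather than $3,5,\dots$. The parity half is clean, but pinning down the count requires the local analysis at the corners of $C_0$ — distinguishing convex from reflex turns, tracking which of the four strand-sectors at each medial vertex the offset threads through, and invoking lenslessness to forbid a strand from re-entering $U'$. I expect this bookkeeping, rather than any conceptual obstruction, to be the bulk of the work; the no-vertex, containment, and endpoint conditions are routine consequences of keeping the offset inside a sufficiently thin neighborhood of $\overline{U}$.
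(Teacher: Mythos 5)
There is a genuine gap, and it sits exactly at the step you flagged as the crux; it is not bookkeeping. First, your construction itself can fail condition 2. Your $\delta$ controls only medial \emph{vertices}, so choosing $\epsilon<\delta$ keeps vertices out of the strip between $C_0$ and $C_0'$, but it does nothing to keep vertex-free \emph{strand segments} out of it. Concrete failure: let $U$ consist of the inside of a single $[t_0,t_1]$-reentrant strand $r$, and let $s$ be a transverse strand that leaves $a\in[t_0,t_1]$, hooks over $r$ at small positive clearance $c$ without crossing it (allowed by lenslessness), and returns to $b\in(-\infty,t_0]$. Here there are no medial vertices at all, so your constraint on $\epsilon$ is vacuous; if $\epsilon>c$, your offset curve crosses $s$ five times (once near $a$, twice on each pass over $r$) --- odd, consistent with your parity count, but not one. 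The paper avoids this by not using a metric offset at all: it threads $C_0'$ through the medial \emph{cells} adjacent to $C_0$, so $C_0'$ meets strands only on the walls separating consecutive adjacent cells, and in the example above it crosses $s$ exactly once no matter how close $s$ hooks over $r$.

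Second, and independently, your upgrade from ``odd'' to ``exactly one'' rests on the assertion that lenslessness ``forbids $s$ from re-entering $U'$.'' That is a non sequitur: lenslessness constrains intersections of two \emph{strands}, and $C_0'$ is not a strand, so it says nothing about how often $s$ may cross $C_0'$; nothing you establish prevents $s$ from leaving $U'$ and later re-entering the strip over a different part of $\overline{U}$ or of $[t_0,t_1]$. The missing idea --- the actual content of the paper's proof --- is to route the trapping through $\overline{U}$, whose boundary (unlike $C_0'$) is made of strand segments and real-axis segments. One needs two things: (i) the construction must guarantee that an inward crossing of $C_0'$ forces $s$ immediately into $\overline{U}$ (in the paper's construction the wall of $s$ that $C_0'$ crosses terminates at a medial vertex on $\partial U$); and (ii) once $s$ enters $\overline{U}$ it is trapped there: it entered either by crossing a $[t_0,t_1]$-reentrant strand, which lenslessness forbids it to recross, or by entering a triangle of $U$, in which case it has already crossed the far side $s_2$, so the region cut off by $s_1$ and $s$ lies in $U$ and the rest of $s$ stays in $\overline{U}$. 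Since $C_0'\cap\overline{U}=\varnothing$, following $s$ from its outer endpoint the first crossing of $C_0'$ is then the last, giving exactly one. Without (i) and (ii), no amount of sector-tracking at the corners of $C_0$ will close the argument.
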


\begin{proof}
Let $\mathcal{A}_1, \dots, \mathcal{A}_n$ be the medial cells outside $U$ whose closures intersect $C_0$, listed in order along $C_0$.  Construct $C_1$ inductively starting on $\R \cap \partial \mathcal{A}_1$, then going into $\mathcal{A}_2$, and so forth.

The hardest condition to verify is the second one: Suppose $s$ is a medial strand with one endpoint on $(-\infty,t_0]$ and one endpoint on $[t_0,t_1]$.  If $s$ crosses $C_0'$, then it must enter $\overline{U}$ immediately afterward.  At the point where it enters $U$, it must either cross a $[t_0,t_1]$-reentrant strand or enter a triangle formed by strands $s_1$ and $s_2$, where $s_1$ has endpoints on $(-\infty,t_0]$ and $[t_0,t_1]$, and $s_2$ has endpoints on $[t_0,t_1]$ and $[t_1,+\infty)$.  Move along $s$ starting at the endpoint on $(-\infty,t_0]$.  If $s$ crosses a $[t_0,t_1]$-reentrant strand, then it cannot cross it again, and hence is trapped inside $\overline{U}$ and cannot cross $C_0'$ again.  If it enters a triangle formed by $s_1$ and $s_2$, then it must have crossed $s_2$ at some point since it started outside $s_2$.  Then the triangle formed by $s_1$ and $s$ is inside $U$, so the rest of $s$ must also be inside $\overline{U}$.  A symmetrical argument works if $s$ has one endpoint on $[t_0,t_1]$ and one on $[t_1,\infty)$.
\end{proof}

\begin{claim*}
There is a point $z$ on $C_0'$ such that
\begin{itemize}
	\item Any strand starting on $(-\infty,t_0]$ and ending on $[t_0,t_1]$ must intersect $C_0'$ before $z$ (``before'' along $C_0'$).
	\item Any strand starting on $[t_1,+\infty)$ and ending on $[t_0,t_1]$ must intersect $C_0'$ after $z$.
\end{itemize}
\end{claim*}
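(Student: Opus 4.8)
The plan is to reduce the existence of $z$ to a \emph{separation} statement: along $C_0'$, oriented from $t_0'$ to $t_1'$, every crossing by a strand of the first kind (endpoints on $(-\infty,t_0]$ and on $[t_0,t_1]$) precedes every crossing by a strand of the second kind (endpoints on $[t_1,+\infty)$ and on $[t_0,t_1]$). By the previous Claim each such strand meets $C_0'$ exactly once, so once the separation is known I can take $z$ to be any point of $C_0'$ strictly between the last first-kind crossing and the first second-kind crossing (and $z=t_0'$ or $z=t_1'$ if one family is empty).

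To prove the separation I work inside the Jordan domain $\Omega$ bounded by $C_0'$ together with the segment $[t_0',t_1']\subset\partial\h$; by the previous Claim $\overline U\subset\overline\Omega$ and $\Omega$ contains no medial vertex outside $\overline U$. Fix a first-kind strand $s$, with real endpoints $p_s<t_0'$ and $q_s\in(t_0,t_1)$, meeting $C_0'$ at $a$, and a second-kind strand $s'$, with real endpoints $p_{s'}\in(t_0,t_1)$ and $q_{s'}>t_1'$, meeting $C_0'$ at $b$. Since a lensless strand is embedded and meets $\partial\h$ only at its endpoints and meets $C_0'$ only once, the trace $\sigma=s\cap\Omega$ is a simple arc joining $a\in C_0'$ to $q_s\in(t_0',t_1')$, and $\sigma'=s'\cap\Omega$ is a simple arc joining $b$ to $p_{s'}$; the four endpoints $a,b,q_s,p_{s'}$ are distinct points of $\partial\Omega$.

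The crux is comparing two interleaving patterns. Viewing $s,s'$ as chords of $\h$ with endpoints on $\R$, they cross if and only if their endpoints interleave, which here happens exactly when $p_{s'}<q_s$ (since $p_s<t_0'$ and $t_1'<q_{s'}$ while $p_{s'},q_s\in(t_0,t_1)$, so $q_{s'}$ lies outside $(p_s,q_s)$). Moreover, when $p_{s'}<q_s$ the unique crossing point is the apex of a triangle bounded by a segment of $s$, a segment of $s'$, and the segment $[p_{s'},q_s]\subset[t_0,t_1]$, which is exactly a defining triangle of $U$; hence the crossing lies in $\overline U\subset\Omega$, and being a medial vertex it is not on $C_0'$, so it is an interior point of both $\sigma$ and $\sigma'$. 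Conversely any intersection of $\sigma$ with $\sigma'$ is an intersection of $s$ with $s'$, and lenslessness forces at most one such. Therefore $\sigma$ and $\sigma'$ cross inside $\Omega$ if and only if $p_{s'}<q_s$.

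Finally I invoke the Jordan-domain interleaving criterion for the simple arcs $\sigma,\sigma'$: they cross inside $\Omega$ iff the pairs $\{a,q_s\}$ and $\{b,p_{s'}\}$ alternate around $\partial\Omega$. Reading $\partial\Omega$ counterclockwise, the boundary points $q_s,p_{s'}$ occur on $[t_0',t_1']$ in the order of their real values while $a,b$ occur on $C_0'$; matching the alternation condition against the equivalence of the previous paragraph pins down the cyclic order and forces $a$ to precede $b$ along $C_0'$ in both cases, namely when $p_{s'}<q_s$ (arcs cross, so the four endpoints must alternate) and when $p_{s'}>q_s$ (arcs disjoint, so they must not alternate). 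As $s,s'$ were arbitrary in their respective families, the separation follows and $z$ exists. I expect the main work to be this bookkeeping of cyclic orders on $\partial\Omega$, together with the verification that a crossing of $s$ and $s'$ necessarily lands in $U$ so that it is genuinely interior to $\Omega$; everything else is a routine application of the Jordan curve theorem and the lensless hypothesis.
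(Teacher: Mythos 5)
Your proof is correct, but it takes a recognizably different route from the paper's, even though both hinge on the same key fact: a first-kind strand and a second-kind strand can only cross at the apex of one of the defining triangles of $U$, hence inside $C_0'$. The paper's argument is constructive and extremal: it lists the finitely many medial cells $\mathcal{A}_1,\dots,\mathcal{A}_n$ met by $C_0'$, places $z$ in the first cell lying outside every first-kind strand (which makes the first bullet automatic), and then derives a contradiction from a second-kind strand crossing before $z$, asserting that such a strand would be forced to cross the \emph{last} first-kind strand $s_1$ outside of $C_0'$. You instead prove the stronger pairwise separation statement---every first-kind crossing precedes every second-kind crossing along $C_0'$---by playing two interleaving criteria against each other: endpoint interleaving on $\R$ (which, given transversality and lenslessness, decides whether $s$ and $s'$ cross at all, and your triangle observation places any such crossing in $\overline{U}\subset\overline{\Omega}$, off $C_0'$ and off $\R$, hence interior to both truncated arcs) versus endpoint alternation around $\partial\Omega$ (which decides whether $\sigma$ and $\sigma'$ cross inside $\Omega$). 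Your cyclic-order case analysis is right and yields $a$ before $b$ in both cases. What your route buys is that the topological heart of the matter---which the paper compresses into the single unjustified sentence ``the only way it can do this is by crossing $s_1$ outside of $C_0'$''---is fully established by a parity argument; what the paper's route buys is brevity and an explicit location of $z$ in terms of the cell structure. Two small points to tighten: properness of the strand arrangement is what guarantees that only finitely many strands have an endpoint in $[t_0,t_1]$, so that the ``last'' first-kind crossing and ``first'' second-kind crossing exist (the paper glosses over this too); and your cyclic-order bookkeeping implicitly uses that $C_0'$ meets $\R$ only at $t_0'$ and $t_1'$, so that $C_0'\cup[t_0',t_1']$ really is a Jordan curve---this is how $C_0'$ is constructed, but it is worth saying.
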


\begin{proof}
Let $\mathcal{A}_1, \dots, \mathcal{A}_n$ be as above.  Since $C_0'$ ends on the outside of all strands with endpoints on $(-\infty,t_0]$ and $[t_0,t_1]$, there must be a first $\mathcal{A}_j$ that is on the the outside of all such strands.  Let $z$ be a point of $C_0'$ inside $\mathcal{A}_j$, and let $s_1$ be the last strand with endpoints on $(-\infty,t_0]$ before $\mathcal{A}_j$.

Suppose for contradiction $s$ is a strand with endpoints on $[t_1,\infty)$ and $[t_0,t_1]$ that intersects $C_0'$ before $z$.  Since $s$ only intersects $C_0'$ once, the only way it can do this is by crossing $s_1$ outside of $C_0'$, which contradicts the definition of $U$.
\end{proof}

\begin{figure}

\caption{Division of $\h$ into three regions.  The orange arrows show the general direction of harmonic continuation.  Here $C_0' = C_1 \cup C_2$ and $R_3$ is slightly larger than $\overline{U}$.} \label{fig:division}

\begin{center}
\begin{tikzpicture}

\draw[<->] (-5,0) -- (5,0);
\draw (2.5,0) arc (0:180:2.5) (-2.5,0);
\draw[->] (0,2.5) -- (0,6);

\node at (0.4,2.9) {$z$};
\node at (-2.5,-0.5) {$t_0'$};
\node at (2.5,-0.5) {$t_1'$};

\node at (0.5,5) {$C_1$};
\node at (135:3) {$C_2$};
\node at (45:3) {$C_3$};

\node at (-4,4) {$R_1$};
\node at (4,4) {$R_2$};
\node at (0,1) {$R_3$};

\draw[->,orange] (-4.5,0) arc (190:70:1.5);
\draw[->,orange] (-4.5,3) arc (120:50:3);

\draw[->,orange] (4.5,0) arc (-10:110:1.5);
\draw[->,orange] (4.5,3) arc (60:130:3);

\draw[->,orange] (135:2) -- (145:0.9);
\draw[->,orange] (45:2) -- (35:0.9);

\draw[->,orange] (-4,5) -- (-1,5);
\draw[->,orange] (4,5) -- (1,5);

\end{tikzpicture}
\end{center}

\end{figure}
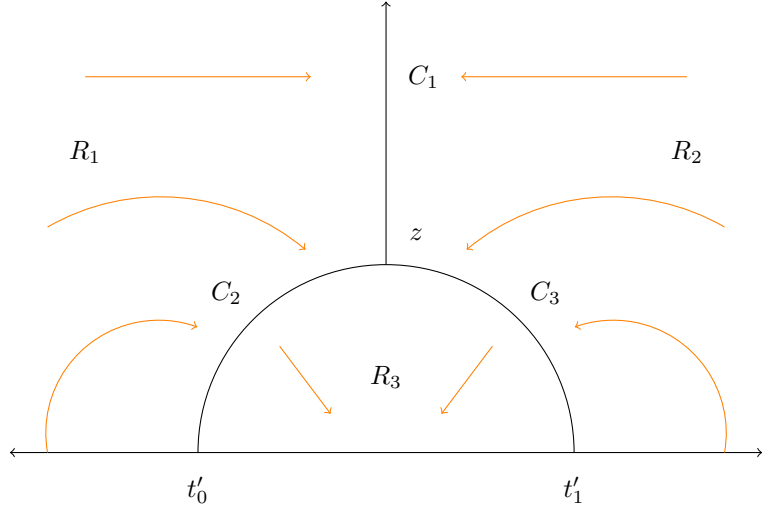

\begin{claim*}
There exists an oriented curve $C_1$ injectively parametrized by $[0,+\infty)$ such that
\begin{itemize}
	\item $C_1$ starts at $z$ and goes to complex $\infty$.
	\item $C_1$ does not contain any medial vertices.
	\item $C_1$ intersects each strand at most once.
	\item $C_1$ only crosses strands from inside to outside.
	\item $C_1$ never intersects $C_0'$ again.
\end{itemize}
\end{claim*}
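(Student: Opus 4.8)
The plan is to build $C_1$ as an \emph{outward walk} through the medial cells: starting at $z$, we march from cell to adjacent cell, each time leaving the current cell by crossing a bounding strand segment \emph{from its inside to its outside}, and we continue until we land in the unbounded face, from which we run out to complex $\infty$. The crucial feature of insisting that \emph{every} crossing be from inside to outside is that three of the five required properties then come for free: crossing a single strand segment changes our position relative to that one strand only, so each step takes us out of exactly one strand's inside and into no new strand's inside. Consequently the set of strands whose inside currently contains us only shrinks; in particular we can never return to the inside of a strand we have already left, so $C_1$ meets each strand at most once and only ever in the inside-to-outside direction. Medial vertices are isolated, so whenever we cross a bounding strand segment we simply choose a non-vertex point of it; thus $C_1$ contains no medial vertices.

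The engine of the construction is the following local fact: \textbf{every bounded medial cell $\mathcal{A}$ lies on the inside of at least one of the strands bounding it.} To see this, let $q^*$ be a point of $\overline{\mathcal{A}}$ of greatest height (which exists since $\mathcal{A}$ is bounded); then $q^*$ lies on $\partial\mathcal{A}$, hence on some strand $s$ (if $q^*$ is a medial vertex, take either of the two strands meeting there). Because $q^*$ maximizes height over $\overline{\mathcal{A}}$, the cell $\mathcal{A}$ lies locally below $s$ near $q^*$, and the region immediately below $s$ there is its inside; since $\mathcal{A}$ is connected and does not cross $s$, it lies inside $s$. Thus from any bounded cell we may exit outward across such a strand, lowering by one the number of strands whose inside contains our position. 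This nesting depth is finite at the start --- only finitely many strands enclose the fixed point $z$, by local finiteness (properness) of the arrangement --- and it strictly decreases at every step, so after finitely many crossings we reach depth zero, i.e.\ the unique unbounded face, and we finish $C_1$ by a ray to $\infty$ inside that face.

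The delicate point --- and the main obstacle --- is to guarantee that this outward walk \emph{never crosses $C_0'$ again} and stays in $\h\setminus\overline{U}$. Here we must exploit the special choices already made: $C_0'$ bounds $\overline{U}$, meets each transverse strand exactly once, and the point $z$ was placed so that every strand running from $(-\infty,t_0]$ to $[t_0,t_1]$ meets $C_0'$ before $z$ while every strand from $[t_1,+\infty)$ to $[t_0,t_1]$ meets it after $z$. Since $z$ sits just outside $\overline{U}$ and the walk only moves to strictly shallower nesting depth, it moves strictly away from $\overline{U}$; the task is to check, using the separating property of $z$ together with the fact that each transverse strand meets $C_0'$ only once, that no outward crossing can land the walk back across $C_0'$. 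I expect this bookkeeping --- reconciling the inside-to-outside order of the crossings with the left/right splitting of the transverse strands at $z$ --- to be the part requiring the most care, whereas the topmost-point lemma and the depth-decrease argument are routine.
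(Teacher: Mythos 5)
Your overall plan---an outward walk through the cells, always crossing strands from inside to outside---is exactly the paper's construction, and your observations that such a walk automatically avoids medial vertices, crosses each strand at most once, and only ever crosses inside-to-outside are correct and match the paper. But two of the three load-bearing steps have genuine gaps. The most serious is your termination claim: ``only finitely many strands enclose the fixed point $z$, by local finiteness (properness) of the arrangement.'' This is false, and it fails precisely in the infinite half-planar setting this claim exists to serve. Properness only says that every \emph{compact} set meets finitely many strands; it does not bound the number of strands enclosing $z$. For example, one may superimpose on any supercritical arrangement an infinite family of huge nested arcs, each enclosing the entire rest of the picture (these are legitimate medial strands whose black cells are isolated boundary vertices, and the arrangement stays lensless and proper); then every point is enclosed by infinitely many strands, the nesting depth at $z$ is infinite, and your walk never reaches ``depth zero.'' The paper does not claim termination at all: it allows the walk to make infinitely many crossings, and instead proves \emph{properness} of the resulting curve---the walk eventually leaves the inside of any fixed strand $s$ (because the inside of $s$ has compact closure, hence meets only finitely many strands, and an inside-to-outside walk cannot re-enter), and then any compact $K$ meets only finitely many cells, each bounded one of which lies inside some strand, so the walk eventually leaves $K$. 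Your finite-depth argument cannot be repaired into this; the properness argument is genuinely needed.

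Second, your ``engine'' lemma is true (it is the paper's dichotomy) but your topmost-point proof of it is unsound. The step ``the region immediately below $s$ there is its inside'' is wrong: it holds at the \emph{global} maximum of a strand's height, but not at other local maxima. Concretely, take a strand that rises to height $8$, runs across the top, descends to height $4$, runs left along a shelf (or a slight bump), and then descends to $\R$; the region just beneath the shelf lies \emph{outside} the strand, since the strand's high arch passes over it, and one can complete this (staying lensless) to an arrangement with a bounded cell whose highest point is on that shelf. Such a cell is outside $s$, so your argument misidentifies the enclosing strand and, as written, proves nothing. The paper's argument is different: if a cell is outside every strand bounding it, then by a connectedness argument it equals the intersection of those outsides, each of which contains a neighborhood of infinity, so the cell is unbounded. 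Finally, note that the one property you explicitly defer---that $C_1$ never meets $C_0'$ again---is not a matter of delicate bookkeeping with the separating property of $z$: since $\overline{U}$ is built from insides of strands (and triangles), entering $\overline{U}$ would force an outside-to-inside crossing, which your walk never makes; as $C_0'$ hugs $\overline{U}$ with no medial vertices in the sliver between them, the walk can simply be routed to avoid $C_0'$. The separating property of $z$ is needed only in the later claims, not here.
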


\begin{proof}
For a given medial cell $\mathcal{A}$ bounded by strands $s_1, \dots, s_n$, there are two possibilities:
\begin{enumerate}
	\item $\mathcal{A}$ is on the inside of some $s_j$.
	\item $\mathcal{A}$ is on the outside of each $s_j$.  In this case, by a connectedness argument, $\mathcal{A}$ is exactly the intersection of the outsides of the $s_j$'s, and hence is unbounded.
\end{enumerate}
We construct $C_1$ inductively cell by cell, starting at $z$.  As long as we are in a cell where (1) holds, we can continue into another cell by crossing a strand from inside to outside.  If we ever reach a cell where (2) holds, we can stay inside the cell and go to $\infty$.  Because we only ever cross strands from inside to outside, we never cross the same strand twice or enter the same medial cell twice.

We never entered $\overline{U}$ because to do that, we would have to cross from the outside to the inside of some strand (by previous Claims about $U$).  Thus, we can arrange that we never cross $C_0'$ (since $C_0'$ was defined to ``skirt the outside of'' $\overline{U}$).

Now we prove the path goes to $\infty$.  This is trivial if (2) ever occurs.

If (1) occurs infinitely many times, then I claim the path is eventually outside any given strand $s$.  The path crosses infinitely many strands from inside to outside.  However, the inside of $s$ only intersects finitely many strands, so the path cannot stay inside $s$ forever, and once it goes outside of $s$ it cannot come back inside.

Suppose $K \subset \overline{\h}$ is compact, and we will show that the path is eventually outside of $K$.  Only finitely many medial cells intersect $K$.  Let $K'$ be the union of the medial cells that intersect $K$ and satisfy (1).  Since we assumed (2) never occurs, the path never enters any unbounded cells, so it suffices to show the path is eventually outside $K'$.  But any cell of $K'$ is on the inside of some strand, and we just proved that the path $C_1$ is eventually outside every strand.
\end{proof}

\begin{claim*}
Let $C_2$ be the arc of $C_0'$ before $z$ and let $C_3$ be the arc of $C_0'$ after $z$.  Then $C_1$, $C_2$, and $C_3$ divide $\h$ into three simply connected regions homeomorphic to the disk:
\begin{itemize}
	\item $R_1$ is the region outside $C_0'$ and to the left of $C_1$.  It is bounded by $(-\infty,t_0']$, $C_1$, and $C_2$.
	\item $R_2$ is the region outside $C_0'$ and to the right of $C_2$.  It is bounded by $[t_1',+\infty)$, $C_1$, and $C_3$.
	\item $R_3$ is the region inside $C_0'$.  It is bounded by $[t_0',t_1']$, $C_2$ and $C_3$.
\end{itemize}
\end{claim*}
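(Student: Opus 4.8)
The plan is to transport the entire configuration to a closed disk by one-point compactification, where the three curves become two crosscuts, so that the statement reduces to two applications of the Jordan curve theorem.

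First I would fix a homeomorphism $F\colon \overline{\h}\cup\{\infty\} \to \overline{\D}$ carrying $\R\cup\{\infty\}$ onto $\partial\D$; such an $F$ exists because the one-point compactification of the closed half-plane is a closed disk, and concretely one may take the continuous extension of a conformal map $\h\to\D$ (a M\"obius transformation). Under $F$ the arc $C_0'$ becomes a Jordan arc whose two endpoints $F(t_0')$ and $F(t_1')$ lie on $\partial\D$ and whose interior lies in the open disk---that is, a \emph{crosscut} of $\D$---since $C_0'$ is embedded and meets $\R$ only at $t_0'$ and $t_1'$. The curve $C_1$, properly parametrized by $[0,+\infty)$ and tending to complex $\infty$, extends continuously to an arc $F(C_1)$ running from $F(z)$ to the single boundary point $F(\infty)$; properness of $C_1$ is exactly what guarantees that $F(C_1)$ reaches $\partial\D$ and touches it only at $F(\infty)$.

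Next I would apply the crosscut form of the Jordan curve theorem (equivalently, Jordan--Schoenflies) twice. The crosscut $F(C_0')$ cuts $\D$ into exactly two Jordan subdomains: one, which I call $R_3$, is bounded by $F(C_0')=F(C_2)\cup F(C_3)$ together with the boundary arc $F([t_0',t_1'])$, and the other, $R'$, is bounded by $F(C_0')$ together with the complementary boundary arc $F\bigl((-\infty,t_0']\cup[t_1',+\infty)\cup\{\infty\}\bigr)$. I then must verify that $F(C_1)$ is a crosscut of $R'$: its endpoint $F(z)$ lies on $F(C_0')\subset\partial R'$, its endpoint $F(\infty)$ lies on the complementary arc $\subset\partial R'$, and its interior lies in $R'$. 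The last point is where the earlier Claims are used: $C_1\setminus\{z\}$ never meets $C_0'$ again and never enters $\overline{U}$, and since it runs to $\infty$ through the open half-plane it never meets $[t_0',t_1']$; being connected and starting just outside $\overline{R_3}$, it therefore stays in $R'$. Hence $F(C_1)$ is a crosscut of $R'$ and splits it into two Jordan subdomains $R_1$ and $R_2$.

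Finally I would read off the boundaries from the cyclic positions of $F(z)$ and $F(\infty)$ on $\partial R'$. The point $F(z)$ separates $F(C_0')$ into $F(C_2)$ (before $z$, ending at $F(t_0')$) and $F(C_3)$ (after $z$, ending at $F(t_1')$), while $F(\infty)$ separates the outer boundary arc into $F((-\infty,t_0'])$ and $F([t_1',+\infty))$. Thus one component, $R_1$, is bounded by $F(C_2)$, $F((-\infty,t_0'])$, and $F(C_1)$, and the other, $R_2$, by $F(C_3)$, $F([t_1',+\infty))$, and $F(C_1)$, matching the boundary data in the statement (the ``$C_2$'' there in the verbal description of $R_2$ being a slip for $C_1$). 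Pulling back by $F^{-1}$ returns the three regions in $\h$; each is the image of a Jordan subdomain of $\D$, hence homeomorphic to the open disk by Schoenflies, with $R_1$ and $R_2$ unbounded only because their closures pass through the compactification point. The main obstacle is the bookkeeping at $\infty$: justifying that the proper ray $C_1$ genuinely compactifies to a crosscut ending at $F(\infty)$ and that it lands in $R'$ rather than re-entering $R_3$---which is precisely what the preceding Claims about $C_1$ and $U$ were arranged to provide.
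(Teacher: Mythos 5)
Your proof is correct and is essentially the paper's own argument: the paper disposes of this Claim in a single line (``Use the Jordan curve theorem and conformal equivalence of the half-plane and disk''), and your proposal simply carries that plan out in detail---compactifying $\h$ to a closed disk, realizing $C_0'$ and then $C_1$ as crosscuts (using properness of $C_1$ and the earlier Claims to keep its interior in the outer region), and applying the Jordan curve/Schoenflies theorem twice. Your side remark that ``to the right of $C_2$'' in the description of $R_2$ should read ``to the right of $C_1$'' is also a correct reading of the intended statement.
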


\begin{proof}
Use the Jordan curve theorem and conformal equivalence of the half-plane and disk.
\end{proof}

\begin{claim*}
Let $G_1, G_2, G_3$ be the $\partial$-subgraph partition of $G$ induced by the division of $\h$ into $R_1$, $R_2$, and $R_3$, and let $\mathcal{M}_1, \mathcal{M}_2, \mathcal{M}_3$ be the corresponding medial strand arrangements.  Then
\begin{itemize}
	\item Any strand of $\mathcal{M}_1$ either has both endpoints on $(-\infty,t_0']$ or one endpoint on $(-\infty,t_0']$ and one on $C_2 \cup C_1$.
	\item Any strand of $\mathcal{M}_2$ either has both endpoints on $[t_1',+\infty)$ or one endpoint on $[t_1',+\infty)$ and one on $C_3 \cup C_1$.
\end{itemize}
\end{claim*}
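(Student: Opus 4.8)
The plan is to classify the strands of $\mathcal{M}$ according to where their two endpoints sit on $\R$ — both in $(-\infty,t_0]$, both in $[t_1,+\infty)$, both in $[t_0,t_1]$, one in $(-\infty,t_0]$ and one in $[t_0,t_1]$ (left-transverse), one in $[t_0,t_1]$ and one in $[t_1,+\infty)$ (right-transverse), or one in $(-\infty,t_0]$ and one in $[t_1,+\infty)$ (over-the-top) — and, for each type, to determine exactly which segments it contributes to $R_1$. A strand of $\mathcal{M}_1$ is a connected component of an original strand intersected with $\overline{R_1}$, and its endpoints necessarily lie on $\partial R_1 = (-\infty,t_0']\cup C_2\cup C_1$. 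By left-right symmetry the statement for $\mathcal{M}_2$ follows from that for $\mathcal{M}_1$, so I would only treat $\mathcal{M}_1$. Because no strand has an endpoint in $(t_0',t_0]$, every real endpoint lying in $\overline{R_1}$ actually lies in $(-\infty,t_0']$; hence the assertion to prove is precisely that \emph{no} segment of $\mathcal{M}_1$ has both endpoints on $C_1\cup C_2$, i.e.\ every segment reaches down to $(-\infty,t_0']$.

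The tools are the three facts already established: (i) $C_1$ meets each strand at most once and only crosses it from inside to outside; (ii) $C_0'$ meets each transverse strand exactly once, with the left-transverse crossings lying on $C_2$ (before $z$) and the right-transverse ones on $C_3$ (after $z$); and (iii) $R_3$, the inside of $C_0'$, contains $\overline U$ and no medial vertices outside $\overline U$. First I would dispose of the easy types. A segment cannot have both endpoints on $C_1$, since a strand meets $C_1$ at most once. A $[t_0,t_1]$-reentrant strand has its inside contained in $U\subset R_3$, so the whole strand lies in $\overline{R_3}$ and contributes nothing to the interior of $R_1$. A left-transverse strand begins at its endpoint $a\in(-\infty,t_0']$, runs up into $R_1$, and crosses $C_0'$ exactly once, on $C_2$; since it can meet $C_2\subset C_0'$ only that once, its unique $R_1$-segment runs from $a$ to that $C_2$-point, giving one endpoint on $(-\infty,t_0']$ and one on $C_2$. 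A right-transverse strand, crossing $C_0'$ only on $C_3$ and having both endpoints at real positions $\ge t_0$, does not reach $R_1$.

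The substance of the proof lies in the two remaining types, the $(-\infty,t_0]$-reentrant strands and the over-the-top strands, for which there is \emph{no} a priori bound on the number of times the strand meets $C_2$. Here I would rule out a \emph{floating} segment — one entering $R_1$ through $C_2$ from $R_3$ and leaving through $C_1$ (or re-entering $R_3$), so that both endpoints lie on $C_1\cup C_2$. The idea is to combine the at-most-once, inside-to-outside behavior of $C_1$ with the enclosure property (iii): since $C_1$ runs from $z\in C_0'$ out to $\infty$ and crosses each strand once in the fixed direction, and since every medial vertex inside $C_0'$ already belongs to $\overline U$, a strand that dips from $R_3$ into $R_1$ must, in its left-to-right orientation, cross $C_0'$ consistently outward; following the strand monotonically in this orientation then shows it cannot both enter $R_1$ across $C_2$ and exit across $C_1$ without having already used its single $C_1$-crossing or violated the one-crossing behavior of $C_0'$ on the relevant arc.

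I expect this orientation-bookkeeping across $C_1$ and $C_2$ — in particular, pinning down the \emph{crossing direction} of a strand across $C_0'$ and pairing it with the crossing direction across $C_1$ — to be the main obstacle; the reentrant and over-the-top strands are exactly the ones not covered by the ``exactly once'' guarantee, so the argument must be geometric rather than a crossing count alone. Once the crossing directions are fixed, a parity argument (each $R_1$-segment begins and ends on $\partial R_1$, and the only admissible incidences are the ones listed in the claim) closes the remaining cases, and the mirror-image bookkeeping, replacing $(-\infty,t_0']$, $C_2$ by $[t_1',+\infty)$, $C_3$, yields the statement for $\mathcal{M}_2$.
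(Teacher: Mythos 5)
Your setup is sound: the reduction of the claim to ``no segment of a strand in $\overline{R_1}$ has both endpoints on $C_1 \cup C_2$'' is correct, and your disposal of the easy types (both endpoints on $C_1$; $[t_0,t_1]$-reentrant; left- and right-transverse strands) uses the same crossing-count facts as the paper and works. You also correctly identify the $(-\infty,t_0]$-reentrant and over-the-top strands as the cases not controlled by the ``exactly once'' property of $C_0'$ --- indeed, the paper's own casework silently omits the side-reentrant strands, so you have spotted a real issue. But at precisely this point your proposal stops being a proof: the ``orientation bookkeeping'' is never carried out, and the one concrete assertion you make --- that a strand dipping from $R_3$ into $R_1$ must ``cross $C_0'$ consistently outward'' in its left-to-right orientation --- cannot be right as stated, since consecutive transversal crossings of a Jordan arc necessarily alternate sides. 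No argument is actually given for why a strand cannot enter $R_1$ across $C_2$ and leave across $C_1$, and that is the entire content of the hard case.

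What is needed (and what the paper supplies for the over-the-top case) is not a crossing-direction count but a trapping argument using the insides of individual strands together with lenslessness: if an over-the-top strand $s$ crosses $C_0'$, it enters $\overline{U}$, hence enters a triangle of $U$ and so lies inside the right-transverse strand $s'$ bounding that triangle; since $z$ is outside every transverse strand and $C_1$ crosses strands only from inside to outside, $C_1$ lies entirely outside $s'$, and one concludes that $s$ never meets $C_1$ and meets each of $C_2$ and $C_3$ exactly once. For the side-reentrant strands the stated properties of $C_0'$ are genuinely insufficient, and one must return to its construction: $C_0'$ crosses strands only along the walls separating consecutive cells adjacent to $C_0$, and every such wall belongs to a strand that reaches $\partial U$; a $(-\infty,t_0]$-reentrant strand can never reach $\partial U$, because both its endpoints lie to the left of $t_0'$, so by the Jordan curve theorem and lenslessness it can cross neither a $[t_0,t_1]$-reentrant strand nor a right-transverse strand, and entering a triangle of $U$ across a left-transverse strand would trap it there. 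Hence such a strand never crosses $C_0'$, crosses $C_1$ at most once, and so --- by parity, both endpoints lying in $\overline{R_1}$ --- never leaves $\overline{R_1}$ at all. Without these two arguments your proposal establishes only the cases that were already easy.
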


\begin{proof}
Consider a medial strand $s$ from the original medial strand arrangement $\mathcal{M}$.
\begin{itemize}
	\item If $s$ is $[t_0,t_1]$-reentrant since then it would is entirely contained in $\overline{U} \subset R_3$, so there is nothing to prove.
	\item Suppose $s$ has one endpoint on $(-\infty,t_0] \cup [t_1,+\infty)$ and one on $[t_0,t_1]$.  Then it crosses $C_0'$ exactly once.  Since $C_1$ only crosses strands from inside to outside and it starts outside $s$, we know $s$ never crosses $C_1$, so we are done.
	\item Suppose $s$ has one endpoint on $(-\infty,t_0]$ and one on $[t_1,+\infty)$, and that it never crosses $C_0'$.  Then we are done since $C_1$ intersects each strand at most once.
	\item Suppose $s$ has one endpoint on $(-\infty,t_0]$ and one on $[t_1,+\infty)$, and that it crosses $C_0'$ at some time.  Orient $s$ to start on $(-\infty,t_0]$ and end on $[t_1,+\infty)$.  Note $s$ cannot intersect a $[t_0,t_1]$-reentrant strand.  Thus, once $s$ enters $\overline{U}$, it must be inside one of the triangles in the definition of $U$, hence it has gone to the inside of a strand $s'$ with one endpoint on $[t_0,t_1]$ and one endpoint on $[t_1,+\infty)$.  Since $C_1$ is outside of $s'$, $s$ can never intersect $C_1$ after this point.  But by a symmetrical argument, $s$ can never intersect $C_1$ \emph{before exiting} $\overline{U}$.  Thus, it can never intersect $C_1$ at all.
	
	Furthermore, if $s$ crosses $C_2$ and hence enters $\overline{U}$, it is inside $s'$ and hence remains outside $R_1$ and never crosses $C_2$ again.  Thus, $s$ can must cross $C_2$ exactly once and $C_3$ exactly once by symmetry.
\end{itemize}
\end{proof}

\textbf{Construction of Scaffold:}  The scaffold will be defined so that the direction of harmonic continuation is roughly as follows:
\begin{itemize}
	\item In $G_1$, it will go from $(-\infty,t_0']$ to $C_2 \cup C_1$.
	\item In $G_2$, it will go from $[t_1',+\infty)$ to $C_3 \cup C_1$.
	\item In $G_3$, it will go from $C_2 \cup C_3$ to $[t_0',t_1']$.
\end{itemize}
We will define scaffolds on $G_1$, $G_2$, and $G_3$, then paste them together.

\begin{claim*}
Let $\mathcal{O}_1$ be the orientation of $\mathcal{M}_1$ defined as follows:
\begin{itemize}
	\item A $(-\infty,t_0']$-reentrant strand is oriented from right to left.
	\item A strand with one endpoint on $(-\infty,t_0']$ and one on $[t_0',t_1']$ is oriented to start on $(-\infty,t_0']$.
	\item The boundary is oriented counterclockwise except for a small interval near $t_0'$.
\end{itemize}
Then $\mathcal{O}_1$ defines a scaffold $S_1$ on $G_1$.
\end{claim*}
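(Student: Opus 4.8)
The plan is to deduce the Claim from Lemma~\ref{lem:orientationscaffold}, which already packages the work of turning an oriented medial strand arrangement into a scaffold with every edge in the Middle. Thus it suffices to check its three hypotheses for $\mathcal{O}_1$ on $\mathcal{M}_1$: (a) that $\mathcal{M}_1$ is acyclic, (b) that there is no negatively oriented path admitting a proper parametrization by $[0,+\infty)$, and (c) that $\mathcal{O}_1$ has the Desired Behavior at each black cell. Two preliminary observations set this up. First, $\mathcal{M}_1$ is lensless, being the restriction to $R_1$ of the lensless arrangement $\mathcal{M}$. Second, by the region-decomposition Claim $R_1$ is simply connected and conformally equivalent to $\D$; fixing such a map $F\colon R_1 \to \D$, the orientation $\mathcal{O}_1$ corresponds to an $\mathcal{O}_\theta$-type orientation whose cut point (the one place where the boundary orientation reverses) is the small interval near $t_0'$. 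This lets me import the disk results proved earlier.

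For (a) and (c) I would run the same reductions used for the disk. Any positively oriented loop meets only finitely many strands; since in the supercritical setting every strand is compact and the arrangement is locally finite, I may delete all other strands and apply Lemma~\ref{lem:diskacyclic} to the resulting finite lensless arrangement in $\D$, proving acyclicity and hence that $\prec_{\mathcal{O}_1}$ is a genuine partial order. For (c) the Desired Behavior is a condition checked one black cell at a time; each such cell is compact and bordered by finitely many strand segments, so the cell-by-cell computation in the proof of Lemma~\ref{lem:diskDB} goes through unchanged and exhibits the splitting of $\partial \mathcal{A}_p$ into a positively and a negatively oriented arc.

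The one genuinely new ingredient, and the step I expect to be the main obstacle, is condition (b): ruling out a negatively oriented proper path to infinity, a feature with no analogue in the compact disk. The idea is to exploit supercriticality, namely that every strand begins and ends on $\R$, so each strand is compact and only finitely many strands meet any compact set. Consequently a negatively oriented proper path---equivalently an infinite strictly decreasing chain of medial vertices---would have to traverse infinitely many distinct strands whose crossings escape every compact set, i.e.\ run out toward complex infinity. But under $F$ the increasing direction of $\prec_{\mathcal{O}_1}$ flows away from the cut near $t_0'$, and in particular the boundary arc $C_1$, being oriented counterclockwise, is increasing as it runs out to $\infty$; combined with compactness of the individual strands this should confine decreasing chains to compact subsets of $R_1$ and prevent escape. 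Making this precise will require a monovariant (for instance, tracking the leftmost real endpoint of the strand being crossed) together with a reduction to finitely many strands in the spirit of the acyclicity argument, and this is the technical heart of the proof.

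Once (a), (b), and (c) are in hand, Lemma~\ref{lem:orientationscaffold} immediately yields that $T_{\mathcal{O}_1}$ is a scaffold $S_1$ on $G_1$, with every edge in $\Mid S_1$, which is exactly the assertion of the Claim.
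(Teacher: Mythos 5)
Your reduction to Lemma \ref{lem:orientationscaffold}, and your handling of acyclicity and the Desired Behavior by transferring to the disk results (Lemmas \ref{lem:diskacyclic} and \ref{lem:diskDB}) via an identification of $R_1$ with a disk cut at $t_0'$, is exactly what the paper does; the observation that any loop or any single cell involves only finitely many strands is an adequate way to make that transfer. The problem is condition (b), which you yourself flag as unfinished. That is not a routine technicality: it is the entire new content of the Claim, the one place where the disk lemmas cannot be quoted, and your sketch of it aims at the wrong enemy. You picture a decreasing chain escaping toward complex infinity and hope that the counterclockwise orientation of $C_1$ plus compactness of individual strands confines it. But a decreasing path can never return to $C_1 \cup C_2$ after leaving it (no strand of $\mathcal{M}_1$ \emph{starts} there under $\mathcal{O}_1$, and the reversed interval blocks passage through $t_0'$), so $C_1$ is not where the danger lies. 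The dangerous escape route is along the ray $(-\infty,t_0']$: a decreasing path runs backwards along a strand to its starting point on the ray, slides leftward, climbs backwards up a reentrant strand entered at its left endpoint, and repeats. Since $\mathcal{M}_1$ contains infinitely many compact strands marching off to $-\infty$, compactness of each individual strand rules out nothing, and there is no a priori reduction to finitely many strands because the chain may use infinitely many of them. Your proposed monovariant (``the leftmost real endpoint of the strand being crossed'') is not monotone under these moves, so it does not repair this.

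What the paper actually does here, and what your proposal is missing, is a one-shot trapping (barrier) argument rather than a monovariant. Given a decreasing path, let $Z$ be the set of strands it uses and let $s_0 \in Z$ be the strand whose endpoint on the real line is closest to $t_0'$; this maximum exists because strand endpoints form a discrete subset of $(-\infty,t_0']$. Because the arrangement is lensless and every strand of $Z$ has its real endpoints no closer to $t_0'$ than that of $s_0$, the Jordan-curve argument of Lemma \ref{lem:diskacyclic} shows that every crossing of $s_0$ by a strand of $Z$ goes, in the positive direction, from the inside of $s_0$ to the outside; hence every backwards crossing made by the decreasing path moves it \emph{into} the region inside $s_0$ and never out. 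So once the path reaches $s_0$ it is trapped in the closure of that region. Supercriticality enters exactly here: $s_0$ comes from a strand with both ends on $\R$, so this closure is compact, and properness of the strand arrangement then gives only finitely many medial vertices inside it, forcing the path to terminate and verifying the partial well-order condition. Until you supply this barrier argument (or a genuine substitute), your proof is incomplete at its central step.
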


\begin{proof}
Let $F: \h \to \D$ be a conformal map and let $e^{i\theta} = F(t_0')$.  The orientation $\mathcal{O}_1$ matches $\mathcal{O}_\theta$ on the disk, and hence is acyclic.  The same argument shows that the medial cells have the Desired Behavior.

To show that every subset has a minimal element, it suffices to show that any descreasing path of medial strand segments must terminate.  Let $C$ be any such path, and let $Z$ be the set of strands used in the path.  Let $s_0$ be the strand with the endpoint closest to $t_0'$ on the real line.  Then no strand can cross from the right (outside) of $s$ to the left (inside) of $s$.  Hence, once the decreasing path reaches $s$, it remains trapped in the closure of the region inside $s$, which contains only finitely many medial vertices.  Hence, the path must terminate.
\end{proof}

\begin{claim*}
Symmetrically, Let $\mathcal{O}_2$ be the orientation of $\mathcal{M}_2$ defined as follows:
\begin{itemize}
	\item A $[t_1',+\infty)$-reentrant strand is oriented from right to left.
	\item A strand with one endpoint on $[t_1',+\infty)$ and one on $[t_0',t_1']$ is oriented to start on $[t_1',+\infty)$.
	\item The boundary is oriented counterclockwise except for a small interval near $t_1'$.
\end{itemize}
Then $\mathcal{O}_2$ defines a scaffold $S_2$ on $G_2$.
\end{claim*}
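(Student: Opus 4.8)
The plan is to verify the three hypotheses of Lemma \ref{lem:orientationscaffold} for $\mathcal{O}_2$ on the medial strand arrangement $\mathcal{M}_2$ in the region $R_2$, mirroring the argument just given for $\mathcal{O}_1$ on $G_1$ under the reflection that interchanges $(-\infty,t_0']$ with $[t_1',+\infty)$ and $t_0'$ with $t_1'$. A previous claim established that $R_2$ is simply connected and homeomorphic to the disk, and that every strand of $\mathcal{M}_2$ has at least one endpoint on $[t_1',+\infty)$ (being either $[t_1',+\infty)$-reentrant or running from $[t_1',+\infty)$ to $C_3 \cup C_1$), so the disk machinery of \S\ref{subsec:circularscaf} applies after a conformal change of coordinates.

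First I would fix a conformal map $F$ carrying $R_2$ onto $\D$ and sending the distinguished boundary point near $t_1'$ to a cut point $e^{i\theta} = F(t_1')$. Under $F$, the three orientation rules defining $\mathcal{O}_2$ become precisely the rules defining $\mathcal{O}_\theta$: a $[t_1',+\infty)$-reentrant strand oriented right-to-left, and a transverse strand oriented to start on $[t_1',+\infty)$, both correspond to the $\mathcal{O}_\theta$-orientation determined by the cyclic position of a strand's endpoints relative to the cut, while the boundary convention (counterclockwise except on a small arc near $t_1'$) matches the clockwise interval $I$. Acyclicity of $\mathcal{O}_2$ then follows from Lemma \ref{lem:diskacyclic}, and the Desired Behavior at every medial black cell follows from Lemma \ref{lem:diskDB}. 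This disposes of hypotheses (a) and (c).

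For hypothesis (b) --- no negatively oriented (decreasing) path properly parametrized by $[0,+\infty)$ --- I would reproduce the trapping argument, reflected. Let $s_0$ be the strand whose real endpoint lies closest to $t_1'$ on the $[t_1',+\infty)$ side. By the lensless hypothesis, no strand can cross $s_0$ into its inside, so once a decreasing path reaches $s_0$ it is confined to the closure of the bounded component cut off by $s_0$, which contains only finitely many medial vertices; hence the path terminates, and (together with acyclicity) every subset of $E / \bar{~}$ has a minimal element. With (a), (b), (c) in hand, Lemma \ref{lem:orientationscaffold} yields that $T_{\mathcal{O}_2} = S_2$ is a scaffold on $G_2$ with every edge in its Middle.

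The main obstacle I anticipate is bookkeeping rather than conceptual: I must check that the reflection defining $\mathcal{O}_2$ genuinely matches the conventions of $\mathcal{O}_\theta$ (in particular that ``right-to-left'' for the reentrant strands and ``start on $[t_1',+\infty)$'' for the transverse ones agree with the positive orientation induced by the cut point once $\partial R_2$ is oriented), and that the trapping strand $s_0$ exists and isolates a region with only finitely many medial vertices despite the possibly disconnected and degenerate embedding. Both points are settled by the same reasoning as in the $\mathcal{O}_1$ case and by the structural claim identifying the strands of $\mathcal{M}_2$, so no genuinely new difficulty arises.
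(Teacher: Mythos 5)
Your overall plan is the one the paper intends: the paper itself offers no separate argument for $\mathcal{O}_2$ (it is dismissed with the word ``Symmetrically''), and the proof it mirrors is exactly the one you outline --- identify the region with $\D$, match the orientation with $\mathcal{O}_\theta$ so that Lemmas \ref{lem:diskacyclic} and \ref{lem:diskDB} give hypotheses (a) and (c) of Lemma \ref{lem:orientationscaffold}, and dispose of hypothesis (b) by the trapping argument with the strand nearest $t_1'$.

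The genuine gap is precisely in the ``bookkeeping'' you defer, and it cannot be settled by ``the same reasoning as in the $\mathcal{O}_1$ case.'' Take $F\colon R_2 \to \D$ orientation-preserving with $e^{i\theta} = F(t_1')$. Traversing $\partial R_2$ counterclockwise from $t_1'$ one meets $[t_1',+\infty)$ in \emph{increasing} order, then $C_1$ from $\infty$ to $z$, then $C_3$ from $z$ back to $t_1'$; hence the pullback of $\mathcal{O}_\theta$ orients a $[t_1',+\infty)$-reentrant strand with endpoints $a<b$ from $a$ to $b$, i.e.\ \emph{left to right}, contradicting the first bullet of $\mathcal{O}_2$. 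An orientation-reversing $F$ repairs the reentrant strands but then orients the transverse strands to \emph{end} on $[t_1',+\infty)$ and reverses the boundary, contradicting the other two bullets. No placement of the cut can repair this, because the three bullets are mutually inconsistent as literally stated: a reentrant strand oriented from $b$ to $a$ together with the counterclockwise boundary arc from $a$ to $b$ is a positively oriented loop, and as soon as any strand crosses that reentrant strand (a transverse strand starting in $(a,b)$ must), the relation $\prec_{\mathcal{O}_2}$ has a cycle, so hypothesis (a) of Lemma \ref{lem:orientationscaffold} fails for the literal $\mathcal{O}_2$. (The paper's $\mathcal{O}_1$ bullets have the same defect; the flaw is in the paper's statement, which is why mirroring it cannot save you.) Two honest repairs: (i) prove the claim for the orientation your $F$ actually produces --- reentrant strands left to right, transverse strands starting on $[t_1',+\infty)$, boundary counterclockwise except near $t_1'$ --- which is a genuine $\mathcal{O}_\theta$-pullback and has everything the later patching argument uses, namely that $S_2$ is a scaffold with all edges in its Middle and that no oriented strand exits $C_3 \cup C_1$; or (ii) keep the stated strand orientations and observe that $T_{\mathcal{O}}$ depends only on the strand orientations, not on the boundary orientation, so it suffices to realize them as the pullback of $\mathcal{O}_\theta$ under an orientation-reversing identification with the cut at the boundary corner at $\infty$. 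In either case note one more point you gloss over: $\mathcal{M}_2$ may contain infinitely many strands, so its image in $\D$ is not a strand arrangement (properness fails) and Lemma \ref{lem:diskacyclic}, proved by induction on the number of strands, does not apply verbatim; you need the reduction used in the paper's first claim about $U$, that any positively oriented cycle (and any single cell) involves only finitely many strands.
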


To construct a scaffold on $G_3$, note there is a homeomorphism $F: R_3 \to \D$ (by corollaries of the Jordan curve theorem), and the homeomorphism extends to the closures.  In particular, $G_3$ is circular planar with no lenses in the medial strands.  Let $S_3$ be the scaffold obtained by pulling back $\mathcal{O}_\theta$ through $F$, where $\theta$ is chosen with $e^{i\theta} = F(t_1')$.  This is chosen so that all the strands with one endpoint on $C_2 \cup C_3$ and one on $[t_0',t_1']$ are oriented from $C_2 \cup C_3$ to $[t_0',t_1']$.

Now $S = S_1 \cup S_2 \cup S_3$ is not necessarily a scaffold on $G$.  But observe that
\begin{itemize}
	\item Any interior vertex of $G_1$ or $G_2$ or $G_3$ is both an input and an output of the edges in $S$.
	\item If $p$ is a vertex of $G_1$ whose cell touches $C_1 \cup C_2$, then $p$ cannot be an input of an edge in $S_1$ since there are no oriented strands exiting $C_1 \cup C_2$. A symmetrical claim holds for $G_2$.
	\item If $p$ a vertex of $G_1$ which is interior in $G$ and its medial cell touches $C_1 \cup C_2$, then its medial cell does not touch $\R$, and this vertex must be an output of an edge in $S_1$.  A symmetrical claim holds for $G_2$.
	\item Similarly, any vertex $p$ in $G_3$ which is interior in $G$ and touches $C_2 \cup C_3$ must be an input of $S_3$.  Any vertex which touches $C_2 \cup C_3$ cannot be an output of edges in $S_3$ since there are no strands in $\mathcal{M}_3$ entering $C_2 \cup C_3$.
\end{itemize}
We define $S'$ as $S$ minus the edges in $e \in E(S_2)$ such that the medial cell of $e_+$ in $\mathcal{M}_2$ touches $C_1 \cup C_2$.  Now every interior vertex of $G$ is the output of some edge in $S$, but any vertex is the output of at most one edge in $S$ and the input of at most one edge in $S$.

\begin{claim*}
$S$ is a scaffold.
\end{claim*}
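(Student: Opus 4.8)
The plan is to verify that the modified set $S'$ is a scaffold; it is $S'$ rather than $S=S_1\cup S_2\cup S_3$ that we want, since the text already notes $S$ need not be one. We check the three conditions in the definition of a scaffold, reusing that $S_1,S_2,S_3$ are scaffolds on $G_1,G_2,G_3$ and the four bulleted observations recorded just above the claim. First, $S'\cap\overline{S'}=\varnothing$ is immediate, because the edge sets of a subgraph partition are disjoint and each $S_i$ satisfies $S_i\cap\overline{S_i}=\varnothing$. Condition (1), that $e\mapsto e_+$ and $e\mapsto e_-$ are injective on $S'$, is exactly the sentence recorded before the claim, and this is where the modification is essential: a vertex shared along $C_2$ is an output of $S_1$ and an input of $S_3$, a vertex shared along $C_3$ is an output of $S_2$ and an input of $S_3$ (one of each in both cases), while a vertex shared along $C_1$ would be an output of \emph{both} $S_1$ and $S_2$; deleting the offending $S_2$-edges leaves it an output of $S_1$ only.

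The engine of the remaining argument is a one-way crossing structure, which I would establish from the observations: with respect to $S'$, an increasing path crosses $C_2$ only in the direction $R_1\to R_3$ and crosses $C_3$ only in the direction $R_2\to R_3$, and never crosses $C_1$ at all. Indeed a $C_2$-vertex is an output of $S_1$ and an input of $S_3$, so the flow passes from $R_1$ into $R_3$ but not back; symmetrically for $C_3$; and after the modification a $C_1$-vertex is the head of an $S_1$-edge but the tail of none, so it is a terminal sink and $C_1$ is impassable. Hence every increasing path runs inside $R_1$ or $R_2$, crosses once into $R_3$, and thereafter stays in $R_3$. This immediately yields acyclicity: an increasing cycle would either remain in one region (each $S_i$ is acyclic) or cross a cut curve and return, which the one-way structure forbids. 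Consequently $\prec$, defined by $e\prec e'$ iff there is an increasing $S'$-path from $e$ to $e'$, is a strict partial order. For condition (2), an infinite decreasing path reverses the crossings, so once it leaves $R_3$ into $R_1$ or $R_2$ it can never return and is eventually confined to a single region, contradicting the termination of decreasing paths proved in the regional claims; this is condition (B) of Lemma \ref{lem:scafequivalentdef}.

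For condition (3), its trivial-path instance, that every interior vertex of $G$ lies in $S'_+\cup S'_-$, is precisely the recorded observation that every interior vertex of $G$ is an output of $S'$. For a genuine increasing path from an interior $p$ to an interior $y$, I would split the path along its (at most one) region crossing and invoke the input/output alternative of the relevant regional scaffold, matching outputs to inputs at the shared boundary vertices via observations (3) and (4). The delicate point is a vertex that is interior in $G$ yet lies on a cut curve: observations (3) and (4) pin down its input/output status (output of $S_1$ or $S_2$ on $C_1\cup C_2$, input of $S_3$ on $C_2\cup C_3$) and thereby close the alternative globally.

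The main obstacle I anticipate is the partial-well-order/termination step in the unbounded regions $R_1,R_2$ and along the infinite curve $C_1$, where one must rule out infinite decreasing paths despite noncompactness. I expect to dispatch this exactly as in the regional scaffold claims: the one-way crossing structure confines a decreasing path to a single region once it has left $R_3$, and then the trapping argument — the strand whose real endpoint is closest to $t_0'$ (resp.\ $t_1'$) bounds a subregion meeting only finitely many medial vertices — forces the path to terminate. This same monotonicity of $S'$ toward the bounded region $R_3$ is what will afterward give finiteness of $\{e:e\succ e_0\}$.
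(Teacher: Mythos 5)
Your overall strategy (verify the scaffold axioms directly via a crossing analysis across the cut curves) is different from the paper's, but the engine of your argument is false as stated. You claim that after the modification a $C_1$-vertex "is the head of an $S_1$-edge but the tail of none, so it is a terminal sink and $C_1$ is impassable." This misreads the definition of an increasing path: leaving a vertex does not require an outgoing scaffold edge. After arriving along a ladder, an increasing path may exit along any \emph{bridge} (an edge not in $S' \cup \overline{S'}$), and only the edge after that must again be a ladder. At a $C_1$-vertex $p$, the incoming $S_1$-edge remains in $S'$, while the deleted $S_2$-edge $e$ with $e_+ = p$ (and its reverse) are now bridges; hence the path consisting of the $S_1$-ladder into $p$ followed by $\overline{e}$ is increasing and crosses $C_1$ from $R_1$ into $R_2$. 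So increasing paths \emph{can} cross $C_1$, and can make two region crossings ($R_1 \to R_2 \to R_3$), contradicting both your "never crosses $C_1$" and "at most one crossing" assertions. The true crossing relation is $R_1 \to R_2$, $R_1 \to R_3$, $R_2 \to R_3$, which is still acyclic, so your conclusions (no increasing cycles; eventual confinement of decreasing paths to one region) survive, but not by the argument you gave.

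There is a second gap in the confinement step for condition (2). Even granting that an infinite decreasing path for $S'$ is eventually confined to one region, confinement to $R_2$ does not immediately yield an infinite decreasing path for $S_2$: the deleted $S_2$-edges are bridges for $S'$ but elements of $S_2$, so an $S'$-decreasing path may traverse them while an $S_2$-decreasing path may not. You need the additional check that a confined decreasing path cannot in fact use a deleted edge $e$: traversing $e$ forward forces the next edge to be the reverse of the $S_1$-ladder at $e_+$, which exits $R_2$; traversing $\overline{e}$ requires the preceding edge to be a reversed ladder ending at $e_+ \in C_1$, and none exists, so $\overline{e}$ can occur only as the initial edge of the tail. (The analogous check is needed for increasing cycles confined to $R_2$.) For comparison, the paper sidesteps all of this by invoking Lemma \ref{lem:scafequivalentdef}: take the partial order that restricts to the three regional orders and declares $E(G_1) \prec E(G_2) \prec E(G_3)$; the local comparison and partial well-order conditions are routine casework region by region, and the input/output alternative is trivial because every interior vertex of $G$ lies in $S'_+$ --- which, incidentally, also makes the extra work you propose for condition (3) unnecessary.
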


\begin{proof}
We can use the partial order $\prec$ defined by using the partial orders associated to $S_1$, $S_2$, and $S_3$, and declaring that edges in $G_1$ are less than edges in $G_2$ which are less than edges in $G_3$.  The local comparison and partial well-order conditions of Lemma \ref{lem:scafequivalentdef} are verified by casework.  The input-output alternative is trivial since every interior vertex is the output of some edge in $S$.
\end{proof}

\begin{claim*}
$E(G_3) \subset \Mid S$.
\end{claim*}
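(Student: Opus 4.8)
The plan is to exploit two facts already assembled during the construction of $S$: that every interior vertex of $G$ is an output of some edge in $S$, and that the sub-scaffold $S_3$ survives intact in the final scaffold so that every interior-in-$G$ vertex of $G_3$ is an input. The first fact gives $\End S = \varnothing$ immediately: by definition an element lies in $\End S$ only if some decreasing path reaches an interior vertex that is \emph{not} an output, and no such interior vertex exists. Hence $\Mid S = (V \sqcup E) \setminus \Beg S$, and it suffices to prove that no edge of $G_3$ lies in $\Beg S$.

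So I would fix $e \in E(G_3)$ and suppose for contradiction that $e \in \Beg S$, i.e.\ there is an increasing path beginning with $e$ and ending at an interior vertex $q \notin S_-$. Let $f$ be the final edge of this path, so that $f_+ = q$. The first step is to observe that $e \preceq f$ in the partial order $\prec$ induced by $S$, and that (by the remark following Lemma~\ref{lem:scafequivalentdef}) this induced order is contained in the declared order used to verify that $S$ is a scaffold, in which every edge of $G_1$ and $G_2$ precedes every edge of $G_3$. Since $e \in E(G_3)$ is maximal for this stratification, $f$ cannot lie in $G_1$ or $G_2$; thus $f \in E(G_3)$ and therefore $q = f_+ \in V(G_3)$.

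The key step is then to argue that every vertex of $G_3$ that is interior in $G$ is an input of $S$. If $q \in V^\circ(G_3)$, it is both an input and an output of $S_3$. If $q$ is a boundary vertex of $G_3$ but interior in $G$, then its medial cell cannot meet $\R$, so it must touch $C_2 \cup C_3$; by the bullet point in the construction, such a vertex is an input of $S_3$. Since forming the final scaffold only removed certain edges of $S_2$ and left $S_3$ untouched, $q$ remains an input of $S$, i.e.\ $q \in S_-$. This contradicts $q \notin S_-$, so no such increasing path can exist and $e \notin \Beg S$. Combined with $\End S = \varnothing$, this yields $e \in \Mid S$.

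I expect the only delicate point to be controlling how ``downstream'' information propagates across the three pasted scaffolds: one must rule out that an increasing path launched inside $G_3$ doubles back into $G_1$ or $G_2$. This is exactly what the stratified partial order delivers — the induced order refines the declared order in which $G_3$ sits on top — and, paired with the fact that all interior-in-$G$ vertices of $G_3$ are inputs, it pins the terminal vertex $q$ inside $G_3$, where no non-input target is available.
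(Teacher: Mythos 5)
Your proof is correct and takes essentially the same approach as the paper's: both dispose of $\End S$ via the fact that every interior vertex of $G$ is an output of the final scaffold, and both exclude $\Beg S$ by combining the stratified partial order (edges of $G_1$ and $G_2$ precede those of $G_3$) with the fact that every vertex of $G_3$ that is interior in $G$ is an input. The only difference is organizational: you pin the terminal edge $f$ of the increasing path inside $G_3$ and contradict $q \notin S_-$ there, whereas the paper argues contrapositively that any edge incident to a vertex of $V^\circ(G) \setminus S_-$ lies in $G_1$ or $G_2$ and is therefore $\prec$ every edge of $G_3$.
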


\begin{proof}
Any interior vertex of $G$ which is in $G_3$ is the input of some edge in $S$.  Thus, any edge incident to a vertex in $V^\circ(G) \setminus S_-$ is in $G_1$ or $G_2$ and hence is $\prec$ the edges in $G_3$ by definition of our partial order.
\end{proof}

\textbf{Proof of Theorem \ref{thm:supercritical}:}  Choose an edge $e_0$.  By choosing $t_0$ and $t_1$ correctly and constructing a scaffold $S$ as above, we can arrange that the medial vertex of $e_0$ is on a $[t_0,t_1]$-reentrant strand, hence in $U$ and hence in $G_3$.  We can arrange $e_0$ is either in $S \cup \overline{S}$ or not in $S \cup \overline{S}$ as desired.  Thus, the Theorem follows from the previous claim.

\section{The Rank-Connection Principle} \label{sec:characterization}

One can show using Lemma 4.1 of \cite{CIM} or the determinantal formulas of \cite{RF} that the rank-connection principle holds for any finite network for \emph{generic} edge weights in any algebraically closed field.  Here we want to understand what happens in the \emph{general} rather than the generic case.  We will show that $\rank X([\Gamma,i,j]) \leq m([\Gamma,i,j])$ always, and give a geometric characterization of morphisms $[G,i,j]$ such that the rank-connection principle holds for \emph{all} edge weights over any field, using a slight generalization of elementary factorizations.

\subsection{Completely Reducible $\partial$-Graphs} \label{subsec:completelyreducible}

Our task is even more subtle than it might first appear, since it turns out that a harmonic function is not necessarily uniquely determined by its boundary data.  Even if the $\partial$-graph is connected, there can be degenerate edge weights for which some nonzero harmonic functions have zero potential and zero net current on the boundary.  As stated, the rank-connection principle pertains to the boundary behavior rather than the space of harmonic functions; we want to know how the input data on $P$ and output data on $Q$ are related, even though we might have no control over the values of $u$ on the interior between them!

For one example of nonzero harmonic functions with zero boundary data, consider the ``triangle-in-triangle'' network with boundary vertices $\{1,\dots,6\}$ and interior vertices $\{7,8,9\}$ and edges with coefficients $w(e)$ shown in the figure.  (This $\partial$-graph was considered in \cite{LP}.)  The matrix of $\Delta$ is
\[
\begin{pmatrix}
0 & 0 & 0 & 0 & 0 & 0 & -1 & 0 & 1 \\
0 & 0 & 0 & 0 & 0 & 0 & 1 & -1 & 0 \\
0 & 0 & 0 & 0 & 0 & 0 & 0 & 1 & -1 \\
0 & 0 & 0 & 0 & 0 & 0 & -1 & 0 & 1 \\
0 & 0 & 0 & 0 & 0 & 0 & 1 & -1 & 0 \\
0 & 0 & 0 & 0 & 0 & 0 & 0 & 1 & -1 \\
-1 & 1 & 0 & -1 & 1 & 0 & 0 & 0 & 0 \\
0 & -1 & 1 & 0 & -1 & 1 & 0 & 0 & 0 \\
1 & 0 & -1 & 1 & 0 & -1 & 0 & 0 & 0
\end{pmatrix}.
\]
Let $e_p$ be the vector with $1$ on vertex $p$ and zero elsewhere.  Then $e_7 + e_8 + e_9$ is a harmonic potential which is zero on the boundary and also has net current zero at each boundary vertex.

\begin{figure}
\caption{Singular edge weights on the triangle-in-triangle network.  Boundary vertices are colored in.  Vertices are labelled with their index.  Edges are labelled with their conductance.}

	\begin{center}
		\begin{tikzpicture}
			\node[circle,fill] (1) at (90:5) [label = above: $1$] {};
			\node[circle,fill] (2) at (210:5) [label = left: $2$] {};
			\node[circle,fill] (3) at (330:5) [label = right: $3$] {};
			\node[circle,fill] (4) at (90:2) [label = below: $4$] {};
			\node[circle,fill] (5) at (210:2) [label = left: $5$] {};
			\node[circle,fill] (6) at (330:2) [label = right: $6$] {};
			\node[circle,draw] (7) at (150:4) [label = left: $7$] {};
			\node[circle,draw] (8) at (270:4) [label = below: $8$] {};
			\node[circle,draw] (9) at (30:4) [label = right: $9$] {};
			
			\draw (7) to node[auto,swap] {$1$} (2) to node[auto,swap] {$-1$} (8) to node[auto,swap] {$1$} (3) to node[auto,swap] {$-1$} (9) to node[auto,swap] {$1$} (1) to node[auto,swap] {$-1$} (7) to node[auto] {$1$} (5) to node[auto] {$-1$} (8) to node[auto] {$1$} (6) to node[auto] {$-1$} (9) to node[auto] {$1$} (4) to node[auto] {$-1$} (7);
		\end{tikzpicture}
	\end{center}
\end{figure}
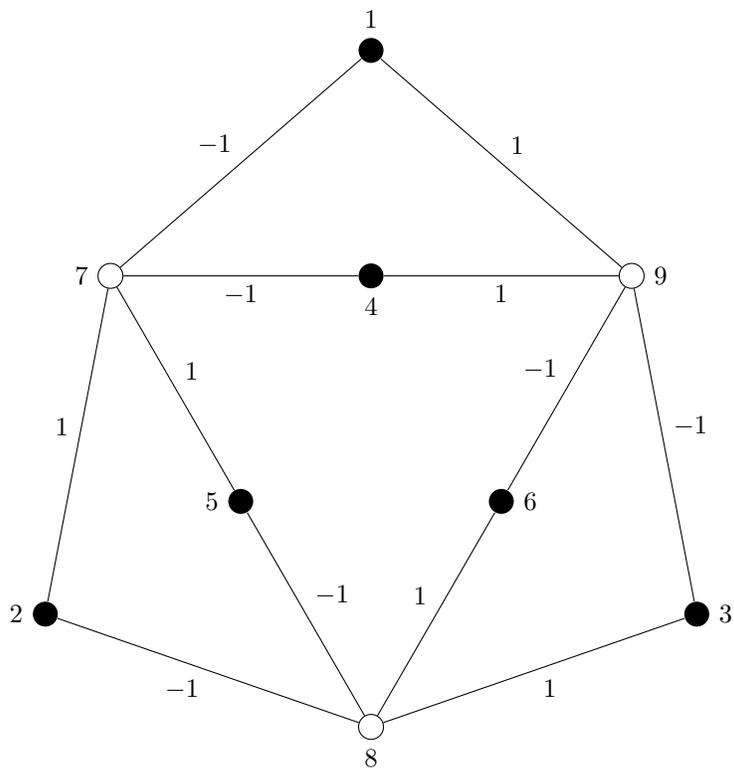

As a warm-up to handling the rank-connection principle, we will first give a geometric characterization of $\partial$-graphs for which a harmonic function is always uniquely determined by its boundary data.  This proposition was proved by Avi Levy and the author in \cite{torsion}.  The necessity of using boundary wedge-sums in the characterization was pointed out by Will Dana, Austin Stromme, and Collin Literell, students at the University of Washington REU 2015.

\begin{definition}
A finite $\partial$-graph is {\bf completely reducible} if it can be reduced to the empty graph by layer-stripping operations and splitting apart boundary wedge-sums.  In other words, completely reducible $\partial$-graphs are the smallest class of finite $\partial$-graphs which is closed under adding isolated boundary vertices, adding boundary edges, adding boundary spikes, and joining two graphs together into a boundary wedge-sum.  
\end{definition}

\begin{definition}
We define $\mathcal{U}_0(\Gamma) = \{u \in \mathcal{U}(\Gamma): u|_{\partial V} = 0, \Delta u_{\partial V} = 0\}$.
\end{definition}

\begin{proposition} \label{prop:completelyreducible}
A finite $\partial$-graph $G$ is completely reducible if and only $\mathcal{U}_0(\Gamma) = 0$ for all nonzero edge weights over any field.  In fact, for any given infinite field $\F$, $G$ is completely reducible if and only if $\mathcal{U}_0(\Gamma) = 0$ for all nonzero edge weights in $\F$.
\end{proposition}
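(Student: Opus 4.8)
\emph{Plan.} I would prove the two implications separately, treating the forward direction (complete reducibility $\Rightarrow \mathcal{U}_0 = 0$) by structural induction and the converse by strong induction on the size of $G$, with all the real content concentrated in a single lemma about irreducible graphs. For the forward direction, recall that the completely reducible $\partial$-graphs are generated from the empty graph by (i) adding isolated boundary vertices, (ii) adding boundary edges, (iii) adding boundary spikes, and (iv) forming boundary wedge-sums. I would show that each of these four operations preserves the property ``$\mathcal{U}_0(\Gamma) = 0$ for all nonzero weights over every field,'' which, being true for the empty graph, gives the claim by induction on the construction. Concretely, given $u \in \mathcal{U}_0$ of the larger graph, one checks it restricts to an element of $\mathcal{U}_0$ of the smaller piece(s): for a spike with boundary endpoint $p$ and interior endpoint $q$, the condition $\Delta u(p)=0$ together with $u(p)=0$ forces $u(q)=0$, so $u$ restricts to $\mathcal{U}_0$ of the contracted graph; for a boundary edge the zero boundary potential makes the edge carry no current, so currents are unchanged; for isolated vertices there is nothing to do. The wedge-sum case is the only one using finiteness: since the net currents of any potential on a finite network sum to zero, applying this to each side $\Gamma_i$ (whose boundary currents away from the wedge point $p$ already vanish) forces $\Delta u(p)=0$ \emph{within each} $\Gamma_i$, so $u$ restricts to $\mathcal{U}_0(\Gamma_i)$ on both sides. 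By induction each restriction is zero, hence $u = 0$.

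For the converse I would argue the contrapositive via strong induction on $|V(G)| + |E(G)|$: if $\mathcal{U}_0(\Gamma)=0$ for all nonzero weights (in the prescribed infinite field, or in every field), then $G$ is completely reducible. The same four restriction arguments as above, run in reverse, show that a nonzero element of $\mathcal{U}_0$ of a smaller piece \emph{lifts} to a nonzero element of $\mathcal{U}_0$ of the larger graph (extend by the forced value on a spike endpoint, or by $0$ across a boundary edge, a new isolated vertex, or the opposite side of a wedge-sum). Hence if $G$ admits \emph{any} reduction operation, the hypothesis $\mathcal{U}_0 = 0$ descends to the strictly smaller reduced graph(s), which are completely reducible by the inductive hypothesis, so $G$ is too. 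The only remaining case is that $G$ is nonempty but \emph{irreducible}: it has no isolated boundary vertices, no boundary edges, no boundary spikes, and is not a nontrivial boundary wedge-sum. Everything then reduces to the following key lemma, whose conclusion contradicts the hypothesis and so rules out this case.

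\emph{Key Lemma.} A nonempty irreducible finite $\partial$-graph $H$ admits nonzero edge weights (over any infinite field) with $\mathcal{U}_0(H) \neq 0$. The construction I would use is a circulation argument. Writing the current $f(e) = -w(e)\,du(e)$, one computes $\Delta u(p) = -\sum_{e:\,e_+=p} f(e)$, so a potential $u$ with $u|_{\partial V}=0$ lies in $\mathcal{U}_0(H)$ exactly when $f$ is a divergence-free circulation; conversely, given a potential $u$ and a nowhere-zero circulation supported on $\{e : du(e)\neq 0\}$, one recovers nonzero weights by $w(e) = -f(e)/du(e)$ (consistent with $w(e)=w(\overline e)$). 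The strategy is thus to choose $u$ so that its ``support'' $\{du \neq 0\}$ is bridgeless (every edge on a cycle), put a nowhere-zero circulation there, and leave the weights on the bridges free. The structural facts I would extract from irreducibility are: every boundary vertex has degree $\geq 2$ with all incident edges running to the interior (no isolated vertices, spikes, or boundary edges); and \emph{every bridge has both endpoints interior} --- for if a bridge had a boundary endpoint $a$, then $a$ (being degree $\geq 2$) would be a cut vertex, exhibiting $H$ as a nontrivial boundary wedge-sum at $a$, a contradiction. Consequently no boundary vertex is a bridge endpoint, so each ``bridge-block'' (class of vertices joined by bridges) of size $\geq 2$ is entirely interior, and contracting all bridges yields a bridgeless multigraph $\widehat H$ whose edges are precisely the non-bridge edges of $H$. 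I would then take $u$ constant on each bridge-block, equal to $0$ on the (singleton) bridge-blocks meeting $\partial V$, and generic elsewhere; since every non-bridge edge has an interior endpoint whose bridge-block is boundary-free, each such edge sees two distinct bridge-blocks at least one of whose values is free, so over an infinite field a generic such $u$ is nonzero and satisfies $du \neq 0$ on all non-bridge edges while $du = 0$ on bridges. A nowhere-zero circulation on the bridgeless $\widehat H$ (a generic element of its cycle space) then completes the construction.

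The main obstacle, and the heart of the proof, is precisely the bridge analysis in the Key Lemma: a naive choice of $u$ with distinct interior values fails because bridges cannot carry a circulation, and the fix depends on the nontrivial observation that irreducibility (specifically the absence of a boundary cut vertex) forces all bridges into the interior, which is what lets the boundary-vanishing potential coexist with a nowhere-zero circulation on the bridgeless core. Degenerate sub-cases I would dispatch quickly: if $H$ is a forest then all edges are bridges hence all vertices are interior, and the nonzero constants already lie in $\mathcal{U}_0$; a disconnected $H$ is handled one nontrivial component at a time, with arbitrary nonzero weights elsewhere. Finally, the discrepancy between ``over any field'' and ``over a fixed infinite field'' is reconciled cleanly: the forward direction holds over every field, while the witness produced above lives in any prescribed infinite field, so failure of complete reducibility already violates the infinite-field hypothesis and a fortiori the all-fields hypothesis.
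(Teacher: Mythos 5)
Your proposal is correct and follows essentially the same route as the paper: the forward direction is the same structural induction over the four closure operations, and your Key Lemma is precisely the paper's construction --- choose the potential first (zero on bridge-blocks touching the boundary, generic nonzero values on the purely interior ones), then build nonzero weights from a generic nowhere-zero circulation on the cycle edges, with the crucial structural input in both cases being that the absence of boundary wedge-sums forces every edge at a boundary vertex into a cycle (your ``bridges have interior endpoints''). The remaining differences are presentational: you run the converse as a strong induction with lifting through each operation where the paper reduces in one step to an irreducible harmonic subgraph and extends the witness by zero, and you phrase the weights in circulation/cycle-space language where the paper sums explicit oriented-cycle weight functions $w_j$ with generic coefficients $\alpha_j$.
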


\begin{proof}
To prove the forward direction, it suffices to show that the class of $\partial$-graphs for which $\mathcal{U}_0(\Gamma)$ is always zero is closed under the operations of adding spikes, boundary edges, and isolated boundary vertices, and under boundary wedge-sums.  This follows from similar reasoning as in Lemma \ref{lem:wedgesumrecovery} and Lemma \ref{lem:reductionelectrical}.

Now consider the converse implication.  Let $\F$ be an infinite field and suppose $G$ is not completely reducible.  By applying layer-stripping operations and breaking apart boundary wedge-sums and connected components we can find a harmonic subgraph $H$ which has no boundary spikes, boundary edges, or isolated boundary vertices, and cannot be written as a boundary wedge-sum or disjoint union.  It suffices to find edge weights on $H$ which will make $\mathcal{U}_0$ nonzero, since any harmonic function with zero potential and current on a subgraph can be extended to be zero on the larger network.

We can assume without loss of generality that there are no self-looping edges since these edges have no effect on $\Delta$ or on complete reducibility (they can always be removed as boundary edges once the endpoint is a boundary vertex).

Our strategy will be to choose a potential function $u$ first with $u|_{\partial V} = 0$, and \emph{then} choose edge weights such that $\Delta u \equiv 0$.  Let $S$ be the set of edges in $H$ that are contained in a cycle.  Define $u$ to be zero on any component of $H \setminus S$ that contains boundary vertices of $H$, and assign $u$ a distinct, nonzero value on each of the other components.  It follows from the definition of $S$ that any edge in $S$ must have endpoints in distinct components of $H \setminus S$.

To guarantee that $u$ does not vanish on any interior vertex, it suffices to show that any edge $e$ with endpoints $e_- = p \in \partial V$ and $e_+ = q \in V^\circ$ must be contained in a cycle.  By hypothesis, $e$ is not a self-loop or boundary spike. Thus there is some edge $e' \neq e$ with $e_-' = e_-$.  If $e_+' = e_+$, then $e$ and $e'$ form a two-cycle.  Otherwise, let $r = e_+' \neq q$.  Since $G$ is not a boundary wedge-sum, deleting $p$ leaves $G$ connected. Thus, there is a path from $q$ to $r$ avoiding $p$.  Hence, there is a cycle containing $e$ and $e'$. Consequently, $u$ is nonzero on all the interior vertices.

Now we choose the edge weights.  Choose oriented cycles $C_1, \dots, C_k$ such that $S = \bigcup_{j=1}^k C_j \cup \overline{C}_j$.  For each $j$, define
  \[
    w_j(e) = w_j(\overline{e}) = \begin{cases}
      1/du(e), \text{ for } e \in C_j \\
      0, \text{ for } e \not \in C_j \cup \overline{C}_j.
    \end{cases}
  \]
Then $w_j(e) du(e)$ is $1$ on $C_j$ and $-1$ on $\overline{C}_j$ and vanishes elsewhere.  Thus, $\Delta_{w_j} u = 0$.  For each $e \in S$, there is a weight function $w_j$ with $w_j(e) \neq 0$.  Since $\F$ is infinite and the graph is finite, we may choose $\alpha_j \in F$ such that $\sum_{j=1}^k \alpha_j w_j(e) \neq 0$ for all $e \in S$ simultaneously.

Set $w = 1_S+\sum_{j=1}^k \alpha_j w_j$.  Then $w(e) \neq 0$ for each $e$.  Since $\Delta_{w_j} u = 0$ for each $j$ and $du(e) = 0$ when $e \not \in S$, we have $\Delta_w u = 0$ by linearity.  Thus, $\mathcal{U}_0(H,w) \neq 0$ as desired.
\end{proof}

\subsection{A Max-Flow Min-Cut Principle}

One ingredient in our rank-connection theorem is the following result, which is also of interest in its own right.  The number $m(P,Q)$ can be thought of as the ``maximum flow'' from $P$ to $Q$, although our setup is different than the standard max flow problem in that the flow \emph{through each vertex} is limited rather than just the flow through each edge.  The correct analogue of ``minimum cut'' can be phrased in terms of factorizations in the IO-graph category.

\begin{proposition}[Max-Flow Min-Cut] \label{prop:maxflowmincut}
Let $[G,i,j]: P \to Q$.  Then $m([G,i,j])$ is the minimum value of $|R|$ such that $[G,i,j]$ factors as the composition of two morphisms $P \to R$ and $R \to Q$.
\end{proposition}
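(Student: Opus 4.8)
The plan is to set up a precise dictionary between factorizations of $[G,i,j]$ and vertex separators of $i(P)$ from $j(Q)$, and then to invoke the vertex form of Menger's theorem. First I would unpack what a factorization $[G,i,j] = [G_2,i_2,j_2]\circ[G_1,i_1,j_1]$ through an object $R$ says about $G$. By definition of composition the underlying graph is obtained by gluing $G_1$ and $G_2$ along the $R$-labelled vertices, so (after identifying $G^*$ with $G$ via the isomorphism witnessing the equality of morphisms) we obtain a partition $V(G) = A \sqcup \tilde R \sqcup B$, where $\tilde R = j_1(R) = i_2(R)$ has $|\tilde R| = |R|$, $A$ consists of the remaining vertices of $G_1$, and $B$ of those of $G_2$. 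Since the edge set of $G$ is the disjoint union of the edge sets of $G_1$ and $G_2$, there are no edges between $A$ and $B$; and because $i$ factors through $G_1$ and $j$ through $G_2$, we have $i(P) \subseteq A \cup \tilde R$ and $j(Q) \subseteq B \cup \tilde R$. Conversely any such partition data reassembles into a factorization. Thus factorizations through $R$ correspond exactly to separating vertex sets $\tilde R$ of size $|R|$.

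For the inequality $\min_R |R| \geq m([G,i,j])$ I would argue directly and elementarily. Given a factorization as above and any connection (a family of vertex-disjoint $i(P)$--$j(Q)$ paths), each path must meet $\tilde R$: following it from its endpoint in $i(P) \subseteq A \cup \tilde R$, if it never entered $\tilde R$ it would remain in $A$ (there are no $A$--$B$ edges), contradicting that it ends in $j(Q) \subseteq B \cup \tilde R$, a set disjoint from $A$. Trivial paths at vertices of $i(P) \cap j(Q) \subseteq \tilde R$ are met automatically. Since the paths are vertex-disjoint they meet $\tilde R$ in distinct vertices, whence $|R| = |\tilde R| \geq m([G,i,j])$, and taking the maximum over connections gives the claim.

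For the reverse inequality I would invoke the set-to-set vertex version of Menger's theorem, which produces a separator $S$ of $i(P)$ from $j(Q)$ with $|S| = m([G,i,j])$. I then build a factorization through $R = S$ by letting $A$ be the set of vertices reachable from $i(P) \setminus S$ in the graph obtained from $G$ by deleting the vertices of $S$, and $B = V(G) \setminus S \setminus A$. A short argument shows $j(Q) \setminus S \subseteq B$ (a path from $i(P)$ to $j(Q)$ avoiding $S$ would, after truncating at the last $i(P)$-vertex and the first subsequent $j(Q)$-vertex, yield an honest $i(P)$--$j(Q)$ path avoiding $S$, contradicting separation) and that there are no $A$--$B$ edges (such an edge would extend reachability into $B$). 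Assigning each edge of $G$ to $G_1$ if it meets $A$ and to $G_2$ if it meets $B$, with edges and self-loops internal to $S$ placed on either side, and taking $G_1$ on the vertex set $A \cup S$ and $G_2$ on $B \cup S$, produces well-defined IO-graph morphisms $P \to S$ and $S \to Q$ whose composition reproduces $[G,i,j]$, giving $\min_R |R| \leq m([G,i,j])$.

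The main obstacle is bookkeeping rather than depth: I must carefully treat the overlap $i(P) \cap j(Q)$ (which is forced simultaneously into the separator and into $\tilde R$), the edges and self-loops with both endpoints in $S$ (which may be assigned to either side), and the requirement that the labelling functions $i_k, j_k$ remain injective and that the glued composite equal $[G,i,j]$ on the nose. The only genuinely non-elementary input is Menger's theorem (equivalently, a vertex-splitting reduction to edge max-flow min-cut); every remaining step is a verification that the reachability partition satisfies the three conditions identified in the first paragraph.
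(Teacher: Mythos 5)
Your proof is correct, but it takes a genuinely different route from the paper's. The paper proves the proposition from scratch by induction on the number of edges: it deletes an edge $e$, applies the inductive hypothesis to obtain a factorization of $G \setminus e$ through some $R$ with $|R| = m_{G\setminus e}(P,Q)$, and then does a case analysis on where the endpoints of $e$ sit relative to the two factors, invoking the inductive hypothesis a second time on one factor in the hardest case. In effect, the paper reproves a Menger-type theorem inside the IO-graph framework and never cites the classical result. You instead set up an explicit dictionary---factorizations of $[G,i,j]$ through $R$ correspond to partitions $V(G) = A \sqcup \tilde{R} \sqcup B$ with no $A$--$B$ edges, $i(P) \subseteq A \cup \tilde{R}$, $j(Q) \subseteq B \cup \tilde{R}$, hence to vertex separators of $i(P)$ from $j(Q)$---and then quote the set-to-set vertex form of Menger's theorem. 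Your version is shorter, more modular, and conceptually clarifying: it identifies the minimum width of a categorical factorization as the classical connectivity invariant, something the paper's induction leaves implicit. What the paper's approach buys is self-containedness (consistent with its stated aim of elementary, self-contained arguments) and a proof that works entirely in the language of IO-morphisms without the translation step. Your handling of the genuine pitfalls---the truncation argument showing reachability of $j(Q)\setminus S$ would contradict separation, the forced inclusion $i(P) \cap j(Q) \subseteq \tilde{R}$, and the arbitrary assignment of edges internal to the separator---is exactly the bookkeeping needed to make the dictionary rigorous, and Menger's theorem applies without issue to finite multigraphs with self-loops since neither parallel edges nor loops affect vertex-disjoint paths or separators.
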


\begin{proof}
To simplify notation, we assume that $P$, $Q$, and $R$ are literally subsets of $V(G)$ rather than merely labels, and write $[G]: P \to Q$ rather than $[G,i,j]: P \to Q$.  We will also write graphs without specifying boundary vertices, with the understanding that if we write $[G]: P \to Q$, then we are treating $P \cup Q$ as the set of boundary vertices.

Let $n$ be the minimum value of $|R|$ and let $m$ be the maximum size connection.  It is clear that $m \leq n$ since for any factorization into morphisms $P \to R$ and $R \to Q$, any path from $P$ to $Q$ must pass through $R$.

We prove the reverse equality by induction on the number of edges.  If there are zero edges, then the maximum connection is taken by using the trivial paths from $P \cap Q$ to $P \cap Q$.  On the other hand, the IO-graph morphism can be factored into maps $P \to P \cap Q \to Q$.

Suppose $G$ has at least one edge.  Choose an edge $e$ with endpoints $p$ and $q$ and let $H$ be obtained from $G$ by deleting $e$ without changing the vertex set.  By the induction hypothesis $[H]: P \to Q$ has a factorization into two subgraphs
\[
[H_1]: P \to R, \quad [H_2]: R \to Q
\]
such that $|R| = m_H(P,Q)$.

Suppose $p$ and $q$ are both in $V(H_1)$ or both in $V(H_2)$.  By symmetry, we can assume without loss of generality $p, q \in V(H_1)$.  Then we have a factorization of $G$ into
\[
[H_1 \cup e]: P \to R, \quad [H_2]: R \to Q.
\]
Any maximum size connection from $P$ to $Q$ in $H$ is also a connection in $G$.  Since there is a connection of size $|R|$ and a factorization through $R$, we have $m \geq n$ as desired.

On the other hand, if $p$ and $q$ are not both in $V(H_1)$ or $V(H_2)$, then by symmetry, we can assume $p \in V(H_1) \setminus V(H_2)$ and $q \in V(H_2) \setminus V(H_1)$.   There is a connection in $H$ of size $|R|$, and this connection must restrict to a connection from $P$ to $R$ in $H_1$ which uses all the vertices of $R$.  Thus, $m_{H_1}(P, R \cup \{p\}) \geq |R|$, so it is either $|R|$ or $|R| + 1 = |R \cup \{p\}|$.  Assume that
\[
m_{H_1}(P, R \cup \{p\}) = |R| + 1 = m_{H_2}(R \cup \{q\}, Q).
\]
Then there is a connection in $H_1$ from $P$ to $R \cup \{p\}$ that uses all the vertices of $R \cup \{p\}$ and a connection in $H_2$ from $R \cup \{q\}$ to $Q$ that uses all the vertices of $R \cup \{q\}$.  Joining these connections together with the edge $e$ from $p$ to $q$ provides a connection of size $|R| + 1$ in $G$ from $P$ to $Q$.  On the other hand, we can factorize $[G]: P \to Q$ as
\[
[H_1 \cup e]: P \to R \cup \{q\}, \quad [H_2]: R \cup \{q\} \to Q.
\]
Therefore, $m \geq |R| + 1 \geq n$.

The only case that remains is when $p \in V(H_1) \setminus V(H_2)$ and $q \in V(H_2) \setminus V(H_1)$ and either $m_{H_1}(P, R \cup \{p\})$ or $m_{H_2}(R \cup \{q\}, Q)$ equals $|R|$ rather than $|R| + 1$.  Assume that $m_{H_1}(P, R \cup \{p\}) = |R|$ since the other case is symmetrical.  Then since $H_1$ has fewer edges, the induction hypothesis yields a factorization of $[H_1]: P \to R \cup \{p\}$ into
\[
[H_1']: P \to P' \qquad [H_1'']: P' \to R \cup \{p\}
\]
such that $|P'| = |R|$.  Then we may factorize $[G]: P \to Q$ into
\[
[H_1']: P \to P' \qquad [H_2 \cup e] \circ [H_1'']: P' \to R \cup \{p\} \to Q.
\]
This implies that $m \geq |R| = |P'| \geq n$, so we are done.
\end{proof}

The max-flow min-cut principle allows us to prove the following version of the rank-connection principle, which works for all finite networks, but only yields an inequality:
\begin{proposition}[Rank-Connection Principle 2]
Let $[\Gamma,i,j]: P \to Q$.   Then
\[
\rank X([\Gamma,i,j]) \leq 2 m([\Gamma,i,j]).
\]
\end{proposition}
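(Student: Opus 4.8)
The plan is to combine the Max-Flow Min-Cut principle (Proposition \ref{prop:maxflowmincut}) with the functoriality of $X$ and the rank inequality for compositions (Lemma \ref{lem:rankinequality}). Set $m = m([\Gamma,i,j])$. By Proposition \ref{prop:maxflowmincut}, the underlying IO-graph morphism factors as $[G,i,j] = [G_2,i_2,j_2] \circ [G_1,i_1,j_1]$ through an intermediate set $R$ of size $|R| = m$. Since composition in the IO-graph category merely partitions the edges between the two factors (gluing along $R$ creates no new edges and destroys none), I can equip each edge of $G_1$ and $G_2$ with exactly the weight it carries in $\Gamma$. This promotes the graph factorization to a factorization of networks $[\Gamma,i,j] = [\Gamma_2,i_2,j_2] \circ [\Gamma_1,i_1,j_1]$, with $[\Gamma_1,i_1,j_1]\colon P \to R$.

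Applying the functor $X$ gives $X([\Gamma,i,j]) = X([\Gamma_2,i_2,j_2]) \circ X([\Gamma_1,i_1,j_1])$, so Lemma \ref{lem:rankinequality} yields $\rank X([\Gamma,i,j]) \le \rank X([\Gamma_1,i_1,j_1])$. Now $X([\Gamma_1,i_1,j_1])$ is a linear relation from $(\F^P)^2$ to $(\F^R)^2$. For any linear relation $T\colon V \rightsquigarrow W$ one has $\rank T = \dim \im T - \dim \ker \overline{T} \le \dim \im T \le \dim W$; applying this with $W = (\F^R)^2$ gives $\rank X([\Gamma_1,i_1,j_1]) \le \dim (\F^R)^2 = 2|R| = 2m$. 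Chaining the two inequalities produces $\rank X([\Gamma,i,j]) \le 2m = 2 m([\Gamma,i,j])$, which is exactly the claim.

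The substantive content of this argument lives entirely in the Max-Flow Min-Cut principle, which is already established above; once a minimal cut $R$ is in hand, the remaining steps are purely formal applications of the rank inequality and the crude bound $\rank T \le \dim W$. The only point deserving a moment's care is the promotion of the graph factorization to a network factorization, and this is harmless precisely because gluing along $R$ transfers the edge weights unambiguously. I therefore anticipate no genuine obstacle: this inequality is the ``easy half,'' and it is the matching reverse bound (i.e.\ equality) that genuinely requires the stronger hypothesis of an \emph{elementary} factorization, as in Theorem \ref{thm:RC1}, rather than an arbitrary minimal cut.
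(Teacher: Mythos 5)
Your proof is correct and is essentially the same as the paper's: both invoke Proposition \ref{prop:maxflowmincut} to factor through a cut $R$ with $|R| = m([\Gamma,i,j])$, bound the rank of the first factor by $\dim(\F^R)^2 = 2|R|$, and finish with Lemma \ref{lem:rankinequality}. Your explicit remark about promoting the graph factorization to a network factorization is a detail the paper leaves implicit, but it does not change the argument.
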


\begin{proof}
By Proposition \ref{prop:maxflowmincut}, we can write $[\Gamma,i,j] = [\Gamma_2,i_2,j_2] \circ [\Gamma_1,i_1,j_1]: P \to R \to Q$.  Then
\[
\rank X([\Gamma_1,i_1,j_1]) \leq \dim (\F^R)^2 = 2|R| = 2 m([\Gamma,i,j]).
\]
Applying Lemma \ref{lem:rankinequality} to $X([\Gamma_2,i_2,j_2]) \circ X([\Gamma_1,i_1,j_1])$ completes the proof.
\end{proof}

\subsection{Semi-Elementary Factorizations} \label{subsec:semielementary}

We will now give a geometric characterization of when the rank-connection principle holds for all edge weights.  Inspired by Proposition \ref{prop:completelyreducible}, we want to extend the framework of elementary factorization to incorporate boundary wedge sums.  We define two types of ``semi-elementary'' networks which roughly correspond to attaching components to the input side or to the output side by boundary wedge-sums or disjoint unions.

\begin{definition}
A morphism $[G,i,j]: P \to Q$ is called {\bf type 3'} if every output vertex is also an input and any two outputs are in separate components of the graph.  {\bf Type 4'} is defined the same way with inputs and outputs switched.  Note that type 3' includes type 3 and type 4' includes type 4.
\end{definition}

\begin{definition}
A {\bf semi-elementary factorization} of $[G,i,j]: P \to Q$ is a factorization into morphisms of types 1, 2, 3', 4' such that the type 3' morphisms come before the type 4'.  The {\bf width} of a factorization into morphisms $P_k \to P_{k+1}$ is $\min_k |P_k|$.
\end{definition}

\begin{theorem}[Rank-Connection Principle 3] \label{thm:RC3}
Let $[G,i,j]: P \to Q$.  The following are equivalent:
\begin{enumerate}
	\item $[G,i,j]$ admits a semi-elementary factorization.
	\item For any network $\Gamma$ on $G$, we have $\rank X([\Gamma,i,j]) = 2 m([\Gamma,i,j])$.
\end{enumerate}
\end{theorem}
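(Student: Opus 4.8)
The plan is to prove the two implications separately, handling $(1) \implies (2)$ by the template of Theorem~\ref{thm:RC1} and reducing the harder converse to a one-sided statement via max-flow min-cut.

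For $(1) \implies (2)$ I would first compute $X$ on the two new morphism types. I claim $X$ of a type 3' morphism is an epimorphism and $X$ of a type 4' morphism is a monomorphism. For type 3', since every output is an input, $y_1$ is forced to equal the restriction of $x_1$; and since each component is finite, the net currents sum to zero on each component (the argument of Lemma~\ref{lem:wedgesumrecovery}), which forces $\ker \overline{X} = 0$. Surjectivity onto $(\F^Q)^2$ holds because the constant function equal to $y_1(q)$ on each component realizes any prescribed output potential (outputs lie in separate components, so there is no conflict), while the freely chosen net-current data on the input-only vertices absorbs any prescribed $y_2$; by Lemma~\ref{lem:relmorphisms} this makes $X$ an epimorphism, and type 4' is the mirror image. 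Since the type 3' factors precede the type 4' factors, I would split the factorization at the narrowest object $P_\ell$, writing $X$ as a monomorphism composed with an epimorphism, so that Lemma~\ref{lem:relmorphisms2} gives $\rank X = \dim(\F^{P_\ell})^2 = 2\,(\text{width})$. Finally I would identify the width with $m([\Gamma,i,j])$: the bound $m \le \text{width}$ is immediate from Proposition~\ref{prop:maxflowmincut}, and $m \ge \text{width}$ follows exactly as in Lemma~\ref{lem:connectionwidth} by concatenating connections through the factors, using that an output of a type 3' morphism is literally an input vertex and hence contributes a trivial path.

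The core of $(2) \implies (1)$ is a reduction to a one-sided problem. Using Proposition~\ref{prop:maxflowmincut} I would fix a minimal cut, i.e.\ a factorization $[\Gamma,i,j] = [\Gamma_2,i_2,j_2] \circ [\Gamma_1,i_1,j_1]$ through a set $R$ with $|R| = m([\Gamma,i,j])$. The maximum connection restricts to full connections $P \to R$ in $G_1$ and $R \to Q$ in $G_2$, so $m([G_1]) = m([G_2]) = |R|$. Because the edge weights of $G$ split independently between $G_1$ and $G_2$, and because $\rank X \le 2|R|$ with equality forcing, via Lemma~\ref{lem:rankinequality} and Lemma~\ref{lem:relmorphisms2}, that $X([\Gamma_1,\cdot])$ is an epimorphism and $X([\Gamma_2,\cdot])$ a monomorphism, the hypothesis $\rank X = 2m$ for all weights is equivalent to the conjunction: $X([\Gamma_1,w_1])$ is an epimorphism for all $w_1$, and $X([\Gamma_2,w_2])$ is a monomorphism for all $w_2$. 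Thus the theorem reduces to the one-sided statement that $X([H,\cdot]) \colon (\F^P)^2 \rightsquigarrow (\F^R)^2$ is an epimorphism for every edge weight if and only if $[H]$ admits a factorization into morphisms of types $1$, $2$, and $3'$ (the type 4' statement being obtained by passing to $\overline{X}$). Note that $X$ being an epimorphism even for one weight already forces $m([H]) = |R|$, so the full-connection hypothesis comes for free; concatenating the two one-sided factorizations yields a semi-elementary factorization with all type 3' factors before all type 4' factors.

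The one-sided statement is where I expect the main obstacle, and I would prove its nontrivial direction (epimorphism for all weights $\implies$ factorization) by contraposition, in the spirit of the converse half of Proposition~\ref{prop:completelyreducible}. Unwinding $\ker \overline{X([H,w])}$ shows that a nonzero element is precisely a nonzero $u$ with $u|_P = 0$ and $\Delta u = 0$ at every vertex outside $R$ (interior and input-only vertices alike), with the data on $R$ left free. Assuming $[H]$ has no type-$1,2,3'$ factorization, I would peel off from the input side every available spike, boundary edge, and wedge-like collapse until reaching an irreducible core $H'$ that has no boundary spikes or boundary edges on the $P$ side and is not separable as a boundary wedge-sum or disjoint union isolating an output; on this core I would run the cycle construction of Proposition~\ref{prop:completelyreducible}, choosing $u$ supported off the $P$-touching components and supported on cycles, and then choosing conductances along those cycles so that $\Delta u = 0$ off $R$, thereby exhibiting a nonzero element of $\ker \overline{X([H',w])}$ and, after extending $u$ by zero and the weights arbitrarily, of $\ker \overline{X([H,w])}$. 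The delicate points, which make this direction substantially harder than $(1)\implies(2)$, are verifying that the peeling terminates at a genuinely irreducible core, that the asymmetry is respected (only the $P$ side must be annihilated in both potential and current, whereas $R$ is permitted to carry the witnessing data), and that the conductance choice can be made over an arbitrary field as in Proposition~\ref{prop:completelyreducible}.
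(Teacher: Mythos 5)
Your overall route is the same as the paper's: for (1)$\Rightarrow$(2) the paper likewise shows type 3$'$ factors give epimorphisms and type 4$'$ factors give monomorphisms and then reruns the arguments of Lemmas \ref{lem:rankwidth} and \ref{lem:connectionwidth}; for (2)$\Rightarrow$(1) it likewise takes a minimal-cut factorization from Proposition \ref{prop:maxflowmincut}, reduces by symmetry to the input half $[G_1]\colon P \to R$, peels off type 1, 2, 3$'$ factors until an irreducible core remains, and then builds degenerate edge weights in the style of Proposition \ref{prop:completelyreducible}. Your packaging of the reduction as an equivalence ($\rank X = 2m$ for all weights if and only if $X([\Gamma_1,\cdot])$ is always an epimorphism and $X([\Gamma_2,\cdot])$ always a monomorphism, using that the weights on the two halves vary independently) is correct, and slightly cleaner than the paper's contrapositive-only formulation; your verification that type 3$'$ gives an epimorphism is also more complete than the paper's, since you check $\ker \overline{X} = 0$ via the component-wise current sum.

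The gap is in the final step, and it sits exactly at the crux. On the irreducible core $H'$ (inputs $P^*$, outputs $R$), the cycle construction of Proposition \ref{prop:completelyreducible}, taken literally, cannot produce the required nonzero element of $\ker \overline{X}$: a flow supported on cycles is divergence-free at \emph{every} vertex, so your clause ``$\Delta u = 0$ off $R$'' buys no extra freedom, and the candidate witness is $(u|_R, 0)$, which is nonzero only if some output lies in a component of $H' \setminus S$ not touching $P^*$. The critical cores are precisely those where this fails. Simplest example: the two-edge path $q_1 - v - q_2$ with $v$ interior and $q_1, q_2$ simultaneously inputs and outputs. It admits no type 1, 2, 3$'$ factorization (neither spike can be factored out, because its pendant endpoint is an output and so must survive as a labelled vertex of the last factor; and no type 3$'$ factor exists since the two outputs lie in one component), yet it contains no cycles at all, so your $S$ is empty, $u \equiv 0$, and the witness vanishes --- even though degenerate weights $a + b = 0$ really do drop the rank here. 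The missing idea, which is the actual content of the paper's asymmetric adaptation, is to enlarge $S$ to the set of edges lying on a cycle \emph{or on a self-avoiding path joining two vertices of $R$}: flows along such paths have divergence only at their endpoints, which lie in $R$, where nonzero net current is permitted. With this enlargement one then needs a two-case nonvanishing argument: if $R \subset P^*$, failure of type 3$'$ yields a path between distinct outputs, and a generic combination of flows makes the current on $R$ nonzero; if instead some output is not an input, irreducibility forces every edge incident to $P^*$ into the enlarged $S$ (each such edge lies on a cycle or an $R$-to-$R$ path), so the inputs are isolated in $H' \setminus S$ and the potential at that output is nonzero. So the asymmetry you flagged as a ``delicate point to verify'' is not a verification but a genuinely different construction; without it the contrapositive does not go through. (A minor further point: the generic choice of coefficients requires an infinite field, as in Proposition \ref{prop:completelyreducible}, not an arbitrary one.)
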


\begin{proof}
If $[G,i,j]$ admits a semi-elementary factorization, then a similar argument to Lemma \ref{lem:connectionwidth} shows that the maximum size connection is the same as the width of the factorization.

Next, note that if $[\Gamma,i,j]: P \to Q$ is type 3', then $X([\Gamma,i,j])$ is an epimorphism in the category of linear relations.  Indeed, we can achieve any potentials on $j(Q)$ using a harmonic function which is constant on each component of the graph.  Since $j(Q) \subset i(P)$, we can then achieve whatever output current we want on $j(Q)$ by cancelling it with the input current on $P$.  Similarly, type 4' morphisms produce monomorphisms in the category of linear relations.  Thus, the same argument as in Lemma \ref{lem:rankwidth} shows that if $[G,i,j]$ admits a semi-elementary factorization, then $\rank X([\Gamma,i,j])$ is twice the width.

These two steps complete (1) implies (2).  Now we prove (2) implies (1).  Let $\F$ be an infinite field.  Suppose $[G,i,j]$ does not admit a semi-elementary factorization.  By Proposition \ref{prop:maxflowmincut}, we can factorize $[G,i,j]$ into $[G_1,i_1,j_1]: P \to R$ and $[G_2,i_2,j_2]: R \to Q$ such that $|R| = m([G,i,j])$.  Since $[G,i,j]$ does not admit a semi-elementary factorization, we know that either
\begin{itemize}
	\item $[G_1,i_1,j_1]$ does not admit a factorization into types 1, 2, and 3', or
	\item $[G_2,i_2,j_2]$ does not admit a factorization into types 1, 2, and 4'.
\end{itemize}
By symmetry, we can assume the first case holds.  By Lemma \ref{lem:rankinequality}, it suffices to construct a network $\Gamma_1$ on $G_1$ such that $\rank X([\Gamma_1,i_1,j_1]) < 2|R|$.  In fact, it suffices to arrange that $\ker \overline{X([\Gamma_1,i_1,j_1])}$ is nonzero, that is, there is nonzero data on $R$ compatible with zero data on $P$.

We now follow the same strategy as in Proposition \ref{prop:completelyreducible}.  If $G_1$ has any boundary spikes, boundary edges, or isolated boundary vertices on the input side, or if it is possible to ``break off'' components from the input side using boundary wedge-sums or disjoint unions, then we first remove them.  Precisely, we factor $[G_1,i_1,j_1]$ as
\[
[G^*, i^*, j^*] \circ [G_n',i_n',j_n'] \circ \dots \circ [G_1',i_1',j_1'],
\]
where the $[G_k',i_k',j_k']$ are morphisms of type 1, type 2, or type 3', such that it is no longer possible to factor out any more elementary morphisms from the beginning of $[G^*, i^*, j^*]$.  Then it suffices to find edge weights on $[G^*, i^*, j^*]$ such that $\ker \overline{X([\Gamma^*, i^*, j^*])} \neq 0$.

Let $P^*$ and $Q^*$ be the inputs and outputs of $[G^*,i^*,j^*]$ considered as literal subsets of $V(G^*)$.  Assume there are no self-looping edges.  Let $S$ be the set of edges which are contained in a cycle \emph{or} a path from $Q^*$ to $Q^*$ with no self-intersections.  We define $u$ to be zero on any component of $G^* \setminus S$ that contains a vertex of $P^*$, and set it to a different nonzero value on each of the other components.

Choose a collection of sets $S_j$ which are cycles or paths from $Q^*$ to $Q^*$ such that $S = \bigcup_j S_j \cup \overline{S}_j$.  As in Proposition \ref{prop:completelyreducible}, we can choose $w_j$ which is nonzero on $S_j \cup \overline{S}_j$ such that $\Delta_{w_j} u = 0$ on $V \setminus Q^*$.  Since $\F$ is infinite, we can also choose $\alpha_j$ such that $\sum_j \alpha_j w_j$ is nonzero on all of $S$ simultaneously.  We then set $w = 1_{G^* \setminus S} + \sum_j \alpha_j w_j$.  This ensures that $u|_{P^*} = 0$ and $\Delta u |_{V(G^*) \setminus Q^*} = 0$.  Then we can choose a compatible assignment of input and output currents which will make the data on $P^*$ identically zero.

It only remains to arrange that the data on $Q^*$ is not identically zero.

{\bf Case 1:}  Suppose $Q^* \subset P^*$.  Then since $[G^*,i^*,j^*]$ is not type 3', there must be some path connecting distinct vertices $q_1 \in Q^*$ and $q_2 \in Q^*$.  We can assume this path is one of the $S_j$'s.  Then $\Delta_{w_j} u(q_1) \neq 0$ since nonzero current flows along the path.  Since $\F$ is infinite, we can choose the $\alpha_j$'s such that $\Delta_w u(q_1)$ is still nonzero.  Then since the input current on $q_1 \in P^*$ must be zero, the output current is nonzero.

{\bf Case 2:} Suppose there exists $q \in Q^* \setminus P^*$.  Then we claim that $u(q) \neq 0$.  It suffices to show that any vertex of $P^*$ is in its own different component of $G^* \setminus S$, since $u$ was only zero on the components with vertices of $P^*$.  We claim that any edge with one endpoint in $P^*$ must be contained in a cycle or in a path from $Q^*$ to $Q^*$ and hence is in $S$.  Choose $e$ with $e_- = p \in P^*$ and $e_+ = r$.
\begin{itemize}
	\item Suppose $p \in P^* \cap Q^*$.  Then because it is not possible to factor out a type 3' network, we know that deleting $p$ does not create any components that are disconnected from $Q^*$.  Thus, there is some path from $r$ to $Q^*$ which does not use $p$.  This implies there is a path from $p$ to a different vertex in $Q^*$ which uses $e$.
	\item On the other hand, suppose that $p \in P^* \setminus Q^*$.  Since $e$ is not a boundary spike on the input side, there must be some other edge $e'$ with $e_-' = e_-$.  If $e_+' = e_+$, then they form a cycle already.  Otherwise, let $r' = e_+'$.  There exist paths from $r$ and $r'$ to $Q^*$.  Joining these paths with $e$ and $e'$ produces a path from $Q^*$ to $Q^*$.  If the path has any self-intersections, then we can choose a subset containing $e$ and $e'$ which is either a cycle or a path from $Q^*$ to $Q^*$ with no self-intersections.
\end{itemize}
This completes case 2 and hence the proof.
\end{proof}

\subsection{Unique Complete Connections}

The following theorem relates boundary data, connections, elementary factorizations, and scaffolds.  It characterizes, for instance, existence and uniqueness for mixed-data boundary value problems in terms of connections.  Let us call a connection from $P$ to $Q$ {\bf complete} if it uses all the vertices in $P$ and all the vertices in $Q$.

\begin{theorem} \label{thm:uniquefull}
Let $G$ be a finite $\partial$-graph with no self-loops.  Assume $\partial V = P \cup Q$ with $|P| = |Q| = n$.  Let $P' = P \setminus Q$ and $Q' = Q \setminus P$.  Let $[G,i,j]: P \to Q$ be the corresponding IO-graph morphism.  The following are equivalent:
\begin{enumerate}[a.]
	\item There is exactly one complete connection from $P$ to $Q$ and this connection uses all the interior vertices.
	\item There is a scaffold $S$ on $G$ such that $V \setminus S_- = P$ and $V \setminus S_+ = Q$.
	\item $[G,i,j]$ admits an elementary factorization into type 1 and type 2 networks.
	\item $\rank X([\Gamma,i,j]) = 2n$ for all edge weights and $G$ has no nontrivial harmonic subgraphs with one boundary vertex.
	\item For any $v \in \F^P$ and $w \in \F^{P'}$, there is a unique harmonic function $u$ with $u|_P = v$ and $\Delta u|_{P'} = w$ and every interior vertex has degree at least $2$.
\end{enumerate}
\end{theorem}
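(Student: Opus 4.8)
The plan is to prove all five conditions equivalent by anchoring everything to \textbf{(b)}/\textbf{(c)}, establishing the four equivalences \textbf{(a)}$\Leftrightarrow$\textbf{(b)}, \textbf{(b)}$\Leftrightarrow$\textbf{(c)}, \textbf{(c)}$\Leftrightarrow$\textbf{(d)}, and \textbf{(c)}$\Leftrightarrow$\textbf{(e)}. The link \textbf{(b)}$\Leftrightarrow$\textbf{(c)} is essentially a restatement of Proposition \ref{prop:IOscaffolds}: a factorization into \emph{only} type 1 and type 2 networks is exactly one with no input stubs and no output stubs, i.e.\ with $V\setminus j(Q)\setminus S_-=\varnothing$ and $V\setminus i(P)\setminus S_+=\varnothing$. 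Passing to the reverse scaffold $\overline{S}$ converts the normalization $S_-=V\setminus P$, $S_+=V\setminus Q$ of condition \textbf{(b)} into the hypothesis $\overline{S}_-\cap\partial V=P\setminus Q$, $\overline{S}_+\cap\partial V=Q\setminus P$ with no stubs required by the Proposition, so the two conditions match. I would first record the consequences of the equalities in \textbf{(b)}: since $P,Q\subseteq\partial V$, every interior vertex lies in both $V\setminus P=S_-$ and $V\setminus Q=S_+$, hence is simultaneously an input and an output; thus the input/output alternative holds automatically, $\Beg S=\End S=\varnothing$, and every edge is in $\Mid S$. The oriented edges of $S$ therefore form vertex-disjoint paths running from the ``pure outputs'' $Q\setminus P$ through all interior vertices to the ``pure inputs'' $P\setminus Q$, with trivial paths at $P\cap Q$.

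For \textbf{(a)}$\Leftrightarrow$\textbf{(b)}, the observation above shows that the underlying undirected paths of such a scaffold are precisely a complete connection from $P$ to $Q$ using every interior vertex; conversely, orienting the paths of a complete connection consistently and taking $S$ to be their edge set produces a candidate scaffold with $S_-=V\setminus P$, $S_+=V\setminus Q$. The two axioms I must still verify to call it a scaffold are the local comparison conditions and the partial well-order of Lemma \ref{lem:scafequivalentdef} (condition (1) is automatic from disjointness, and condition (3) is automatic as noted). The partial well-order reduces, for a finite graph, to the absence of an increasing cycle, and \emph{this is where the uniqueness hypothesis of \textbf{(a)} enters and is the main obstacle}: I expect to show that an increasing cycle relative to $S$ yields, by rerouting the connection along the cycle (swapping scaffold and non-scaffold edges), a \emph{second} complete connection using all interior vertices, contradicting uniqueness; and conversely that two distinct full connections produce a closed increasing walk. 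Making this reroute rigorous—checking it again spans all interior vertices and stays within $P,Q$ as endpoints—is the delicate combinatorial step. The degree-$\geq 2$ statement is a free byproduct: a path through an interior vertex must use two distinct incident edges.

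For \textbf{(c)}$\Rightarrow$\textbf{(d)}, I invoke Theorem \ref{thm:RC1}: with no stubs the width equals $|P|=n$ and the rank is $2n$ for every edge weight and every field. The clause ``$G$ has no nontrivial harmonic subgraph with one boundary vertex'' I would obtain by contradiction: such a subgraph $H$ (whose interior vertices, having degree $\geq 2$ by \textbf{(c)}, must contain a cycle) is not completely reducible, so by Proposition \ref{prop:completelyreducible} some weight choice gives $\mathcal U_0(H)\neq 0$; extending by zero yields a nonzero harmonic function with zero boundary data, forcing $\dim\ker\overline{X}>0$ and $\rank X<2n$, against Theorem \ref{thm:RC1}. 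For the converse \textbf{(d)}$\Rightarrow$\textbf{(c)}, $\rank X=2n$ for all weights forces $m([\Gamma,i,j])=n$, and Theorem \ref{thm:RC3} supplies a semi-elementary factorization of width $n=|P|$; since the width never drops below $|P|$, no genuine type 3 or type 4 reduction occurs, and the only way a type 3' or 4' morphism can appear without changing $|P_k|$ is as a boundary wedge-sum side branch, i.e.\ a harmonic subgraph with a single boundary vertex—excluded by the second clause of \textbf{(d)}. Hence the factorization uses only type 1 and type 2 networks, which is \textbf{(c)}.

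Finally \textbf{(c)}$\Leftrightarrow$\textbf{(e)}. Forward, a type-1/type-2 factorization realizes $G$ as built from the empty boundary by adding boundary spikes and boundary edges; by Lemma \ref{lem:reductionelectrical} every interior vertex arises from a contracted spike, so a harmonic function is \emph{uniquely} determined by its boundary data, and the type-1 and type-2 networks are isomorphisms in $\F$\textbf{-LinRel}, making $X([\Gamma,i,j])$ a bijective function (Lemma \ref{lem:relmorphisms}). Matching data formats, this gives the unique solvability of the mixed problem with prescribed $u|_P$ and $\Delta u|_{P'}$, and the degree condition transfers from the scaffold through-paths. Backward, unique solvability for all weights makes $X$ a bijective function of rank $2n$, and a nontrivial harmonic subgraph with one boundary vertex would again violate uniqueness for a suitable weight via Proposition \ref{prop:completelyreducible}; together with the stated degree condition this yields \textbf{(d)}, hence \textbf{(c)}. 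I expect the bookkeeping in \textbf{(c)}$\Rightarrow$\textbf{(e)} (aligning the $P\cap Q$ overlap conventions of the functor $X$ with the data $(u|_P,\Delta u|_{P'})$) and the rerouting argument in \textbf{(a)}$\Leftrightarrow$\textbf{(b)} to be the two places demanding genuine care; everything else is an application of the already-established factorization, rank-connection, and complete-reducibility results.
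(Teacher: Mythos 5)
Your plan is mostly parallel to the paper's ((b)$\Leftrightarrow$(c) via Proposition \ref{prop:IOscaffolds}, (a)$\Rightarrow$(b) by rerouting, (d)$\Rightarrow$(c) via Theorem \ref{thm:RC3}, and your (c)$\Rightarrow$(e), (e)$\Rightarrow$(d) match the paper's), but your direct argument for (c)$\Rightarrow$(d) contains a genuine error. You claim that a nontrivial harmonic subgraph $H$ with one boundary vertex, given weights with $\mathcal{U}_0(H)\neq 0$ and extended by zero, ``forces $\dim\ker\overline{X}>0$ and $\rank X<2n$.'' It does not: a nonzero harmonic function with identically zero boundary data has IO-data $(0,0)$, so it contributes nothing to $\ker\overline{X([\Gamma,i,j])}$, and $\rank X$ --- which sees only the boundary relation, never the space of harmonic functions --- can remain $2n$. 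Concretely, take $P=\{p\}$, $Q=\{q\}$, one interior vertex $v$, an edge $pq$ of weight $w_0$, and two parallel edges between $q$ and $v$ with weights $w_1,w_2$ satisfying $w_1+w_2=0$: the function equal to $1$ at $v$ and $0$ elsewhere lies in $\mathcal{U}_0(\Gamma)$, yet $X([\Gamma,i,j])$ is still the graph of a bijection, so $\rank X=2n$. This is precisely the subtlety that \S \ref{subsec:completelyreducible} warns about, and it is why the paper routes the one-boundary-vertex clause through \emph{uniqueness of harmonic functions}, not through rank: from (c), $G$ is layerable, hence completely reducible, hence every harmonic subgraph is completely reducible; a completely reducible $\partial$-graph with one boundary vertex is a tree, and a nontrivial tree has an interior vertex of degree $1$, contradicting the degree bound coming from the scaffold through-paths. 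Note that in your own outline this repair is already available: your (e)$\Rightarrow$(d) argument uses Proposition \ref{prop:completelyreducible} correctly (bad weights violate \emph{uniqueness}), so you can simply delete the flawed direct (c)$\Rightarrow$(d) and close the loop as (c)$\Rightarrow$(e)$\Rightarrow$(d)$\Rightarrow$(c).

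Your other departure from the paper is structural rather than wrong: you prove (b)$\Rightarrow$(a) directly, where the paper instead proves (c)$\Rightarrow$(a) by induction on the type~1/type~2 factors and never needs the converse rerouting. Your route rests on the lemma you flag as delicate: a second complete connection forces a closed increasing walk with respect to $S$. This is in fact provable --- form the directed multigraph from the $S$-oriented edges together with the competing connection oriented from $P$ to $Q$, cancel pairs of oppositely oriented copies of the same edge, and at each vertex pair incoming $S$-edges with outgoing edges of the other connection and vice versa; the resulting closed walks never use two non-$S$ edges consecutively \emph{because the scaffold connection covers every interior vertex}, so each is an increasing cycle, contradicting the scaffold axioms --- but it is extra work, and since you already have (a)$\Rightarrow$(b)$\Leftrightarrow$(c), the paper's induction is the cheaper way to return to (a).
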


\begin{proof}
(b) and (c) are equivalent by Proposition \ref{prop:IOscaffolds}.

(e) $\implies$ (d).  Note that (e) implies $\rank X([\Gamma,i,j]) = 2n$.  Hence, $G$ has a semi-elementary factorization.  But (e) also implies that a harmonic function is uniquely determined by its boundary data, and hence $G$ is completely reducible.  This implies that any harmonic subgraph is also completely reducible.  The only completely reducible $\partial$-graphs with one boundary vertex are trees (graphs with no cycles).  Any nontrivial tree would have to have an interior vertex with degree $1$, which contradicts our assumption.  Thus, (d) holds.

(d) $\implies$ (c).  By Theorem \ref{thm:RC3}, $[G,i,j]$ admits a semi-elementary factorization.  Since $\rank X([\Gamma,i,j]) = 2n$ for any edge weights, we know the maximum size connection is $n$.  Thus, in the factorization every type 3' or type 4' IO-graph must have $n$ inputs and $n$ outputs.  This implies that any component of the type 3' or type 4' morphism must be a harmonic subgraph of $G$ with one boundary vertex.  Thus, it is trivial, so in fact, there are no nontrivial type 3' or type 4' morphisms, so (c) holds.

(c) $\implies$ (e).  We know that $\rank X([\Gamma,i,j]) = 2n$ by Theorem \ref{thm:RC3}, we know that for any $v \in \F^P$ and $w \in \F^{P'}$, there \emph{exists} a harmonic function $u$ with $u|_P = v$ and $\Delta u|_{P'} = w$, and the boundary data on $Q$ is uniquely determined by the boundary data on $P$.  But $G$ must be layerable, hence completely reducible, so any harmonic function is uniquely determined by its boundary data.  Thus, the first condition of (e) holds.  For the second condition, note that every interior vertex must have degree at least $2$ since it is contained in a path from $P$ to $Q$ along the edges in the type 1 networks.

(c) $\implies$ (a).  This follows from a straightforward induction on the number of type 1 and type 2 factors.

(a) $\implies$ (b).  Let $S$ be the set of oriented edges in the paths of the unique complete connection.  Since it is assumed to use all the interior vertices, we have $V \setminus S_- = P$ and $V \setminus S_+ = Q$.  Thus, there are no non-input or non-output interior edges.  The graph is also finite, so the only scaffold condition left to check is that there is no increasing path which forms a cycle.  The idea is that if we had such a loop, then we could construct a different connection between $P$ and $Q$ as shown in Figure \ref{fig:newconnection}.

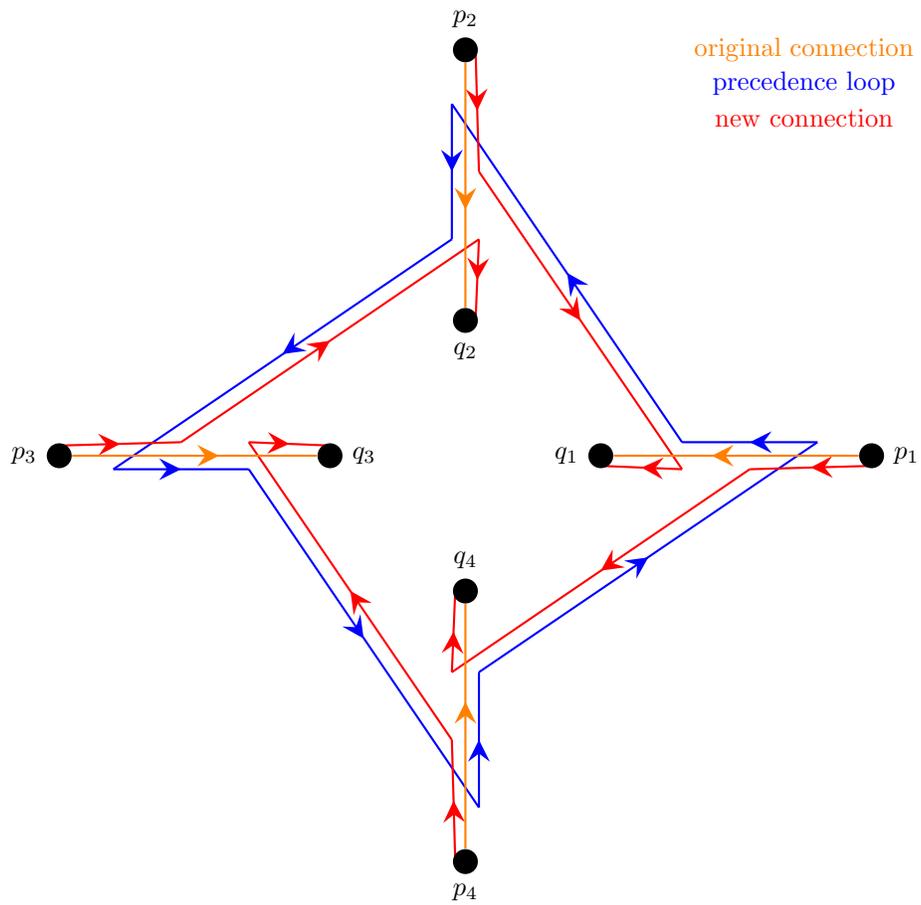
\begin{figure}
\caption{Proof of Theorem \ref{thm:uniquefull}: (a) $\implies$ (b)}  \label{fig:newconnection}

\begin{center}
\begin{tikzpicture}[scale = 0.9]

	\begin{scope}[thick,blue]
		\draw (5.2,0.2) -- (3.2,0.2) [arrow inside={end=stealth,opt={blue, scale=2}}{0.5}];
		\draw (3.2,0.2) -- (-0.2,5.2) [arrow inside={end=stealth,opt={blue, scale=2}}{0.5}];
		\draw (-0.2,5.2) -- (-0.2,3.2) [arrow inside={end=stealth,opt={blue, scale=2}}{0.5}];
		\draw (-0.2,3.2) -- (-5.2,-0.2) [arrow inside={end=stealth,opt={blue, scale=2}}{0.5}];
		\draw (-5.2,-0.2) -- (-3.2,-0.2) [arrow inside={end=stealth,opt={blue, scale=2}}{0.5}];
		\draw (-3.2,-0.2) -- (0.2,-5.2) [arrow inside={end=stealth,opt={blue, scale=2}}{0.5}];
		\draw (0.2,-5.2) -- (0.2,-3.2) [arrow inside={end=stealth,opt={blue, scale=2}}{0.5}];
		\draw (0.2,-3.2) -- (5.2,0.2) [arrow inside={end=stealth,opt={blue, scale=2}}{0.5}];
	\end{scope}

	\begin{scope}[thick,red]
		\draw (0.15,6) to (0.2,4.2) [arrow inside={end=stealth,opt={red, scale=2}}{0.5}];
		\draw (0.2,4.2) to (3.2,-0.2) [arrow inside={end=stealth,opt={red, scale=2}}{0.5}];
		\draw (3.2,-0.2) to (2,-0.15) [arrow inside={end=stealth,opt={red, scale=2}}{0.5}];
		\draw (-6,0.15) to (-4.2,0.2) [arrow inside={end=stealth,opt={red, scale=2}}{0.5}];
		\draw (-4.2,0.2) to (0.2,3.2) [arrow inside={end=stealth,opt={red, scale=2}}{0.5}];
		\draw (0.2,3.2) to (0.15,2) [arrow inside={end=stealth,opt={red, scale=2}}{0.5}];
		\draw (-0.15,-6) to (-0.2,-4.2) [arrow inside={end=stealth,opt={red, scale=2}}{0.5}];
		\draw (-0.2,-4.2) to (-3.2,0.2) [arrow inside={end=stealth,opt={red, scale=2}}{0.5}];
		\draw (-3.2,0.2) to (-2,0.15) [arrow inside={end=stealth,opt={red, scale=2}}{0.5}];
		\draw (6,-0.15) to (4.2,-0.2) [arrow inside={end=stealth,opt={red, scale=2}}{0.5}];
		\draw (4.2,-0.2) to (-0.2,-3.2) [arrow inside={end=stealth,opt={red, scale=2}}{0.5}];
		\draw (-0.2,-3.2) to (-0.15,-2) [arrow inside={end=stealth,opt={red, scale=2}}{0.5}];
	\end{scope}

	\node[text = orange] at (5,6) {original connection};
	\node[text = blue] at (5,5.5) {precedence loop};
	\node[text = red] at (5,5) {new connection};

	\node[circle,fill] (P1) at (0:6) [label = right: $p_1$] {};
	\node[circle,fill] (P2) at (90:6) [label = above: $p_2$] {};
	\node[circle,fill] (P3) at (180:6) [label = left: $p_3$] {};
	\node[circle,fill] (P4) at (270:6) [label = below: $p_4$] {};

	\node[circle,fill] (Q1) at (0:2) [label = left: $q_1$] {};
	\node[circle,fill] (Q2) at (90:2) [label = below: $q_2$] {};
	\node[circle,fill] (Q3) at (180:2) [label = right: $q_3$] {};
	\node[circle,fill] (Q4) at (270:2) [label = above: $q_4$] {};

	\draw[thick,orange] (P1) to (Q1) [arrow inside={end=stealth,opt={orange, scale=2}}{0.6}];
	\draw[thick,orange] (P2) to (Q2) [arrow inside={end=stealth,opt={orange, scale=2}}{0.6}];
	\draw[thick,orange] (P3) to (Q3) [arrow inside={end=stealth,opt={orange, scale=2}}{0.6}];
	\draw[thick,orange] (P4) to (Q4) [arrow inside={end=stealth,opt={orange, scale=2}}{0.6}];

\end{tikzpicture}
\end{center}

\end{figure}

To make this rigorous, consider the increasing loops with the minimal number of oriented edges not in $S$, and from those, choose one with the minimal number of oriented edges overall.  Let $\alpha_1, \dots, \alpha_n$ be the paths in the connection.  Then observe:
\begin{itemize}
	\item Any loop must contain some edges not in $S \cup \overline{S}$, since otherwise it would have to be contained in one of the $\alpha_m$'s, which is impossible.
	\item In the chosen loop the oriented edges must be distinct, since otherwise we could find a smaller loop.
	\item Call our loop $e_1$, $e_2$, \dots $e_K$.  Suppose there are some $i < j < k$ where $e_i$ and $e_k$ are edges in $S$ in the same path $\alpha_m$, and $e_j$ is not in $\alpha_m$ and that $e_i$ comes before $e_k$ in the path $\alpha_m$.  If we replace the segment $e_{i+1} \dots e_{k-1}$ of the loop with the segment of $\alpha_m$ from $e_i$ to $e_k$, then we get a loop with fewer edges not in $S \cup \overline{S}$.  Thus, this cannot happen in our chosen loop.  The same reasoning holds for any cyclic permutation of the indices $1,\dots,K$.  Thus, the loop must intersect each path in an ``interval''; that is, $I_m = \{k: e_k \in \alpha_m \}$ is of the form $\{1,\dots,k\}$ after some cyclic permutation of the indices.
\end{itemize}
Hence, our loop has the following form: It moves forward along some path of the connection (which we will call $\alpha_1$ after reindexing), then crosses by an edge not in $S \cup \overline{S}$ to some other path $\alpha_2$, and it continues in the same way until it crosses from some $\alpha_\ell$ back to $\alpha_1$.  The paths $\alpha_1$,\dots,$\alpha_\ell$ are distinct.  It follows that the vertices in our loop must be distinct and the loop looks essentially like the one portrayed in the Figure except that it might not visit every path of the connection.  If the remaining paths are $\alpha_{\ell+1}, \dots, \alpha_n$, then we construct our new complete connection as follows: $\alpha_j' = \alpha_j$ for $j = \ell + 1, \dots, n$.  For $j = 1,\dots, \ell$, $\alpha_j'$ follows $\alpha_j$ until it meets an endpoint of a plank from the loop, then it crosses along the loop over to $\alpha_{j-1}$, and it continues along $\alpha_{j-1}$ until it reaches $Q$ (indices written mod $\ell$).  This contradicts the uniqueness of the original complete connection, so in fact, we have a scaffold.
\end{proof}

The following much weaker corollary is surprising in itself:

\begin{corollary}
Suppose that $P', Q' \subset \partial V(G)$.  Suppose that there is exactly one complete connection from $P'$ to $Q'$ and that it uses all the interior vertices.  Then $G$ is layerable.
\end{corollary}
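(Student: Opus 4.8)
The plan is to produce a scaffold directly from the given data and then quote the characterization of layerability in Lemma \ref{lem:layerableequiv}. Let $S$ be the set of oriented edges lying on the unique complete connection from $P'$ to $Q'$, each oriented along its path. Since the paths of a connection are simple and vertex-disjoint, the maps $e \mapsto e_+$ and $e \mapsto e_-$ are injective on $S$ and $S \cap \overline{S} = \varnothing$, giving scaffold condition (1). Because the connection uses \emph{all} interior vertices, while the endpoints of its paths lie in $P' \cup Q' \subseteq \partial V$, every interior vertex occurs as an \emph{internal} vertex of some path and is therefore both the head and the tail of an edge of $S$; that is, $V^\circ \subseteq S_+ \cap S_-$. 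This makes the input/output alternative (condition (3)) hold trivially, and, crucially, it forces $\End S = \varnothing$, since by definition $\End S$ is generated by decreasing paths starting from interior vertices that are \emph{not} outputs, and there are none.

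The only real work is the remaining condition, that $S$ has no decreasing path indexed by $\N$. As in Theorem \ref{thm:uniquefull}, we take $G$ finite and without self-loops (a self-loop at an interior vertex would be a one-edge increasing cycle, and in fact would obstruct layerability outright, so it is excluded by the standing hypotheses; self-loops at boundary vertices may be stripped first as boundary edges). Finiteness reduces condition (2) to the absence of an \emph{increasing cycle}. For this I would reuse verbatim the ``precedence loop'' argument from the proof of (a) $\implies$ (b) in Theorem \ref{thm:uniquefull} and Figure \ref{fig:newconnection}: a minimal increasing cycle meets finitely many connection paths $\alpha_1, \dots, \alpha_\ell$, each in an interval, and rerouting every $\alpha_j$ along the cycle onto $\alpha_{j-1}$ yields a \emph{different} complete connection having the same starting set $P'$ and ending set $Q'$, contradicting uniqueness. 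I expect this to be the main obstacle of the proof. The essential observation is that the argument never uses $\partial V = P' \cup Q'$: the rewiring fixes all path endpoints, so the extra boundary vertices of $\partial V \setminus (P' \cup Q')$ are untouched, and the conclusion is still a genuine second complete connection from $P'$ to $Q'$.

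With these verifications $S$ is a scaffold and $\End S = \varnothing$. Moreover, since the connection covers every interior vertex, each interior vertex has an entering and an exiting edge of $S$, hence degree at least $2$; in particular $G$ has no isolated interior vertices, so Lemma \ref{lem:layerableequiv} applies and its implication (b) $\implies$ (a) gives that $G$ is layerable. Thus the corollary follows by extracting the scaffold from the connection, checking the acyclicity (increasing-cycle) condition via the uniqueness hypothesis exactly as in Theorem \ref{thm:uniquefull}, and then reading off $\End S = \varnothing$ to invoke the layerability criterion.
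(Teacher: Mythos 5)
Your proposal is correct and is essentially the paper's intended route: the paper states this corollary without separate proof because, exactly as you observe, the scaffold extracted in the (a) $\Rightarrow$ (b) step of Theorem \ref{thm:uniquefull} makes every interior vertex both an input and an output of $S$ without ever using $\partial V = P' \cup Q'$, so $\End S = \varnothing$, the rerouting argument of Figure \ref{fig:newconnection} rules out increasing cycles, and Lemma \ref{lem:layerableequiv} then gives layerability. One inessential slip: your parenthetical claim that a self-loop at an interior vertex ``would obstruct layerability outright'' is false (contracting a boundary spike at that vertex turns the loop into a deletable boundary edge), so the legitimate reason to exclude self-loops is simply that the corollary inherits the standing no-self-loop hypothesis of Theorem \ref{thm:uniquefull}.
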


\section{Symplectic Properties} \label{sec:symplectic}

The relationship between electrical networks and symplectic forms is well-known, especially among physicists.  The results of this section draw on \cite{BF} and \cite{LP} (see also \cite{CS}).  We characterize the possible subspaces of $\F^n \times \F^n$ that arise as the boundary behavior of a networks, and the linear relations that arise as $X([\Gamma,i,j])$.  This is analogous to the characterization of response matrices as being symmetric with row sums zero, but slightly harder to prove.

We also characterize the electrical linear group defined in \S \ref{subsec:IOlayerstripping}.  We discuss the star-mesh transformation over arbitrary fields and use it to show that any Lagrangian subspace of $\F^n \times \F^n$ can be represented as the boundary behavior of a circular planar network with nonzero edge weights in $\F$, for a field $\F \neq \F_2$.

\subsection{Symplectic Vector Spaces and Relations}

We recall the following definitions from the theory of symplectic vector spaces.

A bilinear form $\omega$ on a vector space $V$ is called {\bf symplectic} if it is non-degenerate, that is,
\[
\omega(x,y) = 0 \text{ for all } y \implies x = 0
\]
and totally isotropic, that is,
\[
\omega(x,x) = 0 \text{ for all } x.
\]
A {\bf symplectic vector space} is a vector space $V$ equipped with a symplectic form.  The {\bf standard symplectic form} on $\F^n \times \F^n$ is
\[
\omega((x_1,x_2), (y_1,y_2)) = \ip{x_2, y_1} - \ip{x_1, y_2} = \begin{pmatrix} x_1^T & x_2^T \end{pmatrix} \begin{pmatrix} 0 & -1 \\ 1 & 0 \end{pmatrix} \begin{pmatrix} y_1 \\ y_2 \end{pmatrix},
\]
where $\ip{-,-}$ is the ``inner product'' $\ip{x,y} = \sum_j x_j y_j$.  We will use the same symplectic form on $\F^P \times \F^P$ for $P$ a finite set.  It turns out that any symplectic form can be written in this way after a change of basis and hence a symplectic vector space must be even-dimensional (reference).

If $V$ is a symplectic vector space of dimension $2n$, then we say a subspace $U$ is {\bf isotropic} if $\omega|_{U \times U} = 0$ and {\bf Lagrangian} if it is isotropic with dimension $n$.  A linear relation $T: V \rightsquigarrow W$ is called {\bf symplectic} if $\omega_V(v,v') = \omega_W(w,w')$ whenever $(v,w) \in T$ and $(v',w') \in T$.  Equivalently, $T$ is symplectic if it is an isotropic subspace of $V \times W$ with respect to the symplectic form
\[
\omega((v,w), (v',w')) = \omega_V(v,v') - \omega(w,w').
\]
In particular, a linear transformation $T: V \to W$ is symplectic if and only if it preserves the symplectic form.  We say a linear relation $T$ is {\bf Lagrangian} if it is a Lagrangian subspace of $V \times W$ with respect to the symplectic form given above.

\subsection{Symplectic Characterization of Boundary Behavior} \label{subsec:symplecticchar}

Our goal in this section is to prove
\begin{theorem} \label{thm:symplectic}
Let $T: (\F^m)^2 \rightsquigarrow (\F^n)^2$.  Then $T = X([\Gamma,i,j])$ for some IO-network morphism $[\Gamma,i,j]: [m] \to [n]$ if and only if $T$ is Lagrangian and contains the vector $((1,\dots,1,0,\dots,0), (1,\dots,1,0,\dots,0))$.
\end{theorem}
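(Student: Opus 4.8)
The plan is to prove the two implications separately, using throughout that the distinguished vector $v_0=((1,\dots,1,0,\dots,0),(1,\dots,1,0,\dots,0))$ is exactly the boundary datum of the constant function $u\equiv 1$: since $\Delta$ annihilates constants, $v_0\in X([\Gamma,i,j])$ for every morphism, so ``contains $v_0$'' is automatic on the network side. I would first record a reduction that lets me work with the ordinary boundary behaviour instead of the relation $X$. Choosing a realizing network with $N=m+n$ \emph{distinct} boundary vertices, $m$ of them labelled by $[m]$ via $i$ and $n$ by $[n]$ via $j$, the images of $i$ and $j$ are disjoint, and the only difference between $X([\Gamma,i,j])$ and $\Lambda(\Gamma)\subseteq\F^N\times\F^N$ is the sign convention on the input currents. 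A direct check shows that negating the $P$-current converts the relation form $\omega_P\ominus\omega_Q$ into the standard symplectic form on $\F^N\times\F^N$ and fixes $v_0$; hence $T$ is Lagrangian and contains $v_0$ if and only if $\Lambda(\Gamma)$ is Lagrangian and contains $((1,\dots,1),(0,\dots,0))$. Both directions thus reduce to statements about $\Lambda(\Gamma)$.

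For necessity, write $L$ for the symmetric matrix of the weighted Laplacian $\Delta$. Isotropy of $\Lambda(\Gamma)$ is the discrete second Green identity: for any $u,u'$,
\[
\ip{Lu,u'}=\sum_{\{e,\overline e\}} w(e)\,du(e)\,du'(e)=\ip{u,Lu'},
\]
and when $u,u'$ are harmonic (so $Lu,Lu'$ are supported on $\partial V$) this says precisely that $\omega\big((u|_{\partial V},\Delta u|_{\partial V}),(u'|_{\partial V},\Delta u'|_{\partial V})\big)=0$. For the dimension I would split $V$ into $B=\partial V$ and the interior $I$ and compute, purely from the symmetry of $L$,
\[
\dim\Lambda(\Gamma)=\dim\{u:Lu|_I=0\}-\dim\{u\in\ker L:u|_B=0\}=|V|-|I|=|B|,
\]
the two dimensions being $|V|-\operatorname{rank}L_{I,\bullet}$ and $|I|-\operatorname{rank}L_{\bullet,I}$ with $\operatorname{rank}L_{I,\bullet}=\operatorname{rank}L_{\bullet,I}$; the ranks cancel. (Conceptually this is the symplectic reduction of the Lagrangian graph of $L$ by the interior symplectic factor, which is automatically Lagrangian.) Being isotropic of half dimension, $\Lambda(\Gamma)$, hence $T$, is Lagrangian; this direction is valid over every field.

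For sufficiency I must realize an arbitrary Lagrangian $\Lambda\subseteq\F^N\times\F^N$ containing $((1,\dots,1),(0,\dots,0))$ as some $\Lambda(\Gamma)$. The transverse case is immediate: if $\Lambda\cap(0\times\F^N)=0$ then $\Lambda=\{(u,Mu)\}$ is the graph of a symmetric $M$, and containment of the special vector forces $M\mathbf 1=0$; placing an edge of weight $-M_{pq}\neq 0$ between $p$ and $q$ produces a network on $N$ boundary vertices with no interior whose Laplacian is exactly $M$, so $\Lambda(\Gamma)=\Lambda$ with no restriction on the field. For a general $\Lambda$ I would invoke the group action of \S\ref{subsec:IOlayerstripping}: the edgeless network on $N$ vertices has $\Lambda_0=\F^N\times 0$, and attaching boundary spikes and boundary edges applies the transvections $\Xi_p(a)$ and $\Xi_{p,q}(a)$ generating the electrical linear group $EL_N$. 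It then suffices to show that $EL_N$ moves every Lagrangian containing the special vector to transverse (graph) form, i.e.\ that $EL_N$ acts transitively on such Lagrangians; the non-transverse (degenerate, $\mathcal U_0\neq 0$) targets, such as the triangle-in-triangle example of \S\ref{subsec:completelyreducible}, would be produced by interleaving \emph{star-mesh} transformations, which trade an interior vertex for a mesh of boundary edges whenever the relevant conductance sum is invertible.

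The main obstacle is exactly this transitivity/normal-form step. Unlike necessity it is not formal symplectic linear algebra, because the generators are constrained: edge transvections realize only lower-unipotent blocks $\left(\begin{smallmatrix}1&0\\M&1\end{smallmatrix}\right)$ with $M$ symmetric of zero row sums, and spikes only the diagonal upper-unipotent blocks, so one must run a symplectic Gaussian elimination within these restricted generators. I expect the cleanest organization is to pass to the reduced symplectic space $v_0^\perp/\langle v_0\rangle$ (potentials modulo constants against currents of total zero, where Kirchhoff's law lives) and eliminate there, generating the degenerate directions by explicit small gadgets and star-mesh moves. This is also where the field matters: a transvection $a\mapsto\Xi_p(a)$ supplies only one nonzero parameter over $\F_2$ and the star-mesh denominators can vanish, so the realization direction should be stated under $\F\neq\F_2$, consistent with the companion Theorem \ref{thm:ELsymplectic}, which is really the same computation packaged as a description of the whole group $EL_N$.
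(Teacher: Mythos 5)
Your necessity argument for networks with \emph{disjoint} input and output labels is sound and matches the paper: isotropy is the discrete Green identity from symmetry of the Laplacian, and your rank-cancellation count $\dim\Lambda(\Gamma)=|V|-|V^\circ|=|\partial V|$ is exactly the paper's computation for Proposition \ref{prop:symplectic1}. But your opening reduction does not cover the necessity direction as stated. The theorem allows $R=i([m])\cap j([n])\neq\varnothing$ (the paper stresses that overlapping labels are essential to the IO-category), and in the necessity direction the morphism is \emph{given}, so you cannot ``choose a realizing network with $m+n$ distinct boundary vertices'' --- that presupposes the theorem you are proving. When $R\neq\varnothing$, $X([\Gamma,i,j])$ is not a sign-flipped copy of $\Lambda(\Gamma)$: at a shared vertex $r=i(p)=j(q)$ the current splits arbitrarily subject to $y_2(q)-x_2(p)=\Delta u(r)$, so the natural surjection $X([\Gamma,i,j])\to\Lambda(\Gamma)$ has an $|R|$-dimensional kernel and $\dim X([\Gamma,i,j])=|\partial V|+|R|=m+n$. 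The paper's reduction lemma consists precisely of this bookkeeping, and without it your proof of necessity only covers the disjoint-label case.

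The more serious gap is in sufficiency: the step you yourself flag as ``the main obstacle'' --- moving an arbitrary Lagrangian containing $(1,\dots,1,0,\dots,0)$ into graph (transverse) form using the electrical generators --- is never proved, and the route you sketch (constrained symplectic elimination in $v_0^\perp/\langle v_0\rangle$ plus star-mesh gadgets) leads you to an incorrect weakening of the statement: you conclude the realization direction needs $\F\neq\F_2$, whereas Theorem \ref{thm:symplectic} holds over \emph{every} field. The missing idea is the paper's Lemma \ref{lem:lagrangianpartition}: for any Lagrangian $\Lambda\subset\F^{2n}$ there is a partition $[n]=S\sqcup T$ such that the coordinates $S\cup(n+T)$ parametrize $\Lambda$ isomorphically; equivalently, $\Lambda$ has a basis matrix of block form $\left(\begin{smallmatrix} I&0\\ *&0\\ *&*\\ 0&I\end{smallmatrix}\right)$. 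Multiplying by the spike transvections $\Xi_k(1)$ for $k\in T$ --- parameter $1$, available in any field --- then puts $\Lambda$ in graph form $\im\left(\begin{smallmatrix}I\\ L\end{smallmatrix}\right)$ with $L$ symmetric of zero row sums, after which your own transverse-case construction (edges of conductance $-L_{pq}$, then spikes of conductance $-1$ on $T$) finishes the proof. No star-mesh transformations and no transitivity analysis of $EL_N$ are needed; only the companion Theorem \ref{thm:ELsymplectic}, which characterizes the group $EL_n$ itself, requires $\F\neq\F_2$. So the crux of the hard direction is both missing from your proposal and pointed in a direction that would prove a strictly weaker theorem.
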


The forward direction of this theorem is proved in \cite{BF} modulo changes of notation.  However, the converse direction, which requires us to construct an electrical network for any Lagrangian relation containing the constant-potential vector, will be proved using layer-stripping and the electrical linear group.  As we shall see, the Lagrangian property has to do with the fact that the Laplacian is a symmetric matrix, and the property concerning $(1,\dots,1,0,\dots,0)$ has to do with the fact that constant functions are harmonic.

This theorem contains the following proposition as a special case when $n = 0$, which characterizes the electrical Grassmannian $EG_m$:
\begin{proposition} \label{prop:symplectic1}
Let $\Lambda \subset (\F^m)^2$.  Then $\Lambda$ is the boundary behavior of some network with $\partial V = [m]$ if and only if $\Lambda$ is a Lagrangian subspace and contains $(1,\dots,1,0,\dots,0)$.
\end{proposition}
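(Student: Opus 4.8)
The plan is to treat the two implications separately: the forward direction follows from the elementary algebra of the weighted Laplacian, while the converse rests on the layer-stripping / electrical-linear-group machinery of \S\ref{subsec:IOlayerstripping}.

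For the forward direction, suppose $\Lambda = \Lambda(\Gamma)$ for a finite network $\Gamma$ with $\partial V = [m]$. First, the constant function $u \equiv 1$ is harmonic and has $u|_{\partial V} = (1,\dots,1)$ and $\Delta u \equiv 0$, so $(1,\dots,1,0,\dots,0) \in \Lambda$. Next I would prove $\Lambda$ is isotropic via the discrete Green's identity: summing by parts and pairing each oriented edge with its reverse, the hypothesis $w(e) = w(\overline e)$ gives
\[
\sum_{p \in V} v(p)\,\Delta u(p) = \sum_{\{e,\overline e\}} w(e)\,du(e)\,dv(e) = \sum_{p \in V} u(p)\,\Delta v(p)
\]
for all potentials $u,v$; when $u,v$ are harmonic the interior terms vanish and what remains on $\partial V$ is exactly $\omega\big((u|_{\partial V},\Delta u|_{\partial V}),(v|_{\partial V},\Delta v|_{\partial V})\big)=0$. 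To upgrade isotropy to the Lagrangian condition I must show $\dim\Lambda = m$. Writing the Laplacian in block form relative to $V=\partial V\sqcup V^\circ$ as $\left(\begin{smallmatrix} A & B \\ B^T & C\end{smallmatrix}\right)$ with $C=C^T$, the space $\Lambda$ is the image of $\{(u_\partial,u_\circ): B^Tu_\partial + Cu_\circ = 0\}$ under $(u_\partial,u_\circ)\mapsto(u_\partial, Au_\partial+Bu_\circ)$; a rank--nullity count, using $\rank[B^T\mid C]=\rank\left(\begin{smallmatrix} B \\ C\end{smallmatrix}\right)$ (which holds because $C$ is symmetric), shows this image has dimension exactly $m$, regardless of any degeneracy of the weights.

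For the converse, the strategy is to realize $\mathcal L := \{\Lambda : \Lambda \text{ Lagrangian and } (1,\dots,1,0,\dots,0)\in\Lambda\}$ as a single group orbit. The base point is $\Lambda_0=\F^m\times\{0\}$, the boundary behavior of the edgeless graph on $[m]$. By \S\ref{subsec:IOlayerstripping}, attaching a boundary spike at $j$ or a boundary edge $\{i,j\}$ transforms the boundary behavior by the generators $\Xi_j(a)=\left(\begin{smallmatrix}1 & aE_{jj}\\ 0 & 1\end{smallmatrix}\right)$ and $\Xi_{i,j}(a)=\left(\begin{smallmatrix}1 & 0\\ a(E_{ii}-E_{ij}-E_{ji}+E_{jj}) & 1\end{smallmatrix}\right)$; a direct check shows each is symplectic and fixes $(1,\dots,1,0,\dots,0)$, so the orbit of $\Lambda_0$ under the group they generate lies inside $\mathcal L$ and every orbit element is the boundary behavior of an honest network assembled from the edgeless graph. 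It remains to prove the reverse inclusion: every $\Lambda\in\mathcal L$ reduces to $\Lambda_0$ under a finite product of generators.

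I would argue by induction on the defect $d=\dim K$, where $K=\{x_2:(0,x_2)\in\Lambda\}$ is the space of currents compatible with zero potential (so $K=I^{\perp}$ for $I=\pi_1\Lambda$, and $d=0$ exactly when $\Lambda$ is the graph of a response matrix). In the base case $d=0$ we have $\Lambda=\{(x_1,Mx_1)\}$ with $M$ symmetric and $M(1,\dots,1)=0$; such $M$ lies in the span of the single-edge Laplacians, so the lower shear $\left(\begin{smallmatrix}1 & 0\\ -M & 1\end{smallmatrix}\right)$ carries $\Lambda$ to $\Lambda_0$ (equivalently, $\Lambda$ is realized outright by the interior-vertex-free network with edge $\{i,j\}$ of weight $-M_{ij}$). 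For the inductive step I would use the spike generators to lower $d$: given $0\neq c\in K$ (necessarily with $\ip{c,(1,\dots,1)}=0$), select a coordinate $j$ and scalar $a$ so that $\Xi_j(a)\Lambda$ has strictly smaller defect, then recurse. The hard part is exactly this elimination — verifying that a suitable generator always decreases $d$, that each step remains in $\mathcal L$, and that the process terminates — a symplectic analogue of Gaussian elimination made delicate by the fact that harmonic functions need not be determined by their boundary data (the degeneracies $\mathcal U_0(\Gamma)\neq 0$ of \S\ref{subsec:completelyreducible}) and by characteristic issues over an arbitrary field. These subtleties, together with the further reduction from a general realizing network to a circular planar one (which genuinely needs $\F\neq\F_2$), are where the star-mesh transformation and the explicit description of $EL_m$ in the later subsections carry the load.
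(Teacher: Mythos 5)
Your forward direction is complete and is, modulo notation, the paper's own argument: isotropy from the symmetry of $\Delta$ (your summation-by-parts identity), the dimension count from $\rank \Delta_{V^\circ,V} = \rank \Delta_{V,V^\circ}$ (your block identity $\rank[B^T\mid C]=\rank\left(\begin{smallmatrix} B \\ C\end{smallmatrix}\right)$), and the constant function supplying $(1,\dots,1,0,\dots,0)$.

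The converse, however, has a genuine gap: you never prove the inductive step (that some generator strictly decreases the defect $d$), and the machinery you defer to cannot supply it. Deferring to the star-mesh transformation and the planarization results is circular, because Corollary \ref{cor:equivalentlayerable} and Theorem \ref{thm:planarequivalent} are themselves deduced from Lemma \ref{lem:lagrangian}, which \emph{is} the converse you are trying to prove; and Theorem \ref{thm:ELsymplectic} concerns invertible matrices, so to use it you would still need to know that the stabilizer of the constant-potential vector acts transitively on your set $\mathcal{L}$ --- which is essentially the statement at issue. The hypothesis $\F \neq \F_2$ you import along the way is also spurious: the proposition is stated, and proved in the paper, over every field, and no characteristic issue arises; the degeneracies $\mathcal{U}_0(\Gamma)\neq 0$ of \S\ref{subsec:completelyreducible} affect only the dimension count, which you already handled. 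The good news is that the step you flag as ``the hard part'' is short. Since $\Lambda$ is Lagrangian, $K = (\pi_1\Lambda)^\perp$ (this is proved inside Lemma \ref{lem:lagrangianpartition}), hence $\pi_1\Lambda = K^\perp$ by double orthocomplement for the nondegenerate form $\ip{\cdot,\cdot}$. If $d = \dim K > 0$, then $K^\perp$ is a proper subspace of $\F^m$, so some coordinate vector $e_j$ satisfies $e_j \notin \pi_1\Lambda$. Fix any $a \neq 0$. If $(0,y_2) \in \Xi_j(a)\Lambda$, then $(-a\,y_2(j)\,e_j,\;y_2) \in \Lambda$; since $e_j \notin \pi_1\Lambda$ this forces $y_2(j) = 0$, hence $(0,y_2)\in\Lambda$, i.e.\ $y_2 \in K$. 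So the defect space of $\Xi_j(a)\Lambda$ is exactly $\{x \in K : x(j) = 0\}$, a proper hyperplane of $K$ because $e_j \notin K^\perp$. Thus each such step lowers $d$ by exactly one, stays in $\mathcal{L}$ (the generators are symplectic and fix the constant-potential vector, as you checked), and terminates at your base case --- over any field. With this paragraph inserted, your induction closes and your proof is correct. For comparison, the paper reaches the same endpoint without induction: Lemma \ref{lem:lagrangianpartition} chooses all the needed spike coordinates $T$ at once, a single product $\prod_{j\in T}\Xi_j(1)$ puts $\Lambda$ in graph form $\im\left(\begin{smallmatrix} I \\ L \end{smallmatrix}\right)$ with $L$ symmetric of row sums zero, and the realizing network is then read off directly, exactly as in your base case.
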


Another important special case is when $m = n$ and $T$ is an invertible transformation:

\begin{proposition} \label{prop:symplectic2}
Let $T$ be an invertible linear transformation $T: (\F^n)^2 \to (\F^n)^2$.  Then $T = X([\Gamma,i,j])$ for some $[\Gamma,i,j]: [n] \to [n]$ if and only if $T$ is symplectic and preserves the vector $(1,\dots,1,0,\dots,0)$.
\end{proposition}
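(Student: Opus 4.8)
The plan is to prove the two directions separately, dispatching the forward direction by the symmetry of the Laplacian and deferring the substantive content of the converse to Proposition~\ref{prop:symplectic1}.

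For the forward direction, suppose $T = X([\Gamma,i,j])$ is an invertible transformation. Given $(x,y),(x',y')\in T$ realized by harmonic functions $u,u'$ on $\Gamma$ (with the sign convention $\Delta u = j_*y_2 - i_*x_2$), I would invoke the discrete Green's identity
\[
\sum_{p\in V} u(p)\,\Delta u'(p) = \sum_{p\in V} u'(p)\,\Delta u(p),
\]
which holds because both sides equal $\sum_{\{e,\overline e\}} w(e)\,du(e)\,du'(e)$ and is therefore symmetric in $u,u'$. Since $u,u'$ are harmonic, $\Delta u$ and $\Delta u'$ vanish off $\partial V$, so the sums collapse to the boundary; unwinding the definition of $X$ and the sign convention turns this identity into $\omega(x,x') = \omega(y,y')$, i.e.\ $T$ is a symplectic relation, hence, being invertible, a symplectic transformation. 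That $T$ fixes $\mathbf 1 := (1,\dots,1,0,\dots,0)$ is immediate: the constant potential $u\equiv 1$ is harmonic with zero net current, so $(\mathbf 1,\mathbf 1)\in X([\Gamma,i,j]) = T$.

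For the converse, the key idea is that the graph of a symplectic transformation is a Lagrangian subspace, so I can reduce to the realization result for Lagrangian boundary behaviors. Given an invertible symplectic $T$ with $T\mathbf 1 = \mathbf 1$, define a subspace $\Lambda\subset(\F^{2n})^2$ (thinking of $\F^{2n}=\F^n\times\F^n$, the first $n$ coordinates labelled by inputs and the last $n$ by outputs) by
\[
\Lambda = \bigl\{\,\bigl((x_1,y_1),(-x_2,y_2)\bigr) : ((x_1,x_2),(y_1,y_2))\in T\,\bigr\}.
\]
The map $((x_1,x_2),(y_1,y_2))\mapsto((x_1,y_1),(-x_2,y_2))$ is a linear isomorphism of the ambient spaces, so $\dim\Lambda = \dim T = 2n$, and $\mathbf 1\in\Lambda$ because $T\mathbf 1=\mathbf 1$ supplies the element with $x_1=y_1=(1,\dots,1)$ and $x_2=y_2=0$. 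The crucial computation is isotropy: for the standard form $\omega_{2n}$ on $(\F^{2n})^2$ one finds, for two elements coming from $(x,y),(x',y')\in T$,
\[
\omega_{2n}\bigl((x_1,y_1),(-x_2,y_2);(x_1',y_1'),(-x_2',y_2')\bigr) = -\bigl(\ip{x_2,x_1'}-\ip{x_1,x_2'}\bigr) + \bigl(\ip{y_2,y_1'}-\ip{y_1,y_2'}\bigr) = -\omega(x,x')+\omega(y,y'),
\]
which vanishes exactly because $T$ is symplectic. Thus $\Lambda$ is a Lagrangian subspace containing $\mathbf 1$, so Proposition~\ref{prop:symplectic1} produces a network $\Gamma$ with $\partial V=[2n]$ and $\Lambda(\Gamma)=\Lambda$. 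Labelling the first $n$ boundary vertices by $i$ and the last $n$ by $j$ gives a morphism $[\Gamma,i,j]\colon[n]\to[n]$, and by construction the sign convention defining $X$ undoes the flip $x_2\mapsto -x_2$, so $X([\Gamma,i,j]) = T$.

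The main obstacle is contained entirely in Proposition~\ref{prop:symplectic1}, which actually constructs a network realizing a given Lagrangian; the reduction above only requires keeping the current-sign bookkeeping between the functor $X$ and the boundary behavior $\Lambda$ consistent, which is the one genuinely error-prone step. I would also note the alternative, more hands-on route matching the paper's stated strategy: show directly that an invertible symplectic $T$ fixing $\mathbf 1$ is a product of the elementary generators $\Xi_j(a)$ and $\Xi_{i,j}(a)$ of the electrical linear group, and realize each factor as a type~1 or type~2 morphism, composing them. That route proves the stronger statement that $EL_n(\F)$ equals the group of symplectic matrices fixing $\mathbf 1$, but it requires a Gaussian-elimination-style normal-form argument inside the symplectic group and genuinely needs $\F\neq\F_2$, whereas the reduction to Proposition~\ref{prop:symplectic1} works over any field.
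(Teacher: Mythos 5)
Your proof is correct and follows essentially the same route as the paper: the forward direction is the paper's symmetry-of-the-Laplacian computation (packaged as Green's identity), and your converse is precisely the paper's reduction of Theorem \ref{thm:symplectic} to Proposition \ref{prop:symplectic1}, specialized to an invertible transformation. Incidentally, your sign bookkeeping is the right one: the paper's printed formula $\Lambda = \{((x_1,y_1),(x_2,-y_2)) : (x,y) \in T\}$ would produce a network realizing $T$ only after negating all edge weights (its $X$ is $T$ conjugated by $(x_1,x_2)\mapsto(x_1,-x_2)$), so your version $((x_1,y_1),(-x_2,y_2))$ quietly corrects a small sign slip in the paper.
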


Before presenting the crux of the proof of Theorem \ref{thm:symplectic}, we first reduce to the special case stated in Proposition \ref{prop:symplectic1} by a tedious computation:
\begin{lemma}
Theorem \ref{thm:symplectic} foillows from Proposition \ref{prop:symplectic1}.  More precisely, each implication of Theorem \ref{thm:symplectic} follows from the corresponding implication in Proposition \ref{prop:symplectic1}.
\end{lemma}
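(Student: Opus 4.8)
The plan is to set up an explicit linear correspondence between the IO-boundary behavior $X([\Gamma,i,j])$ and the ordinary boundary behavior $\Lambda(\Gamma)$, and then transport the Lagrangian property and the distinguished vector across it. The whole lemma rests on one bookkeeping fact: after separating the $P$- and $Q$-coordinates, the relation symplectic form on $(\F^m)^2 \times (\F^n)^2$ agrees up to a global sign with the standard form on $(\F^{m+n})^2$, and the sign flip happens exactly on the input currents. Concretely, in the case where the images $i([m])$ and $j([n])$ are \emph{disjoint}, so that $\partial V = i([m]) \sqcup j([n])$ has $m+n$ elements, I would write $(\phi,\psi) \in \Lambda(\Gamma)$ as $(\phi_P,\phi_Q,\psi_P,\psi_Q)$ and read off from the definition of $X$ the linear isomorphism
\[
\Psi\colon (\F^{m+n})^2 \to (\F^m)^2\times(\F^n)^2, \qquad (\phi_P,\phi_Q,\psi_P,\psi_Q)\mapsto \bigl((\phi_P,-\psi_P),(\phi_Q,\psi_Q)\bigr),
\]
which carries $\Lambda(\Gamma)$ onto $X([\Gamma,i,j])$. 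A direct expansion of the relation form $\Omega = \omega_V - \omega_W$ gives $\Omega(\Psi a,\Psi b) = -\omega(a,b)$ for the standard form $\omega$ on $(\F^{m+n})^2$; since a sign does not affect isotropy, $\Psi$ preserves Lagrangian subspaces. One checks directly that $\Psi$ sends the constant vector $((1,\dots,1),(0,\dots,0))$ to the distinguished vector of Theorem \ref{thm:symplectic}.

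For the converse direction I would use $\Psi$ as a black box. Given $T$ Lagrangian and containing the distinguished vector, set $\Lambda := \Psi^{-1}(T) \subset (\F^{m+n})^2$; by the previous paragraph $\Lambda$ is Lagrangian and contains the all-ones/zero-current vector, so the converse half of Proposition \ref{prop:symplectic1} (applied with $m+n$ boundary vertices) yields a network $\Gamma$ realizing $\Lambda$. Labelling the first $m$ vertices as inputs and the last $n$ as outputs (which are disjoint by construction) produces $[\Gamma,i,j]\colon [m]\to[n]$ with $X([\Gamma,i,j]) = \Psi(\Lambda(\Gamma)) = T$, as required.

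For the forward direction I would argue directly, because overlapping input/output vertices introduce extra freedom that $\Psi$ does not see. Containment of the distinguished vector is immediate from the constant function $u\equiv 1$, which is harmonic with zero net current. For isotropy, I take two elements of $X$ arising from harmonic $u,u'$ and rewrite $\Omega$ using the vectors $i_* x_2,\, j_* y_2 \in \F^V$; the overlap terms collapse and the whole identity reduces to $\ip{\Delta u, u'} = \ip{u, \Delta u'}$, which holds because the weighted Laplacian is a symmetric matrix. For the dimension, let $k=|i([m])\cap j([n])|$ be the overlap size; the natural surjection $\mathcal{U}(\Gamma)\times \F^{k}\twoheadrightarrow X$, where $\F^{k}$ records the splitting of current at each overlap vertex, has kernel $\mathcal{U}_0(\Gamma)\times\{0\}$. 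Hence $\dim X = \dim\Lambda(\Gamma)+k = (m+n-k)+k = m+n$, using $\dim\Lambda(\Gamma)=m+n-k$ from the forward half of Proposition \ref{prop:symplectic1}. An isotropic subspace of half-dimension is Lagrangian, so $X$ is Lagrangian.

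The routine pieces are the converse and the disjoint forward case, where $\Psi$ does all the work. The main obstacle is the overlap in the forward direction: a vertex that is simultaneously an input and an output carries both an incoming and an outgoing current and so contributes an extra dimension to $X$, and one must verify that this extra dimension is exactly compensated by the drop in $|\partial V|$ while the relation stays isotropic. Tracking the signs in $\Psi$ together with this $k$-versus-$(m+n-k)$ accounting is precisely the ``tedious computation'' the lemma alludes to; none of it is conceptually hard, but it is where care is needed.
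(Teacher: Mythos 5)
Your proposal is correct, and its overall architecture matches the paper's: the converse is handled exactly as in the paper (transport $T$ back to a Lagrangian subspace of $(\F^{m+n})^2$ containing $(1,\dots,1,0,\dots,0)$, apply the converse half of Proposition \ref{prop:symplectic1}, and label the $m+n$ boundary vertices disjointly so that the disjoint-case dictionary gives $X([\Gamma,i,j])=\Psi(\Lambda(\Gamma))=T$), and your dimension count in the forward direction is the paper's count in disguise --- the paper uses the surjection $X([\Gamma,i,j])\to\Lambda(\Gamma)$, $(x,y)\mapsto(\phi,\psi)$, whose kernel has dimension $|R|=k$, while you route through $\mathcal{U}(\Gamma)\times\F^k\twoheadrightarrow X$ with kernel $\mathcal{U}_0(\Gamma)\times\{0\}$; by rank--nullity these give the same identity $\dim X=\dim\Lambda(\Gamma)+k=m+n$, and both invoke the forward half of Proposition \ref{prop:symplectic1} for $\dim\Lambda(\Gamma)=|\partial V|=m+n-k$. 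The one place you genuinely diverge is isotropy: the paper \emph{deduces} it from the Lagrangian property of $\Lambda(\Gamma)$ by expanding $\omega((x,y),(x',y'))$ over the three sets $P$, $Q$, $R$ and substituting the defining relations of $X$, whereas you prove it \emph{directly} from symmetry of the weighted Laplacian, and your reduction is sound: writing $\Delta u = j_*y_2-i_*x_2$ and using adjointness of $i_*,i^*$ and $j_*,j^*$ gives $\Omega((x,y),(x',y'))=\ip{u,\Delta u'}-\ip{\Delta u,u'}=0$, with the overlap vertices absorbed automatically. This is cleaner than the paper's $P,Q,R$ case bookkeeping, at the cost of quietly re-proving (rather than citing) the isotropic half of the forward implication of Proposition \ref{prop:symplectic1}; that is harmless for the lemma, which only asserts that the Theorem holds whenever the Proposition does. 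One small point in your favor on signs: your $\Psi$, which flips the sign on the \emph{input} currents, is the convention consistent with the definition of $X$ (the paper's displayed formula in the converse, $((x_1,y_1),(x_2,-y_2))$, puts the flip on the outputs, an inessential slip that your version avoids).
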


\begin{proof}
Suppose that $T = X([\Gamma,i,j])$ for $[\Gamma,i,j]: [m] \to [n]$.  Set
\begin{align*}
\partial V &= i([m]) \cup j([n]) \\
P &= i([m]) \setminus j([n]) \\
Q &= j([n]) \setminus i([m]) \\
R &= i([m]) \cap j([n]).
\end{align*}
Suppose that $(x,y)$ and $(x',y')$ in $X([\Gamma,i,j])$ correspond to harmonic functions with boundary data $(\phi,\psi)$ and $(\phi',\psi')$ in $\Lambda(\Gamma)$.  Assuming that $\Lambda(\Gamma)$ is Lagrangian, we have
\[
\ip{\phi', \psi} - \ip{\phi, \psi'} = 0.
\]
Now consider
\begin{align*}
\omega((x,y), (x',y')) &= \omega(x,x') - \omega(y,y') \\
&= \ip{x_1',x_2} - \ip{x_1,x_2'} - \ip{y_1',y_2} + \ip{y_1,y_2'}
\end{align*}
We split the computation into the sets $P$, $Q$, and $R$ to obtain
\begin{multline*}
\ip{x_1'|_P, x_2|_P} + \ip{x_1'|_R, x_2|_R} - \ip{x_1|_P, x_2'|_P} - \ip{x_1|_R,x_2'|_R} \\
- \ip{y_1'|_Q,y_2|_Q} - \ip{y_1'|_R,y_2|_R} + \ip{y_1|_Q, y_2'|_Q} + \ip{y_1|_R, y_2'|_R},
\end{multline*}
where by a slight abuse of notation, we have identified $P$, $Q$, and $R$ with subsets of $[m]$ and $[n]$.  Recall that by definition of $X$, we have
\[
x_1|_R = y_1|_R = \phi|_R \qquad y_2|_R - x_2|_R = \psi|_R
\]
as well as
\[
x_1|_P = \phi|_P, \quad y_1|_Q = \phi|_Q, \quad x_2|_P = -\psi|_P, \quad y_2|_Q = \psi_Q.
\]
The same holds for $x'$, $y'$, $\phi'$, $\psi'$.  Substituting this shows that
\[
\omega((x,y), (x',y')) = -\ip{\phi', \psi} + \ip{\phi, \psi'} = 0.
\]
Thus, $X([\Gamma,i,j])$ is isotropic.

To compute the dimension of $X([\Gamma,i,j])$, note that $(x,y) \mapsto (\phi,\psi)$ is a surjective map $X([\Gamma,i,j]) \to \Lambda$.  The kernel consists of $(x,y)$ such that $x_1 = 0 = y_1$, $x_2|_P = 0$, $y_2|_Q = 0$, and $x_2|_R = y_2|_R$.  Thus, the kernel has dimension $R$.  Therefore, using the assumption that $\Lambda$ is Lagrangian
\[
\dim X([\Gamma,i,j]) = |R| + \dim \Lambda = |R| + |\partial V| = |R| + |P| + |Q| + |R| = m + n,
\]
and hence $X$ has the correct dimension.

Finally, because $(1,\dots,1,0,\dots,0)$ is in $\Lambda$, we know $X$ contains $(1,\dots,1,0,\dots,0) \times (1,\dots,1,0,\dots,0)$.

To prove the other implication of Theorem \ref{thm:symplectic}, suppose that $T$ is Lagrangian and contains $(1,\dots,1,0,\dots,0) \times (1,\dots,1,0,\dots,0)$.  Define $\Lambda \subset (\F^{m+n})^2$ by
\[
\Lambda = \{((x_1,y_1), (x_2,-y_2)): (x,y) \in T\}.
\]
Then $\Lambda$ is Lagrangian with respect to the standard symplectic form on $(\F^{m+n})^2$ and contains $(1,\dots,1,0,\dots,0)$.  We assume that $\Lambda$ can be represented as the boundary behavior of some electrical network $\Gamma$ with $\partial V = [m+n]$.  Then the same computation as before (with $R = \varnothing$) shows that $T$ is $X(\Gamma)$ with the obvious labelling of the boundary nodes.
\end{proof}

Now that that is out of the way, we prove the forward implication of Proposition \ref{prop:symplectic1}:
\begin{lemma}
If $\Lambda \subset (\F^n)^2$ is the boundary behavior of a network with $\partial V = [n]$, then
\begin{enumerate}[a,]
	\item $\Lambda$ is isotropic,
	\item $\dim \Lambda = n$.
	\item $\Lambda$ contains $(1,\dots,1,0,\dots,0)$.
\end{enumerate}
\end{lemma}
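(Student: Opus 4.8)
The three assertions are all consequences of elementary properties of the weighted Laplacian, and I would assume throughout that $G$ is finite (as is implicit in this section), so that every sum below is finite. Part (c) is immediate: the constant function $u \equiv 1$ on $V$ has $du(e) = 0$ on every edge, hence $\Delta u \equiv 0$, so $u$ is harmonic with $u|_{\partial V} = (1,\dots,1)$ and $\Delta u|_{\partial V} = 0$; thus $(1,\dots,1,0,\dots,0) \in \Lambda$. This reflects the fact that constant functions are harmonic.

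For part (a), the plan is to establish a discrete Green's identity expressing that $\Delta$ is self-adjoint for the standard inner product on $\F^V$. Summing over oriented edges and pairing each $e$ with $\overline{e}$ (using $w(\overline e) = w(e)$ and $du(\overline e) = -du(e)$), I would show
\[
\ip{v, \Delta u}_V = \sum_{p \in V} v(p)\,\Delta u(p) = \sum_{e \in E} v(e_+)\,w(e)\,du(e) = \sum_{\{e\}} w(e)\, du(e)\, dv(e),
\]
where the last sum runs over unoriented edges and is manifestly symmetric in $u$ and $v$; hence $\ip{v,\Delta u}_V = \ip{u, \Delta v}_V$. If $u,v$ are harmonic with boundary data $(\phi,\psi)$ and $(\phi',\psi')$, then $\Delta u$ and $\Delta v$ vanish on $V^\circ$, so $\ip{v,\Delta u}_V = \ip{\phi',\psi}$ and $\ip{u,\Delta v}_V = \ip{\phi,\psi'}$. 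Equating gives $\omega((\phi,\psi),(\phi',\psi')) = \ip{\psi,\phi'} - \ip{\phi,\psi'} = \ip{\phi',\psi} - \ip{\phi,\psi'} = 0$, which is isotropy.

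For part (b), I would argue by a rank computation valid over any field, so as to sidestep the fact that the Dirichlet problem need not be solvable. Write the Laplacian in block form $L = \begin{pmatrix} A & B \\ B^T & C \end{pmatrix}$, with $A$ the $n\times n$ boundary block and $C$ the $k \times k$ interior block ($k = |V^\circ|$); symmetry of $L$ forces $C = C^T$. Writing a potential as $(\phi,\eta)$, harmonicity reads $B^T\phi + C\eta = 0$, and the surjection $\rho \colon \mathcal{U}(\Gamma) \to \Lambda$, $(\phi,\eta) \mapsto (\phi, A\phi + B\eta)$, has kernel $\{(0,\eta): C\eta = 0,\ B\eta = 0\}$. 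Setting $r = \rank(B^T \mid C)$, one has $\dim \mathcal{U}(\Gamma) = (n+k) - r$, while $C = C^T$ identifies $\rank\begin{pmatrix} C \\ B\end{pmatrix} = \rank\begin{pmatrix} C & B^T\end{pmatrix} = r$ (a column permutation of $(B^T \mid C)$), so $\dim \ker\rho = k - r$. Therefore
\[
\dim \Lambda = \dim\mathcal{U}(\Gamma) - \dim\ker\rho = (n + k - r) - (k - r) = n.
\]

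The genuinely substantive step is part (b). The tempting alternative — splitting $\Lambda$ into achievable potentials and their orthogonal complement — secretly requires solving a homogeneous Dirichlet problem, which can fail over arbitrary fields (this is exactly the degeneracy exhibited by the triangle-in-triangle example). The rank manipulation above avoids that issue, using only the symmetry of $L$, and hence works uniformly over every field; parts (a) and (c) are routine.
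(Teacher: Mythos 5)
Your proposal is correct and follows essentially the same route as the paper's proof: part (a) rests on the self-adjointness of $\Delta$ (you derive it via the edge-sum Green's identity, the paper by inspecting the matrix entries), part (c) is the constant function in both, and part (b) is the identical rank--nullity computation, since your identification $\rank(B^T \mid C) = \rank\begin{pmatrix} C \\ B \end{pmatrix}$ via $C = C^T$ is exactly the paper's use of $\Delta_{V,V^\circ}^T = \Delta_{V^\circ,V}$ written in block form. No gaps; the block notation is just a cosmetic repackaging of the paper's submatrix argument.
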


\begin{proof}
Let $\Gamma$ be a network with $\partial V = [n]$.

To prove (a), recall that the linear map $\Delta: \F^V \to \F^V$ is given by the matrix
\[
\Delta_{p,q} = \begin{cases} \sum_{e:e_+=p} w(e), & p = q \\ -\sum_{\substack{e: e_+ = p \\ e_- = q}} w(e), \end{cases}
\]
or equivalently
\[
\Delta_{p,q} = \begin{cases} \sum_{e:e_-=p} w(e), & p = q \\ -\sum_{\substack{e: e_- = p \\ e_+ = q}} w(e). \end{cases}
\]
The matrix $\Delta$ is symmetric, and hence
\[
\ip{u_1, \Delta u_2} = \ip{u_2, \Delta u_1}
\]
for any $u_1, u_2 \in \F^V$.  If $u_1$ and $u_2$ are harmonic on $V^\circ$, then
\[
\ip{u_1|_{\partial V}, \Delta u_2|_{\partial V}} = \ip{u_1, \Delta u_2} = \ip{u_2, \Delta u_1} = \ip{u_2|_{\partial V}, \Delta u_1|_{\partial V}}.
\]
Since $\Lambda$ consists of all pairs $\ip{u|_{\partial V}, \Delta u|_{|\partial V|}}$ for harmonic $u$, this implies that $\Lambda$ is isotropic.

To prove (b), let $\mathcal{U}$ be the space of harmonic functions on $\Gamma$.  Let $\Phi: \mathcal{U} \to \Lambda$ be given by $u \mapsto (u|_{\partial V}, \Delta u|_{\partial V})$.  Note that $\Phi$ is surjective by definition of $\Lambda$.  Let $\Delta_{V^\circ, V}$ be the submatrix of the Laplacian consisting of the rows corresponding to interior vertices.  Then
\[
\mathcal{U} = \ker \Delta_{V^\circ,V}.
\]
On the other hand, note that
\begin{align*}
\ker \Phi &= \{u \in \mathcal{U}: u|_{\partial V} = 0, \Delta u|_{\partial V} = 0\} \\
&\cong \{w \in \F^{V^\circ}: \Delta_{V,V^\circ} w = 0\} = \ker \Delta_{V,V^\circ},
\end{align*}
where $\Delta_{V,V^\circ}$ is the submatrix with columns indexed by the interior vertices.  By symmetry, $\Delta_{V,V^\circ}^T = \Delta_{V^\circ,V}$.  Therefore, when we apply the rank-nullity theorem to $\Phi$ and $\Delta_{V,V^\circ}$ and $\Delta_{V^\circ,V}$, we obtain
\begin{align*}
\dim \Lambda &= \dim \mathcal{U} - \dim \ker \Phi \\
&= \dim \ker \Delta_{V^\circ, V} - \dim \ker \Delta_{V,V^\circ} \\
&= (|V| - \rank \Delta_{V^\circ,V}) - (|V^\circ| - \rank \Delta_{V,V^\circ}) \\
&= |V| - |V^\circ| = |\partial V| = n.
\end{align*}
This completes (b), and (c) is trivial since the constant function $u \equiv 1$ is harmonic.
\end{proof}

The last lemma was straightforward for the most part.  The only subtlety is that sometimes $\dim U > \dim \Lambda$ because there can be harmonic functions with zero potential and zero current on the boundary, as remarked in \S \ref{subsec:completelyreducible}.

Now we consider the converse direction of Proposition \ref{prop:symplectic1}, showing that any Lagrangian subspace of $\F^{2n}$ containing $(1,\dots,1,0,\dots,0)$ can be realized as the boundary behavior of a network.  The first step is purely algebraic:

\begin{lemma} \label{lem:lagrangianpartition}
Suppose $V$ is a Lagrangian subspace of $\mathbb{F}^{2n}$.  For $S \subset [2n]$, let $\pi_S: \mathbb{F}^{2n} \to \mathbb{F}^S$ be the coordinate projection.  Then there is a partition of $[n]$ into two sets $S$ and $T$ such that
\begin{itemize}
	\item $\pi_S(x) = 0$ implies $\pi_{[n]}(x) = 0$ for $x \in V$.
	\item $\pi_{S \cup (n + T)}$ defines an isomorphism $V \to \mathbb{F}^{S \cup (n + T)}$.
\end{itemize}
\end{lemma}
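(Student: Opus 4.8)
The plan is to split $\F^{2n} = \F^n \times \F^n$ into its ``potential'' coordinates $[n]$ and its ``current'' coordinates $n + [n]$, and to read off $S$ from the projection of $V$ onto the potential coordinates. First I would set $W = \pi_{[n]}(V) \subseteq \F^n$ and $r = \dim W$. Since $W$ is $r$-dimensional, a standard rank argument produces a set $S \subseteq [n]$ with $|S| = r$ such that $\pi_S$ restricts to an isomorphism $W \to \F^S$ (take a matrix whose rows are a basis of $W$, extract an invertible $r \times r$ submatrix, and let $S$ index its columns). Put $T = [n] \setminus S$, so that $|T| = n - r$.

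With this choice the first bullet is immediate: if $x \in V$ and $\pi_S(x) = 0$, then $\pi_{[n]}(x) \in W$ and $\pi_S$ is injective on $W$, so $\pi_{[n]}(x) = 0$.

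For the second bullet, I would first note that $|S \cup (n+T)| = r + (n-r) = n = \dim V$, so it suffices to prove that $\pi_{S \cup (n+T)}$ is injective on $V$. Suppose $x \in V$ with $\pi_{S\cup(n+T)}(x) = 0$. The vanishing of the $S$-coordinates gives $\pi_S(x) = 0$, so by the first bullet $\pi_{[n]}(x) = 0$; that is, $x = (0,k)$ lies in the subspace $K = \{v \in V : \pi_{[n]}(v) = 0\}$ of current-supported vectors, and the additional relation $\pi_{n+T}(x) = 0$ forces $k$ to be supported on $S$. The remaining input is the Lagrangian hypothesis, which I would use in the form of isotropy: for any $w \in W$ choose $w'$ with $(w,w') \in V$; then $0 = \omega((w,w'),(0,k)) = \ip{w', 0} - \ip{w, k} = -\ip{w,k}$, so $k \in W^\perp$. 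Because $k$ is supported on $S$, this says $\sum_{s \in S} k_s w_s = 0$ for every $w \in W$; but the functionals $w \mapsto w_s$ for $s \in S$ are linearly independent on $W$ precisely because $\pi_S\colon W \to \F^S$ is an isomorphism, whence every $k_s = 0$ and $x = 0$.

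The conceptual content, and the step I expect to require the most care, is the isotropy computation showing that the current part of a vector in $K$ annihilates $W$; everything else is dimension counting and careful bookkeeping of the decomposition $[2n] = [n] \sqcup (n + [n])$ together with the sign convention in $\omega$. It is worth recording that the argument uses only isotropy of $V$ and the equality $\dim V = n$, which together are exactly the Lagrangian hypothesis, so no stronger form of $V = V^\perp$ is invoked.
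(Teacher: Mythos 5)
Your proof is correct, and it rests on the same two ingredients as the paper's argument: the choice of $S$ so that $\pi_S$ restricts to an isomorphism $\pi_{[n]}(V) \to \mathbb{F}^S$, and the isotropy pairing $\omega\bigl((w,w'),(0,k)\bigr) = -\ip{w,k}$. Where you genuinely diverge is in how the second bullet is finished. The paper first upgrades the isotropy inclusion to an exact equality $\{k : (0,k) \in V\} = \pi_{[n]}(V)^\perp$ via two applications of rank--nullity, then shows $\pi_T$ carries this kernel isomorphically onto $\mathbb{F}^T$, and finally assembles the isomorphism $V \to \mathbb{F}^{S \cup (n+T)}$ with the five lemma applied to a pair of short exact sequences. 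You bypass all of that structure: since $|S \cup (n+T)| = n = \dim V$, it suffices to prove injectivity, and an element of the kernel has the form $(0,k)$ with $k$ supported on $S$ and orthogonal to $\pi_{[n]}(V)$, which forces $k = 0$ because $\pi_S$ maps $\pi_{[n]}(V)$ onto $\mathbb{F}^S$. Your route is leaner --- it needs only one direction of the orthogonality statement and no exact-sequence machinery --- whereas the paper's route establishes the sharper structural fact that the ``current-supported'' part of $V$ equals $\pi_{[n]}(V)^\perp$ exactly, which makes the duality between $S$ and $T$ (and the electrical corollary that potentials on $P$ together with currents on $Q$ determine all boundary data) more transparent. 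Both proofs consume the Lagrangian hypothesis in the same two places: isotropy for the pairing, and $\dim V = n$ for the dimension count, a point you correctly flag at the end.
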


In electrical language, the lemma says the following:

\begin{corollary}
If $\Gamma$ is a linear network over $\mathbb{F}$, then there is a partition of $\partial V$ into two sets $P$ and $Q$ such that potentials on $P$ and net currents on $Q$ uniquely determine the other boundary data.
\end{corollary}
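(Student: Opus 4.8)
The plan is to deduce the Corollary immediately from Lemma~\ref{lem:lagrangianpartition} applied to $V = \Lambda(\Gamma)$: this subspace is Lagrangian (isotropic of dimension $n$) by the forward implication of Proposition~\ref{prop:symplectic1} proved above, so the Lemma supplies a partition $[n] = S \cup T$. Reading the first $n$ coordinates as potentials and the last $n$ as net currents, $S$ becomes the set $P$ and $T$ the set $Q$; the asserted isomorphism $\pi_{S \cup (n+T)}\colon \Lambda(\Gamma) \to \mathbb{F}^{S \cup (n+T)}$ says exactly that the potentials on $P$ together with the net currents on $Q$ determine a unique element of $\Lambda(\Gamma)$, hence all the boundary data. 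Thus everything reduces to proving the Lemma, which is pure linear algebra about Lagrangian subspaces.

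For the Lemma, I would first isolate the only two features of $V$ that matter: it is isotropic for the standard form $\omega((x_1,x_2),(y_1,y_2)) = \langle x_2, y_1\rangle - \langle x_1, y_2 \rangle$, and $\dim V = n$. Let $W = \pi_{[n]}(V) \subseteq \mathbb{F}^n$ be the projection of $V$ onto the potential coordinates and put $r = \dim W$. The engine of the proof is the standard fact that one can pick a coordinate set $S \subseteq [n]$ with $|S| = r$ on which $\pi_S$ restricts to an \emph{isomorphism} $W \xrightarrow{\sim} \mathbb{F}^S$: writing a basis of $W$ as the rows of an $r \times n$ matrix of rank $r$, choose $S$ to index $r$ independent columns. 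With this $S$ the first bullet is automatic, since $\pi_S(x) = 0$ with $x \in V$ forces $\pi_{[n]}(x) \in W$ to vanish by injectivity of $\pi_S|_W$.

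Set $T = [n]\setminus S$, so that $|S \cup (n+T)| = r + (n - r) = n = \dim V$ and the second bullet reduces to injectivity of $\pi_{S\cup(n+T)}|_V$. Take $x = (x_1,x_2) \in V$ with $x_1|_S = 0$ and $x_2|_T = 0$; the first bullet gives $x_1 = 0$. Now I invoke isotropy: for every $y = (y_1, y_2) \in V$ we have $0 = \omega(x,y) = \langle x_2, y_1\rangle - \langle x_1, y_2\rangle = \langle x_2, y_1\rangle$, so $x_2 \perp W$. Since $x_2$ is supported on $S$, this reads $\langle x_2|_S, \pi_S(y_1)\rangle = 0$ for all $y_1 \in W$; but $\pi_S|_W$ is \emph{surjective}, so $\pi_S(y_1)$ exhausts $\mathbb{F}^S$ and therefore $x_2|_S = 0$, whence $x_2 = 0$ and $x = 0$.

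The one genuinely load-bearing idea — and the step I expect to be the crux rather than a true obstacle — is that isotropy is precisely what converts ``$x$ lies in the kernel of the projection'' into the orthogonality relation $x_2 \perp W$, and that the set $S$ must be chosen so that $\pi_S|_W$ is a \emph{bijection}: injectivity alone handles the first bullet, but it is surjectivity of $\pi_S|_W$ that annihilates $x_2$ in the second. Both the isotropy hypothesis and the dimension equality $\dim V = n$ are used, so the argument genuinely needs $V$ to be Lagrangian and not merely isotropic.
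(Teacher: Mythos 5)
Your proof is correct, and it shares the paper's crucial first move: choosing $S$ so that the coordinate projection $\pi_S$ restricts to an isomorphism from $\pi_{[n]}(V)$ onto $\mathbb{F}^S$. Where you diverge is in how the isomorphism $\pi_{S \cup (n+T)}|_V$ is then established. The paper first proves the structural identity $\{w : (0,w) \in V\} = \pi_{[n]}(V)^\perp$ (isotropy gives one inclusion; the reverse uses rank--nullity on $\pi_{[n]}|_V$ together with the formula $\dim U + \dim U^\perp = n$, which holds over any field even though the form may be degenerate on $U$), then shows $\pi_T$ is an isomorphism from that space onto $\mathbb{F}^T$, and finally assembles the two coordinate isomorphisms into one via the five-lemma applied to the exact sequence $0 \to W \to V \to \pi_{[n]}(V) \to 0$. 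You bypass all of this: you prove injectivity of $\pi_{S\cup(n+T)}|_V$ directly, invoking isotropy only at the moment it is needed to get $\langle x_2, y_1\rangle = 0$, and then using surjectivity of $\pi_S|_W$ plus nondegeneracy of the standard pairing on $\mathbb{F}^S$ (pair against basis vectors) to kill $x_2$, closing with a dimension count. Your route is leaner---no orthogonal-complement dimension formula, no five-lemma---and it is equally valid over arbitrary fields. What the paper's longer route buys is the explicit electrical interpretation it records along the way: the currents compatible with zero boundary potential form exactly the annihilator of the achievable potentials, and they are parametrized by their values on $Q$; your argument proves the corollary without exhibiting that intermediate structure.
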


\begin{proof}[Proof of Lemma \ref{lem:lagrangianpartition}]
Let $W = \{w \in \mathbb{F}^n: (0,w) \in V\}$.  If $(x,y) \in V$ and $w \in W$, then
\[
0 = \omega((x,y), (0,w)) = -\ip{x,w},
\]
and hence
\[
W \subset \pi_{[n]}(V)^\perp = \{w \in \mathbb{F}^n: \ip{w,x} = 0 \text{ for } x \in \pi_{[n]}(V)\}.
\]
However, note that $W \cong \ker(\pi_{[n]}|_V)$, hence by the rank-nullity theorem $\dim W + \dim \pi_{[n]}(V) = \dim V = n$.  We also know by the rank-nullity theorem that $\dim \pi_{[n]}(V) + \dim \pi_{[n]}(V)^\perp = n$ for any field.  Therefore, $W = \pi_{[n]}(V)^\perp$.

From basic linear algebra, we can choose $S \subset [n]$ such that $\pi_S: \mathbb{F}^n \to \mathbb{F}^S$ restricts to an isomorphism $\pi_{[n]}(V) \to \mathbb{F}^S$.  Let $T = [n] \setminus S$.  Since $W = \pi_{[n]}(V)^\perp$, this implies that $\pi_T: \mathbb{F}^n \to \mathbb{F}^T$ defines an isomorphism $W \to \mathbb{F}^T$ (details\footnote{$\pi_{[n]}(V) \cap (\mathbb{F}^T \times 0^S) = 0$ in $\mathbb{F}^n$, which implies $\pi_{[n]}(V) + (\mathbb{F}^T \times 0^S) = \mathbb{F}^n$ since $\dim \pi_{[n]}(V) = |S| = n - |T|$.  Hence taking orthogonal complements $\pi_{[n]}(V)^\perp \cap (\mathbb{F}^S \times 0^T) = 0$}).  This implies that $\pi_{S \cup (n + T)}: \mathbb{F}^{2n} \to \mathbb{F}^{S \cup (n + T)}$ defines an isomorphism $V \to \mathbb{F}^{S \cup (n + T)}$.  One way to see this is to by applying the five-lemma to the diagram
\[
\begin{tikzcd}
0 \arrow{d} \arrow{r} & W \arrow{d} \arrow{r} & V \arrow{d} \arrow{r} & \pi_{[n]}(W) \arrow{d} \arrow{r} & 0 \arrow{d} \\
0 \arrow{r} & \mathbb{F}^T \arrow{r} & \mathbb{F}^{S \cup (n + T)} \arrow{r} & \mathbb{F}^S \arrow{r} & 0.
\end{tikzcd}
\]
\end{proof}

\begin{lemma} \label{lem:lagrangian}
Let $\Lambda$ be a Lagrangian subspace of $\mathbb{F}^{2n}$ containing $(1,\dots,1,0,\dots,0)$.  Then $\Lambda$ is the boundary behavior of some linear network over $\mathbb{F}$.
\end{lemma}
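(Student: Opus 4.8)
The plan is to put $\Lambda$ into a symmetric normal form using the partition supplied by Lemma \ref{lem:lagrangianpartition}, realize the non-degenerate (Dirichlet) part by a response-matrix network with no interior vertices, and realize the degenerate (Neumann) part by small gadgets built from interior vertices and non-generic weights. First I would apply Lemma \ref{lem:lagrangianpartition} to get a partition $[n] = S \sqcup T$ for which $\pi_{S \cup (n+T)}$ is an isomorphism $\Lambda \to \F^{S \cup (n+T)}$. After reordering coordinates this says that $\Lambda$ is the graph of the linear map carrying the free data $(\phi_S,\psi_T)$ to the determined data $(\psi_S,\phi_T)$; writing it in blocks gives
\[
\begin{pmatrix}\psi_S\\ \phi_T\end{pmatrix} = \begin{pmatrix} C & D\\ A & B\end{pmatrix}\begin{pmatrix}\phi_S\\ \psi_T\end{pmatrix}.
\]
Imposing that $\Lambda$ be isotropic for the standard symplectic form is a short bilinear computation (in the same spirit as the forward direction already proved) that forces $C$ and $B$ to be symmetric and $A = -D^{T}$. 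Evaluating the defining relations on $v_0 = (\mathbf{1}_n,\mathbf{0}_n)\in\Lambda$ then yields the compatibility conditions $C\mathbf{1}_S = 0$ and $D^{T}\mathbf{1}_S = -\mathbf{1}_T$, which simply encode that constant potentials are harmonic with zero net current. Realizing $\Lambda$ now means building a network whose mixed relation (Dirichlet data on $S$, Neumann data on $T$) is exactly this normal form.

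The base case is $T = \varnothing$, where $\Lambda$ is the graph of the symmetric matrix $C$ with zero row sums. This is realized with no interior vertices by the network on the complete graph on $S$ with edge weight $-C_{ij}$ between distinct $i,j$ (omitting the edge when $C_{ij}=0$), whose Laplacian equals $C$; the zero-row-sum condition is precisely $v_0\in\Lambda$, and the Dirichlet problem is uniquely solvable so the boundary behavior is exactly the graph of $C$. For general $T$ the point is that Neumann coordinates cannot arise from any uniquely-solvable Dirichlet network, so I would deliberately introduce interior vertices and degenerate weights, exactly the phenomenon isolated in \S\ref{subsec:completelyreducible}. The basic device is a \emph{wire gadget}: an interior vertex $m$ joined to $p$ with weight $a$ and to $q$ with weight $-a$ forces $u(p)=u(q)$ for \emph{every} harmonic function while leaving $u(m)$, and hence a current parameter at $p$ and $q$, free. (Over $\F_2$ one takes equal weights, and since $a+(-a)=0$ in every characteristic the gadget behaves uniformly.) Using wire gadgets together with ordinary boundary edges and spikes, I would assemble block by block a network encoding the normal form: the block $C$ by edges among $S$, the coupling $D$ and block $B$ by edges to and among auxiliary interior vertices, and each free Neumann parameter on $T$ supplied by a gadget. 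Since boundary behaviors of glued subnetworks combine by Proposition \ref{prop:subnetworkgluing}, I can build and check the network one piece at a time.

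The main obstacle is the \textbf{verification that the assembled network has boundary behavior equal to $\Lambda$ on the nose}, rather than to some larger or differently-shaped subspace. Because the construction uses interior vertices and degenerate weights, harmonic functions are \emph{not} determined by their boundary data, so I cannot simply read off a response matrix; instead I would establish the two containments directly with the harmonic-continuation machinery of \S\ref{subsec:scafcontinuation}. Existence of a harmonic function realizing each prescribed pair $(\phi_S,\psi_T)$ follows from Lemma \ref{lem:hcuniqueness} applied to a scaffold oriented from $S$ and the Neumann stubs into the rest of the graph, while the fact that the determined data $(\psi_S,\phi_T)$ is \emph{forced} to take the prescribed values follows from Lemma \ref{lem:hcexistence}. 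A secondary obstacle is uniformity over all fields, in particular $\F_2$: I would confirm that every gadget uses only the field elements $0,\pm 1$ and that the forced potential identities survive when $-1=1$, so that no step secretly requires a third field element.

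Once both containments hold and the field-uniformity is checked, one concludes $\Lambda = \Lambda(\Gamma)$ for the constructed network $\Gamma$, which completes the converse implication of Proposition \ref{prop:symplectic1} and, via the reduction lemma already proved, of Theorem \ref{thm:symplectic}. I expect the routine parts to be the normal-form bookkeeping and the complete-graph base case, and the genuinely delicate part to be designing the interior-vertex gadgets so that the degenerate coordinates are produced exactly, together with the harmonic-continuation verification that guarantees exactness over an arbitrary field.
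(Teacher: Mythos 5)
Your opening moves are sound and essentially parallel the paper's: the partition from Lemma \ref{lem:lagrangianpartition}, the block normal form, the conditions forced by isotropy and by $(1,\dots,1,0,\dots,0)\in\Lambda$, and the complete-graph realization when $T=\varnothing$ are all correct. But the case $T\neq\varnothing$, which is the entire content of the lemma, is left as a plan, and the plan as stated has a genuine gap. The wire gadget you propose (one interior vertex joined to $p$ and $q$ with weights $a$ and $-a$) can only force the relation $\phi_p=\phi_q$; the normal form requires forcing, for each $k\in T$, the general linear relation $\phi_k=-\sum_{i\in S}D_{ik}\phi_i$, which no assembly of two-edge gadgets, boundary edges, and spikes obviously produces. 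What works is a zero-weighted-degree star: an interior vertex $m_k$ joined to $k$ with weight $1$ and to each $i\in S$ with weight $D_{ik}$; harmonicity at $m_k$ forces exactly $\phi_k+\sum_i D_{ik}\phi_i=0$ \emph{because the weights at $m_k$ sum to zero}, and that zero-sum condition is precisely your identity $D^{T}\mathbf{1}_S=-\mathbf{1}_T$ --- a connection your write-up never makes. Two further problems: (i) ``the block $C$ by edges among $S$'' is wrong as stated, because each gadget also injects potential-dependent current into $S$ (the current from $m_k$ into $i$ is $D_{ik}(\phi_i-\phi_k+\psi_k)$, not $D_{ik}\psi_k$); the bookkeeping shows the edges among $S$ must realize the Laplacian $C-\operatorname{diag}(D\mathbf{1}_T)-DD^{T}$ (still symmetric with zero row sums, hence realizable), not $C$ itself. (ii) Your plan to encode a nonzero block $B$ ``by edges among auxiliary interior vertices'' would fail, since a relation $\phi_T=-D^{T}\phi_S+B\psi_T$ couples forced potentials to free currents, which zero-sum stars cannot produce; you are rescued by a fact you did not notice: the first property of Lemma \ref{lem:lagrangianpartition} ($\pi_S(x)=0$ implies $\pi_{[n]}(x)=0$) forces $B=0$ identically.

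On verification: harmonic continuation is unnecessarily heavy here. Both your (corrected) gadget network and the paper's network are built from isolated boundary vertices by adjoining boundary edges and boundary spikes, so they are layerable, and \S \ref{subsec:IOlayerstripping} computes their boundary behavior for arbitrary (including degenerate) weights as a product of the matrices $\Xi_j$, $\Xi_{i,j}$ applied to $\F^n\times 0$; no scaffolds are needed. This is in fact the paper's whole strategy, which sidesteps gadget analysis entirely: multiply $\Lambda$ by $\prod_{k\in T}\Xi_k(1)$ to reduce to a subspace of the form $\{(\phi,L\phi)\}$ with $L$ symmetric and zero-row-sum, realize $L$ by boundary edges alone, then attach boundary spikes of conductance $-1$ at the vertices of $T$; the identity $\Lambda=\prod_{k\in T}\Xi_k(-1)\prod_{i<j}\Xi_{i,j}(\cdot)\,(\F^n\times 0)$ \emph{is} the verification. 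Your route can be completed along the lines above and yields a different (star-shaped) network realizing the same $\Lambda$, but as submitted the construction and its verification --- the delicate part, by your own admission --- are missing.
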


\begin{proof}
Choose a partition of $[n]$ into two sets $S$ and $T$ as in the previous lemma.  By reindexing the coordinates, assume that $S = [\ell]$ for some $\ell \leq n$.  Let $m = n - \ell$.  Then we can choose a basis $x_1, \dots, x_n$ of $\Lambda$ such that
\[
\begin{pmatrix} | & \dots & | \\ x_1 & \dots & x_n \\ | & \dots & | \end{pmatrix} \text{ is of the form }
\begin{pmatrix}
I & 0 \\ * & 0 \\ * & * \\ 0 & I
\end{pmatrix},
\]
where the sizes of the blocks are
\[
\begin{pmatrix}
\ell \times \ell & \ell \times m \\ m \times \ell & m \times m \\ \ell \times \ell & \ell \times m \\ m \times \ell & m \times m
\end{pmatrix}.
\]
Then define $\Lambda' := \Xi_{\ell+1}(1) \dots \Xi_n(1)(\Lambda)$ and note that
\[
\Lambda' = \im \begin{pmatrix} I & E_{\ell+1,\ell+1} \\ 0 & I \end{pmatrix} \dots  \begin{pmatrix} I & E_{n,n} \\ 0 & I \end{pmatrix} \begin{pmatrix} I & 0 \\ * & 0 \\ * & * \\ 0 & I \end{pmatrix} = \im \begin{pmatrix} I & 0 \\ * & I \\ * & * \\ 0 & I \end{pmatrix} = \im \begin{pmatrix} I & 0 \\ 0 & I \\ * & * \\ * & I \end{pmatrix},
\]
which can be written as
\[
V' = \im \begin{pmatrix} I \\ L \end{pmatrix}
\]
with $n \times n$ blocks.

Since we have already proved the forward direction of Proposition \ref{prop:symplectic1} and hence Theorem \ref{thm:symplectic}, we know $\Xi_j(t)$ is symplectic and fixes $(1,\dots,1,0,\dots,0)$.  This can also be verified by direct computation.  In any case, $\Lambda'$ is a Lagrangian subspace that contains $(1,\dots,1,0,\dots,0)$.  This implies $L$ is symmetric and has row sums zero.  Thus, $L$ has the form
\[
L =  \sum_{i < j} -L_{i,j}(E_{i,i} - E_{i,j} - E_{j,i} + E_{j,j}),
\]
and this implies that
\[
\begin{pmatrix} I \\ L \end{pmatrix} = \prod_{i < j} \begin{pmatrix} I & 0 \\ -L_{i,j}(E_{i,i} - E_{i,j} - E_{j,i} + E_{j,j}) & I\end{pmatrix} \begin{pmatrix} I \\ 0 \end{pmatrix},
\]
or in other words,
\[
\Lambda' = \prod_{i < j} \Xi_{i,j}(-L_{i,j})(\mathbb{F}^n \times 0^n),
\]
so that
\[
\Lambda = \prod_{k=\ell+1}^n \Xi_k(-1) \prod_{i < j} \Xi_{i,j}(L_{i,j}) (\mathbb{F}^n \times 0).
\]
Using the ideas of \S \ref{subsec:IOlayerstripping}, $\Lambda$ is the boundary behavior of the network obtained by taking $n$ isolated boundary vertices, adjoining boundary edges of conductances $L_{i,j}$ between vertices $i$ and $j$ whenever $L_{i,j} \neq 0$, and then adjoining boundary spikes of conductance $-1$ to the vertices $\ell + 1$, \dots, $n$.
\end{proof}

This completes the proof of Proposition \ref{prop:symplectic1} and hence Theorem \ref{thm:symplectic}.  The proof of the last lemma leads to the following corollaries:

\begin{corollary} \label{cor:equivalentlayerable}
Any $\Lambda \in EG_n(\mathbb{F})$ can be expressed as the boundary behavior of a layerable network with at most $\frac{1}{2}n(n-1) + 1$ edges.
\end{corollary}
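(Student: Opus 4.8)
The plan is to read the desired bound directly off the explicit network constructed in the proof of Lemma \ref{lem:lagrangian}, by counting its edges more carefully. Recall that proof produces, for a partition $[n] = S \sqcup T$ with $|S| = \ell$ and $|T| = m = n-\ell$ (and $S = [\ell]$ after reindexing), a network on $n$ vertices obtained by (i) adjoining a boundary edge of conductance $L_{i,j}$ between vertices $i$ and $j$ for each off-diagonal pair with $L_{i,j} \neq 0$, and (ii) adjoining $m$ boundary spikes of conductance $-1$ at the vertices $\ell+1, \dots, n$, matching the generators $\Xi_{i,j}$ and $\Xi_k$ to boundary edges and spikes as in \S \ref{subsec:IOlayerstripping}. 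My first step is to observe that this network is layerable: one contracts the $m$ spikes (turning the $T$-vertices back into boundary vertices), then deletes every remaining boundary edge, then removes the resulting isolated boundary vertices, which exhausts the graph.

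The key step is a sharpened edge count. The crucial structural fact, visible from the normal form
\[
\Lambda' = \im \begin{pmatrix} I & 0 \\ 0 & I \\ * & * \\ * & I \end{pmatrix} = \im \begin{pmatrix} I \\ L \end{pmatrix}
\]
appearing in the proof of Lemma \ref{lem:lagrangian}, is that the lower-right $m \times m$ block of $L$ (the block indexed by $T \times T$) is the identity $I_m$. Consequently $L_{i,j} = 0$ for every pair of distinct indices $i, j \in T$, so no boundary edge is ever placed between two of the vertices $\ell+1, \dots, n$. Hence the number of boundary edges is at most $\binom{n}{2} - \binom{m}{2}$, the number of unordered pairs from $[n]$ not contained in $T$.

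Adding the $m$ spikes, the total number of edges is at most
\[
\binom{n}{2} - \binom{m}{2} + m = \binom{n}{2} + \frac{m(3-m)}{2}.
\]
A one-line check over integers $m \geq 0$ shows $\tfrac{m(3-m)}{2} \leq 1$, with equality exactly at $m = 1, 2$ (and value $0$ in the generic case $m = 0$ and again at $m=3$, negative thereafter). Therefore the total is at most $\tfrac{1}{2}n(n-1) + 1$, as claimed.

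I expect the only genuine subtlety to be justifying that the $T \times T$ block of $L$ equals $I_m$ rather than an arbitrary symmetric block. This is precisely what the column-reduction step of Lemma \ref{lem:lagrangian} delivers: the identity in that block is inherited from the fact that the current coordinates on $T$ were taken as free parameters (via Lemma \ref{lem:lagrangianpartition}), so no new computation is required beyond citing that normal form. Everything else is the elementary inequality above together with the routine verification of layerability.
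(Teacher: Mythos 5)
Your proof is correct and takes essentially the same route as the paper's: both read the edge count off the explicit network built in the proof of Lemma \ref{lem:lagrangian}, both rest on the observation that the $T \times T$ block of $L$ is the identity (so no boundary edge joins two vertices of $T$), and your bound $\binom{n}{2} - \binom{m}{2} + m$ coincides with the paper's $\tfrac{1}{2}\ell(\ell-1) + \ell m + m$, with the same optimization over the integer $m$. The only addition is your explicit check of layerability, which the paper leaves implicit because the network is constructed from isolated boundary vertices by adjoining boundary edges and boundary spikes, i.e.\ by inverse layer-stripping operations.
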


\begin{proof}
In the previous proof, the number of boundary edges added was the number of nonzero entries of $\Lambda$ above the diagonal.  Since
\[
\Lambda = \begin{pmatrix} * & * \\ * & I \end{pmatrix},
\]
with the last block being $m \times m$, the number of nonzero entries above the diagonal is at most $\frac{1}{2}\ell(\ell - 1) + \ell m$.  The number of boundary spikes adjoined was $m$, so recalling $\ell + m = n$, the total number of edges is at most
\[
m + \frac{1}{2}(n - m)(n - m - 1) + (n - m)m = \frac{1}{2}n(n - 1) - \frac{1}{2} m(m - 3) \leq \frac{1}{2} n(n - 1) + 1.
\]
\end{proof}

\begin{remark*}
In the simple case when the Dirichlet-to-Neumann map exists, one can represent $\Lambda$ by a network on a complete graph.  Thus, the number of edges needed generically should be $\frac{1}{2}n(n-1)$.  The corollary says that even in degenerate cases, we can get away with at most one more edge.
\end{remark*}

\begin{corollary}
Let $Y = \{(i,j) \in [n] \times [n]: i < j\}$.  For $S \subset n$, define $\digamma_S: \mathbb{F}^Y \to EG_n(\mathbb{F})$ by
\[
\digamma_S((t_{i,j})) = \prod_{k \in S} \Xi_k(-1) \prod_{i < j} \Xi_{i,j}(t_{i,j})(\mathbb{F}^n \times 0^n).
\]
Then the images $U_S = \digamma_S(\mathbb{F}^Y)$ cover $EG_n(\mathbb{F})$ and the transition maps $\digamma_S^{-1} \circ \digamma_{S'}$ are rational functions.  In particular, for $\mathbb{F} = \R$ or $\C$, $EG_n$ is a smooth real/complex manifold of dimension $n(n-1)/2$.
\end{corollary}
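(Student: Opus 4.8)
The plan is to make each $\digamma_S$ completely explicit and then extract both the covering property and the rationality of the transition maps from a single recovery formula.

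First I would compute $\digamma_S$ in matrix form. The matrices $\Xi_{i,j}(t)=\begin{pmatrix} I & 0 \\ tK_{i,j} & I\end{pmatrix}$, with $K_{i,j}=E_{i,i}-E_{i,j}-E_{j,i}+E_{j,j}$, all share the same block-lower-triangular unipotent shape, so they commute and multiply by adding their lower-left blocks; thus $\prod_{i<j}\Xi_{i,j}(t_{i,j})=\begin{pmatrix} I & 0\\ L & I\end{pmatrix}$ with $L=L(t):=\sum_{i<j}t_{i,j}K_{i,j}$. Likewise $\prod_{k\in S}\Xi_k(-1)=\begin{pmatrix} I & -D_S \\ 0 & I\end{pmatrix}$, where $D_S=\sum_{k\in S}E_{k,k}$. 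Applying these to $\F^n\times 0^n=\im\begin{pmatrix}I\\0\end{pmatrix}$ gives
\[
\digamma_S\big((t_{i,j})\big)=\im\begin{pmatrix} I-D_SL \\ L\end{pmatrix}.
\]
Here $t\mapsto L(t)$ is a linear isomorphism from $\F^Y$ onto the space $\mathrm{Sym}_0$ of symmetric $n\times n$ matrices with zero row sums (the $K_{i,j}$ form a standard basis of $\mathrm{Sym}_0$), with $t_{i,j}=-L_{i,j}$ for $i<j$; in particular $\dim\F^Y=n(n-1)/2$.

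The covering statement is then immediate from the earlier results: given $\Lambda\in EG_n(\F)$, Lemma \ref{lem:lagrangianpartition} produces a partition of $[n]$ and the computation in Lemma \ref{lem:lagrangian} exhibits $\Lambda=\digamma_S((t_{i,j}))$ for the corresponding spike set $S$, so $\Lambda\in U_S$. For the charts themselves I would prove injectivity via a recovery formula. If $\Lambda=\im\begin{pmatrix}M\\N\end{pmatrix}$ has full column rank, then $\Lambda=\im\begin{pmatrix}I-D_SL\\L\end{pmatrix}$ holds iff the two framings differ by an invertible $P$ on the right; comparing blocks yields $P=M+D_SN$ and
\[
L=N\,(M+D_SN)^{-1},
\]
valid exactly when $M+D_SN$ is invertible. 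This expression is unchanged under $\begin{pmatrix}M\\N\end{pmatrix}\mapsto\begin{pmatrix}M\\N\end{pmatrix}Q$, hence depends only on $\Lambda$; since $t\mapsto L$ is injective, $\digamma_S$ is injective and $\digamma_S^{-1}$ is well-defined on $U_S$.

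To handle the transition maps I would first identify the overlaps as open sets. Because each $\Xi_k(-1)$ is symplectic and fixes $(1,\dots,1,0,\dots,0)$ (established in the course of Theorem \ref{thm:symplectic}), $\im\begin{pmatrix}I-D_SL\\L\end{pmatrix}=\begin{pmatrix}I&-D_S\\0&I\end{pmatrix}\im\begin{pmatrix}I\\L\end{pmatrix}$ is Lagrangian and contains the constant vector iff $\im\begin{pmatrix}I\\L\end{pmatrix}$ is, i.e.\ iff $L\in\mathrm{Sym}_0$. Hence for $\Lambda\in EG_n(\F)$ the recovered $L$ automatically lies in $\mathrm{Sym}_0$, so $U_S=\{\Lambda\in EG_n(\F):\det(M+D_SN)\neq 0\}$ is open (the condition is representation-independent up to a nonzero determinant factor). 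Writing $\Lambda=\digamma_{S'}((t'))$, so $M=I-D_{S'}L'$ and $N=L'$ with $L'=L(t')$, the transition map $t=\digamma_S^{-1}\circ\digamma_{S'}(t')$ is given by
\[
t_{i,j}=-\Big[L'\,(I-D_{S'}L'+D_SL')^{-1}\Big]_{i,j},\qquad i<j,
\]
a ratio of polynomials in $t'$ (polynomial entries, a single determinant in the denominator via the adjugate), hence rational, with open domain $\{t':\det(I-D_{S'}L'+D_SL')\neq 0\}=\digamma_{S'}^{-1}(U_S\cap U_{S'})$.

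Finally, for $\F=\R$ or $\C$ these data assemble into a smooth atlas on $EG_n(\F)$: the finitely many bijections $\digamma_S:\F^{n(n-1)/2}\to U_S$ cover $EG_n(\F)$, the overlaps are open, and the transition maps are rational, hence smooth, with rational inverses obtained by exchanging $S$ and $S'$. Therefore $EG_n(\F)$ is a smooth manifold of dimension $n(n-1)/2$. I expect the main obstacle to be the third step, namely verifying that the recovered $L$ always lands in $\mathrm{Sym}_0$ so that the charts overlap on exactly the locus where the rational formula is defined; this is where the symplectic normalization of the generators $\Xi_k(-1)$ and $\Xi_{i,j}$ is essential, and it is what keeps the transition formula honestly rational rather than merely piecewise-defined.
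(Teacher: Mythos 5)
Your proof is correct and follows essentially the same route as the paper: the covering of $EG_n(\mathbb{F})$ by the $U_S$ comes from Lemma \ref{lem:lagrangianpartition} and the construction in Lemma \ref{lem:lagrangian}, and rationality of the transition maps comes from computing the charts explicitly by multiplying and inverting matrices. The paper's own proof is only a two-line sketch, so your explicit recovery formula $L = N(M + D_S N)^{-1}$, the resulting injectivity of $\digamma_S$, the identification of $U_S$ as the locus $\det(M + D_S N) \neq 0$, and the check (via the symplectic normalization) that the recovered $L$ lies in the symmetric zero-row-sum matrices are exactly the details the paper leaves implicit.
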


\begin{proof}
The fact that the $U_S$'s cover $EG_n$ follows from the previous proofs, and the transition functions are rational because they can be computed in terms of multiplying and inverting matrices.
\end{proof}

\begin{remark*}
It is perhaps not surprising that $EG_n$ is a smooth manifold given by these symplectic equations for real or complex edge weights.  What is remarkable is that the same characterization works for any field, even fields which have no nice topological or algebraic properties.  It seems that this could only be proved by an elementary and explicit argument such as the one given here.
\end{remark*}

\subsection{Characterization of the Electrical Linear Group} \label{subsec:ELchar}

Proposition \ref{prop:symplectic2} showed that an invertible matrix arises as $X([\Gamma,i,j])$ for some network if and only if it is symplectic and preserves $(1,\dots,1,0,\dots,0)$.  In particular, the group $EL_n$ generated by matrices of the form $\Xi_k$ and $\Xi_{j,k}$ is \emph{contained} in the group of symplectic matrices which fix $(1,\dots,1,0,\dots,0)$.  However, we will show in this section that in fact $EL_n$ is \emph{equal} to this group.  The proof once again is elementary, but a bit tedious.  We will construct explicit factorizations in terms of the generators $\Xi_j$ and $\Xi_{i,j}$.  The argument works for any field other than $\F_2$, the field with two elements.

For brevity, we write
\[
c_0 = (1,\dots,1,0,\dots,0)
\]
and
\[
\Omega = \begin{pmatrix} 0 & -1 \\ 1 & 0 \end{pmatrix}.
\]
We recall that $A$ is symplectic if and only if $A^T \Omega A = \Omega$.

\begin{theorem} \label{thm:ELsymplectic}
Suppose $\mathbb{F} \neq \mathbb{F}_2$.  If $A$ is symplectic and $A c_0 = c_0$, then $A \in EL_n(\mathbb{F})$.
\end{theorem}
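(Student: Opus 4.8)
The plan is to show that $EL_n(\F)$ exhausts the group $G := \{A \in \mathrm{Sp}_{2n}(\F) : A c_0 = c_0\}$ of symplectic matrices fixing $c_0 = (\mathbf 1, 0)$, where $\mathbf 1 = (1,\dots,1) \in \F^n$ and we write vectors of $\F^{2n}$ as $(x_1,x_2)$. The inclusion $EL_n(\F) \subseteq G$ is already contained in Proposition \ref{prop:symplectic2}. The first thing I would record is that the generators are \emph{symplectic transvections}: a direct check gives $\Xi_j(a) = T_{v_j,a}$ with $v_j = (e_j,0)$, and $\Xi_{i,j}(a) = T_{w_{ij},-a}$ with $w_{ij} = (0, e_i - e_j)$, where $T_{v,c}(x) = x + c\,\omega(x,v)\,v$. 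Both directions lie in $c_0^\perp = \F^n \times \mathbf 1^\perp$, and in fact $\{v_j\} \cup \{w_{ij}\}$ is a basis of this hyperplane. I would then reduce an arbitrary $A \in G$ to the identity in three steps by multiplying on the left and right by generators.

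\emph{Step 1 (reduce to block–upper–triangular).} Let $L_0 = \F^n \times 0$, a Lagrangian containing $c_0$. Since $A$ is symplectic and fixes $c_0$, the image $A L_0$ is again a Lagrangian containing $c_0 = A c_0$. By the construction in the proof of Lemma \ref{lem:lagrangian}, every Lagrangian containing $c_0$ has the form $\Xi(L_0)$ for some $\Xi \in EL_n(\F)$; choosing such a $\Xi$ with $\Xi L_0 = A L_0$ and replacing $A$ by $\Xi^{-1} A \in G$, I may assume $A L_0 = L_0$, i.e. $A = \begin{pmatrix} B & * \\ 0 & * \end{pmatrix}$. The relation $A^T \Omega A = \Omega$ then forces $A = \mathrm{diag}(B, B^{-T})\,U_S$, where $U_S = \begin{pmatrix} I & S \\ 0 & I\end{pmatrix}$ with $S = S^T$, and $A c_0 = c_0$ forces $B \mathbf 1 = \mathbf 1$. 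Thus it remains to show that $\mathrm{diag}(B,B^{-T}) \in EL_n(\F)$ whenever $B \mathbf 1 = \mathbf 1$, and that $U_S \in EL_n(\F)$ for every symmetric $S$.

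\emph{Step 2 (the Levi part — the main obstacle).} Here I would exploit the pairs of transvections with nonzero symplectic pairing. Since $\omega(v_p, w_{pq}) = -1$, the transvections $\Xi_p(\cdot)$ and $\Xi_{p,q}(\cdot)$ generate a copy of $SL_2(\F)$ acting on the hyperbolic plane $\langle v_p, w_{pq}\rangle$ and trivially on its symplectic complement. The diagonal (dilation) elements of this $SL_2$, which exist precisely because $\F \neq \F_2$ provides a scalar $\lambda \neq 0,1$, act on the first block as the elementary map $B_{p,q,\lambda} = I + (\lambda-1)\,e_p (e_p - e_q)^T$, and one checks $B_{p,q,\lambda} \mathbf 1 = \mathbf 1$. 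The crux of the whole argument is to show that these $B_{p,q,\lambda}$ generate the full stabilizer $H = \{B \in GL_n(\F) : B\mathbf 1 = \mathbf 1\}$. I would prove this by conjugating $H$ to the affine group: a change of basis sending $\mathbf 1$ to $e_n$ identifies $H$ with $GL_{n-1}(\F) \ltimes \F^{n-1}$, which for $|\F| \geq 3$ is generated by elementary transvections and diagonal scalings, and I would match these against (products of) the $B_{p,q,\lambda}$. This is exactly where $\F_2$ must be excluded: over $\F_2$ every $B_{p,q,\lambda}$ degenerates to the identity, and $H$ cannot be reached.

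\emph{Step 3 (the unipotent part).} Once the Levi elements are available in $EL_n(\F)$, the symmetric top–right blocks come almost for free by conjugation. Conjugating $\Xi_j(a)$ by $\mathrm{diag}(B,B^{-T})$ yields $\begin{pmatrix} I & a\,(Be_j)(Be_j)^T \\ 0 & I\end{pmatrix}$, so I can realize $U_{a vv^T}$ for every vector $v$ arising as a column of some $B \in H$ (which includes all $e_i$ and all $e_i + e_j$). Since products of such elements add their top–right blocks, and every symmetric matrix is an $\F$–linear combination of the rank–one symmetric matrices $vv^T$ for $v \in \{e_i\} \cup \{e_i+e_j\}$ — an identity valid in every characteristic, since $(e_i+e_j)(e_i+e_j)^T - e_ie_i^T - e_je_j^T = E_{ij}+E_{ji}$ — I obtain $U_S \in EL_n(\F)$ for all symmetric $S$. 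Combining Steps 1–3 writes $A$ as a product of generators, proving $A \in EL_n(\F)$. The one genuinely delicate point, to which I would devote most of the write-up, is the generation statement for $H$ in Step 2.
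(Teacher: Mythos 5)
Your proposal is correct, but it takes a genuinely different route from the paper. The paper proves the theorem by induction on $n$ with a hands-on reduction: it multiplies $A$ by explicit generators, running a case analysis on the entries of $Ae_{2n}$ and then $Be_n$ (this is where $\F \neq \F_2$ enters, to dodge degenerate parameter choices), until the $n$th and $2n$th rows and columns are trivial, and then invokes the inductive hypothesis on the smaller symplectic matrix. You instead use a parabolic-type decomposition: transitivity of $EL_n(\F)$ on Lagrangians containing $c_0$ (which the paper indeed establishes in the proof of Lemma \ref{lem:lagrangian}, independently of Theorem \ref{thm:ELsymplectic}, so there is no circularity) reduces to the stabilizer of $\F^n \times 0$; the symplectic condition splits that stabilizer into a Levi part $\mathrm{diag}(B,B^{-T})$ with $B\mathbf{1}=\mathbf{1}$ and a unipotent part $U_S$; the Levi part is handled via the $SL_2$ generated by the transvection pairs $\Xi_p, \Xi_{p,q}$ (whose nontrivial diagonal elements exist exactly when $\F \neq \F_2$) together with a generation lemma for $H = \{B : B\mathbf{1}=\mathbf{1}\} \cong GL_{n-1}(\F)\ltimes\F^{n-1}$; the unipotent part follows by conjugation and rank-one spanning. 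Your route is more structural, reuses Lemma \ref{lem:lagrangian} rather than redoing that work, and makes the role of $\F \neq \F_2$ conceptually transparent; the paper's route is self-contained and fully algorithmic, which is what supports its later remarks on rational parametrizations of $EL_n(\F)$. Your deferred crux -- that the matrices $B_{p,q,\lambda} = I + (\lambda-1)e_p(e_p-e_q)^T$ generate $H$ for $|\F| \geq 3$ -- is true and provable exactly along the lines you sketch (it needs its own induction, so the total effort is comparable to the paper's).

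Two small repairs are needed. First, in Step 3 your parenthetical claim that the columns of matrices in $H$ include all $e_i + e_j$ fails when $n = 2$: there $e_i + e_j = \mathbf{1}$, and a matrix in $H$ having $\mathbf{1}$ as a column would force its remaining column to be $0$, contradicting invertibility (the same problem recurs in characteristic $2$, where $e_i - e_j = e_i + e_j$). The fix is harmless: for $|\F| \geq 3$ you can realize $U_{a\,vv^T}$ for any $v$ with two distinct entries, and such $vv^T$ already span all symmetric matrices. Second, the assertion that $\{v_j\} \cup \{w_{ij}\}$ is a \emph{basis} of $c_0^\perp$ should say spanning set (or restrict to $w_{1j}$, $j \geq 2$); this is cosmetic and unused.
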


\begin{proof}
We proceed by induction on $n$.  For $n = 1$, any symplectic matrix $A$ that fixes $c_0$ must be of the form
\[
A = \begin{pmatrix} 1 & t \\ 0 & 1 \end{pmatrix} = \Xi_1(t).
\]

For the induction step, it suffices to find $A_1, \dots, A_\ell \in EL_n(\mathbb{F})$ such that
\[
A_1 \dots A_\ell A = \begin{pmatrix} * & 0 & * & 0 \\ 0 & 1 & 0 & 0 \\ * & 0 & * & 0 \\ 0 & 0 & 0 & 1 \end{pmatrix},
\]
where each ``$*$'' is $(n - 1) \times (n - 1)$.  Heuristically, $A_1 \dots A_\ell A$ is the behavior of IO-network where the $n$th input vertex equals the $n$th output, and this vertex is isolated; we are thus reducing to the case of networks with $n - 1$ inputs and outputs.  If we can find such matrices $A_1$, \dots $A_\ell$, then the matrix $A'$ formed by deleting the $n$th and $2n$th row and column of $A_1 \dots A_\ell A$ must be symplectic and fix $c_0 \in \mathbb{F}^{2(n-1)}$.  So by the induction hypothesis $A' \in EL_{n-1}(\mathbb{F})$, which implies $A \in EL_n(\mathbb{F})$.

Our first goal is to find $A_1$, \dots, $A_m$ generators of $EL_n(\mathbb{F})$ such that $A_m \dots A_1 A$ fixes $e_{2n}$ (the last column is $e_{2n}$).  Heuristically, $A_m \dots A_1 A$ corresponds to an IO-network where the $n$th input vertex is the same as the $n$th output, but is not necessarily an isolated vertex.  Let $x = A e_{2n}$; it suffices to show that by multiplying by elements of $EL_n$ we can map $x$ to $e_{2n}$.  There are several cases:
\begin{enumerate}
	\item Suppose that the ``potential'' $x_n \neq 0$ and that the ``net currents'' $x_{n+1}, \dots, x_{2n-1} \neq 0$.  Let
	\[
	y = \left( \prod_{k=1}^{n-1} \Xi_k(-x_k / x_{n+k}) \right) x.
	\]
	Then $y_1, \dots, y_{n-1} = 0$, $y_n = x_n \neq 0$.  Next, let
	\[
	z = \left( \prod_{k=1}^{n-1} \Xi_{k,n}(-y_{n+k}/y_n) \right) y.
	\]
	Then $z_1, \dots, z_{n-1} = 0$ and $z_{n+1}, \dots, z_{2n-1} = 0$.  But $\omega(c_0,z) = \omega(c_0,x) = 1$, so $z_{2n} = 1$.  Thus, multiplying by $\Xi_n(-z_n)$ will make the $n$th entry zero, yielding $e_{2n}$.
	\item If $x_n = 0$ but $x_{n+1}, \dots, x_{2n-1}, x_{2n} \neq 0$, then we can multiply by $\Xi_n(1)$ to make $x_n \neq 0$, then proceed to Case 1.
	\item Suppose that some of the ``currents'' $x_{n+1}, \dots, x_{n+k}$ are zero, but the ``potentials'' $x_1, \dots, x_n$ are not all equal.  For each $j$ with $x_{n+j} = 0$, we can find a $k$ with $x_j \neq x_k$.  Then multiply by some $\Xi_{j,k}(t)$ to make it nonzero.  In order to guarantee that the ``net current'' at $k$ is still nonzero, we choose $t \neq 0$ and $t \neq -x_{n+k}/(x_k - x_j)$.  This is possible because $\mathbb{F}$ has at least three elements.  Once we have done this for every $j$, proceed to Case 2.
	\item Suppose that $x_1, \dots, x_n$ are all equal to some constant $t$.  Since the vector $c_0$ is fixed by $A$ and all matrices in $EL_n$, it is not possible that $x_{n+1}, \dots, x_{2n}$ are all zero.  Hence, there is some $x_{n+k} \neq 0$, and we can multiply by some $\Xi_k(1)$ to make the new $x_k \neq t$.  Then proceed to Case 3.
\end{enumerate}
Thus, if we let $A_1, \dots, A_m$ be the matrices used in the above operations and $B = A_m \dots A_1 A$, then $Be_{2n} = e_{2n}$.

Our next task is find $A_{m+1}$, \dots, $A_\ell$ such that $A_\ell \dots A_{m+1} B$ fixes both $e_{2n}$ and $e_n$.  Let $x = B e_n$, and consider the following cases:
\begin{enumerate}
	\item Suppose that the ``net currents'' $x_{n+1}, \dots, x_{2n}$ are all nonzero.  Observe
	\[
	x_n = \omega(e_{2n},x) = \omega(B e_{2n}, B e_n) = \omega(e_{2n}, e_n) = 1.
	\]
	Let
	\[
	y = \left( \prod_{k=1}^{n-1} \Xi_k(x_k/x_{n+k}) \right) x,
	\]
	so that $y_1, \dots, y_{n-1} = 0$ and $y_n = 1$.  Then let
	\[
	z = \left( \prod_{k=1}^{n-1} \Xi_{k,n}(-y_{n+k}) \right) y.
	\]
	Then $z_1 = y_1, \dots, z_n = y_n$, and $z_{n+1}, \dots, z_{2n-1} = 0$.  But $\omega(c_0,z) = \omega(c_0,e_n) = 0$, so $z_{2n} = 0$ as well.  Hence, $z = e_n$.
	\item If some of ``currents'' $x_{n+1}, \dots, x_{n+k}$ are zero, but the ``potentials'' $x_1, \dots, x_n$ are not all equal, we can multiply by $\Xi_{j,k}(t)$'s to make all the ``currents'' nonzero (as in the previous part of the proof).  Then proceed to Case 1.
	\item Suppose that $x_1, \dots, x_n$ are all equal to $1$.  One of the ``net currents'' must be nonzero; so in fact, at least two of them are nonzero.  Hence, we can multiply by $\Xi_k(1)$ for some $k \neq n$ to make the new $x_k \neq 1$.  Then proceed to Case 2.
\end{enumerate}
In all these cases, we never multiplied by a $\Xi_n(t)$ matrix.  Thus, if we let $A_{m+1}$, \dots, $A_\ell$ be the matrices used in the above operations, then each one fixes $e_{2n}$, and thus
\[
C = A_\ell \dots A_{m+1} B = A_\ell \dots A_1 A
\]
also fixes $e_{2n}$, besides fixing $e_n$.

Because $C^T \Omega C = \Omega$, we know $C^T = \Omega C^{-1} \Omega^{-1}$.  Since $C^{-1}$ fixes $e_n$ and $e_{2n}$, we know $C^T$ fixes $\Omega e_n = e_{2n}$ and $\Omega e_{2n} = -e_n$.  Thus, the $n$th and $2n$th rows of $C$ are $e_n$ and $e_{2n}$, and so are the $n$th and $2n$th columns.  Thus, $C$ has the desired form and the induction step is complete.
\end{proof}

\begin{remark*}
The theorem fails in the case of $\mathbb{F}_2$.  For instance, for $n = 2$,
\[
\begin{pmatrix} 1 & 0 & 1 & 1 \\ 0 & 1 & 1 & 1 \\ 0 & 0 & 1 & 0 \\ 0 & 0 & 0 & 1 \end{pmatrix} \not \in EL_2(\mathbb{F}_2)
\]
despite being symplectic and fixing $c_0$.  An easy way to see this is to compute the orbit of $e_4$ under the action of $EL_2(\mathbb{F}_2)$ on $\mathbb{F}_2^4$; the orbit has only four elements and does not contain $e_1 + e_2 + e_4$, which is the last column of the matrix of above.\footnote{I have not worked out precisely what happens for $\mathbb{F}_2$, but might do it later.  This would be a good problem for REU students.}
\end{remark*}

As with $EG_n(\mathbb{F})$, the construction in Theorem \ref{thm:ELsymplectic} provides parametrizations of $EL_n(\mathbb{F})$ for which the transition functions are rational.  For a given $A$, we parametrize a ``neighborhood'' using the parameters for Case 1 of each step, keeping the parameters in the other steps fixed.  From this, we work out that the ``dimension'' of $EL_n(\mathbb{F})$ is $n(2n - 1)$, which is the same as for $EG_{2n}(\mathbb{F})$.

The action of $EL_n(\mathbb{F})$ on $EG_n(\mathbb{F})$ is transitive; indeed, the proof of Lemma \ref{lem:lagrangian} showed that every element of $EG_n(\mathbb{F})$ is in the orbit of $\mathbb{F}^n \times 0$.  However, the action is not faithful:  There exist nontrivial elements of $EL_n$ which fix every element of $EG_n$.  These elements are the kernel of the homomorphism $\Upsilon$ from $EL_n$ to the group of bijections $EG_n \to EG_n$ given by $\Xi \mapsto F_\Xi$, where $F_\Xi: EG_n \to EG_n: L \mapsto \Xi(L)$.  The reader can verify that (for $\mathbb{F} \neq \mathbb{F}_2$) the kernel consists of matrices of the form
\[
\begin{pmatrix} I + \mathbf{1} \alpha^T & \mathbf{1} \beta^T + \beta \mathbf{1}^T \\ 0 & I - \alpha \mathbf{1}^T \end{pmatrix},
\]
where $\mathbf{1}$ is the vector with every entry $1$ and $\alpha, \beta \in \R^n$ with $\sum_{k=1}^n \alpha_k = 0$.

\subsection{Network Planarization}

Given a network, we want to find a circular planar network with the same boundary behavior.  This has long been a goal of electrical engineers, who desired to print out flat circuit components with certain behavior.  For instance, \cite{vanLier} suggests using the star-mesh transformation to find planar equivalents.  Thanks to \cite{CIM} Theorem 4 (and related results), we now know exactly what response matrices can occur for circular planar networks with positive linear conductances, which ought to be the end of the matter as far as positive edge weights are concerned.  Many non-planar networks with positive real edge weights cannot have the same boundary behavior as a circular planar network with positive edge weights.

However, if we allow negative edge weights, it is much easier to planarize a network.  The REU paper \cite{KS} conjectured that any real response matrix could be represented by a circular planar network with signed real conductances, and \cite{MG} and \cite{WJ} suggest using the star-mesh transformation with signed conductances.  This turns out to be true for all fields other $\mathbb{F}_2$, as we will prove in Theorem \ref{thm:planarequivalent} below.

We will use the electrical linear group and the star-mesh transformation.  We first review the star-mesh transformation described in \cite{JR} and \cite{DI}, generalizing to arbitrary fields.  The {\bf $n$-star} $\bigstar_n$ is the $\partial$-graph with $n$ boundary vertices $\{1,\dots,n\}$ and one interior vertex $0$, and edges from the interior vertex to each of the boundary vertices.  The {\bf $n$-mesh} $M_n$ is the graph with $n$ boundary vertices, no interior vertices, and edges between any two boundary vertices.

The star-mesh transformation replaces a network on $\bigstar_n$ with a network on $M_n$ with the same boundary behavior and vice versa (if possible).  Using the principle of subnetwork splicing described in \ref{subsec:subgraphs}, we can replace a star subnetwork in a larger network with a mesh subnetwork without affecting the boundary behavior.

The star-mesh transformation in the special case $n = 4$ produces relations between the generators of $EL_n$.  This reduces our original set of generators of $EL_n$ to a smaller set of generators corresponding to ``circular planar'' operations of adjoining boundary spikes and boundary edges between boundary vertices with \emph{adjacent indices} (Lemma \ref{lem:relations} and Proposition \ref{prop:generators}).  We already know that any boundary behavior can be represented by a layerable network, and Proposition \ref{prop:generators} allows us to replace any operation of adjoining a boundary edge with an equivalent operation that preserves network planarity and thus to prove Theorem \ref{thm:planarequivalent}.

\begin{lemma}[adaptation of \cite{JR}] \label{lem:stark}
Let $n \geq 3$.
\begin{itemize}
	\item Consider a network $\Gamma$ on $\bigstar_n$ whose $j$th edge weight is $a_j$.  Then $\Gamma$ has the same boundary behavior as a network on $M_n$ if and only if $\sigma = \sum_{j=1}^n a_j \neq 0$.  In this case, the edge weights on $M_n$ are given by $b_{i,j} = a_i a_j / \sigma$.
	\item Consider a network $\Gamma'$ on $M_n$ with edge weight $b_{i,j}$.  Then $\Gamma'$ has the same boundary behavior as a a network on $\bigstar_n$ if and only if $b_{i,j} b_{k,\ell} = b_{i,k} b_{j,\ell}$ for distinct $i,j,k,\ell$ and
	\[
	\sum_{j \neq i} b_{i,j} + \frac{b_{i,k} b_{i,\ell}}{b_{k,\ell}} \neq 0 \text{ for some } i, k, \ell.
	\]
	In this case, the edge weights on the star are given by
	\[
	a_i = \sum_{j \neq i} b_{i,j} + \frac{b_{i,k} b_{i,\ell}}{b_{k,\ell}},
	\]
	which is independent of the choice of $k$ and $\ell$.
\end{itemize}
\end{lemma}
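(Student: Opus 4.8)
The plan is to prove both parts by computing boundary behaviors explicitly and then, for the mesh-to-star direction, inverting the relation $b_{i,j}=a_ia_j/\sigma$ over an arbitrary field by a cohomological factorization. Throughout, recall that the boundary behavior of a network with no interior vertices is automatically the graph of its response matrix, and that a network whose Dirichlet problem has a unique solution also has $\Lambda$ equal to such a graph.

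First I would handle the star. For $\bigstar_n$ with weights $a_j$, harmonicity at the interior vertex $0$ reads $\sum_j a_j(u_0-\phi_j)=0$, i.e. $\sigma u_0=\sum_j a_j\phi_j$ with $\sigma=\sum_j a_j$. When $\sigma\neq 0$ this determines $u_0$ uniquely, so $\Lambda(\bigstar_n)$ is the graph of the response matrix $L_{i,i'}=a_i\delta_{i,i'}-a_ia_{i'}/\sigma$. For $M_n$ there are no interior vertices, so $\Lambda(M_n)$ is the graph of $L'$ with $L'_{i,i}=\sum_{j\neq i}b_{i,j}$ and $L'_{i,i'}=-b_{i,i'}$. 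Equating off-diagonal entries forces $b_{i,j}=a_ia_j/\sigma$, and then a one-line computation $\sum_{j\neq i}b_{i,j}=a_i(\sigma-a_i)/\sigma=a_i-a_i^2/\sigma$ shows the diagonal entries agree automatically; the resulting $b_{i,j}$ are nonzero since the $a_j$ and $\sigma$ are, so the mesh is a genuine network. This proves the ``if'' direction of the first bullet. For ``only if'' I would argue by contradiction: if $\sigma=0$, take $\phi=0$ and $u_0=-1$ to see that $(0,(a_1,\dots,a_n))\in\Lambda(\bigstar_n)$ with $(a_i)\neq 0$; but every mesh has $\Lambda(M_n)$ a graph, so any element with zero potential has zero second coordinate. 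Hence no mesh matches the star when $\sigma=0$.

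For the necessity in the second bullet, if the mesh agrees with some star then by the first part $\sigma\neq 0$ and $b_{i,j}=a_ia_j/\sigma$. Substituting gives $b_{i,j}b_{k,\ell}=a_ia_ja_ka_\ell/\sigma^2=b_{i,k}b_{j,\ell}$, the product relation; and splitting the claimed expression as $\sum_{j\neq i}b_{i,j}=a_i-a_i^2/\sigma$ together with $b_{i,k}b_{i,\ell}/b_{k,\ell}=a_i^2/\sigma$ shows the recovered value equals $a_i$. Since $a_i$ is an edge weight it is nonzero, so the non-vanishing hypothesis holds, and independence of $k,\ell$ follows because the value is always $a_i$.

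The crux is sufficiency. Assume the product relation and the non-vanishing condition, and set $c_i=b_{i,k}b_{i,\ell}/b_{k,\ell}$. The product relation (applied to four distinct indices when $n\geq 4$; vacuous but with a unique choice of $k,\ell$ when $n=3$) shows $c_i$ is independent of $k,\ell$, and choosing the two expressions $c_i=b_{i,j}b_{i,\ell}/b_{j,\ell}$ and $c_j=b_{i,j}b_{j,\ell}/b_{i,\ell}$ yields the key identity $b_{i,j}^2=c_ic_j$. The obstacle is to upgrade this ``rank-one up to signs'' identity to an honest factorization over a field with no square roots, and I would do this cohomologically: the quantities $e_{i,j}=b_{i,j}/c_i$ satisfy $e_{i,j}e_{j,i}=1$, and using $b_{i,j}b_{j,k}=c_jb_{i,k}$ they satisfy the cocycle identity $e_{i,j}e_{j,k}=e_{i,k}$, so $e_{i,j}=f_i/f_j$ with $f_i:=e_{i,1}$. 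This gives $b_{i,j}=\lambda/(f_if_j)$ and $c_i=\lambda/f_i^2$ for a common nonzero $\lambda$. Defining $a_i:=(\lambda/f_i)\sum_m(1/f_m)$, I would verify that it matches the stated formula, that $\sigma=\lambda\big(\sum_m 1/f_m\big)^2$ is nonzero (forced because the non-vanishing hypothesis makes some $a_{i_0}\neq 0$, whence $\sum_m 1/f_m\neq 0$), that every $a_i\neq 0$, and finally that $a_ia_j/\sigma=\lambda/(f_if_j)=b_{i,j}$. Applying the first part to this star then produces a star network with the prescribed boundary behavior, completing the proof; the splicing principle for subnetworks justifies using these identities as local replacements inside a larger network.
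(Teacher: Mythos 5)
Your proof is correct, and for the first bullet and the necessity half of the second it coincides (up to repackaging through explicit response matrices) with the paper's argument: the same Dirichlet-solvability dichotomy handles $\sigma \neq 0$ versus $\sigma = 0$, and necessity of the quadrilateral rule follows by substituting $b_{i,j} = a_i a_j/\sigma$. Where you genuinely diverge is the sufficiency half of the second bullet. The paper passes to a field extension in which each quantity $b_{i,k}b_{i,\ell}/b_{k,\ell}$ has a square root $c_i$, adjusts signs so that $c_i c_j = b_{i,j}$, and then computes $a_i = c_i \sum_j c_j$ and $\sigma = \left(\sum_j c_j\right)^2$, the desired identities in $\mathbb{F}$ being verified inside the extension. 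You instead stay entirely inside $\mathbb{F}$: writing $c_i$ for what the paper calls $c_i^2$, you observe that $e_{i,j} = b_{i,j}/c_i$ satisfies the multiplicative cocycle identity $e_{i,j}e_{j,k} = e_{i,k}$ (a consequence of $c_j b_{i,k} = b_{i,j}b_{j,k}$, itself an instance of the definition of $c_j$), hence $e_{i,j} = f_i/f_j$, which yields the rank-one factorization $b_{i,j} = \lambda/(f_i f_j)$, $c_i = \lambda/f_i^2$ over the ground field, after which the verification is mechanical. Your route buys independence from field extensions and makes the rank-one structure encoded by the quadrilateral rule explicit; the paper's route buys shorter, more symmetric formulas. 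Two small points to tidy: define $f_1 := 1$ (your $e_{1,1}$ is undefined), and for $n \geq 4$ note explicitly that independence of $c_i$ from the pair $\{k,\ell\}$ follows by changing one index at a time via the quadrilateral rule.
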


\begin{proof}
Observe that for any edge weights on $M_n$, the Dirichlet problem has a unique solution, that is, there is a unique harmonic function that achieves any given boundary potentials.  If $\Gamma$ is a network on an $n$-star and $\sigma = \sum_j a_j = 0$, then any harmonic function $u$ must satisfy $\sum_j a_j u(j) = 0$.  Hence, the Dirichlet problem does not always have a solution.  Therefore, the star cannot have the same boundary behavior as a network where the Dirichlet problem always has some solution.

On the other hand, if $\sigma \neq 0$, then the Dirichlet problem has a unique solution given by $u(0) = \sum_{j=1}^n a_j u(j) / \sigma$.  Moreover, if we set $b_{i,j} = a_i a_j / \sigma$, then $\Delta u(j) = a_j(u(j) - u(0)) = \sum_i b_{i,j}(u(j) - u(i))$.  Thus, the star has the same behavior as a network on $M_n$, and the $b_{i,j}$'s are also uniquely determined.

To prove the second claim, suppose $\Gamma'$ is a network on $M_n$.  If the network has the same boundary behavior as some $n$-star, then the previous argument shows that $b_{i,j} = a_i a_j / \sigma$.  Thus, for distinct $i,j,k,\ell$,
\[
b_{i,j} b_{k,\ell} = \frac{a_i a_j a_k a_\ell}{\sigma^2} = b_{i,k} b_{j,\ell}.
\]
Also,
\[
\sum_{j \neq i} b_{i,j} + \frac{b_{i,k} b_{i,\ell}}{b_{k,\ell}} = \sum_{j \neq i} \frac{a_i a_j}{\sigma} + \frac{a_i a_k a_i a_\ell}{\sigma a_k a_\ell} = a_i \neq 0.
\]

Suppose conversely that $\Gamma'$ satisfies $b_{i,j} b_{k,\ell} = b_{i,k} b_{j,\ell}$ for distinct $i,j,k,\ell$ and
\[
\sum_{j \neq i} b_{i,j} + \frac{b_{i,k} b_{i,\ell}}{b_{k,\ell}} \neq 0 \text{ for some } i, k, \ell.
\]
Fix $i$ and choose distinct $k, \ell \neq i$, and let
\[
a_i = \sum_{j \neq i} b_{i,j} + \frac{b_{i,k} b_{i,\ell}}{b_{k,\ell}}.
\]
The ``quadrilateral rule'' $b_{i,j} b_{k,\ell} = b_{i,k} b_{j,\ell}$ guarantees that the right hand side is independent of $k$ and $\ell$.  By assumption at least one of the $a_i$'s is nonzero.  By extending $\mathbb{F}$ to a larger field if necessary, we can assume that there exists $c_i$ with
\[
c_i^2 = b_{i,k} b_{i,\ell} / b_{k,\ell} \text{ for distinct } k, \ell \neq i,
\]
and again this is independent of $k, \ell$.  Then
\[
c_i^2 c_j^2 = \frac{b_{i,k} b_{i,j}}{b_{j,k}} \frac{b_{j,k} b_{i,j}}{b_{i,k}} = b_{i,j}^2
\]
so that $c_i c_j = \pm b_{i,j}$.  By choosing $c_1$ first and then modifying $c_j$ for $j \neq 1$ if necessary, we can guarantee $c_1 c_j = b_{1,j}$ for $j \neq 1$.  Then for $i \neq 1$ we have
\[
c_i c_j = b_{1,i} b_{1,j} / c_1^2 = b_{i,j}
\]
as well.  Then
\[
a_i = \sum_{j \neq i} b_{i,j} + \frac{b_{i,k} b_{i,\ell}}{b_{k,\ell}} = \sum_{j \neq i} c_i c_j + c_i^2 = c_i \sum_{j=1}^n c_j.
\]
Since at least one $a_i$ is nonzero, we have $\sum_{j=1}^n c_j \neq 0$; hence, all the $a_i$'s are nonzero.  Moreover,
\[
\sigma = \sum_{i=1}^n c_i \sum_{j=1}^n c_j = \left( \sum_{i=1}^n c_i \right)^2 \neq 0.
\]
The network $\Gamma'$ is equivalent to the network on the star because
\[
\frac{a_i a_j}{\sigma} = \frac{\left( c_i \sum_{k=1}^n c_k \right) \left( c_j \sum_{k=1}^n c_k \right)}{\left( \sum_{k=1}^n c_k \right)^2} = c_i c_j = b_{i,j}. \qedhere
\]
\end{proof}

\begin{lemma} \label{lem:relations}
Let $\mathbb{F} \neq \mathbb{F}_2$.  For any distinct indices $i, j, k$, $\Xi_{i,k}(t)$ can be expressed in terms of $\Xi_{i,j}$'s, $\Xi_j$'s, and $\Xi_{j,k}$'s.
\end{lemma}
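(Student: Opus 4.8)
The plan is to prove Lemma~\ref{lem:relations} as an identity among the generators of $EL_n(\F)$, with the star--mesh transformation (Lemma~\ref{lem:stark}) as the conceptual engine and a direct block--matrix computation as the verification. The electrical picture is this: an $i$--$k$ edge of weight $t$ is the series reduction of a two--edge path $i$--$j$--$k$ once the intermediate node $j$ has been eliminated, and the spike operation $\Xi_j$ is precisely the algebraic device that performs this elimination at the level of boundary behavior. Since all the generators involved touch only the indices $i,j,k$, the whole assertion reduces to an identity of $6\times 6$ matrices (block size $3$), and I would prove it by restricting to the coordinates $i,j,k$ throughout.

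Before hunting for the word, I would record two structural facts that both guide the search and show the spike is unavoidable. Writing $\omega$ for the standard symplectic form and $e_p$ for the coordinate vectors, one checks that each generator is a symplectic transvection: $\Xi_j(s)$ is the transvection along $(e_j,0)$, while $\Xi_{i,j}(a)$ and $\Xi_{j,k}(c)$ are transvections along the current directions $(0,e_i-e_j)$ and $(0,e_j-e_k)$. Thus $\Xi_j$ is the only generator acting on the ``potential'' block, hence the only possible source of coupling between the currents at $i$ and $k$. Concretely, any product of $\Xi_{i,j}(a)$ and $\Xi_{j,k}(c)$ alone has lower--triangular current block $aL_{i,j}+cL_{j,k}$, where $L_{p,q}=E_{p,p}-E_{p,q}-E_{q,p}+E_{q,q}$, and since $L_{i,j}+L_{j,k}\neq L_{i,k}$ (the left side retains a genuine $j$--component), no such product can equal $\Xi_{i,k}(t)=\begin{pmatrix} I & 0 \\ tL_{i,k} & I\end{pmatrix}$. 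This confirms that at least one factor $\Xi_j$ must appear and explains its role.

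With this in hand I would produce the explicit word by translating the series reduction through $j$ into generators: realize the $i$--$k$ edge as two edges $i$--$j$ and $j$--$k$ of auxiliary weights $g_1,g_2$ whose series combination $g_1g_2/(g_1+g_2)$ equals $t$, use a spike $\Xi_j$ of a suitable weight to eliminate $j$ (as in the $Y$--$\Delta$/star--mesh identity of Lemma~\ref{lem:stark} with $n=3$), and cancel any byproduct couplings at $j$ by further factors $\Xi_{i,j}(\cdot)$, $\Xi_{j,k}(\cdot)$, $\Xi_j(\cdot)$ with opposite weights --- these are legitimate since $\Xi_{i,j}(a)^{-1}=\Xi_{i,j}(-a)$ and likewise for the others. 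I expect a word of the shape $\Xi_{i,j}(\cdot)\,\Xi_{j,k}(\cdot)\,\Xi_j(\cdot)\,\Xi_{i,j}(\cdot)\,\Xi_{j,k}(\cdot)$ to collapse to $\Xi_{i,k}(t)$, and I would verify the collapse by straightforward block multiplication, using the functoriality of $X$ and the splicing principle (\S\ref{subsec:subgraphs}) together with the generator dictionary of \S\ref{subsec:IOlayerstripping} to justify that the gadget and its series reduction induce the same transformation. The field hypothesis enters exactly at the choice of auxiliary weights: one needs $g_1,g_2\neq 0$ with $g_1+g_2\neq 0$ and $g_1g_2/(g_1+g_2)=t$, which is solvable precisely when $\F\neq\F_2$ (over $\F_2$ the only nonzero weight is $1$ and $1+1=0$, whereas any larger field, including characteristic--two fields such as $\F_4$, admits a valid choice).

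The main obstacle I anticipate is not conceptual but bookkeeping: selecting the auxiliary weights $g_1,g_2$ and the spike weight so that every spurious term at $j$ cancels and the product is \emph{exactly} $\Xi_{i,k}(t)$ rather than $\Xi_{i,k}(t)$ times extra factors. A secondary delicate point is pinning down the $\F_2$ boundary --- one must confirm that the failure is genuine (the solvability of the series equation) and consistent with the fact, noted after Theorem~\ref{thm:ELsymplectic}, that $EL_n$ degenerates over $\F_2$. Once the word and the weight choice are fixed, the verification is a routine symmetric computation restricted to three indices.
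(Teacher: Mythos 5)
Your structural observations (the transvection description of the generators, and the fact that no product of $\Xi_{i,j}$'s and $\Xi_{j,k}$'s alone can equal $\Xi_{i,k}(t)$) are correct, and your instinct to use star--mesh and cancelling pairs is the right one. But the word you propose cannot work. A word of the shape $\Xi_{i,j}(\cdot)\,\Xi_{j,k}(\cdot)\,\Xi_j(s)\,\Xi_{i,j}(\cdot)\,\Xi_{j,k}(\cdot)$ is of the form $L_2\,\Xi_j(s)\,L_1$ with $L_1,L_2$ lower block-triangular, and writing $L_m = \begin{pmatrix} I & 0 \\ A_m & I\end{pmatrix}$ one computes
\[
L_2\,\Xi_j(s)\,L_1 \;=\; \begin{pmatrix} I + sE_{j,j}A_1 & sE_{j,j} \\ A_1 + A_2 + sA_2E_{j,j}A_1 & I + sA_2E_{j,j} \end{pmatrix},
\]
whose upper-right block is $sE_{j,j}$.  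Since $\Xi_{i,k}(t)$ has zero upper-right block, this forces $s=0$, and then your own observation about products of edge-generators rules out the rest.  So no choice of the five weights makes your word collapse to $\Xi_{i,k}(t)$; any valid word must contain at least two $\Xi_j$ factors (the paper's word contains three).

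The root of the problem is a conflation of two different equalities: equality of boundary behaviors (equal images in $EG_n$, i.e.\ equal action on the trivial network $\F^n\times 0^n$) versus equality of matrices in $EL_n$, which is what the lemma asserts.  Composing with $\Xi_j(s)$ does \emph{not} ``eliminate $j$'': in the IO category the old vertex $j$ remains an \emph{input} of the composite (it becomes interior only when you act on a network with no inputs), so $\Xi_j(s)\Xi_{j,k}(g_2)\Xi_{i,j}(g_1)$ is the $Y$-network with its center still on the boundary --- exactly why its matrix has the nonzero upper-right block above --- and is not the series reduction of anything.  The series/$Y$--$\Delta$ equivalence gives an equality of Lagrangian subspaces, not of group elements, and the action of $EL_n$ on $EG_n$ is not faithful (the paper exhibits the kernel at the end of \S\ref{subsec:ELchar}), so no matrix identity follows from it.  The paper's proof is engineered to avoid precisely this: it starts from an IO-network whose morphism literally \emph{is} $\Xi_{1,3}(a)$, inserts cancelling spike pairs and cancelling parallel pairs, which leave the matrix itself unchanged (not merely the boundary behavior), and only then applies the $4$-mesh-to-$4$-star transformation of Lemma \ref{lem:stark} to a \emph{subnetwork}, which preserves the morphism's relation $X$ by the splicing principle; the word is then read off from the layer-by-layer factorization of the resulting planar gadget.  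The hypothesis $\F\neq\F_2$ enters there in choosing $b\neq 0$ with $a+3b\neq 0$ so that the mesh-to-star step is legitimate --- not in your series equation, which concerns a reduction that never appears once the argument is set up correctly.  Repairing your outline along these lines essentially reproduces the paper's proof.
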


\begin{proof}
For simplicity in drawing pictures, we will assume $i = 1$, $j = 2$, and $k = 3$.  We can also assume $n = 3$, since for general $n$, one simply has to add more rows/columns to all the matrices, filling the new spaces with ones on the diagonal and zeroes elsewhere (this corresponds to adding isolated input/output boundary vertices to an IO-network for the indices larger than $3$).

We begin with an IO-network representing $\Xi_{1,3}(a)$ for given $a \neq 0$.  Here the inputs are red and the outputs green, and the inputs/outputs $1$, $2$, and $3$ are in order from top to bottom:
\begin{center}
	\begin{tikzpicture}
		\draw (0,1) to[bend left=30] node[auto] {$a$} (0,-1);
		
		\rgvertex{0}{1}
		\rgvertex{0}{0}
		\rgvertex{0}{-1}
	\end{tikzpicture}
\end{center}
For some parameter $b$ to be chosen later, add in a series with conductances $b$ and $-b$, representing $\Xi_2(1/b)$ and $\Xi_2(-1/b) = \Xi_2(1/b)^{-1}$:
\begin{center}
	\begin{tikzpicture}
		\draw (-2,0) to node[auto] {$-b$} (-1,0);
		\draw (-1,0) to node[auto] {$b$} (0,0) to (1,0);
		\draw (0,1) to node [auto] {$a$} (0,0) to (0,-1);
	
		\rvertex{-2}{0}
		\ivertex{-1}{0}
		\gvertex{1}{0}
		\rgvertex{0}{1}
		\rgvertex{0}{-1}
	\end{tikzpicture}
\end{center}
Next, add some cancelling parallel edges.  Two of them, for instance, correspond to inserting $\Xi_{1,2}(a)$ and its inverse $\Xi_{1,2}(-a)$ into our factorization in $EL_n$.  In the picture, the crossing edges in the middle are not labelled; their weights are shown in the previous picture.
\begin{center}
	\begin{tikzpicture}
		\draw (-2,0) to node[auto] {$-b$} (-1,0);
		\draw (-1,0) to (1,0);
		\draw (0,1) to (0,-1);
		\draw (-1,0) to node[auto,swap] {$b$} (0,1);
		\draw (-1,0) to node[auto] {$b$} (0,-1);
		\draw (1,0) to node[auto] {$a$} (0,1);
		\draw (1,0) to node[auto,swap] {$a$} (0,-1);
		\draw (-1,0) to[bend left=45] node[auto] {$-b$} (0,1);
		\draw (-1,0) to[bend right=45] node[auto,swap] {$-b$} (0,-1);
		\draw (1,0) to[bend right=45] node[auto,swap] {$-a$} (0,1);
		\draw (1,0) to[bend left = 45] node[auto] {$-a$} (0,-1);
	
		\rvertex{-2}{0}
		\ivertex{-1}{0}
		\gvertex{1}{0}
		\rgvertex{0}{1}
		\rgvertex{0}{-1}
	\end{tikzpicture}
\end{center}
We want to choose $b$ so that the $4$-mesh subnetwork in the middle will be equivalent to a star.  Examining the formulas in Lemma \ref{lem:stark}, we choose $b \neq 0$ so that $a + 3b \neq 0$, which is possible because $\F$ has at least three elements.  Set
\[
c = 3a + a^2/b = (a + 3b)(a/b) \neq 0, \qquad d = a + 3b,
\]
and then the $4$-mesh is equivalent to a $4$-star with conductances $c$, $d$, $d$, $d$, and hence our network becomes
\begin{center}
	\begin{tikzpicture}
		\draw (-2,0) to node[auto] {$-b$} (-1,0);
		\draw (-1,0) to node[auto,swap] {$c$} (0,0) to node[auto] {$d$} (1,0);
		\draw (0,1) to node[auto,swap] {$d$} (0,0) to node[auto] {$d$} (0,-1);
		\draw (-1,0) to[bend left=45] node[auto] {$-b$} (0,1);
		\draw (-1,0) to[bend right=45] node[auto,swap] {$-b$} (0,-1);
		\draw (1,0) to[bend right=45] node[auto,swap] {$-a$} (0,1);
		\draw (1,0) to[bend left = 45] node[auto] {$-a$} (0,-1);
	
		\rvertex{-2}{0}
		\ivertex{-1}{0}
		\ivertex{0}{0}
		\gvertex{1}{0}
		\rgvertex{0}{1}
		\rgvertex{0}{-1}
	\end{tikzpicture}
\end{center}
This represents $\Xi_{1,3}(a)$ as the product of\footnote{The matrix at the top of the list is applied first, which means that it goes on the \emph{right} when we write the product out.}
\begin{align*}
&\Xi_2(-1/b) \\
&\Xi_{1,2}(-b) \Xi_{2,3}(-b) \\
&\Xi_2(1/c) \\
&\Xi_{1,2}(d) \Xi_{2,3}(d) \\
&\Xi_2(1/d) \\
&\Xi_{1,2}(-a) \Xi_{2,3}(-a),
\end{align*}
which completes the proof.
\end{proof}

\begin{proposition} \label{prop:generators}
Let $\mathbb{F} \neq \mathbb{F}_2$.  The electrical linear group is generated by $\Xi_j(t)$ for $j = 1$, \dots, $n$ and $\Xi_{j,j+1}(t)$ for $j = 1$, \dots, $n-1$ and $t \in \mathbb{F} \setminus \{0\}$.
\end{proposition}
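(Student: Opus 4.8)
The plan is to prove the proposition by induction on the ``distance'' $d = j - i$ between the endpoints of a boundary-edge generator $\Xi_{i,j}(t)$. By the definition of $EL_n(\F)$ in \S\ref{subsec:IOlayerstripping}, this group is generated by the matrices $\Xi_j(t)$ for all $j$ together with the $\Xi_{i,j}(t)$ for all $i < j$. The spike generators $\Xi_j(t)$ already belong to our proposed generating set, so it suffices to show that every $\Xi_{i,j}(t)$ with $i < j$ lies in the subgroup $H \leq EL_n(\F)$ generated by the $\Xi_j(t)$ and the \emph{adjacent} edge generators $\Xi_{k,k+1}(t)$. Recall that $a \mapsto \Xi_{i,j}(a)$ and $a \mapsto \Xi_j(a)$ are homomorphisms from $(\F,+)$ into $GL_{2n}(\F)$, so $H$ automatically contains all inverses of its listed generators, which is what lets us manipulate products freely.

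First I would dispose of the case $t = 0$, where $\Xi_{i,j}(0)$ is the identity and hence trivially in $H$; so assume $t \neq 0$. For the base case $d = j - i = 1$, the matrix $\Xi_{i,i+1}(t)$ is itself one of the allowed generators, and therefore lies in $H$.

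For the inductive step, suppose $d > 1$ and that every edge generator of distance strictly less than $d$ lies in $H$. I would apply Lemma \ref{lem:relations} with the intermediate index $m = i+1$: in the notation of that lemma, taking its three distinct indices to be $i$, $i+1$, $j$ (with $i+1$ playing the role of the lemma's middle index), we obtain an expression of $\Xi_{i,j}(t)$ as a product of matrices of the forms $\Xi_{i,i+1}(\cdot)$, $\Xi_{i+1}(\cdot)$, and $\Xi_{i+1,j}(\cdot)$. The factors $\Xi_{i,i+1}(\cdot)$ are adjacent edge generators and the factors $\Xi_{i+1}(\cdot)$ are spike generators, so both lie in $H$. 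Since $d > 1$ we have $i+1 < j$, so each factor $\Xi_{i+1,j}(\cdot)$ is an edge generator of distance $j - (i+1) = d - 1 < d$ and hence lies in $H$ by the inductive hypothesis. As $H$ is a subgroup, the whole product lies in $H$, giving $\Xi_{i,j}(t) \in H$. This completes the induction and shows $H = EL_n(\F)$.

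The genuine content of the argument is already packaged in Lemma \ref{lem:relations}, whose proof rests on the star-mesh transformation and uses the hypothesis $\F \neq \F_2$ precisely to choose the auxiliary conductance $b$ with $b \neq 0$ and $a + 3b \neq 0$. Given that lemma, the only point requiring any care here is the bookkeeping that the intermediate index $m = i+1$ satisfies $i < m < j$, so that both edge generators produced by the lemma genuinely have strictly smaller distance than the original---this is exactly what drives the induction. I do not anticipate any further obstacle.
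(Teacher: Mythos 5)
Your proof is correct and is essentially the same argument as the paper's: induction on the distance $j-i$, with Lemma \ref{lem:relations} supplying the inductive step. The only (immaterial) difference is that you split off the intermediate index $i+1$ adjacent to the lower endpoint, whereas the paper uses $k-1$ adjacent to the upper endpoint---a mirror image of the same reduction.
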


\begin{proof}
For $k > j$,  we want to show that $\Xi_{j,k}(t)$ can be expressed in terms of $\Xi_j(t)$ for $j = 1$, \dots, $n$ and $\Xi_{j,j+1}(t)$ for $j = 1$, \dots, $n-1$.  By induction on $k - j$, it suffices to show $\Xi_{j,k}(t)$ can be expressed in terms of $\Xi_{k-1}$'s, $\Xi_{j,k-1}$'s and $\Xi_{k-1,k}$'s, which follows from the last lemma.
\end{proof}

\begin{theorem} \label{thm:planarequivalent}
For $\F \neq \F_2$, every element of $EG_n(\F)$ can be represented by a layerable circular planar network.
\end{theorem}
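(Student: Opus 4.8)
The plan is to build the desired network explicitly from a generator factorization in the electrical linear group, and to observe that the two families of generators surviving in Proposition \ref{prop:generators} are exactly the two operations that can be carried out inside the disk without destroying circular planarity.

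First I would reduce to a statement about $EL_n(\F)$. The action of $EL_n(\F)$ on $EG_n(\F)$ is transitive: the proof of Lemma \ref{lem:lagrangian} writes every $\Lambda \in EG_n(\F)$ as a product of $\Xi_k(-1)$'s and $\Xi_{i,j}$'s applied to $\F^n \times 0^n$, so $\Lambda = \Xi(\F^n \times 0^n)$ for some $\Xi \in EL_n(\F)$. Since $\F \neq \F_2$, Proposition \ref{prop:generators} lets me rewrite $\Xi = \Xi^{(N)} \cdots \Xi^{(1)}$, where each factor is one of the \emph{adjacent} generators $\Xi_j(t)$ (for $1 \le j \le n$) or $\Xi_{j,j+1}(t)$ (for $1 \le j \le n-1$), with $t \in \F \setminus \{0\}$.

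Next I would realize this factorization geometrically. Start with $\Gamma_0$ consisting of $n$ isolated boundary vertices placed at $n$ fixed positions in cyclic order $1, \dots, n$ on $\partial \D$; this is circular planar and $\Lambda(\Gamma_0) = \F^n \times 0^n$. I then apply the operations dual to $\Xi^{(1)}, \dots, \Xi^{(N)}$, in that order, as described in \S \ref{subsec:IOlayerstripping}: the factor $\Xi_j(t)$ becomes the attachment of a boundary spike of conductance $t^{-1} \neq 0$ at the current boundary vertex in position $j$ (which thereby becomes interior, the new degree-$1$ boundary vertex taking over position $j$), and $\Xi_{j,j+1}(t)$ becomes the attachment of a boundary edge of conductance $t \neq 0$ between the boundary vertices in positions $j$ and $j+1$. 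By the formula $\Lambda(\Gamma') = \Xi \cdot \Lambda(\Gamma)$ from \S \ref{subsec:IOlayerstripping}, the resulting network $\Gamma$ satisfies $\Lambda(\Gamma) = \Xi^{(N)} \cdots \Xi^{(1)}(\F^n \times 0^n) = \Lambda$, and all conductances are nonzero since every $t$ is.

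Two things then require verification, and the geometric one is the crux. For \emph{layerability}, reversing the build sequence turns each attached spike into a boundary spike contraction and each attached boundary edge into a boundary edge deletion; undoing the operations in reverse order is legal because the most recently attached feature is always a genuine boundary spike or boundary edge at the moment it is removed, and after also deleting isolated boundary vertices the filtration exhausts $G$, so $G$ is layerable. For \emph{circular planarity}, I maintain the invariant that the $n$ active boundary vertices occupy $n$ fixed cyclic positions $1, \dots, n$ on $\partial \D$. Attaching a spike at position $j$ is harmless: push the old vertex an $\varepsilon$ into $\D^\circ$, place the new boundary vertex at position $j$, and join them by a short radial edge, preserving the cyclic order. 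The delicate case is the adjacent boundary edge: since positions $j$ and $j+1$ are cyclically consecutive, the boundary arc $A$ between them contains no other boundary vertex and lies on a single face of the current embedding with both endpoints on that face; equivalently, drawing the new edge in an arbitrarily thin neighborhood of $A$, as the outermost edge at $j$ and at $j+1$, avoids all existing edges, since no edge crosses $\partial \D$. The main obstacle is precisely this planarity bookkeeping, and it is where the hypothesis pays off: restricting Proposition \ref{prop:generators} to \emph{adjacent} indices is exactly what is needed, because a boundary edge between non-consecutive boundary vertices generally cannot be drawn inside the disk without a crossing, whereas an adjacent one always can.
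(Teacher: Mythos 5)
Your proposal is correct and follows essentially the same route as the paper: reduce to $\Lambda = \Xi(\F^n \times 0^n)$ with $\Xi \in EL_n(\F)$ (the paper cites Corollary \ref{cor:equivalentlayerable}, which is the same content as your appeal to Lemma \ref{lem:lagrangian}), factor $\Xi$ into the adjacent generators of Proposition \ref{prop:generators}, and realize each generator as attaching a boundary spike or a boundary edge between consecutive boundary vertices while maintaining a circular planar embedding with the boundary vertices in cyclic order. Your write-up merely makes explicit the planarity and layerability bookkeeping that the paper's proof states in one sentence.
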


Recall that we defined $EL_n(\mathbb{F})$ using $\Xi_j(t)$ together with $\Xi_{j,k}(t)$ for all $j \neq k$.  The smaller set of generators in Proposition \ref{prop:generators} more closely resembles \cite{LP}'s definition of the electrical linear group.  If we view $EL_n(\F)$ as acting on $EG_n(\F)$ by adjoining boundary spikes and boundary edges to networks, the theorem says that it suffices to consider adjoining boundary edges between consecutively-indexed boundary vertices, rather than between any pair of boundary vertices.

\begin{proof}[Proof of Theorem \ref{thm:planarequivalent}]
By Corollary \ref{cor:equivalentlayerable}, any element of $EG_n(\mathbb{F})$ can be represented by a layerable network, and hence has the form
\[
A(\mathbb{F}^n \times 0^n) \text{ for some } A \in EL_n(\mathbb{F}).
\]
But $A$ can be represented as a product of the generators in Proposition \ref{prop:generators}, which implies that $A(\mathbb{F}^n \times 0^n)$ is the boundary behavior of a network obtained from a network of isolated boundary vertices by adjoining boundary spikes, and adjoining boundary edges between consecutively-indexed boundary vertices.  If we embed the original network with $n$ isolated boundary vertices in the disk with the boundary vertices indexed in CCW order, then at each step the modified network can still be embedded in the disk with the boundary vertices indexed in CCW order, so Theorem \ref{thm:planarequivalent} follows.
\end{proof}

\begin{remark*}
Our proof of Theorem \ref{thm:planarequivalent} is a terribly inefficient algorithm for constructing a circular planar network representing a given boundary behavior, in the sense that it adds too many unnecessary edges.  Future research may find a better method | perhaps by giving a circular planar version of the proof of Lemma \ref{lem:lagrangian} or Theorem \ref{thm:ELsymplectic}.
\end{remark*}

One might hope to show that any boundary behavior can be represented by a \emph{critical} circular planar network, but this is overly optimistic.  Consider the following network:
\begin{center}
	\begin{tikzpicture}
		\node[circle,draw] (4) at (0,0) {};
		\node[circle,fill] (1) at (-1.5,0) {};
		\node[circle,fill] (2) at (1,1) {};
		\node[circle,fill] (3) at (1,-1) {};
		\draw (1) to node[auto] {$a$} (4);
		\draw (4) to node[auto] {$b$} (2);
		\draw (4) to node[auto,swap] {$c$} (3);
		\draw (2) to node[auto] {$d$} (3);
	\end{tikzpicture}
\end{center}
Suppose that $a + b + c = 0$ and $1/b + 1/c + 1/d = 0$ (which can happen for most fields).  In this case, the boundary potentials do not uniquely determine the boundary currents, nor do the boundary currents determine the boundary potentials up to constants.  However, there does not exist a critical circular planar network, or indeed any network recoverable over positive linear conductances, which has this property and has only three boundary vertices.  For the Dirichlet problem to not have a unique solution, it must have an interior vertex, and the interior vertex must have degree $\geq 3$ for the network to be critical circular planar, since a series is not recoverable.  Since a recoverable network with $3$ boundary vertices cannot have more than $3$ edges by consideration of the number of variables, the only possibility is a $Y$.  However, in a $Y$, the Neumann problem has a unique solution.

This example also shows that not every network is equivalent to a network with $\leq \frac{1}{2} n (n - 1)$ edges, as we might hope, so the bound in Corollary \ref{cor:equivalentlayerable} is sharp in this case.

\section{Generalizations and Open Problems} \label{sec:concluding}

\subsection{Nonlinear Networks} \label{subsec:nonlinear}

Johnson's treatment of harmonic continuation \cite{WJ} was motivated by the question of how to recover networks with non-Ohmic resistors.  The current on each edge is given as a nonlinear function of the voltage, that is, $\gamma_e(du(e))$, where $\gamma_e: \R \to \R$.  For $\gamma_e$ to be physically reasonable, one would require that $\gamma_e(0) = 0$ and $\gamma_e$ is increasing.  However, as we shall see, the inverse problem can be solved whenever $\gamma_e(0) = 0$ and $\gamma_e$ is a bijection.

Another more algebraic generalization was described by Avi Levy and the author in \cite{torsion}, motivated by the algebraic-topological perspective on the graph Laplacian in \cite{DKM}.  We can take the edge weights to be units in a ring $R$, and consider potential and current functions taking values in $R$, or more generally in an $R$-module $M$.  The $\Z$-module of harmonic functions on the network could be studied using homological algebra.

A little reflection shows that all our harmonic continuation arguments work in very general situations, including the two described above.  It relied exclusively upon the following ingredients:
\begin{itemize}
	\item The currents on edges and the potentials on vertices can be added together.
	\item The current on an edge is a function by the voltage across the edge.
	\item Conversely, the voltage on an edge is a function of the current on the edge.
	\item Zero voltage corresponds to zero current.
\end{itemize}
This motivates the following definition:
\begin{definition}
Let $M$ be an abelian group, written additively.  A {\bf $BZ(M)$-network} is a $\partial$-graph together with a bijection $\gamma_e: M \to M$ such that
\[
\gamma_e(0) = 0 \text{ and } \gamma_{\overline{e}}(x) = -\gamma_e(-x),
\]
where the second condition guarantees that the current on $e$ is negative the current on $\overline{e}$ and that $\gamma_e$ is uniquely determined by $\gamma_{\overline{e}}$.
\end{definition}

\begin{remark*}
The set-up given here can be generalized even further.  For instance, Kenyon considers a vector bundle Laplacian where the potentials at each vertex take values in some vector space \cite{RK}.  There is a different vector space for each vertex, and each edge has an associated ``parallel transport'' isomorphism between the different vector spaces, which allows us to compare potentials on the two endpoints.  We omit this case for the sake of simpler notation and leave it to the reader to generalize to cases that interest them.
\end{remark*}

The arguments given here adapt almost word for word to show that
\begin{theorem}
If a $\partial$-graph is recoverable by scaffolds, then it is recoverable over $BZ(M)$ for any $M$, that is, the function $\gamma_e: M \to M$ for each $e$ is uniquely determined by $\Lambda(\Gamma)$.
\end{theorem}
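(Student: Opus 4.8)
The plan is to observe that the hypothesis ``recoverable by scaffolds'' is purely combinatorial: scaffolds, their induced partial orders, and the $\Beg$, $\Mid$, $\End$ decomposition (together with functoriality, Lemma \ref{lem:scaffunctor}) make no reference to edge weights or to the field, so the content to prove is precisely that the \emph{electrical} ingredients of the proof of Theorem \ref{thm:solvablerecoverable} survive the passage from $\F$ to $BZ(M)$. First I would install the $BZ(M)$ analogues of the basic notions: a potential is $u \colon V \to M$, its voltage on $e$ is $du(e) = u(e_+) - u(e_-) \in M$, the contribution of $e$ to the net current at $e_+$ is $\gamma_e(du(e))$, and the net current is $\Delta u(p) = \sum_{e \colon e_+ = p} \gamma_e(du(e))$. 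The compatibility $\gamma_{\overline{e}}(x) = -\gamma_e(-x)$ yields conservation, $\gamma_{\overline e}(du(\overline e)) = -\gamma_e(du(e))$, exactly as in the linear case, so $\Delta$, harmonicity, boundary data, and $\Lambda(\Gamma) \subset M^{\partial V} \times M^{\partial V}$ are defined verbatim; the only structural change is that $\Lambda(\Gamma)$ is now merely a subset, not a subspace.

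The heart of the argument is to recheck the two harmonic-continuation lemmas, since these are the only places linearity enters. In the uniqueness lemma (Lemma \ref{lem:hcexistence}), the inductive step takes the minimal edge $e$ on which two candidate harmonic functions $u,v$ disagree and concludes $du(e) = dv(e)$; over $BZ(M)$ this follows because additivity isolates $\gamma_e(du(e))$ in the Kirchhoff equation at the already-controlled vertex $e_-$, and then \emph{injectivity} of the bijection $\gamma_e$ forces $du(e) = dv(e)$, hence $u(e_+) = v(e_+)$. Dually, in the existence lemma (Lemma \ref{lem:hcuniqueness}), the Zorn's-lemma extension step chooses the potential at $e_+$ so that the net current at $e_-$ vanishes; here \emph{surjectivity} of $\gamma_e$ supplies a voltage realizing the required current, and additivity then pins down $u(e_+) = u(e_-) + du(e)$. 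The Zorn scaffolding (initial subsets, maximal elements, pasting along unions) is order-theoretic and unchanged. Thus both lemmas hold over $BZ(M)$ using only that each $\gamma_e$ is a bijection and that $M$ is an abelian group.

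With these in hand, the recovery lemma (Lemma \ref{lem:recovery}) adapts with one genuinely new twist: we must recover the entire function $\gamma_e$, not a scalar. I would run the same construction parametrized by an arbitrary $m \in M$. For a boundary edge (or spike) $e$ recoverable by a scaffold $S$, the uniqueness lemma forces any harmonic $u$ with $u|_P = 0$ and $\Delta u|_Q = 0$ to vanish on $e_-$ and its relevant neighbors, while the existence lemma---applied after setting $u(e_+) = m$ on $\Gamma \setminus \Gamma_2$---produces such a $u$. Consequently the fiber $\{(\phi,\psi) \in \Lambda(\Gamma) : \phi|_P = 0,\ \psi|_Q = 0,\ \phi(e_+) = m\}$ is nonempty, and every element has $\psi(p)$ equal to the current across $e$, namely a fixed value of $\gamma_e$ at the argument determined by $m$. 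Since this is a statement about the \emph{set} $\Lambda(\Gamma)$ alone, no linear solving is needed; letting $m$ range over $M$ reads off the whole bijection $\gamma_e$ from $\Lambda(\Gamma)$.

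Finally I would check that the reduction lemma (Lemma \ref{lem:reductionelectrical}) survives: once $\gamma_e$ is known, the passage between $\Lambda(\Gamma)$ and $\Lambda(\Gamma')$ for a single spike contraction or edge deletion is still a bijection of the boundary-data sets---adding a spike uses surjectivity of $\gamma_e$ to solve for the new boundary potential making the interior current zero, and adding an edge leaves interior Laplacians unchanged---so $\Lambda(\Gamma')$ is computable from $\Lambda(\Gamma)$ and the recovered $\gamma_e$, and conversely. Then the proof of Theorem \ref{thm:solvablerecoverable} runs unchanged: along a filtration witnessing recoverability by scaffolds, each edge's $\gamma_e$ is recovered and the boundary behavior of the next network is computed, and since the filtration exhausts all edges, every $\gamma_e$ is determined by $\Lambda(\Gamma)$. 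The main obstacle I anticipate is not any single step but the bookkeeping in the recovery and reduction lemmas over a mere \emph{set} $\Lambda(\Gamma)$: I must confirm that every place the original proof ``solves linear equations'' or reads off a row operation is replaced by a statement about nonemptiness and constancy of an explicit fiber of $\Lambda(\Gamma)$, which is exactly what bijectivity of each $\gamma_e$ and additivity of $M$ guarantee.
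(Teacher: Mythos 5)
Your proposal is correct and takes essentially the same approach as the paper: the paper's entire proof of this theorem is the observation that the harmonic-continuation machinery (Lemmas \ref{lem:hcexistence}, \ref{lem:hcuniqueness}, \ref{lem:recovery}, \ref{lem:reductionelectrical}, and Theorem \ref{thm:solvablerecoverable}) uses only additivity of potentials and currents, the voltage-current bijections with $\gamma_e(0)=0$, and hence ``adapts almost word for word'' to $BZ(M)$. Your write-up carries out exactly that adaptation---injectivity of $\gamma_e$ for the uniqueness steps, surjectivity for the existence/extension steps, and recovery of the whole function $\gamma_e$ by letting the imposed boundary value $m$ range over $M$---which is precisely what the paper asserts but leaves to the reader.
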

In particular, we have reproved the main result of \cite{WJ} that critical circular planar networks are recoverable in the nonlinear case.

The category of IO-networks generalizes easily to the $BZ(M)$ case, and the IO boundary behavior $X$ is now a functor from the category of nonlinear IO-networks to the category of relations (not necessarily linear).  Now suppose we have an elementary factorization of $[\Gamma,i,j]: P \to Q$ consisting of $[\Gamma_k,i_k,j_k]: P_{k-1} \to P_k$ and choose $P_\ell$ with $|P_\ell| = m = m(P,Q)$.  Suppose there are $k$ input stubs and $\ell$ output stubs.  Then, starting in the middle with $P_\ell$ and working toward the beginning and the end of the factorization, we can use harmonic continuation to parametrize $X([\Gamma,i,j])$ by $(M^{P_\ell})^2 \times M^k \times M^\ell$.\footnote{However, the same argument does not work for semi-elementary factorizations.}

This implies that if there is an elementary factorization, then the rank-connection principle generalizes to the nonlinear case, provided we have a suitable notion of dimension.  For instance, if $M$ is finite, then $\{x: \exists y \text{ with } (x,y) \in X([\Gamma,i,j])\}$ has cardinality $|M|^{2m+k}$ and $\{x: (x,0) \in X([\Gamma,i,j])\}$ has cardinality $|M|^k$.  Thus, $m$ can be detected from $X([\Gamma,i,j])$ by looking at the size of these sets.  Similarly, if $M = \R$ and $\gamma_e: M \to M$ is a homeomorphism, then $\{x: \exists y \text{ with } (x,y) \in X([\Gamma,i,j])\}$ has dimension $2m + k$ as a topological manifold and $\{x: (x,0) \in X([\Gamma,i,j])\}$ has dimension $k$.  So again, $m$ can be detected from the boundary behavior.

Another consequence of elementary factorizations is
\begin{proposition} \label{prop:manifold}
Suppose $M = \R$ and consider networks where $\gamma_e: \R \to \R$ is a homeomorphism.  If $\Gamma$ is a finite layerable network with $n$ boundary vertices, then $\Lambda(\Gamma)$ is properly embedded topological submanifold of $\R^{2n}$ which is homeomorphic to $\R^n$.
\end{proposition}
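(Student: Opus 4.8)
The plan is to mirror the computation of $\Lambda(\Gamma)$ through the electrical linear group in \S\ref{subsec:IOlayerstripping}, but carried out in the nonlinear $BZ(\R)$ setting and keeping track of topological rather than linear structure. Since $\Gamma$ is finite and layerable, exactly as in \S\ref{subsec:IOlayerstripping} the IO-network morphism $[\Gamma,i,j] \colon \varnothing \to [n]$ factors as
\[
[\Gamma,i,j] = [\Gamma_m,i_m,j_m] \circ \dots \circ [\Gamma_1,i_1,j_1] \circ [\Gamma_0,i_0,j_0],
\]
where $[\Gamma_0,i_0,j_0]\colon \varnothing \to [n]$ is the edgeless network with $V = \partial V = [n]$ and each remaining factor is an elementary morphism of type $1$ or type $2$ carrying a single boundary spike or boundary edge. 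This factorization is purely a statement about the graph and its layer-stripping filtration, so it transfers verbatim from the linear case. Applying the (nonlinear) boundary-behavior functor $X$ and noting $\Lambda(\Gamma_0) = \R^n \times \{0\}$ (there are no edges, so every function is harmonic with zero net current), we obtain $\Lambda(\Gamma) = \Phi(\R^n \times \{0\})$, where $\Phi = \Phi_m \circ \dots \circ \Phi_1$ and $\Phi_k = X([\Gamma_k,i_k,j_k])$.

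The central step I would carry out is to verify that each $\Phi_k$ is a homeomorphism of $\R^{2n}$ onto itself, and in particular a genuine bijective function rather than merely a relation. For a type $2$ factor (a boundary edge $e$), $X$ sends $(x_1,x_2)$ to $(x_1, x_2 + \delta)$, where $\delta$ adjusts the net currents at the two endpoints by $\pm \gamma_e(x_1(e_+) - x_1(e_-))$; since $\gamma_e$ is continuous this is a continuous bijection whose inverse subtracts the same correction. For a type $1$ factor (a boundary spike $e$), $X$ fixes the current coordinates and alters the single potential coordinate at the new boundary tip by applying $\gamma_e^{-1}$ to the current coordinate at the interior endpoint; because $\gamma_e$, and hence $\gamma_e^{-1}$, is a homeomorphism of $\R$, this map and its inverse are continuous. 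In both cases $\Phi_k$ is an honest homeomorphism $\R^{2n} \to \R^{2n}$, so composition of these relations is ordinary function composition and $\Phi$ is a self-homeomorphism of $\R^{2n}$.

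Finally I would observe that $\R^n \times \{0\}$ is a linear subspace of $\R^{2n}$, hence a properly embedded topological submanifold homeomorphic to $\R^n$, and that these properties are preserved by the ambient self-homeomorphism $\Phi$. The restriction $\Phi|_{\R^n \times \{0\}}$ is a homeomorphism onto $\Lambda(\Gamma)$, giving $\Lambda(\Gamma) \cong \R^n$; and because $\Phi$ is a homeomorphism of the ambient space it is proper and carries the closed, properly embedded subspace $\R^n \times \{0\}$ to a closed, properly embedded one, with the pair $(\R^{2n}, \Lambda(\Gamma))$ homeomorphic to the standard flat pair $(\R^{2n}, \R^n \times \{0\})$. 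This yields the stated conclusion.

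I expect the only genuine work to lie in the second paragraph: confirming that the nonlinear type $1$ and type $2$ relations are honest homeomorphisms of $\R^{2n}$ (this is exactly where the hypothesis that every $\gamma_e$ is a homeomorphism is used) and, as a bookkeeping point, that relational composition reduces to function composition for these factors so that $\Lambda(\Gamma)$ really is $\Phi(\R^n \times \{0\})$. Once this is established, the submanifold, properness, and homeomorphism-type claims follow formally from $\Phi$ being an ambient homeomorphism.
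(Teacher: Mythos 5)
Your proof is correct and takes essentially the same approach as the paper: use layerability to factor $[\Gamma,i,j]\colon \varnothing \to [n]$ into elementary type 1 and type 2 morphisms applied to the edgeless network, check that each such factor induces a self-homeomorphism of $\R^{2n}$ (this is exactly where the hypothesis that $\gamma_e$ is a homeomorphism enters), and push $\R^n \times \{0\}$ forward through the composite. The paper's proof is simply a terser version of yours, asserting without detail the two verifications you spell out in your second and third paragraphs.
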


\begin{proof}
Assume $[n] = \partial V$.  Since $\Gamma$ is layerable, as discussed in \S \ref{subsec:IOlayerstripping}, we can express $[\Gamma,i,j]: \varnothing \to [n]$ as
\[
[\Gamma_n,i_n,j_n] \circ \dots \circ [\Gamma_0,i_0,j_0],
\]
where $[\Gamma_0,i_0,j_0]: \varnothing \to [n]$ is a network consisting of isolated boundary vertices, and the other morphisms correspond to adding boundary spikes or boundary edges.  Since $\gamma_e$ is a homeomorphism, we can see that for $k \geq 1$, $X_k = X([\Gamma,i_k,j_k])$ defines a homeomorphism $\R^{2n} \to \R^{2n}$.  Now
\[
\Lambda(\Gamma) = X_n \circ \dots \circ X_1(\R^n \times 0),
\]
which proves the asserted claims.
\end{proof}

In the smooth case, we can refine this to

\begin{proposition} \label{prop:lagrangianmanifold}
Let $\Gamma$ be a finite network over $\R$.  Suppose that $\gamma_e: \R \to \R$ is a diffeormorphism.  Then
\begin{itemize}
	\item $\Lambda(\Gamma)$ is a smooth submanifold of $\R^{2n}$ which is diffeomorphic to $\R^n$.
	\item If $d_u\Gamma$ is the linear network with edge weights $\gamma_e'(u(e_+) - u(e_-))$, then the tangent space
	\[
	T_{(u|_{\partial V}, \Delta u|_{\partial V})}\Lambda(\Gamma) = \Lambda(d_u\Gamma).
	\]
	\item The tangent space to $\Lambda(\Gamma)$ at each point is a Lagrangian subspace of $\R^{2n}$, which means that $\Lambda(\Gamma)$ is a Lagrangian submanifold of $\R^{2n}$.
\end{itemize}
\end{proposition}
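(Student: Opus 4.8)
The plan is to upgrade the proof of Proposition \ref{prop:manifold} from the topological to the smooth category and then read off the tangent spaces directly from the factorization. As in that proof, I would express the morphism $[\Gamma,i,j]\colon \varnothing \to [n]$ as a composition $[\Gamma_n,i_n,j_n]\circ\dots\circ[\Gamma_0,i_0,j_0]$, where $\Gamma_0$ consists of $n$ isolated boundary vertices and each later factor attaches a boundary spike or a boundary edge, so that $\Lambda(\Gamma)=X_n\circ\dots\circ X_1(\R^n\times 0)$ with $X_k=X([\Gamma_k,i_k,j_k])$. The key observation is that, when each $\gamma_e$ is a diffeomorphism of $\R$, the boundary-data maps $X_k$ for $k\ge 1$ are no longer merely homeomorphisms but honest diffeomorphisms of $\R^{2n}$: adding a boundary edge sends $(\phi,\psi)$ to $(\phi,\psi')$ with $\psi'(p)=\psi(p)+\gamma_e(\phi(p)-\phi(q))$ and the opposite change at $q$, while adding a boundary spike replaces the datum at $q$ by $\phi(p)=\phi(q)-\gamma_e^{-1}(-\psi(q))$, $\psi(p)=\psi(q)$; both are smooth with smooth inverses precisely because $\gamma_e$ and $\gamma_e^{-1}$ are smooth bijections. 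Hence $\Lambda(\Gamma)$ is the image of the linear subspace $\R^n\times 0$ under a diffeomorphism of $\R^{2n}$, so it is a properly embedded smooth submanifold diffeomorphic to $\R^n$, which is the first bullet.

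For the second bullet I would differentiate this factorization. A direct computation shows that the differential of each nonlinear elementary map $X_k$ at the relevant point is exactly the linear elementary map from \S\ref{subsec:elementarymorphisms} with weight $w=\gamma_e'(du(e))$: differentiating $\psi'(p)=\psi(p)+\gamma_e(\phi(p)-\phi(q))$ produces the boundary-edge shear of weight $\gamma_e'(du(e))$, and differentiating $\phi(p)=\phi(q)-\gamma_e^{-1}(-\psi(q))$ produces the boundary-spike shear of weight $\gamma_e'(du(e))$, using $(\gamma_e^{-1})'=1/\gamma_e'$. By the chain rule, the differential of $X_n\circ\dots\circ X_1$ at the point corresponding to $u$ is the composition of these linear maps, that is, the product $\Xi_n\circ\dots\circ\Xi_1$ of the \emph{linear} elementary transformations associated to the linearized network $d_u\Gamma$. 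Applying this differential to the tangent space $T(\R^n\times 0)=\R^n\times 0$ and invoking the linear factorization of \S\ref{subsec:IOlayerstripping} identifies the image with $\Lambda(d_u\Gamma)$, giving $T_{(u|_{\partial V},\Delta u|_{\partial V})}\Lambda(\Gamma)=\Lambda(d_u\Gamma)$.

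The third bullet is then immediate: $d_u\Gamma$ is a genuine linear $\R$-network, since its weights $\gamma_e'(du(e))$ are nonzero because a diffeomorphism of $\R$ has nonvanishing derivative, so by the already-established forward direction of Proposition \ref{prop:symplectic1} its boundary behavior $\Lambda(d_u\Gamma)$ is a Lagrangian subspace of $\R^{2n}$. Thus every tangent space of $\Lambda(\Gamma)$ is Lagrangian, which is exactly the statement that $\Lambda(\Gamma)$ is a Lagrangian submanifold.

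I expect the main obstacle to be conceptual rather than computational: making sure $\Lambda(\Gamma)$ is cut out cleanly as a manifold despite the possible presence of nonzero $\mathcal{U}_0(\Gamma)$, i.e.\ harmonic functions with zero boundary potential and current, as in the triangle-in-triangle example of \S\ref{subsec:completelyreducible}. Such functions make the naive approaches fail, since realizing $\Lambda$ as a regular level set of $u\mapsto\Delta u|_{V^\circ}$, or as the image of a submersion out of $\mathcal{U}(\Gamma)$, breaks down when the linearized interior Laplacian drops rank, which can happen once the $\gamma_e'$ are allowed to be negative. The factorization circumvents this entirely by building $\Lambda$ as a diffeomorphic image of $\R^n$ without ever referring to $\mathcal{U}(\Gamma)$; the one hypothesis it genuinely requires is that $\Gamma$ be layerable, exactly as in Proposition \ref{prop:manifold}, and I would carry that assumption forward here. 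The remaining work is purely the verification of the smoothness and derivative formulas for the two elementary maps, which is routine once the sign convention $\gamma_{\overline e}(x)=-\gamma_e(-x)$ is tracked carefully.
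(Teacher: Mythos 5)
Your proof is correct and takes essentially the same route as the paper's: factor $[\Gamma,i,j]$ through elementary morphisms so that $\Lambda(\Gamma)=X_n\circ\dots\circ X_1(\R^n\times 0)$, note each $X_k$ is a diffeomorphism, compute $DX_k$ as the linear elementary (symplectic) matrix of weight $\gamma_e'(du(e))$, apply the chain rule to get $T\Lambda(\Gamma)=\Lambda(d_u\Gamma)$, and invoke the symplectic characterization for the Lagrangian claim. Your remark that layerability must be carried over as a hypothesis is also faithful to the paper, whose proof begins ``proceeding as in the previous proof'' (Proposition \ref{prop:manifold}, which assumes layerability) and whose subsequent non-layerable counterexample shows that hypothesis is genuinely needed.
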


\begin{proof}
Proceeding as in the previous proof, we see that $X_k$ is a diffeomorphism, which establishes the first claim.  Moreover, by direct computation, the derivative $DX_k$ at the point corresponding to a potential function $u$ is given by the symplectic matrix corresponding to adjoining spikes or boundary edges of weight $\gamma_e'(u(e_+) - u(e_-))$.  Thus, $D(X_n \circ \dots \circ X_1)$ is the product of symplectic matrices corresponding to adding edges of weight $\gamma_e'(u(e_+) - u(e_-))$.  This establishes the second claim, and the third follows immediately from Theorem \ref{thm:symplectic}.
\end{proof}

The fact that $\Lambda(\Gamma)$ is a submanifold is nontrivial and does not hold for all networks.  Indeed, consider the following network:
\begin{center}
	\begin{tikzpicture}
		\node[circle,fill] (1) at (-1,0) [label = left: $1$] {};
		\node[circle,fill] (2) at (1,0) [label = right: $2$] {};
		\node[circle,draw] (3) at (0,1) [label = above: $3$] {};
		\node[circle,draw] (4) at (0,-1) [label = below: $4$] {};
		\draw (1) to node[auto] {$e_1$} (3) to node[auto] {$e_2$} (2) to node[auto] {$e_4$} (4) to node[auto] {$e_3$} (1);
	\end{tikzpicture}
\end{center}
Define $\gamma_e: \R \to \R$ by $\gamma_e = \rho_e^{-1}$, where $\rho_{e_1}(t) = \rho_{e_3}(t) = t + \frac{1}{2} \sin t$ (the orientation of the edge does not matter since the function is odd), and $\rho_{e_2}(t) = \rho_{e_3}(t) = -t$.  These are bijective $C^\infty$ resistance functions with a $C^\infty$ inverse.

We can view $\rho_e$ as a \emph{resistance function} which gives the voltage on an edge as a function of the current.  The series with resistance functions $\rho_{e_1}$ and $\rho_{e_2}$ is equivalent to a single-edge with resistance $\rho_{e_1} + \rho_{e_2}$.  Thus, the network is equivalent to a parallel connection
\begin{center}
	\begin{tikzpicture}
		\node[circle,fill] (1) at (-1,0) [label = left: $1$] {};
		\node[circle,fill] (2) at (1,0) [label = right: $2$] {};
		\draw[->] (1) to[bend left = 30] node[auto] {$e_1$} (2);
		\draw[<-] (2) to[bend left = 30] node[auto] {$e_2$} (1);
	\end{tikzpicture}
\end{center}
in which each edge has resistance function $\rho(t) = \frac{1}{2} \sin t$.  Though this is not a $BZ(M)$ network, it still makes sense to talk about harmonic functions as being given by a potential $u: V \to \R$ and a compatible current function $c: E \to \R$ with $c(\overline{e}) = -c(e)$.  Though the manipulations we are about to do can be understood without reducing the series to a single edge with a rather degenerate resistance function, we think that this is conceptually simpler.

Let $e_1$ and $e_2$ be the oriented edges shown in the picture.  Thus, a potential function $u$ has a compatible current function $c: E \to \R$ if and only if
\[
u_1 - u_2 = \tfrac{1}{2} \sin c_{e_1} = \tfrac{1}{2} \sin c_{e_2}.
\]
Now $\sin c_{e_1} = \sin c_{e_2}$ is equivalent to $c_{e_2} = c_{e_1} + 2 \pi n$ or $c_{e_2} = \pi - c_{e_1} + 2 \pi n$.  If $c_{e_1} = c_{e_2} + 2 \pi n$, then the net current $\psi_1 = c_{e_1} + c_{e_2} = 2c_{e_1} + 2 \pi n$ and $\psi_2 = -\psi_1$ and $u_1 - u_2$ must be $\frac{1}{2} \sin \psi_1/2$.  If $c_{e_2} = \pi - c_{e_1} + 2 \pi n$, then $\psi_1 = (2n + 1)\pi$ and $\psi_2 = -\psi_1$ and $u_1 - u_2$ could be any number in $[-1,1]$.  Thus,
\begin{align*}
L = &\{(\phi,\psi): \phi_1 - \phi_2 = \tfrac{1}{2} \sin \psi_1/2, \, \psi_1 = -\psi_2\} \\ &\cup \{(\phi,\psi): \phi_1 - \phi_2 \in [-1,1], \, \psi_1 = (2n + 1) \pi, \, \psi_2 = -\psi_1\}.
\end{align*}
This is not a smooth manifold in any neighborhood of the points where $\phi_1 - \phi_2 = \pm 1$ and $\psi_1 = (2n + 1) \pi$.

\subsection{More General Sufficient Conditions for Recoverability}

The condition of recoverability for scaffolds is not as general as possible.  One improvement on recoverability by scaffolds stems from the observation that Lemmas \ref{lem:hcuniqueness} and \ref{lem:hcexistence} only used ``partial scaffolds'' defined on subgraphs of $G$.  Moreover, in the proof of Lemma \ref{lem:recovery}, we did not use all of the scaffold $S$, but only the parts in the regions $\Gamma_1$ and $\Gamma_2$.  Thus, we can define a class of $\partial$-graphs that are {\bf recoverable through partial scaffolds}.  This condition would be more complicated to state and more unwieldy in many situations.  But it would still pull back through UHMs, since the scaffolds defined on subgraphs can still be pulled back functorially.

Similar to the generalization of elementary factorizations in \S \ref{subsec:semielementary}, we can adapt the definition of scaffolds to account for harmonic continuation steps that define a function to be constant on some subnetwork.  Moreover, we can use the fact that a boundary wedge sum of recoverable networks is recoverable, provided one of them is finite.  However, the preimage of a boundary wedge-sum under a UHM is not a boundary wedge-sum.  Similarly, the more general notion of harmonic continuation given by semi-elementary factorizations does not pull back functorially under UHMs.

We did not develop these more general conditions systematically because they were not needed for most graphs on surfaces or examples that one usually comes up with by hand.  Nonetheless, such conditions might have some hope of geometrically characterizing recoverability, though it not clear to the author how to prove this.

Moreover, the notion of ``recoverability'' is subtle.  A network can be recoverable for \emph{generic} edge weights over some algebraically closed field without being recoverable for \emph{all} edge weights.  It is known that some networks which are not even completely reducible are recoverable for positive real edge weights.  Given that the geometric characterizations in this paper correspond to algebraic conditions holding for \emph{all} edge weights in an infinite field, we expect that it is easier to give a geometric characterization for universal recoverability over an infinite field, but testing weaker forms of recoverability would require a different approach.

A major weakness of the machinery developed here for solving the inverse problem is that it is useful almost exclusively for proving \emph{positive} results.  We have not described how to prove a $\partial$-graph is \emph{not} recoverable, \emph{not} recoverable by scaffolds, or \emph{not} totally layerable.  It would be very useful to have some algebraic or combinatorial invariants (not directly related to the inverse problem) that could be used to prove negative results about recoverability, or about recoverability by scaffolds.

\subsection{Infinite Networks}

Recoverability for scaffolds makes sense for infinite networks, but much of our theory has not been fully fleshed out in the infinite case.  For instance,
\begin{itemize}
	\item What is the analogue of completely reducible $\partial$-graphs in the infinite case, and does a version of Proposition \ref{prop:completelyreducible} hold?
	\item Can we generalize Theorem \ref{thm:RC3} if we allow an infinite size connection to correspond to an infinite rank?
	\item Can we define elementary factorizations with infinitely many factors using categorical limits and prove a version of Proposition \ref{prop:IOscaffolds}?
\end{itemize}

Moreover, as mentioned in \S \ref{subsec:halfplanar}, there are several reasonable definitions of $\Lambda(\Gamma)$ in the infinite case.  Over arbitrary fields, the two feasible choices are (1) the boundary data of all harmonic functions and (2) the boundary data of finitely supported harmonic functions.  We have adopted the first definition, but the idea of recoverability by scaffolds works using the second definition as well, so long as we guarantee that harmonic continuation produces finitely supported functions.

It is not clear in general whether recoverability using (1) and (2) are equivalent or whether either one implies the other.

\newpage


\begin{thebibliography}{99}
	\bibitem{BF}{John C. Baez and Brendan Fong.  ``A Compositional Framework for Passive Linear Networks.''  2015.  Online at arxiv:1504.05625v2}
	\bibitem{bakernorine}{Matthew Baker and Serguei Norine.  ``Harmonic Morphisms and Hyperelliptic Graphs.''  International Math Research Notices 15 (2009), pp. 2914-2955.}
	\bibitem{BobenkoGunther}{Alexander I. Bobenko and Felix G{\"u}nther.  ``Discrete complex analysis on planar quad-graphs.'' To appear in Advances in Discrete Differential Geometry, Springer, 2016, pp. 55-128. arXiv:1505.05673}
	\bibitem{card}{Ryan Card and Brandon Muranaka.  ``Using Network Amalgamation and Separation to Solve the Inverse Problem.''  University of Washington Math REU 2003.  Online at \newline \small{\verb|https://www.math.washington.edu/~reu/papers/2000/card/card.pdf|}}
	\bibitem{CIM}{Edward B. Curtis, David Ingerman, James A. Morrow.  ``Circular Planar Graphs and Resistor Networks.''  \textit{Linear Algebra and Its Applications}, 283, pp. 115Ð150. 1998.  Available online at \newline \small{\verb|http://www.math.washington.edu/~morrow/papers/cim.pdf|}}
	\bibitem{CMdn}{Edward B. Curtis and James A. Morrow.  ``The Dirichlet-to-Neumann Map for a Resistor Network.''   SIAM J. of Applied Math 51, pp. 918-930.  1990.  Online at \small{\verb|http://www.math.washington.edu/~morrow/reu97/char.pdf|}}
	\bibitem{CMresistor}{Edward B. Curtis and James A. Morrow.  ``Determining the Resistors in a Network.''  SIAM J. Applied Math 50(3), pp. 918Ð930. Online at \small{\verb|https://www.math.washington.edu/~reu/jim_announcement/1997/resistor.pdf|}}
	\bibitem{CM}{Edward B. Curtis and James A. Morrow. \textit{Inverse Problems for Electrical Networks.} World Scientific. 2000.}
	\bibitem{dVGV}{Y. C. de Verdiere, I. Gitler, and G. Vertigan.  ``R\'eseaux \'electriques planaires II.'' \textit{Comment. Math. Helvetici} vol. 71(1), pp. 144-167.  1996.}
	\bibitem{DKM}{Art M. Duval, Caroline J. Klivans, Jeremy L. Martin.  ``Critical Groups of Simplicial Complexes.''  arxiv:1101.3981.}
	\bibitem{RF}{Robin Forman.  ``Determinants of Laplacians on Graphs.''  \textit{Topology} 32(1), pp. 35-46. 1993.}
	\bibitem{GodsilRoyle}{Chris Godsil and Gordon Royle.  \textit{Algebraic Graph Theory.}  Springer-Verlag New York.  2001. Book.}
	\bibitem{MG}{Michael Goff.  ``Recovering Networks with Signed Conductivities.''   University of Washington Math REU 2003.  Online at \newline \small{\verb|https://www.math.washington.edu/~reu/papers/2003/goff/mgoff.pdf|}}
	\bibitem{DI}{David Ingerman.  ``Theory of Equivalent Networks and Some of its Applications.''  University of Washington Math REU 1992.  Online at \newline \small{\verb|http://www.math.washington.edu/~reu/papers/1992/ingerman/ingerman.pdf|}}
	\bibitem{torsion}{David Jekel and Avi Levy.  ``Torsion of the Graph Laplacian: Sandpiles, Electrical Networks, and Homological Algebra.''  Online at arxiv:1604.07075.}
	\bibitem{WJ}{Will Johnson. ``Circular Planar Resistor Networks with Nonlinear and Signed Conductors.''  Online at arxiv:1203.4045.}
	\bibitem{Kennelly}{A.E. Kennelly,``Equivalence of triangles and three-pointed stars in conducting networks,'' \textit{Electrical World and Engineer}, vol. 34, pp. 413Ð414, 1899.}
	\bibitem{RK}{Richard Kenyon. ``Spanning forests and the vector bundle Laplacian,'' Ann. Probab. 39 (5), pp. 1983Ð2017.  2011.  Online at arxiv:1001.4028}
	\bibitem{RKsurvey}{Richard Kenyon.  ``The Laplacian on Planar Graphs and Graphs on Surfaces.''  \textit{Current Developments in Mathematics} 2011.  arxiv:1203.1256}
	\bibitem{LP}{Thomas Lam and Pavlo Pylyavskyy. ``Electrical Networks and Lie Theory.''  Online at arxiv:1103.3475}
	\bibitem{LPcyl}{Thomas Lam and Pavlo Pylyavskyy,  ``The Inverse Problem in Cylindrical Electrical Networks.''  \textit{SIAM J. of Applied Math.}, 72 (2012), 767-788.  Online at arxiv:1104.4998.}
	\bibitem{JRsub}{Jeffrey Russell.  ``Recovery of Subgraphs.'' \newline \small{\verb|https://www.math.washington.edu/~reu/papers/2003/russell/subgraph.pdf|}}
	\bibitem{JR}{Jeffrey Russell.  ``$\bigstar$ and $\mathcal{K}$ Solve the Inverse Problem.''  University of Washington Math REU 2003.  Online at \newline \small{\verb|http://www.math.washington.edu/~reu/papers/2003/russell/recovery.pdf|} }
	\bibitem{CS}{Christophe Sabot.  ``Electrical Networks and Symplectic Reductions.''  Online at arxiv:0304015v2}
	\bibitem{KS}{Konrad Schr\o der.  ``Mixed-Sign Conductor Networks.'' University of Washington Math REU 1993.  Online at \newline \small{\verb|http://www.math.washington.edu/~reu/papers/1993/schroder/sign.pdf|}}
	\bibitem{urakawa}{H. Urakawa.  ``A discrete analogue of the harmonic morphism and Green kernel comparison theorems.'' Glasg. Math. J., 42(3):319Ð334, 2000.}
	\bibitem{vanLier}{M. C. van Lier, R. H. J. M. Otten.  ``Planarization by Transformation.''  IEEE Transactions on Circuit Theory 04/1973; 20(2):169 - 171.}
	\bibitem{IZ}{Ian Zemke.  ``Infinite Electrical Networks: Forward and Inverse Problems.''  University of Washington Undergraduate Thesis 2012.  \newline \small{\verb|https://www.math.washington.edu/~morrow/papers/ian-thesis.pdf|}}
\end{thebibliography}
\end{document}